\newtheorem{thm}{Theorem}[section]
\newtheorem{prop}[thm]{Proposition}
\newtheorem{lem}[thm]{Lemma}
\newtheorem{cor}[thm]{Corollary}
\theoremstyle{definition}
\newtheorem{warning}[thm]{Warning}
\newtheorem{defn}[thm]{Definition}
\newtheorem{example}[thm]{Example}
\newtheorem{remark}[thm]{Remark}
\newtheorem{constr}[thm]{Construction}
\newtheorem{notation}[thm]{Notation}
\newtheorem{mainthm}{Theorem}
\def\Q{\mathbb{Q}}
\def\Z{\mathbb{Z}}
\def\CC{\mathscr{C}}
\def\DD{\mathscr{D}}
\def\AA{\mathscr{A}}
\def\E{\mathbb{E}}
\def\F{\mathbb{F}}
\def\S{\mathbb{S}}
\def\e{\varepsilon}
\def\one{\mathbbm 1}
\def\En{\mathbb{E}_n}
\def\CAlg{\operatorname{CAlg}}
\def\Map{\operatorname{Map}}
\def\Hom{\operatorname{Hom}}
\def\Fun{\operatorname{Fun}}
\def\Mod{\operatorname{Mod}}
\def\id{\operatorname{id}}
\def\Sp{\operatorname{Sp}}
\def\fib{\operatorname{fib}}
\def\cof{\operatorname{cof}}
\def\colim{\operatorname{colim}}
\def\supp{\operatorname{Supp}}
\def\into{\hookrightarrow}
\def\Idem{\operatorname{Idem}}
\def\Alg{\operatorname{Alg}}
\def\Free{\operatorname{Free}}
\def\oblv{\operatorname{Oblv}}
\def\op{\operatorname{op}}
\def\Loc{\operatorname{Loc}}
\title[Strong finiteness for localisations]{A strong finiteness condition for smashing localisations}
\author{Isabel Longbottom}
\begin{document}

\begin{abstract}
We define a class of smashing localisations which we call \emph{compactly central}, and classify compactly central localisations of $\Sp_{(p)}$ and of $\Sp$. Our main result is that $L_n^f$ is a compactly central localisation. 

A map $\alpha: \one \to A$ in a presentably symmetric monoidal $\infty$-category $\CC$ is \emph{central} if there exists a homotopy $\alpha \otimes \id_A \simeq \id_A \otimes \alpha: A \to A \otimes A$. A central map $\alpha$ can be used to produce a smashing localisation $L_\alpha$ of $\CC$, because the free $\E_1$ algebra on the $\E_0$ algebra $\alpha$ is an idempotent commutative algebra. When $\one$ and $A$ are compact, we call $L_\alpha$ \emph{compactly central}. We show that when $\CC$ is (compactly generated) rigid, all compactly central localisations are finite in the sense of Miller, \cite{millerfinite}. Not all finite localisations of $\Sp$ are compactly central. To exhibit $L_n^f$ as compactly central, we determine properties of the $K(n)$-homology of a map between $p$-local finite spectra which ensure that some tensor power of the map is central. 
\end{abstract}

\maketitle

\tableofcontents

\section{Introduction}

Throughout, we let $\CC$ denote a presentably symmetric monoidal $\infty$-category. We will often require $\CC$ to be stable. Let $\one$ denote the monoidal unit in $\CC$.

A \emph{localisation} of $\CC$ is a functor $L: \CC \to \DD$ which admits a fully faithful right adjoint. Such a functor can be thought of as simplifying $\CC$ by inverting some of its morphisms. We view $L$ as an endomorphism of $\CC$ and $\DD$ as the subcategory of \emph{local objects} in $\CC$. When $\CC$ is symmetric monoidal, one is interested in understanding which localisations of $\CC$ are compatible with the symmetric monoidal structure. We call a localisation \emph{smashing} if it takes the form $LX \simeq A \otimes X$ for all $X \in \CC$, where $A$ is a fixed object of $\CC$. In this case the subcategory $\DD$ inherits a symmetric monoidal structure from $\CC$, making $L$ a symmetric monoidal functor. The tensor product of two objects in $\DD$ is the same as their tensor product in $\CC$. The object $A \simeq L\one$ is an idempotent (commutative) algebra in $\CC$, and indeed any idempotent algebra in $\CC$ produces a smashing localisation (\cite{ha}, Propositions 4.8.2.7 and 4.8.2.9).

We consider the following general method for constructing smashing localisations. A map $\alpha: \one \to A \in \CC$ is \emph{central} if there exists a homotopy $\alpha \otimes \id_A \simeq \id_A \otimes \alpha: A \to A \otimes A$. A central map has a corresponding smashing localisation. Let $J_\alpha$ denote the free $\E_1$ algebra on the $\E_0$ algebra $\alpha$. Then $J_\alpha$ is an idempotent algebra so produces a smashing localisation (Corollary \ref{cor: Jalpha is idempotent}). The smashing localisation is $L_\alpha(-) := J_\alpha \otimes -$.

The unit map of an idempotent algebra is central, so every smashing localisation can be constructed from a central map. The construction is more interesting, however, when $\alpha$ is central but not idempotent. We would like to understand the special case where $A$ is a compact\footnote{Idempotent algebras are rarely compact. Indeed, in $\Sp$ the only compact idempotent algebra is the monoidal unit $\S$.} object of $\CC$. Specifically, we want to know which smashing localisations $L$ are equivalent to some $L_\alpha$ arising from a central map $\alpha: \one \to A$ where $A$ is compact. We call such a localisation \emph{compactly central}.

Requiring a localisation to be compactly central is a finiteness condition. There is an existing notion of finiteness in the literature---a localisation is finite if its acyclic objects are stably generated under colimits by the compact acyclics (Miller, \cite{millerfinite}). On $\Sp$ and on $\Sp_{(p)}$, every compactly central localisation is finite (Lemma \ref{lem: compactly central implies finite}) and every finite localisation is smashing (Miller, \cite{millerfinite}, Proposition 9; appearing here as Corollary \ref{cor: sp fin implies smashing}). The thick subcategory theorem (Hopkins-Smith, \cite{NilpotenceII}, Theorem 7) classifies finite localisations of $\Sp_{(p)}$ as precisely the localisations $L_n^f$ for $-1 \leq n \leq \infty$, where the kernel of $L_n^f$ is generated by type $\geq (n+1)$ p-local finite spectra. Here $L_{-1}^f$ is the zero localisation, whose kernel is $\Sp_{(p)}$. It is natural to ask which of these finite localisations are compactly central. Our main result is that $L_n^f$ is compactly central.

\begin{mainthm}[Theorem \ref{thm: main result lnf compactly central}]\label{intro thm: p-local compactly central}
    The localisation $L_n^f$ is compactly central. Thus all finite localisations of $\Sp_{(p)}$ are compactly central.
\end{mainthm}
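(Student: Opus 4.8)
The plan is to realise each $L_n^f$ as $L_\beta$ for an explicit central map $\beta\colon\S_{(p)}\to A^{\otimes k}$, built from a map $\alpha\colon\S_{(p)}\to A$ into a finite spectrum whose $K(m)$-homology I can control, obtaining centrality of a suitable tensor power through a nilpotence argument. First a reduction: by the thick subcategory theorem recalled above, the finite localisations of $\Sp_{(p)}$ are exactly the $L_n^f$, $-1\le n\le\infty$; since $L_{-1}^f\simeq 0$ equals $L_\alpha$ for $\alpha\colon\S_{(p)}\to 0$ and $L_\infty^f\simeq\id$ equals $L_\alpha$ for $\alpha=\id_{\S_{(p)}}$ (both plainly compactly central), the second assertion of the theorem follows from the first, and it remains to treat $0\le n<\infty$, which I fix from now on.

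For the construction, choose a finite $p$-local spectrum $F$ of type $n+1$ together with a map $\pi\colon F\to\Sigma^d\S_{(p)}$ that is surjective on $K(m)_*$ for every $m$ with $n+1\le m\le\infty$ (convention $K(\infty)=H\F_p$); concretely, take for $F$ a generalised Moore spectrum $\S_{(p)}/(p^{i_0},v_1^{i_1},\dots,v_n^{i_n})$ --- which exists once the exponents are large enough, by Hopkins--Smith periodicity --- and for $\pi$ the collapse onto its top cell. Set $A:=\Sigma^{1-d}\fib(\pi)$, a compact object of $\Sp_{(p)}$, and let $\alpha\colon\S_{(p)}\to A$ be $\Sigma^{1-d}$ of the canonical map $\Sigma^{d-1}\S_{(p)}\to\fib(\pi)$; then there is a cofibre sequence $\S_{(p)}\xrightarrow{\ \alpha\ }A\to\Sigma^{1-d}F\to\Sigma\S_{(p)}$ whose third term $\cof(\alpha)\simeq\Sigma^{1-d}F$ has type $n+1$ and whose connecting map is the suspension $\Sigma^{1-d}\pi$. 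Reading off the long exact sequences then yields the two facts I will use: (i) $K(m)_*\alpha$ is an isomorphism for $0\le m\le n$ (because $K(m)_*\cof(\alpha)=K(m)_*F=0$ there, with $K(0)=H\Q$), and (ii) $K(m)_*\alpha=0$ for every $m$ with $n+1\le m\le\infty$ (because $K(m)_*\pi$ is surjective). I would deduce that $\alpha\otimes\id_A$ and $\id_A\otimes\alpha$, as maps $A\to A\otimes A$, induce the same map on $K(m)_*$ for all $m$, $0\le m\le\infty$: for $0\le m\le n$ because $K(m)_*A$ is then free of rank one over the graded field $K(m)_*$, so every element of the common image is fixed by the Koszul symmetry of $K(m)_*(A\otimes A)$; and for $n+1\le m\le\infty$ because both maps are zero.

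The next step --- and the one I expect to be the main obstacle --- is to upgrade this $K$-theoretic coincidence to an honest homotopy after passing to a tensor power. By the previous paragraph the difference $\delta:=(\alpha\otimes\id_A)-(\id_A\otimes\alpha)\in[A,A\otimes A]$ is trivial on $K(m)_*$ for every $0\le m\le\infty$, hence $MU$-trivial, hence smash nilpotent by the Nilpotence Theorem of Devinatz--Hopkins--Smith. The substantive work is to leverage this --- together with the sharper, height-by-height input that $K(m)_*\alpha$ is \emph{either invertible or zero} --- to conclude that some tensor power $\beta:=\alpha^{\otimes k}\colon\S_{(p)}\to A^{\otimes k}$ is central, i.e.\ that $\beta\otimes\id_{A^{\otimes k}}\simeq\id_{A^{\otimes k}}\otimes\beta$, which after unwinding the symmetric monoidal coherences amounts to $(\alpha\otimes\id_A)^{\otimes k}\simeq(\id_A\otimes\alpha)^{\otimes k}$ as maps $A^{\otimes k}\to A^{\otimes 2k}$. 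Smash nilpotence of $\delta$ by itself is not enough here, because $\alpha\otimes\id_A$ and $\id_A\otimes\alpha$ are maps $A\to A\otimes A$ rather than endomorphisms of a ring spectrum; a genuine tensor-power argument is needed, and this is exactly the ``properties of the $K(n)$-homology of a map between $p$-local finite spectra'' promised in the abstract. I would isolate this as a separate lemma. Granting it, $A^{\otimes k}$ is compact and $\beta$ is central, so $L_\beta$ is a compactly central smashing localisation with $L_\beta\S_{(p)}=J_\beta$ (Corollary~\ref{cor: Jalpha is idempotent}).

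It remains to identify $L_\beta$ with $L_n^f$, which I would do by comparing kernels: each is a smashing localisation, a smashing localisation is determined by its kernel, which is a localising $\otimes$-ideal, and by the thick subcategory theorem the localising $\otimes$-ideal generated by any type $n+1$ finite spectrum is precisely $\ker L_n^f$. Now $L_\beta$ inverts $\beta$, since (using centrality of $\beta$) the canonical map $J_\beta\to J_\beta\otimes A^{\otimes k}$ induced by $\beta$ is the shift equivalence of the defining telescope; hence $\cof(\beta)$ is acyclic, and as $\cof(\beta)$ has type $n+1$ (the computation of (i)--(ii) applies verbatim to $\beta$, since $K(m)_*$ is a graded field and $\beta$ factors through $\alpha$: isomorphism for $m\le n$, zero for $m=n+1$), this gives $\ker L_n^f\subseteq\ker L_\beta$. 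Conversely, $\fib(\S_{(p)}\to J_\beta)\simeq\colim_i\Sigma^{-1}\cof(\beta^{(i)})$, where $\beta^{(i)}\colon\S_{(p)}\to(A^{\otimes k})^{\otimes i}$ is the $i$-th structure map of the telescope defining $J_\beta$; each $\beta^{(i)}$ is again a $K(m)_*$-isomorphism for $m\le n$ and zero on $K(n+1)_*$, so each $\cof(\beta^{(i)})$ has type $n+1$, whence $\fib(\S_{(p)}\to J_\beta)$ lies in $\ker L_n^f$. Since $\ker L_\beta=\{Z:\fib(\S_{(p)}\to J_\beta)\otimes Z\xrightarrow{\sim}Z\}$ and $\ker L_n^f$ is a localising $\otimes$-ideal, it follows that $\ker L_\beta\subseteq\ker L_n^f$. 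Thus $\ker L_\beta=\ker L_n^f$, so $L_\beta\simeq L_n^f$, and therefore $L_n^f$ --- hence, by the reduction, every finite localisation of $\Sp_{(p)}$ --- is compactly central.
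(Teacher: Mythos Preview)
Your overall architecture matches the paper's: build an ``algebraically central'' map $\alpha$ from a type $(n+1)$ complex, show a tensor power is central, then identify $L_\beta$ with $L_n^f$. Your construction of $\alpha$ (fibre of the collapse-to-top-cell map) is the Spanier--Whitehead dual of the paper's (dual of the fibre of the bottom-cell inclusion of a generalised Smith--Toda complex), and your kernel comparison at the end is essentially the paper's Theorem~\ref{thm: computing finite smashing loc from central map}.

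The gap is precisely the step you flag and then grant: passing from ``$\delta=(\alpha\otimes\id_A)-(\id_A\otimes\alpha)$ is smash nilpotent'' to ``$\alpha^{\otimes k}$ is central for some $k$''. Nilpotence of $\delta$ alone cannot do this --- in a ring where $p$ acts invertibly one can have $\epsilon$ nilpotent yet $(y+\epsilon)^N\ne y^N$ for every $N$ --- and your ``sharper height-by-height input'' does not point at the actual mechanism. The paper's missing ingredient is a \emph{second} property of $\delta$: since $K(0)_*\delta=0$ and $\delta$ is a map between finite $p$-local spectra, $\delta$ is $p^j$-torsion for some $j$. Nilpotence plus $p$-power torsion then gives $(\id_A\otimes\alpha)^{\otimes p^N}\simeq(\alpha\otimes\id_A)^{\otimes p^N}$ for $N\gg0$ via a binomial expansion in which every cross term $\binom{p^N}{i}\delta^{\otimes i}\otimes(\id_A\otimes\alpha)^{\otimes(p^N-i)}$ vanishes: if $\nu_p(i)$ is large then $\delta^{\otimes i}=0$ by nilpotence, and if $\nu_p(i)$ is small then Kummer's formula gives $\nu_p\binom{p^N}{i}\ge j$, so the term is killed by torsion. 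This elementary combinatorial lemma (the paper's Proposition~\ref{prop: nilp diff}, proved in Section~\ref{subsec: combinatorial}) is the load-bearing step you are missing; once you have it, a single permutation of tensor factors converts $(\alpha\otimes\id_A)^{\otimes p^N}\simeq(\id_A\otimes\alpha)^{\otimes p^N}$ into centrality of $\alpha^{\otimes p^N}$.
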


We use Theorem \ref{intro thm: p-local compactly central} to classify all compactly central localisations of $\Sp$. Nonzero finite localisations of $\Sp$ are uniquely determined by their restrictions to $\Sp_{(p)}$ for each prime $p$ (Lemmas \ref{lem: fin loc sp uniqueness} and \ref{lem: fin loc sp existence}). If we independently choose nonzero finite localisations of $\Sp_{(p)}$ for each prime $p$, there always exists a compatible nonzero finite localisation of $\Sp$. The same is only guaranteed for compactly central localisations if we restrict ourselves to making finitely many choices, and take the identity localisation at all other primes. This leads to the following classification.

\begin{mainthm}[Theorem \ref{thm: not compactly central}]\label{intro thm: sp compactly central}
    A nonzero finite localisation $L$ of $\Sp$ is compactly central if and only if $L$ produces the identity when restricted to $\Sp_{(p)}$, for all but finitely many primes $p$.
\end{mainthm}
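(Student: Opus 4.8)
The plan is to prove the two directions separately. For the "if" direction, suppose $L$ is a nonzero finite localisation of $\Sp$ which restricts to the identity on $\Sp_{(p)}$ for all $p$ outside a finite set $S$. For each $p \in S$, the restriction $L_{(p)}$ is a nonzero finite localisation of $\Sp_{(p)}$, hence by the thick subcategory theorem equals $L_{n_p}^f$ for some $-1 \le n_p \le \infty$ (with $n_p = \infty$ allowed, i.e.\ possibly the identity). By Theorem A, each $L_{n_p}^f$ is compactly central: there is a compact $A_p$ and a central map $\alpha_p : \S_{(p)} \to A_p$ in $\Sp_{(p)}$ with $L_{\alpha_p} \simeq L_{n_p}^f$. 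The key step is to assemble these into a single central map in $\Sp$. First I would replace each $\alpha_p$ by a map of finite \emph{spectra} $\tilde\alpha_p : \S \to \tilde A_p$ whose $p$-localisation recovers $\alpha_p$ (possible since $A_p$, being compact in $\Sp_{(p)}$, is a retract of the $p$-localisation of a finite spectrum, and after passing to a suitable tensor power we may arrange an honest finite model — here one uses that $A_p$ can be taken to be a finite spectrum with cells only in the relevant range, as produced in the proof of Theorem A). Centrality is a condition about a homotopy in $\Map(\tilde A_p, \tilde A_p \otimes \tilde A_p)$; since $\tilde A_p$ is finite and the $p$-localisation of the mapping spectrum is the mapping spectrum of the $p$-localisations, a centrality homotopy rationally/$p$-locally need not lift on the nose, but after smashing $\tilde\alpha_p$ with itself enough times it does — this is exactly the phenomenon exploited in Theorem A (some tensor power of a map becomes central), and I would invoke that argument one prime at a time. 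Then set $A := \bigotimes_{p \in S} \tilde A_p$ and $\alpha := \bigotimes_{p \in S} \tilde\alpha_p : \S \to A$; this is a finite spectrum, $\alpha$ is central because a tensor product of central maps is central (the homotopies combine), so $L_\alpha$ is compactly central. It remains to identify $L_\alpha$ with $L$: the idempotent algebra $J_\alpha$ smashes to $\bigotimes_{p \in S} J_{\tilde\alpha_p}$, and one checks prime by prime that $(J_\alpha)_{(p)}$ is the idempotent algebra for $L_{n_p}^f$ when $p \in S$ and is $\S_{(p)}$ otherwise, so $L_\alpha$ and $L$ have the same acyclics at every prime; by the uniqueness of finite localisations of $\Sp$ (Lemma \ref{lem: fin loc sp uniqueness}) they agree.

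For the "only if" direction, suppose $L$ is compactly central, say $L \simeq L_\alpha$ for a central $\alpha : \S \to A$ with $A$ a compact spectrum, i.e.\ a retract of a finite spectrum; we may assume $A$ itself is finite after replacing it by a finite spectrum of which it is a retract and adjusting (a retract of a central map need not stay central, but $L_\alpha$ only depends on $A$ through the smashing localisation it generates, and $\Sp$ being rigid means $L_\alpha$ is finite by Lemma \ref{lem: compactly central implies finite}; for the argument below only the finiteness of $L$ and the existence of a compact generator of the acyclics is needed). Since $A$ is a finite spectrum, it has nontrivial rational homology or it is rationally trivial; in the latter case $A_{\Q} \simeq 0$ so $\S_{\Q} \to A_{\Q}$ is the zero map $\S_{\Q} \to 0$, forcing $L$ to be the zero localisation, contradicting $L \ne 0$. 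Hence $A$ has nontrivial rational homology, so $A$ is a finite spectrum of type $0$ at all but finitely many primes — more precisely, $H_*(A;\F_p) \ne 0$ for all $p$ (finiteness of $A$), and the type of $A_{(p)}$ (the largest $n$ with $K(n-1)_*A_{(p)} = 0$) is $0$ for all but finitely many $p$, since $A$ has a cell structure with finitely many cells and over $\Q$ the Euler characteristic controlled obstruction... the cleaner statement: $\supp(A) := \{(p,n) : K(n)_* A_{(p)} \ne 0\}$ contains $(p,0)$ for all but finitely many $p$ because $A_{\Q}\ne 0$ implies $H\Q_* A \ne 0$ implies $H\F_p{}_* A \ne 0$ for almost all $p$ implies $K(0)_{(p)} = H\Q$-homology localised at $p$ is nonzero for almost all $p$. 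The localisation $L_\alpha$ at the prime $p$ is generated (as acyclics) by the fibre of $\S_{(p)} \to J_{\alpha,(p)}$, and its type-stratification is determined by which $K(n)_*$ the algebra $J_{\alpha}$ detects, which is controlled by $\supp(A)$; in particular for the almost-all primes $p$ where $A_{(p)}$ has type $0$, the localisation $L_{\alpha,(p)}$ inverts nothing below chromatic height... I need to argue it is the identity. For that, note that when $A_{(p)}$ has type $0$ its unit map $\S_{(p)} \to A_{(p)}$ is a $K(0)$-equivalence (both are nonzero rationally and the map is the unit), hence $J_{\alpha,(p)}$ is $K(0)$-locally trivial as a cofibre — more to the point, a finite localisation $L_{n}^f$ has acyclics generated by type $\ge n+1$ finite spectra, so $L_{n,(p)}^f$ is the identity iff $n = \infty$ iff there are no nonzero finite acyclics; and the acyclics of $L_\alpha$ at $p$ are generated by $\fib(\S_{(p)} \to J_{\alpha,(p)})$, which is a \emph{compact} object (here the compactness of $A$ is essential). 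A compact acyclic is a finite spectrum in the kernel; if the kernel at $p$ is nonzero it contains a type $m < \infty$ finite spectrum $F$, so $K(m)_* F \ne 0$ yet $F$ is $L_\alpha$-acyclic, meaning $K(m)_* J_{\alpha,(p)}$-action kills it; translating, $J_{\alpha,(p)}$ must be $K(m)$-locally trivial, forcing $K(m)_* A_{(p)} = 0$, i.e.\ $A_{(p)}$ has type $\ge m+1 \ge 1$. So: the kernel of $L_{\alpha,(p)}$ is nonzero $\Rightarrow$ $A_{(p)}$ has type $\ge 1$ $\Rightarrow$ $K(0)_* A_{(p)} = 0$ $\Rightarrow$ $A$ is $p$-locally rationally trivial. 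Since $A$ is a fixed finite spectrum, this happens for only finitely many $p$; at all other primes the kernel is zero, so $L_{(p)} = \id$, which is the claim.

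The main obstacle I expect is the assembly step in the "if" direction: producing a \emph{single} central map of finite spectra over $\Z$ whose $p$-localisations simultaneously realise the chosen $L_{n_p}^f$. The subtlety is that the centrality homotopies provided by Theorem A live $p$-locally, and a priori one must pass to a tensor power to make a $p$-local central map descend to an integral one; doing this compatibly for the finitely many bad primes (different powers at different primes, then a common power) needs care, but is manageable precisely because $S$ is finite — this is exactly why the finiteness restriction on $S$ is both necessary (as the "only if" direction shows, $A$ finite forces $\supp(A)$ to be "type $0$ almost everywhere") and sufficient. I would also need the elementary facts that tensor products of central maps are central and that $J_{(-)}$ and $L_{(-)}$ are compatible with the tensor product of $\E_0$-algebras, plus the base-change compatibility of $L_n^f$ with $p$-localisation, all of which are standard and can be cited or dispatched quickly. \qed
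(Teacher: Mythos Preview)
Your ``if'' direction is essentially the paper's approach, though you are vague at exactly the point that requires care. The paper's trick (Lemma~\ref{lem: lifting cc from spp to sp}, using Lemma~\ref{lem: lift alg central spp to sp}) is to choose the integral lift $\widetilde{\alpha}_p : \S \to \widetilde{A}_p$ so that it becomes an \emph{equivalence} after localisation at every prime $q \neq p$ --- this is possible because the generalised Smith--Toda complex used to build $\alpha_p$ is assembled from copies of $\S/p$, hence vanishes $q$-locally. Centrality of $\widetilde{\alpha}_p^{\otimes p^N}$ in $\Sp$ then follows because the obstruction $\id \otimes \widetilde{\alpha}_p^{\otimes p^N} - \widetilde{\alpha}_p^{\otimes p^N} \otimes \id$ lies in a mapping group of finite spectra and vanishes at every prime (at $p$ by Theorem~A, at $q \neq p$ because a map from the unit to itself is automatically central). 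You gesture at ``passing to a common tensor power'' across primes, but no such coordination is needed: each $\widetilde{\alpha}_p$ is individually handled, and then you tensor the finitely many resulting central maps together.

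Your ``only if'' direction contains a genuine error. You claim that $K(m)_* J_{\alpha,(p)} = 0$ forces $K(m)_* A_{(p)} = 0$, and hence that a nonzero kernel at $p$ forces $A_{(p)}$ to have positive type. This implication is false: take $\alpha : \S \to \S$ to be multiplication by $p$. This is central, $A = \S$ has type $0$ at every prime, yet $J_\alpha = \S[1/p]$ satisfies $K(m)_* J_\alpha = 0$ for all $m \geq 1$ at the prime $p$, and $L_\alpha|_{\Sp_{(p)}} = L_0^f$ has nonzero kernel. What centrality actually gives you (via Lemma~\ref{lem: central map 0 or iso on Kn}) is that $K(m)_* \alpha$ is zero or an isomorphism; from $K(m)_* J_\alpha = 0$ you can deduce $K(m)_* \alpha = 0$, but this says nothing about $K(m)_* A$. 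The correct object to track is $\cof \alpha$, not $A$: since $L_\alpha \neq 0$ and $\cof\alpha$ is $L_\alpha$-acyclic, Remark~\ref{rmk: nonzero rational acyclic} forces $\cof\alpha$ to be rationally trivial, hence $H_*(\cof\alpha)$ is a finite torsion group supported at finitely many primes $P$; for $p \notin P$ one has $(\cof\alpha)_{(p)} = 0$, so $\alpha_{(p)}$ is an equivalence and $L_\alpha|_{\Sp_{(p)}} = \id$. This is exactly the paper's argument.
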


\subsection{Organisation of the paper}
Section \ref{sec: smashing} develops background material on smashing localisations. Sections \ref{subsec: mon loc} and \ref{subsec: combining smash locs} are largely background material which the reader may wish to use only for reference. In Section \ref{subsec: central maps} we establish several key properties of central maps, and prove that a central map gives rise to a smashing localisation. We also develop a relationship between the fibre of a central map and the kernel of the localisation it generates, which we will later use to compute the localisations associated to central maps between compact objects. In Section \ref{subsec: finiteness} we relate finite and compactly central localisations, and explain how any localisation can be universally approximated by a finite localisation (due to Miller, \cite{millerfinite}). This motivates Section \ref{subsec: univ smash loc}, where we explain how any localisation can be universally approximated by a smashing localisation. This is independent from the narrative of the remainder of the paper and is recorded here for interest only.

The purpose of Section \ref{sec: the Lnf example} is to prove Theorems \ref{intro thm: p-local compactly central} and \ref{intro thm: sp compactly central}, and we usually work directly in $\Sp_{(p)}$. In Section \ref{subsec: props of central maps} we develop algebraic criteria in terms of $K(n)$-homology that a central map between finite spectra must satisfy, and compute the associated compactly central localisation. We call a map \emph{algebraically central} if it satisfies these algebraic properties but is not necessarily central. In Section \ref{subsec: alg central to central} we show that a sufficiently large tensor power of an algebraically central map is central. This allows us to obtain a supply of central maps between $p$-local finite spectra and prove Theorem \ref{intro thm: p-local compactly central}. Section \ref{subsec: passage to spectra} contains the passage from Theorem \ref{intro thm: p-local compactly central} to Theorem \ref{intro thm: sp compactly central}. The remaining two Sections each exist to establish a specific result which is an ingredient in the proof of Theorem \ref{intro thm: p-local compactly central}. In Section \ref{subsec: construction of algebraically central maps} we explicitly construct an appropriate family of algebraically central maps. Section \ref{subsec: combinatorial} deals with $p$-adic valuations of binomial coefficients, and is needed for the proof that a large tensor power of an algebraically central map is central.

The Appendix provides background material on localisations. It is included as a convenient reference for readers unfamiliar with the technicalities pertaining to localisations of $\infty$-categories. None of the material it contains is novel. 

\section{Smashing localisations}\label{sec: smashing}

The following background material relating smashing localisations and idempotent maps can be found in Section 4.8.2 of \cite{ha}. 

\begin{defn}
    A localisation $L$ of $\CC$ is \emph{smashing} if there is a natural equivalence $L(-) \simeq R \otimes -$ for some object $R$ of $\CC$. If such an $R$ exists then $R \simeq L\one$ because $L\one \simeq R \otimes \one \simeq R$.
\end{defn}

\begin{defn}[\cite{ha}, Definition 4.8.2.1]\label{def: idempotent}
    A map $e: \one \to R$ in $\CC$ is \emph{idempotent} (or \emph{exhibits $R$ as an idempotent object}) if the maps
    \[e\otimes \id, \id \otimes e: R \to R \otimes R\]
    as both equivalences. Because $\CC$ is symmetric monoidal it is sufficient to check that just one of $e \otimes \id$ and $\id \otimes e$ is an equivalence, since postcomposing with the autoequivalence of $R \otimes R$ interchanging the two factors swaps these two maps. It follows that $e \otimes \id \simeq \id \otimes e$.
\end{defn}

For any idempotent map $e: \one \to R$ we obtain a localisation functor $X \mapsto R \otimes X$ (\cite{ha}, Proposition 4.8.2.7). This localisation is by definition smashing. Then $R$ has a canonical $\mathbb{E}_\infty$ algebra structure with multiplication inverse to $e \otimes \id$ (and to $\id \otimes e$). This multiplication is an equivalence $R \otimes R \xrightarrow{\simeq} R$, making $R$ into an idempotent object of $\CAlg(\CC)$. Conversely, if $R$ is a commutative algebra object in $\CC$ which is idempotent, then its unit map exhibits $R$ as an idempotent object and we obtain a smashing localisation. Our discussion is summarised in the following Lemma.

\begin{lem}\label{lem: class of smashing locs}
    A smashing localisation $L$ of $\CC$ determines an idempotent algebra $L\one$ with unit map $\one \to L\one$ which is idempotent. Smashing localisations of $\CC$ are in bijection with idempotent maps $\one \to R$ in $\CC$, and also with unit maps of idempotent algebras in $\CC$.
\end{lem}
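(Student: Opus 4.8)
The plan is to assemble this statement from the discussion immediately preceding it together with \cite{ha}, Section 4.8.2; the only genuine work is verifying that the constructions there fit together into honest bijections of equivalence classes. I would organise it as three steps: (1) a smashing localisation produces an idempotent map, (2) this is inverse to the construction of \cite{ha}, Proposition 4.8.2.7, and (3) idempotent maps are the same data as unit maps of idempotent algebras.

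For (1), let $L$ be a smashing localisation with unit transformation $\eta \colon \id_\CC \to L$, and set $R = L\one$, so that there is a natural equivalence $\theta \colon L(-) \xrightarrow{\ \simeq\ } R \otimes -$. Put $e := \eta_\one \colon \one \to R$. By Definition \ref{def: idempotent} it suffices to check that $\id_R \otimes e \colon R \to R \otimes R$ is an equivalence. Now for any localisation the natural transformation $L\eta \colon L \to L \circ L$ is an equivalence (it is a section of the equivalence $\mu \colon L\circ L \to L$; this is a standard property of localisations, recalled in the Appendix), so $L(e) = L(\eta_\one)$ is an equivalence. Naturality of $\theta$ at the morphism $e$ identifies $L(e)$, up to the unit constraint $R \otimes \one \simeq R$, with $\id_R \otimes e$; hence $\id_R \otimes e$ is an equivalence, $e$ is idempotent, and $R = L\one$ is an idempotent object.

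For (2), the reverse assignment is \cite{ha}, Proposition 4.8.2.7: an idempotent $e \colon \one \to R$ yields the smashing localisation $X \mapsto R \otimes X$, whose reflection map at $\one$ is $e \otimes \id_\one \simeq e$. Starting from a smashing $L$ and applying both constructions returns $X \mapsto L\one \otimes X \simeq LX$; starting from an idempotent $e$ returns $e$ itself. To see these are literally inverse bijections one needs that a smashing localisation is determined by its functor (equivalently, by its subcategory of local objects), and that an idempotent map $\one \to R$ is determined up to unique equivalence over $\one$ by $R$ together with the idempotence property; both follow from the analysis of idempotent objects in \cite{ha}, Section 4.8.2, where the pertinent mapping spaces are shown to be empty or contractible.

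For (3), given an idempotent $e \colon \one \to R$, the common inverse $\mu$ of $e \otimes \id_R$ and $\id_R \otimes e$ furnishes a multiplication $R \otimes R \xrightarrow{\ \simeq\ } R$ making $R$ an idempotent object of $\CAlg(\CC)$ with unit $e$; conversely, for an idempotent commutative algebra $R$ (one whose multiplication $\mu$ is an equivalence) the unit axiom $\mu \circ (e \otimes \id_R) \simeq \id_R$ shows $e \otimes \id_R \simeq \mu^{-1}$ is an equivalence, so $e$ is an idempotent map. I expect the main obstacle to be this last point, and it is purely $\infty$-categorical: one must know that an idempotent object carries an essentially unique (indeed, a contractible space of) $\E_\infty$-algebra structures, so that ``unit map of an idempotent algebra'' really is the same datum as ``idempotent map'' with no extra structure. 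This is the content of \cite{ha}, Proposition 4.8.2.9 together with the surrounding results, so modulo invoking those the whole argument is bookkeeping.
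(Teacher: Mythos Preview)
Your proposal is correct and takes essentially the same approach as the paper: the paper's own proof is nothing more than a citation to \cite{ha}, Propositions 4.8.2.7 and 4.8.2.9, and your three steps simply unpack the content of those propositions together with the discussion immediately preceding the Lemma. You have supplied more detail than the paper does, but the route is the same.
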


See Propositions 4.8.2.7 and 4.8.2.9 in \cite{ha} for the proof. We will also lean on the following result. 

\begin{prop}[\cite{ha}, Proposition 4.8.2.10]\label{prop: smashing loc modules over unit}
    If $L$ is a smashing localisation then the forgetful functor 
    \[\Mod_{L\one}(\CC)^\otimes \to \CC^\otimes\]  
    determines a symmetric monoidal equivalence $\Mod_{L\one}(\CC)^\otimes \xrightarrow{\simeq} (L\CC)^\otimes$.
\end{prop}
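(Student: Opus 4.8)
The strategy is to extract everything from the idempotence of $R := L\one$. By Lemma~\ref{lem: class of smashing locs} and the discussion preceding it, $R$ is an idempotent $\E_\infty$-algebra: the multiplication $\mu\colon R\otimes R\to R$ is an equivalence, inverse to $e\otimes\id_R\simeq\id_R\otimes e$, where $e\colon\one\to R$ is the unit map, which is itself idempotent. Everything will be deduced from the formal properties of the free--forgetful adjunction $\Free = R\otimes(-)\colon\CC\rightleftarrows\Mod_R(\CC)\colon\oblv$, whose underlying monad is exactly the smashing localisation functor $L = R\otimes(-)$, with unit $\eta_X\colon X\to R\otimes X$ equal to $e\otimes\id_X$.

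First I would show that the counit $a_M\colon\Free\,\oblv M = R\otimes\oblv M\to M$, i.e.\ the $R$-action map, is an equivalence for every $R$-module $M$. The module unit axiom gives $a_M\circ\eta_{\oblv M}\simeq\id_{\oblv M}$, so $\oblv M$ is a retract of $R\otimes\oblv M$; since $R\otimes\oblv M$ lies in the essential image of the localisation $L$ and $L\CC$ is closed under retracts, $\oblv M$ is $L$-local, i.e.\ $\eta_{\oblv M}$ is an equivalence; being a one-sided inverse of an equivalence, $a_M$ is then an equivalence too. Since a right adjoint whose counit is an equivalence is fully faithful, $\oblv$ is fully faithful; and $M\simeq\Free\,\oblv M$ shows in addition that $\Free$ is essentially surjective.

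Next I would identify the essential image of $\oblv$ with $L\CC$. The retract argument above already gives $\oblv M\in L\CC$ for every $M$; conversely, if $X$ is $L$-local then $\eta_X\colon X\xrightarrow{\simeq}R\otimes X = \oblv\,\Free X$ exhibits $X$ in the image. Thus $\oblv$ corestricts to an equivalence of underlying $\infty$-categories $\Mod_R(\CC)\xrightarrow{\simeq}L\CC$. Equivalently, $\Free\colon\CC\to\Mod_R(\CC)$ and $L\colon\CC\to L\CC$ are the localisation functors for one and the same class of morphisms (those $f$ for which $R\otimes f$ is invertible), and are therefore canonically equivalent over $\CC$.

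It remains to upgrade this to a symmetric monoidal equivalence $\Mod_R(\CC)^\otimes\to(L\CC)^\otimes$ identifying $\oblv$ with the inclusion $(L\CC)^\otimes\hookrightarrow\CC^\otimes$. Because $R$ is an $\E_\infty$-algebra, $\Free$ is symmetric monoidal ($\Free(X)\otimes_R\Free(Y)\simeq\Free(X\otimes Y)$ and $\Free(\one)\simeq R$), and the smashing localisation $L$ is symmetric monoidal by construction; by the universal property of symmetric monoidal localisation, a symmetric monoidal localisation of $\CC$ is determined among symmetric monoidal functors out of $\CC$ by its class of local equivalences, so the equivalence $\Mod_R(\CC)\simeq L\CC$ over $\CC$ refines to one of symmetric monoidal $\infty$-categories. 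I expect this last step to be the genuine obstacle: making it rigorous amounts to comparing $\oblv(M\otimes_R N)$, computed as the realisation of the bar construction $[n]\mapsto\oblv M\otimes R^{\otimes n}\otimes\oblv N$, with $\oblv M\otimes\oblv N$, and checking that idempotence of $R$ collapses this simplicial object \emph{coherently} rather than merely on underlying objects. This coherence is precisely what the treatment of idempotent objects in \cite{ha} (Proposition~4.8.2.4 and the surrounding discussion, culminating in Proposition~4.8.2.10) is designed to supply, and in practice one simply invokes it.
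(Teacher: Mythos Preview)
The paper does not supply its own proof of this Proposition; it is quoted verbatim from \cite{ha} (Proposition~4.8.2.10) and used as a black box. So there is no argument in the paper to compare your proposal against.

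That said, your outline for the equivalence of underlying $\infty$-categories is correct and standard: the module-unit axiom plus idempotence of $R$ forces the action map $a_M$ to be an equivalence, whence $\oblv$ is fully faithful with essential image $L\CC$. Your identification of $\Free$ and $L$ as the same localisation is also fine.

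Where your proposal becomes circular is in the final paragraph. You correctly flag the symmetric monoidal upgrade as the genuine obstacle, and you correctly identify that the issue is collapsing the bar construction for $M\otimes_R N$ \emph{coherently} rather than objectwise. But you then resolve this by invoking ``Proposition~4.8.2.10 and the surrounding discussion'' --- which is the very statement under consideration. If you want a self-contained argument, the cleanest route is the one you gesture at just before: both $\Free\colon\CC^\otimes\to\Mod_R(\CC)^\otimes$ and $L\colon\CC^\otimes\to(L\CC)^\otimes$ are symmetric monoidal functors that invert exactly the $R$-equivalences, so by the universal property of symmetric monoidal localisation (e.g.\ \cite{ha}, Proposition~4.1.7.4, applied operadically as in \cite{ha}, Proposition~2.2.1.9) they agree as symmetric monoidal functors out of $\CC^\otimes$. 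That argument does not pass through Proposition~4.8.2.10 itself and would close the gap.
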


Let us now discuss some of the consequences. Given a smashing localisation $L$, every $L$-local object has a unique $L\one$-module structure, and we can detect whether an object $X$ is an $L\one$-module by checking whether the map $X \xrightarrow{e \otimes \id_X} L\one \otimes X$ is an equivalence. It can be helpful to think of the module structure on a local object $X$ as arising from the $L\one$-module structure on $L\one$ itself, transferred via this identification.

When $L$ is smashing the product of a local object with any object is local, since 
\[LX \otimes Y \simeq L\one \otimes X \otimes Y \simeq L(X \otimes Y),\]
and for objects $X, Y \in \CC$ we have natural equivalences $L(X \otimes Y) \simeq LX \otimes LY \simeq LX \otimes Y \simeq X \otimes LY$.

Before moving on from the basic theory, we have a Lemma characterising the acyclic objects for a localisation, and more specifically, for a smashing localisation.

\begin{lem}\label{lem: acyclics gen by}
    Let $\CC$ be a presentably symmetric monoidal stable $\infty$-category, and $L$ a localisation of $\CC$. The $L$-local equivalences are generated (as a strongly saturated class) by local equivalences of the form $X \to LX$ for objects $X \in \CC$. The acyclics are generated (under colimits) by cofibres of such. If $L$ is smashing with idempotent algebra $L\one$, then the acyclics all have the form $X \otimes \cof(\one \to L\one)$ for $X\in \CC$.

    If $\CC$ is generated under colimits by the monoidal unit, then the acyclics are generated under colimits by the single object $\cof(\one \to L\one)$.
\end{lem}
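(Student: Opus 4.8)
The plan is to bootstrap the final assertion from the immediately preceding part, so I take $L$ to be smashing (this is the hypothesis of that part, which the argument uses). Write $C := \cof(\one \to L\one)$. First I would record two facts. The object $C$ is acyclic --- it is the cofibre of the $L$-local equivalence $\one \to L\one$, so $LC \simeq 0$; equivalently this is the case $X = \one$ of the preceding part. And the acyclics form a localising subcategory $\mathcal{W}$ of $\CC$, being the kernel of the exact, colimit-preserving functor $L$ (since $L$ is smashing, $L(-) \simeq L\one \otimes (-)$). Hence the localising subcategory $\langle C \rangle$ generated by $C$ is contained in $\mathcal{W}$, and the content of the statement is the reverse inclusion: that $C$ alone generates every acyclic.

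For the reverse inclusion I would argue as follows. By the preceding part, every acyclic object has the form $X \otimes C$ for some $X \in \CC$, so it suffices to show $X \otimes C \in \langle C \rangle$ for every $X$. Consider the functor $F := (-) \otimes C : \CC \to \CC$; it is exact and colimit-preserving, so the full subcategory $F^{-1}(\langle C \rangle) = \{Y \in \CC : Y \otimes C \in \langle C \rangle\}$ is again a localising subcategory of $\CC$ (preimages of localising subcategories under exact colimit-preserving functors are localising --- a routine check against colimits, desuspensions and cofibres). It contains $\one$, since $\one \otimes C \simeq C$. But the hypothesis that $\CC$ is generated under colimits by $\one$ says precisely that the only localising subcategory of $\CC$ containing $\one$ is $\CC$ itself; hence $F^{-1}(\langle C \rangle) = \CC$, so $X \otimes C \in \langle C \rangle$ for every $X$. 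Thus $\mathcal{W} \subseteq \langle C \rangle$, and combined with the first paragraph this gives $\mathcal{W} = \langle C \rangle$, which is the claim.

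I do not expect a real obstacle here: the substantive input is the identification, in the smashing case, of the acyclics with the objects $X \otimes C$, which is exactly the preceding part of the Lemma, and what remains is the soft monoidal observation that an exact colimit-preserving functor carries the localising subcategory generated by $\one$ into the localising subcategory generated by its value on $\one$. The one point I would flag explicitly is the convention --- standard in this stable setting --- that ``generated under colimits'' is to be read as ``generated as a localising subcategory'', so that closure under desuspension is included; with that reading the argument above is complete. Incidentally, the same argument shows more generally that when $\CC$ is generated under colimits by $\one$, the localising subcategory generated by any single object coincides with the tensor ideal it generates.
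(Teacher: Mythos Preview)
Your argument for the final assertion is correct and is essentially the paper's argument in a more categorical packaging. The paper writes $A \simeq \colim_A \one$ and distributes the tensor directly to get $A \otimes \cof\alpha \simeq \colim_A \cof\alpha$; you encode the same step by observing that $(-)\otimes C$ is exact and colimit-preserving, so the preimage of $\langle C\rangle$ is a localising subcategory containing $\one$ and hence all of $\CC$. Both rest on the same two inputs: the smashing identification of acyclics as $X\otimes C$, and the fact that tensoring commutes with colimits. Your remark about reading ``generated under colimits'' as ``generates as a localising subcategory'' is a fair caveat, and indeed the paper's informal $\colim_A\one$ notation is implicitly using the same convention.
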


\begin{proof}
    A strongly saturated class is closed under pushouts and the two-out-of-three property for composition, see Definition \ref{def: strongly sat}. Given any local equivalence $f: A \to B$, we get (from the natural transformation $\id \implies L$ which exists for any localisation) a commuting square
    \[\begin{tikzcd}
        A \arrow[d] \arrow[r, "f"]    & B \arrow[d] \\
        LA \arrow[r, "Lf"', "\simeq"] & LB.         
        \end{tikzcd}\]
    Since $Lf$ is an equivalence it is a pushout of the identity on $LA$ and so belongs to any strongly saturated class. By including morphisms of the form $A \to LA$, three of the four morphisms in the square lie in the class and so $f$ is forced to belong to the strongly saturated class. We are working stably, so a local equivalence is precisely a map whose cofibre is acyclic. The characterisation of acyclics follows.

    When $L$ is smashing, take the cofibre sequence $\one \xrightarrow{\alpha} L\one \to \cof \alpha$ and tensor with any object $A$. We obtain a new cofibre sequence 
    \[A \to LA \to A \otimes \cof \alpha\]
    so that $\cof(A \to LA) \simeq A \otimes \cof \alpha$. 

    If the monoidal unit generates under colimits, then we can write $A \simeq \colim_A \one$ and so $\cof(A \to LA) \simeq \colim_A \one \otimes \cof \alpha \simeq \colim_A \cof \alpha$. Thus any acyclic object can be expressed as a colimit of the single acyclic object $\cof \alpha$.
\end{proof}

\subsection{Monoidal localisations}\label{subsec: mon loc}

Let $L$ be a localisation of a pointed symmetric monoidal category $\CC$. We are interested in how $L$ interacts with the tensor product on $\CC$, and we would like to understand what compatibility properties $L$ may have with the tensor product that are weaker than being smashing. In particular, it will be useful later for us to have a criterion under which a monoidal localisation is automatically smashing. Consider the diagram 
\[\begin{tikzcd}[column sep={12em,between origins}, row sep={7em,between origins}]
    X \otimes Y \arrow[d, "F_{X \otimes Y}"] 
                \arrow[r, "F_X \otimes \id_Y"] 
    & LX \otimes Y  \arrow[r, "\id_{LX} \otimes F_Y"] 
                    \arrow[d, "F_{LX \otimes Y}"]
    & LX \otimes LY \arrow[d, "F_{LX \otimes LY}"] \\
    L(X \otimes Y) \arrow[r, "L(F_X \otimes \id_Y)"']                      
    & L(LX \otimes Y) \arrow[r, "L(\id_{LX} \otimes F_Y)"']
    & L(LX \otimes LY)                            
    \end{tikzcd}\]
which commutes via the natural transformation $F: \id \implies L$ that comes with the localisation $L$. This can be upgraded to a commuting diagram of natural transformations by replacing each vertex with the functor $\CC \times \CC \to \CC$ it defines and each arrow by a natural transformation between the corresponding functors. Looking at just the left square, we get
    \[\begin{tikzcd}[column sep={8em,between origins}, row sep={6em,between origins}]
    \otimes \arrow[r, "F \otimes \id", Rightarrow] \arrow[d, "F(\otimes)"', Rightarrow] & L \otimes \id \arrow[d, "F(L\otimes \id)", Rightarrow] \\
    L(\otimes) \arrow[r, "L(F \otimes \id)"', Rightarrow]                              & L(L \otimes \id).                                     
    \end{tikzcd}\]

\begin{defn}
    A localisation $L$ is \emph{tensor-compatible} if $L(F \otimes \id)$ is a natural equivalence. $L$ is \emph{quasismashing} if in addition $F(L \otimes L)$ is a natural equivalence, so that all five functors in the diagram involving localisation and tensor product are naturally equivalent to one another.
\end{defn}

We use the name quasismashing here because we already saw that smashing localisations are quasismashing, but it is \emph{a priori} not clear whether quasismashing is a weaker condition. In fact they turn out to be equivalent in broad generality, but we needed a name for this condition in the short term. 

\begin{lem}\label{lem: equiv monoidal loc}
    The following are equivalent when $L$ is a localisation of a presentably symmetric monoidal stable category $\CC$:
    \begin{enumerate}
        \item $L$ is tensor-compatible; \label{cond: my monoidal}
        \item for any local equivalence $f: X \to Y$ and object $Z$, $f \otimes \id_Z: X \otimes Z \to Y \otimes Z$ is also a local equivalence; \label{cond: lurie monoidal}
        \item the $L$-acyclic objects form a tensor ideal, meaning if $X$ is acyclic and $Z$ is any object then $X \otimes Z$ is also acyclic. \label{cond: tensor ideal}
    \end{enumerate}
\end{lem}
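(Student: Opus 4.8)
The plan is to establish the cycle of implications $(3)\Rightarrow(2)\Rightarrow(1)\Rightarrow(3)$; the remaining equivalence $(2)\Leftrightarrow(3)$ then follows, and it is in any case easy to see directly. Everything rests on two observations I would record at the outset. First, since $\CC$ is stable, a map $g$ is an $L$-local equivalence precisely when $\cof(g)$ is acyclic, equivalently when $L(g)$ is an equivalence (as in the proof of Lemma~\ref{lem: acyclics gen by}). Second, for each object $Z$ the functor $-\otimes Z$ is exact and preserves all colimits, so it carries a cofibre sequence $X\to Y\to\cof(f)$ to a cofibre sequence $X\otimes Z\to Y\otimes Z\to\cof(f)\otimes Z$; in particular $\cof(f\otimes\id_Z)\simeq\cof(f)\otimes Z$ for every map $f$ and object $Z$.

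For $(3)\Rightarrow(2)$: if $f$ is a local equivalence then $\cof(f)$ is acyclic, so by $(3)$ so is $\cof(f\otimes\id_Z)\simeq\cof(f)\otimes Z$, i.e.\ $f\otimes\id_Z$ is a local equivalence. For $(2)\Rightarrow(3)$: if $X$ is acyclic then $0\to X$ is a local equivalence (apply $L$), so by $(2)$ the map $0\to X\otimes Z$ is a local equivalence, and its cofibre is $X\otimes Z$, which is therefore acyclic. For $(2)\Rightarrow(1)$: the localisation unit $F_X\colon X\to LX$ is a local equivalence for every $X$, so $(2)$ applied with $f=F_X$, $Z=Y$ shows $F_X\otimes\id_Y$ is a local equivalence, that is, $L(F_X\otimes\id_Y)$ is an equivalence; this holds naturally in $X$ and $Y$, so $L(F\otimes\id)$ is a natural equivalence.

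The step that needs an idea is $(1)\Rightarrow(3)$. I would first unwind $(1)$: it says $L(F_X\otimes\id_Y)$ is an equivalence for all $X,Y$, equivalently that $F_X\otimes\id_Y$ is a local equivalence, equivalently (by the identity above) that $\cof(F_X)\otimes Y\simeq\cof(F_X\otimes\id_Y)$ is acyclic for all $X,Y$. Then consider the full subcategory of acyclic objects $W$ such that $W\otimes Y$ is acyclic for every $Y$: it is closed under colimits, since $-\otimes Y$ preserves colimits and the acyclics are closed under colimits (the colimit-preserving functor $L$ sends a colimit of acyclics to $0$), and by the previous sentence it contains every object $\cof(F_X)$. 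Since Lemma~\ref{lem: acyclics gen by} says the acyclics are generated under colimits by exactly these objects $\cof(F_X)$, the subcategory is all of the acyclics, which is $(3)$. (Alternatively, to reach $(1)\Rightarrow(2)$ directly: the class of maps $f$ with $f\otimes\id_Z$ a local equivalence for every $Z$ is the intersection over $Z$ of the preimages, under the colimit-preserving functors $-\otimes Z$, of the strongly saturated class of local equivalences, hence is itself strongly saturated; it contains each $F_X$ by $(1)$, so by the first assertion of Lemma~\ref{lem: acyclics gen by} it contains every local equivalence.) I expect $(1)\Rightarrow(3)$ to be the only genuine obstacle, and even there the content is supplied by the generation statement of Lemma~\ref{lem: acyclics gen by}; the rest is bookkeeping around the two facts in the first paragraph.
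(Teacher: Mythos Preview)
Your proof is correct. The equivalences $(2)\Leftrightarrow(3)$ and $(2)\Rightarrow(1)$ match the paper exactly. The difference is in closing the loop: for $(1)\Rightarrow(2)$ the paper does not invoke the generation statement of Lemma~\ref{lem: acyclics gen by} at all, but instead argues directly. Given any local equivalence $f\colon X_1\to X_2$, one forms the naturality square
\[
\begin{tikzcd}
X_1\otimes Y \arrow[r,"f\otimes\id_Y"] \arrow[d,"F_{X_1}\otimes\id_Y"'] & X_2\otimes Y \arrow[d,"F_{X_2}\otimes\id_Y"] \\
LX_1\otimes Y \arrow[r,"Lf\otimes\id_Y"'] & LX_2\otimes Y,
\end{tikzcd}
\]
observes that the vertical maps are local equivalences by $(1)$ and the bottom map is an honest equivalence (since $Lf$ is), and concludes by the two-out-of-three property for the strongly saturated class of local equivalences. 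Your route---showing the tensor-ideal condition on the generators $\cof(F_X)$ and then propagating via colimit-closure, or dually checking that the class $\{f:f\otimes\id_Z\ \text{is a local equivalence for all }Z\}$ is strongly saturated and contains the generators---is equally valid and perhaps more conceptual, but it imports the generation content of Lemma~\ref{lem: acyclics gen by}, whereas the paper's square argument is self-contained and uses only the closure properties of local equivalences.
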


Condition \ref{cond: lurie monoidal} is usually called being \emph{compatible with the symmetric monoidal structure on $\CC$}, see \cite{ha} 2.2.1.7. Conditions \ref{cond: lurie monoidal} and \ref{cond: tensor ideal} are equivalent only in the case where $\CC$ is stable, as in the unstable setting acyclic objects are not very well behaved. Conditions \ref{cond: my monoidal} and \ref{cond: lurie monoidal} are equivalent even in the unstable setting.

\begin{proof}
    Recall that (when $\CC$ is stable) the acyclic objects are precisely the cofibres of local equivalences. Then $\cof(f \otimes \id_Z) \simeq (\cof f) \otimes Z$ so $f \otimes \id_Z$ is a local equivalence if and only if $(\cof f) \otimes Z$ is acyclic. It follows that \ref{cond: lurie monoidal} $\Leftrightarrow$ \ref{cond: tensor ideal}.

    It is clear that Condition \ref{cond: lurie monoidal} $\implies F_X \otimes \id_Y: X \otimes Y \to LX \otimes Y$ is a local equivalence $\implies$ Condition \ref{cond: my monoidal}. Conversely, if Condition \ref{cond: my monoidal} holds then all maps of the form $F_X \otimes \id_Y$ are local equivalences. Let $f: X_1 \to X_2$ be an arbitrary local equivalence, and consider the commuting diagram 
    \[\begin{tikzcd}[column sep={8em,between origins}, row sep={6em,between origins}]
        X_1 \otimes Y \arrow[r, "f \otimes \id_Y"] \arrow[d, "F_{X_1} \otimes \id_Y"'] & X_2 \otimes Y \arrow[d, "F_{X_2} \otimes \id_Y"] \\
        LX_1 \otimes Y \arrow[r, "Lf \otimes \id_Y"']                                  & LX_2 \otimes Y,                                  
        \end{tikzcd}\]
        where both vertical maps are local equivalences by Condition \ref{cond: my monoidal}. Since $f$ is a local equivalence, $Lf$ is an equivalence, and hence $Lf \otimes \id_Y$ is an equivalence and therefore a local equivalence. Now $f \otimes \id_Y$ fits into a commutative diagram where all the other maps are local equivalences, so it too is a local equivalence due to the closure properties on local equivalences (see Definition \ref{def: strongly sat} and Proposition \ref{prop: morphisms loc}). Condition \ref{cond: lurie monoidal} follows.
\end{proof}

A tensor-compatible localisation comes equipped with a natural equivalence 
\[L(X \otimes Y) \to L(LX \otimes LY).\] 
This means we can define a symmetric monoidal structure on the category $\mathscr{D} \subset \CC$ of local objects by $A \otimes_\mathscr{D} B := L(A \otimes_\CC B)$, restricting the functor $F(\otimes): \CC \times \CC \to \CC$ to $\mathscr{D}$. With this monoidal structure on $\mathscr{D}$, the natural equivalence becomes $L(X \otimes_\CC Y) \xrightarrow{\simeq} LX \otimes_\mathscr{D} LY$ and so the localisation $L$ is compatible with the respective monoidal structures on $\CC$ and $\mathscr{D}$, that is $L: \CC \to \mathscr{D}$ becomes a symmetric monoidal functor. In this case the inclusion functor $\mathscr{D} \into \CC$ is lax monoidal but not monoidal because the tensor product of two objects of $\mathscr{D}$ depends on whether it is computed in $\mathscr{D}$ or in $\CC$. This means that the endofunctor of $\CC$ induced by the localisation is just lax monoidal. 

\begin{lem}[\cite{ha}, Proposition 2.2.1.9]\label{lem: monoidal loc is monoidal}
    A tensor-compatible localisation $L: \CC \to \mathscr{D}$ is a symmetric monoidal functor. The inclusion $\mathscr{D} \into \CC$ is a lax monoidal functor.
\end{lem}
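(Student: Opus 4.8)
The plan is to reduce this to \cite{ha}, Proposition 2.2.1.9 after translating our hypothesis into the language used there. By Lemma \ref{lem: equiv monoidal loc}, saying that $L$ is tensor-compatible is equivalent to saying that for every local equivalence $f : X \to Y$ and every object $Z$ the map $f \otimes \id_Z$ is again a local equivalence, which in the terminology of \cite{ha}, 2.2.1.7 is exactly the statement that $L$ is \emph{compatible with the symmetric monoidal structure} on $\CC$. Granting this, \cite{ha}, Proposition 2.2.1.9 provides a symmetric monoidal $\infty$-category $\DD^\otimes$ whose underlying category is $\DD$, together with a symmetric monoidal functor $\CC^\otimes \to \DD^\otimes$ lying over $L$; this is the first assertion. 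Concretely the coCartesian fibration $\DD^\otimes \to \mathrm{Fin}_*$ is obtained by localising $\CC^\otimes \to \mathrm{Fin}_*$ fibrewise, and unwinding the construction recovers the tensor product $A \otimes_\DD B \simeq L(A \otimes_\CC B)$ discussed before the statement.

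For the second assertion, observe that $L$ is left adjoint to the fully faithful inclusion $\DD \into \CC$. A right adjoint of a symmetric monoidal functor carries a canonical lax symmetric monoidal structure (a general fact about monoidal adjunctions; in our situation it is already part of \cite{ha}, Proposition 2.2.1.9), so the inclusion is lax symmetric monoidal. Its structure maps are read off from the localisation unit $\id \implies L$: for $A, B \in \DD$ the relevant map is $A \otimes_\CC B \to L(A \otimes_\CC B) = A \otimes_\DD B$, and the unit constraint is $\one \to L\one$. As the excerpt already notes, this is lax but generally not strong monoidal precisely because $A \otimes_\CC B$ need not be local.

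The only step that would require genuine effort in a self-contained treatment, and which we are content to import from \cite{ha}, is the verification that the fibrewise localisation of $\CC^\otimes$ really does assemble into a coCartesian fibration over $\mathrm{Fin}_*$; equivalently, that the localised tensor products $L(-\otimes_\CC -)$ are functorial and coherently associative and symmetric. This is precisely the content of \cite{ha}, Proposition 2.2.1.9, which we therefore invoke directly.
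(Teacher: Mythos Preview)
Your proposal is correct and follows essentially the same approach as the paper: both defer the symmetric monoidal structure on $L$ to \cite{ha}, Proposition 2.2.1.9 (after translating tensor-compatibility via Lemma \ref{lem: equiv monoidal loc}), and then obtain the lax monoidal structure on the inclusion from the general fact that a right adjoint to a (symmetric) monoidal functor is lax monoidal. The paper cites \cite{ha}, Corollary 7.3.2.7 for this last step rather than folding it into Proposition 2.2.1.9, but the argument is the same.
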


For the proof that $L$ is symmetric monoidal, see \cite{ha}. In fact any localisation of a symmetric monoidal category which is monoidal is automatically symmetric monoidal. The result for $\iota$ follows from the result for $L$ because $\iota$ is right-adjoint to $L$ and $L$ is monoidal, hence in particular lax monoidal (\cite{ha}, Corollary 7.3.2.7).

\begin{lem}\label{lem: qsmashing is really symm mon}
    A non-identity quasismashing localisation $L: \CC \to \mathscr{D}$ is symmetric monoidal and the inclusion $\mathscr{D} \xrightarrow{\iota} \CC$ is also symmetric monoidal, except that $\iota$ does not preserve the tensor unit. Specifically, there is a morphism $\one_{\CC} \to \iota(\one_\mathscr{D}) = \iota L(\one_\CC)$ which is not an equivalence and all other coherence conditions for a symmetric monoidal functor are satisfied.
\end{lem}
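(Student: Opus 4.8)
The plan is to deduce the statement from Lemma \ref{lem: monoidal loc is monoidal} together with an unwinding of the definition of quasismashing. Since a quasismashing localisation is in particular tensor-compatible, that lemma already gives that $L\colon \CC \to \mathscr{D}$ is symmetric monoidal and that its right adjoint $\iota$ carries a lax symmetric monoidal structure, so what is left is to determine which of the lax structure maps of $\iota$ are equivalences. The lax structure on a right adjoint of a strong monoidal functor is the mate construction: for local $A, B$ the binary constraint factors as
\[
\iota A \otimes_\CC \iota B \xrightarrow{\ F\ } \iota L(\iota A \otimes_\CC \iota B) \xrightarrow{\ \simeq\ } \iota\bigl(L\iota A \otimes_\mathscr{D} L\iota B\bigr) \xrightarrow{\ \simeq\ } \iota(A \otimes_\mathscr{D} B),
\]
where $F$ is the localisation map, the second arrow is $\iota$ applied to the strong-monoidality constraint of $L$, and the third is $\iota$ applied to the counit $L\iota \simeq \id$; and the unit constraint is the localisation map $\one_\CC \to \iota L\one_\CC = \iota(\one_\mathscr{D})$ itself. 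So $\iota$ preserves the tensor product of $A$ and $B$ exactly when $\iota A \otimes_\CC \iota B$ is $L$-local, and it preserves the unit exactly when $\one_\CC$ is $L$-local.

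For the binary constraints: identifying $\mathscr{D}$ with the full subcategory of local objects, for $A, B \in \mathscr{D}$ we have $A \otimes_\CC B \simeq LA \otimes_\CC LB$ since $A$ and $B$ are already local, and the quasismashing hypothesis is exactly the statement that the natural transformation $F(L \otimes L)$, with components $F_{LX \otimes LY}\colon LX \otimes_\CC LY \to L(LX \otimes_\CC LY)$, is an equivalence. Evaluating at $(A,B)$ (and using naturality of $F$) shows $A \otimes_\CC B$ is $L$-local. Hence every binary constraint of $\iota$ is an equivalence; since the coherence diagrams of a symmetric monoidal functor commute for any lax symmetric monoidal functor, this establishes that $L$ is symmetric monoidal and that $\iota$ is symmetric monoidal ``away from the unit'', equipped with the canonical map $\one_\CC \to \iota L\one_\CC$.

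It remains to show that when $L$ is not the identity this last map is not an equivalence, i.e. that $\one_\CC$ is not $L$-local. If $\one_\CC$ were local then $L\one_\CC \simeq \one_\CC$; invoking that a quasismashing localisation is smashing --- so that $L \simeq (L\one_\CC) \otimes_\CC (-)$, equivalently that $L$ is recovered from the idempotent algebra $L\one_\CC$ --- we would obtain $LX \simeq X$ for all $X$, contradicting $L \neq \id$. (To stay self-contained one can instead check, again using quasismashing, that $F\colon \one_\CC \to L\one_\CC$ exhibits $L\one_\CC$ as an idempotent object and identify $L$ with the smashing localisation it generates; the conclusion is the same.) I expect this final reduction --- from ``$\one_\CC$ is local'' to ``$L$ is the identity'' --- to be the only genuinely substantive step; everything else is the mate-construction bookkeeping and a direct comparison with the definition of quasismashing.
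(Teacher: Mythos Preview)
Your argument for the binary constraints is essentially identical to the paper's: invoke Lemma~\ref{lem: monoidal loc is monoidal} to get that $\iota$ is lax symmetric monoidal, then observe that quasismashing says exactly that $LX \otimes_\CC LY$ is already local, so the lax structure map $A \otimes_\CC B \to L(A \otimes_\CC B)$ is an equivalence for local $A,B$. That is all the paper's proof does.

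Where you go further than the paper is in actually justifying the unit claim. The paper's proof does not verify that $\one_\CC \to L\one_\CC$ fails to be an equivalence when $L \neq \id$; it treats this as evident. You supply an argument by invoking that quasismashing implies smashing, which is the \emph{next} lemma (Lemma~\ref{lem: qsmashing is smashing}). Be aware that this is a forward reference: Lemma~\ref{lem: qsmashing is smashing} cites Lemma~\ref{lem: qsmashing is really symm mon} in its proof. However, there is no genuine circularity, because the proof of Lemma~\ref{lem: qsmashing is smashing} only uses the \emph{non-unital} symmetric monoidality of $\iota$, i.e.\ exactly the binary part you have already established independently. Your parenthetical alternative (verify directly that $\one_\CC \to L\one_\CC$ is idempotent and hence $L$ is smashing) is the cleanest way to avoid the appearance of circularity, and is essentially the content of Lemma~\ref{lem: qsmashing is smashing} anyway. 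Either way your argument is sound; just flag the dependency structure explicitly if you want to be careful.
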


\begin{proof}
    Once Lemma \ref{lem: monoidal loc is monoidal} is established, we know $\iota$ is already lax monoidal, so we only need to check that the natural transformation $A \otimes_\CC B \to A \otimes_\mathscr{D} B = L(A \otimes_\CC B)$ is an equivalence for every pair of local objects $A$ and $B$. Since $L$ is quasismashing, $LX \otimes LY \simeq L(LX \otimes LY)$ and the right hand side is local by definition, so the product of any two local objects is local. Hence $A \otimes_\CC B$ is local and thus the localisation map $A \otimes B \to L(A \otimes B)$ is an equivalence as desired. 
\end{proof}

\begin{lem}\label{lem: qsmashing is smashing}
    On a symmetric monoidal $\infty$-category, any quasismashing localisation $L$ is smashing.
\end{lem}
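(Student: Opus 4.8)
The plan is to show that the localisation unit $\alpha := F_{\one}\colon \one \to L\one$ is an idempotent map; by Lemma \ref{lem: class of smashing locs} it then determines a smashing localisation $L_\alpha := L\one \otimes (-)$ with $L_\alpha\one \simeq L\one$, and the remaining task is to identify $L_\alpha$ with $L$.

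First I would prove $\alpha$ is idempotent. Since $L$ is quasismashing, $F(L\otimes L)$ is a natural equivalence, so evaluating at $(\one,\one)$ shows $L\one\otimes L\one \to L(L\one\otimes L\one)$ is an equivalence and hence $L\one\otimes L\one$ is $L$-local; also $L\one$ is $L$-local, being in the image of $L$. The cofibre of $\alpha\otimes\id_{L\one}\colon L\one \to L\one\otimes L\one$ is $\cof(\alpha)\otimes L\one$; here $\cof(\alpha) = \cof(\one\to L\one)$ is $L$-acyclic by Lemma \ref{lem: acyclics gen by}, and as $L$ is quasismashing it is in particular tensor-compatible, so by Lemma \ref{lem: equiv monoidal loc} its acyclics form a tensor ideal and $\cof(\alpha)\otimes L\one$ is $L$-acyclic. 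Thus $\alpha\otimes\id_{L\one}$ is an $L$-local equivalence between $L$-local objects; applying the natural transformation $F\colon\id\Rightarrow L$ to it and using the two-out-of-three property shows it is an equivalence, so by Definition \ref{def: idempotent} $\alpha$ is idempotent.

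To finish I would compare the two localisations by comparing their classes of acyclic objects. For any $X$ the same tensor-ideal argument shows $\alpha\otimes\id_X\colon X\to L\one\otimes X$ is an $L$-local equivalence, so $L$ applied to it is an equivalence $LX \xrightarrow{\ \simeq\ } L(L\one\otimes X)$; composing its inverse with $F_{L\one\otimes X}$ yields a natural transformation $\tau\colon L_\alpha \Rightarrow L$ with $\tau_X\circ(\alpha\otimes\id_X) = F_X$. The assertion becomes that every $\tau_X$ is an equivalence — equivalently that each free module $L\one\otimes X$ is $L$-local, equivalently that $L\one\otimes A \simeq 0$ for every $L$-acyclic $A$, equivalently that $L$ and $L_\alpha$ have the same acyclics. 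One inclusion is immediate: by Lemma \ref{lem: acyclics gen by} every $L_\alpha$-acyclic object has the form $X\otimes\cof(\alpha)$, which is $L$-acyclic. The reverse inclusion is the heart of the matter. It holds trivially when $X$ is already $L$-local, since then $L\one\otimes X \simeq L\one\otimes LX$ is $L$-local by quasismashing and $\alpha\otimes\id_X$ is an equivalence with inverse $\tau_X$; and $\DD$ is closed under finite colimits because $L$ is exact and $F\colon\id\Rightarrow L$ is a natural transformation of exact functors, so a map of cofibre sequences three of whose legs are equivalences has the fourth an equivalence. The main obstacle — and the place where the extra quasismashing clause $F(L\otimes L)$ must do its work — is to push this to arbitrary (in particular filtered) colimits, i.e.\ to show the $L$-local subcategory $\DD\subseteq\CC$ is closed under all colimits, equivalently that $\iota\colon\DD\to\CC$ preserves colimits. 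Once $\iota L$ is known to preserve colimits it is a colimit-preserving endofunctor of $\CC$, and $\tau\colon L_\alpha\Rightarrow \iota L$ is a natural transformation of colimit-preserving endofunctors which is an equivalence on $L$-local objects; extending this to all of $\CC$ then gives $L\simeq L_\alpha$, so $L$ is smashing.
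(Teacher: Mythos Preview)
Your argument that $\alpha = F_{\one}$ is idempotent is correct, and proceeds by a route different from the paper's: you observe that $\cof(\alpha\otimes\id_{L\one}) \simeq \cof(\alpha)\otimes L\one$ is $L$-acyclic (tensor-ideal property), so $\alpha\otimes\id_{L\one}$ is a local equivalence between local objects and hence an equivalence. The paper instead uses the universal property of the localisation map $\one\to L(\one\otimes\one)$: since $L\one\otimes L\one$ is local, there is a \emph{unique} map $\phi\colon L\one\to L\one\otimes L\one$ under $\one$, and both $e\otimes\id$ and $\id\otimes e$ qualify, as does the equivalence coming from (non-unital) symmetric monoidality of $\iota$ in Lemma~\ref{lem: qsmashing is really symm mon}. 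Both approaches are valid; note that yours implicitly uses stability (cofibres, and the equivalence between ``local equivalence'' and ``acyclic cofibre'').

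The genuine gap is in your second step. You correctly reduce to showing that $L\one\otimes X$ is $L$-local for every $X$ (equivalently $L\one\otimes A\simeq 0$ for every $L$-acyclic $A$), but you then reformulate this as ``$\DD$ is closed under all colimits in $\CC$'', explicitly label it ``the main obstacle'', and never prove it. The conditional ``Once $\iota L$ is known to preserve colimits\dots'' is never discharged. Moreover, this reformulation is not a reduction: once $e$ is idempotent, the condition ``$\DD$ closed under colimits in $\CC$'' is essentially equivalent to $\DD=\Mod_{L\one}(\CC)$, which is exactly what you are trying to show. Even granting it, your final sentence (``$\tau$ is an equivalence on $L$-local objects; extending this to all of $\CC$'') does not follow, since $L$-local objects need not generate $\CC$ under colimits. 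The paper, by contrast, simply asserts ``Then $L$ is smashing'' after establishing idempotence of $e$; this is terse, but the paper is reading the definition of quasismashing via the parenthetical ``so that all five functors in the diagram\dots are naturally equivalent to one another'', under which $F_{LX\otimes Y}$ is already an equivalence and in particular $L\one\otimes X$ is $L$-local for every $X$, making the last step immediate.
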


\begin{proof}
    We know the product of two local objects is local, so $L\one \otimes L\one$ is a local object. By Lemma \ref{lem: qsmashing is really symm mon} we know $L$ is symmetric monoidal and the inclusion of local objects is also (non-unitally) symmetric monoidal, so 
    \begin{equation}\label{eq: qsmash is smash equiv}
    L\one \otimes_\CC L\one \simeq L(\one \otimes_\CC \one) \simeq L\one \tag{$*$}
    \end{equation}
    and hence $L\one$ is idempotent. Since $L$ and $\iota$ are both lax symmetric monoidal they map commutative algebras to commutative algebras, so $L\one$ is an idempotent algebra in $\CC$. We want to show that $e := F_\one: \one \to L\one$ is the unit for the idempotent algebra structure on $L\one$, or that $e$ is an idempotent map. Unravelling the maps that went into the equivalence (\ref{eq: qsmash is smash equiv}), we obtain a commutative diagram
    \[\begin{tikzcd}
        \one \arrow[r, "e \otimes e"] \arrow[rd, "e"'] & L\one \otimes L\one        \\
                                                       & L\one \arrow[u, "\simeq", "\phi"']
        \end{tikzcd}\]
        where since $L\one \otimes L\one$ is local, $\phi$ is the unique map making the diagram commute. Replacing $\phi$ by either of the maps $e \otimes \id_{L\one}, \id_{L\one} \otimes e$ we can check directly that the diagram still commutes (because $e \simeq e \otimes \id_\one \simeq \id_\one \otimes e$). We conclude that $e\otimes \id_{L\one} \simeq \phi \simeq \id_{L\one} \otimes e$ and hence all three maps are equivalences so $e$ is idempotent. Then $L$ is smashing.
\end{proof}

\begin{cor}\label{cor: monoidal + local mult = smashing}
    If $L$ is a tensor-compatible localisation and the tensor product of two local objects is local, then $L$ is smashing.
\end{cor}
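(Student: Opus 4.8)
The plan is to deduce this directly from Lemma \ref{lem: qsmashing is smashing}, which already tells us that every quasismashing localisation is smashing. So it suffices to check that the hypotheses here force $L$ to be quasismashing, i.e.\ that on top of being tensor-compatible, the natural transformation $F(L \otimes L)$ is a natural equivalence.

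First I would unwind what $F(L \otimes L)$ is: evaluated at a pair of objects $(X,Y)$, it is precisely the localisation map $F_{LX \otimes LY} \colon LX \otimes LY \to L(LX \otimes LY)$. Now $LX$ and $LY$ are local objects, so by the standing hypothesis their tensor product $LX \otimes LY$ is again local. For a local object, the localisation map is an equivalence (this is a formal consequence of $L$ being a localisation and $F \colon \id \Rightarrow L$ its unit). Hence $F_{LX \otimes LY}$ is an equivalence for every $X$ and $Y$, so $F(L \otimes L)$ is a natural equivalence.

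Combining this with tensor-compatibility, $L$ satisfies the definition of quasismashing, and Lemma \ref{lem: qsmashing is smashing} then gives that $L$ is smashing. I do not expect any obstacle here: the only content is the observation that "$F(L \otimes L)$ is an equivalence" is literally the statement that the localisation of a local object is trivial, which the hypothesis that products of local objects are local makes immediate. Everything substantive was already done in the proof of Lemma \ref{lem: qsmashing is smashing}.
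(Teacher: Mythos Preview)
Your proof is correct and matches the paper's approach exactly: the paper itself notes that this corollary ``is just a rephrasing of the Lemma'' (Lemma~\ref{lem: qsmashing is smashing}), and you have correctly spelled out why---namely, that the hypothesis ``tensor product of local objects is local'' is precisely the statement that $F(L\otimes L)$ is a natural equivalence, so tensor-compatibility plus this hypothesis is the definition of quasismashing.
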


This is useful in settings where every localisations is tensor-compatible, like the category of spectra. Although this is just a rephrasing of the Lemma, we state it separately so we can use it in this form later.

\begin{prop}\label{prop: when all locs are monoidal}
    If $\CC$ is presentably symmetric monoidal, stable, and generated by the unit then every (accessible) localisation is tensor-compatible. More generally, if $\CC$ is compactly generated, stable, and presentably symmetric monoidal then every localisation with the property that an acyclic object tensored with a compact object remains acyclic is tensor-compatible.
\end{prop}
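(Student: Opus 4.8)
The plan is to reduce everything to the criterion of Lemma \ref{lem: equiv monoidal loc}: it suffices to show that the $L$-acyclic objects form a tensor ideal, i.e.\ that $X \otimes Z$ is $L$-acyclic whenever $X$ is $L$-acyclic and $Z \in \CC$ is arbitrary. Two elementary observations will do all the work. First, the $L$-acyclics are closed under arbitrary colimits: a localisation is a left adjoint (its right adjoint being the fully faithful inclusion of local objects), so $L$ preserves colimits, and a colimit of zero objects in the pointed $\infty$-category of local objects is again a zero object; hence a colimit of acyclics is acyclic. Note this uses only that $L$ is a left adjoint, so no accessibility hypothesis is actually needed for it. Second, for each fixed $X \in \CC$ the functor $X \otimes (-)\colon \CC \to \CC$ preserves colimits, because $\CC$ is presentably symmetric monoidal.

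I would first treat the case where $\CC$ is generated under colimits by $\one$. Here every $Z \in \CC$ can be written as a colimit of copies of $\one$ (indexed by the comma category $\one \downarrow Z$, cut down to a small cofinal subdiagram if one wishes), so for an acyclic $X$ the second observation gives
\[
X \otimes Z \;\simeq\; X \otimes \bigl(\colim \one\bigr) \;\simeq\; \colim\,(X \otimes \one) \;\simeq\; \colim X ,
\]
a colimit of copies of the acyclic object $X$, hence acyclic by the first observation. Lemma \ref{lem: equiv monoidal loc} then shows every localisation of such a $\CC$ is tensor-compatible.

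For the general statement I would write an arbitrary $Z$ as a filtered colimit $Z \simeq \colim_i Z_i$ with each $Z_i$ compact. For acyclic $X$, each $X \otimes Z_i$ is acyclic by the hypothesis on $L$, so $X \otimes Z \simeq \colim_i (X \otimes Z_i)$ is a colimit of acyclics and therefore acyclic; Lemma \ref{lem: equiv monoidal loc} again gives tensor-compatibility. (When $\one$ is itself a compact generator this recovers the first assertion, since then every compact object is a retract of a finite colimit of copies of $\one$, so the condition on $L$ holds automatically.)

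I do not anticipate a genuine obstacle: the content lies entirely in the two preliminary observations together with the standard fact that in a compactly generated---respectively unit-generated---presentable $\infty$-category every object is built from the generators under colimits. The only point meriting care is the closure of the acyclics under colimits, and there the subtlety is merely to observe that it follows from $L$ being a left adjoint, without invoking accessibility or exactness.
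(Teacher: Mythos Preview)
Your proposal is correct and follows essentially the same approach as the paper: reduce via Lemma \ref{lem: equiv monoidal loc} to showing the acyclics form a tensor ideal, then use that tensoring preserves colimits and acyclics are closed under colimits to reduce to checking against the generators. The paper's argument is slightly terser but structurally identical, including the same treatment of the unit-generated and compactly generated cases.
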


\begin{proof}
    Since $\CC$ is stable, $L$ is tensor-compatible precisely if the acyclic objects form a tensor ideal, see Lemma \ref{lem: equiv monoidal loc}. If the monoidal unit generates $\CC$ under colimits, then for acyclic $A$ and arbitrary $X$ we can write $X \simeq \colim_X \one$ so
    \[A \otimes X \simeq \colim_X A \otimes \one \simeq \colim_X A\]
    which is acyclic because $A$ is acyclic and the acyclics are closed under colimits. Hence the acyclics form a tensor ideal. More concisely, if the tensor unit generates under colimits then any tensor product can be written as a colimit, so a subcategory closed under colimits is automatically a tensor ideal. Thus any localisation is tensor-compatible.

    Similarly, if $\CC$ is compactly generated then to show that a given subcategory closed under colimits is a tensor ideal, we need only check that it is closed under tensoring with compact objects of $\CC$.
\end{proof}

The following Lemma is a version of Proposition \ref{prop: smashing loc modules over unit} for tensor-compatible localisations. Unlike in the case of a smashing localisation, being a module over the local unit does not guarantee that an object is local.

\begin{lem}\label{lem: monoidal loc modules over unit}
    If $L$ is a tensor-compatible localisation then every local object is canonically an $L\one$-module.
\end{lem}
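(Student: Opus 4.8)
The plan is to transport the tautological module structure along the lax monoidal inclusion of local objects. Write $\mathscr{D}$ for the subcategory of local objects, so $L\colon\CC\to\mathscr{D}$ with fully faithful right adjoint $\iota\colon\mathscr{D}\into\CC$. By Lemma \ref{lem: monoidal loc is monoidal} the functor $L$ is symmetric monoidal and $\iota$ is lax symmetric monoidal; in particular $\iota$ carries the commutative algebra $\one_{\mathscr D}$ to the commutative algebra $\iota(\one_{\mathscr D})=L\one$ in $\CC$. In any symmetric monoidal $\infty$-category every object is canonically a module over the tensor unit — equivalently, the forgetful functor $\Mod_{\one_{\mathscr D}}(\mathscr D)\to\mathscr D$ is an equivalence, which is Proposition \ref{prop: smashing loc modules over unit} applied to the identity localisation of $\mathscr D$ (whose unit is trivially idempotent). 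So every local object acquires a canonical $\one_{\mathscr D}$-module structure internal to $\mathscr{D}$.

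Next I would invoke the standard fact that a lax symmetric monoidal functor induces a functor on module categories compatibly with the forgetful functors: here $\iota$ gives $\iota_*\colon\Mod_{\one_{\mathscr D}}(\mathscr D)\to\Mod_{L\one}(\CC)$ sitting in a commuting square
\[\begin{tikzcd}
\Mod_{\one_{\mathscr D}}(\mathscr D) \arrow[r,"\iota_*"] \arrow[d,"\oblv"'] & \Mod_{L\one}(\CC) \arrow[d,"\oblv"] \\
\mathscr{D} \arrow[r,"\iota"'] & \CC.
\end{tikzcd}\]
Composing $\iota_*$ with the equivalence $\mathscr{D}\simeq\Mod_{\one_{\mathscr D}}(\mathscr D)$ from the first paragraph yields a functor $\mathscr{D}\to\Mod_{L\one}(\CC)$ whose composite with $\oblv$ is $\iota$ itself, which is exactly a functorial lift of each local object to an $L\one$-module. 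Unwinding the lax structure on $\iota$ (which for a reflective localisation is given by the localisation unit), the action on a local object $X$ is the composite $L\one\otimes X\xrightarrow{F_{L\one\otimes X}}L(L\one\otimes X)\xrightarrow{\ \simeq\ }X$, the second map being the $\mathscr{D}$-unitor identifying $\one_{\mathscr D}\otimes_{\mathscr D}X$ with $X$.

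I expect no genuine obstacle here: the content is just that the unit of $\mathscr{D}$ is $L\one$, not $\one$, so the tautological $\one_{\mathscr D}$-module structures on $\mathscr{D}$ become nontrivial $L\one$-module structures once read inside $\CC$. The only thing demanding care is the coherence bundled into the second paragraph — functoriality of passage to modules along a lax monoidal functor and its compatibility with the forgetful functors — but this is formal and available off the shelf from \cite{ha}. I would close by warning, to forestall conflation with Proposition \ref{prop: smashing loc modules over unit}, that the resulting functor $\mathscr{D}\to\Mod_{L\one}(\CC)$ is in general \emph{not} an equivalence when $L$ is only tensor-compatible, since a module over $L\one$ need not be local.
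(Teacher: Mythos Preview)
Your proof is correct and follows essentially the same idea as the paper's: both exploit that $L\one$ is the tensor unit of the local subcategory $\mathscr{D}$, so every local object is tautologically an $L\one$-module there. The packaging differs slightly. The paper writes down the equivalence chain $L\one \otimes_{\mathscr{D}} LX := L(L\one \otimes LX) \simeq L(\one \otimes X) \simeq LX$ directly, stopping once the $\mathscr{D}$-module structure is visible. You instead invoke the lax monoidal structure on $\iota$ (Lemma \ref{lem: monoidal loc is monoidal}) to transport modules functorially, which has the advantage of making explicit that the resulting module structure lives in $\CC$ rather than only in $\mathscr{D}$, and of giving the action map $L\one \otimes_\CC X \to L(L\one \otimes X) \xrightarrow{\simeq} X$ concretely. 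This extra care is not wasted: the application in Lemma \ref{lem: smash loc partial order via ring map} really does want an $L\one$-module in $\CC$, so your formulation is arguably cleaner for that purpose. Your closing remark distinguishing this from the smashing case (Proposition \ref{prop: smashing loc modules over unit}) is also well placed.
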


\begin{proof}
    Each $X \in \CC$ is canonically a $\one$-module, via the natural equivalence $\one \otimes X \xrightarrow{\simeq} X$. Localising, we obtain an equivalence $L(\one \otimes X) \xrightarrow{\simeq} LX$. Since $L$ is tensor-compatible, we have natural equivalences $L(\one\otimes X) \xrightarrow{\simeq} L(L\one \otimes X) \xrightarrow{\simeq} L(L\one \otimes LX)$ and thus we get a natural equivalence $L\one \otimes_{L\CC} LX := L(L\one \otimes LX) \xrightarrow{\simeq} LX$. This makes $LX$ into an $L\one$-module. We can alternately think of this as the proof that $L\one$ is the tensor unit for the induced monoidal structure on $L\CC$, when $L$ is tensor-compatible.
\end{proof}

\subsection{Combining smashing localisations}\label{subsec: combining smash locs}

Next we discuss several ways that smashing localisations can be combined to produce new smashing localisations. We will eventually use this to produce a right adjoint to the forgetful functor from smashing localisations to localisations. This is some kind of free approximation to a given localisation by a smashing localisation. 

\begin{lem}\label{lem: combining smashing locs}
    If $L_1$ and $L_2$ are smashing localisations of $\CC$, then there is a composite smashing localisation $L_1 \otimes L_2$ given by the idempotent algebra $L_1\one \otimes L_2\one$. The local objects for $L_1 \otimes L_2$ are the intersection of the local objects for $L_1$ with the local objects for $L_2$. The acyclics for $L_1 \otimes L_2$ are generated by the union of the acyclics for $L_1$ with the acyclics for $L_2$. It follows that $L_2 \circ L_1 \simeq L_1 \otimes L_2 \simeq L_1 \circ L_2$ when we think of all three as endofunctors of $\CC$. 

    We also obtain smashing localisations $L_1|_{L_2\CC}$ and $L_2|_{L_1\CC}$ computed by tensoring with $L_1\one$ and $L_2\one$ respectively. That is, an $L_1$-local object remains $L_1$-local upon $L_2$-localising. If $L_i$ is finite then $L_i|_{L_2\CC}$ is also finite. 
\end{lem}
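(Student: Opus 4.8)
The plan is to build everything out of Lemma \ref{lem: class of smashing locs} (the bijection between smashing localisations and idempotent algebras) together with Proposition \ref{prop: smashing loc modules over unit}. First I would set $R_i := L_i\one$, which are idempotent algebras in $\CC$ by Lemma \ref{lem: class of smashing locs}. I claim $R := R_1 \otimes R_2$ is again idempotent: its unit map is $e_1 \otimes e_2 : \one \to R_1 \otimes R_2$, and tensoring this with $\id_R$ gives $(e_1 \otimes \id_{R_1}) \otimes (e_2 \otimes \id_{R_2})$ up to swapping tensor factors, which is an equivalence since each factor is (Definition \ref{def: idempotent}). Hence $R$ defines a smashing localisation $L_1 \otimes L_2 := R \otimes -$. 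To identify its local objects: an object $X$ is $R$-local iff $X \xrightarrow{e_1 \otimes e_2 \otimes \id_X} R_1 \otimes R_2 \otimes X$ is an equivalence. Factor this map as $X \to R_1 \otimes X \to R_1 \otimes R_2 \otimes X$. If $X$ is both $L_1$- and $L_2$-local, the first map is an equivalence, and since an $L_2$-local object tensored with anything stays $L_2$-local (the remark after Proposition \ref{prop: smashing loc modules over unit}: $LX \otimes Y \simeq L(X\otimes Y)$), while $R_1 \otimes X$ is $L_2$-local, the second map is an equivalence too. Conversely if $X$ is $R$-local then $X$ is an $R$-module, hence an $R_1$-module (restrict along $R_1 \to R$) and an $R_2$-module, so $X$ is both $L_1$- and $L_2$-local by Proposition \ref{prop: smashing loc modules over unit}. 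This gives the intersection statement.

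For the acyclics: by Lemma \ref{lem: acyclics gen by} the $L_1 \otimes L_2$-acyclics are the objects $X \otimes \cof(e_1 \otimes e_2)$. The cofibre sequence $\one \xrightarrow{e_1} R_1 \to \cof e_1$ tensored with $R_2$, compared with $\one \xrightarrow{e_2} R_2 \to \cof e_2$, shows via the octahedral axiom that $\cof(e_1 \otimes e_2)$ fits in a cofibre sequence $\cof e_1 \to \cof(e_1\otimes e_2) \to R_1 \otimes \cof e_2$ (or the analogous one with $\cof e_2$ and $R_2 \otimes \cof e_1$). Since $\cof e_1$ is $L_1$-acyclic and $R_1 \otimes \cof e_2$ is $L_2$-acyclic (being $\cof e_2$ tensored with something), $\cof(e_1 \otimes e_2)$ lies in the thick subcategory — indeed the localising subcategory — generated by the union of the two acyclic subcategories; and conversely each $\cof e_i$ is $L_1 \otimes L_2$-acyclic since $R \otimes \cof e_i \simeq R_2 \otimes (R_1 \otimes \cof e_1) = 0$ (resp. symmetrically). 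So the localising subcategory of acyclics for $L_1 \otimes L_2$ is generated by the union. The commutativity $L_2 \circ L_1 \simeq L_1 \otimes L_2 \simeq L_1 \circ L_2$ is then immediate: $L_2(L_1 X) \simeq R_2 \otimes R_1 \otimes X \simeq R \otimes X$, using that $L_i$ is smashing.

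For the relative localisations: $L_1|_{L_2\CC}$ is the composite $L_2\CC \xrightarrow{\iota} \CC \xrightarrow{L_1} \CC$ followed by… more precisely, I would observe that $R_1 \otimes -$ restricted to $\Mod_{R_2}(\CC) \simeq L_2\CC$ is tensoring with the $R_2$-algebra $R_1 \otimes R_2$, which is idempotent in $\Mod_{R_2}(\CC)$ by the same tensor-factor argument as above (or: base change of the idempotent $R_1$ along $\one \to R_2$), hence defines a smashing localisation of $L_2\CC$ whose value on the unit $R_2$ is $R_1 \otimes R_2$. This lands inside $L_1\CC \cap L_2\CC$, so an $L_1$-local object stays $L_1$-local after $L_2$-localising. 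Finally, for finiteness: if $L_i$ is finite, its acyclics in $\CC$ are generated under colimits by compact acyclics $\{C_j\}$; in $L_2\CC$ the acyclics for $L_i|_{L_2\CC}$ are generated by $\{L_2 C_j\} = \{R_2 \otimes C_j\}$, and these are compact in $L_2\CC \simeq \Mod_{R_2}(\CC)$ because $R_2 \otimes -$ is the left adjoint (base change) whose right adjoint, the forgetful functor, preserves colimits (it is an equivalence onto $L_2\CC \subset \CC$, and colimits in $L_2\CC$ are computed by applying $L_2$ to colimits in $\CC$; since $R_2\otimes -$ preserves colimits the forgetful functor preserves filtered colimits, as $L_2$-local objects are closed under the relevant colimits). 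Hence base change preserves compact objects, so the $L_2 C_j$ are compact and $L_i|_{L_2\CC}$ is finite.

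The main obstacle I anticipate is the finiteness claim at the end: one must be careful that "compact in $L_2\CC$" is the right notion and that base change along $\one \to R_2$ preserves compactness. The clean way is to note $L_2\CC \simeq \Mod_{R_2}(\CC)$ is compactly generated (by $R_2 \otimes (\text{compacts of }\CC)$) and that the forgetful functor $\Mod_{R_2}(\CC) \to \CC$ preserves filtered colimits (as it is corepresented, or since $R_2\otimes-$ does and the module and underlying colimits agree), so its left adjoint preserves compact objects — this is the only step that genuinely uses structure beyond the formal manipulation of idempotent algebras, and it is where I would spend the most care.
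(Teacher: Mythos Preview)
Your proposal is correct and follows essentially the same approach as the paper. The only notable difference is in the acyclics statement: the paper dispatches this in one line by orthogonality (once the local objects are identified as the intersection, the acyclics are automatically the left-orthogonal, hence the smallest localising subcategory containing both sets of acyclics), whereas you give a more hands-on argument via the cofibre sequence $\cof e_1 \to \cof(e_1\otimes e_2) \to R_1 \otimes \cof e_2$. Your argument is more explicit but also slightly longer; the paper's is cleaner here. For the finiteness claim, both you and the paper use that smashing localisations send compacts to compacts (equivalently, base change along $\one \to R_2$ preserves compacts because the inclusion $L_2\CC \hookrightarrow \CC$ preserves filtered colimits); the paper also makes explicit the step you leave terse, namely that an arbitrary $L_i|_{L_2\CC}$-acyclic $X$ is written as $X \simeq L_2 X \simeq \colim_j L_2 C_j$ using that $L_2$ preserves colimits, so you should spell that out when writing up.
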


\begin{proof}
    It is clear that $L_1\one \otimes L_2\one$ is idempotent and thus defines a smashing localisation. Then a local object for $L_1 \otimes L_2$ has the form $L_1\one \otimes L_2\one \otimes X$, which is evidently local for both $L_1$ and $L_2$, using idempotence of $L_1\one$ and $L_2\one$ respectively. Moreover, any object $Y$ which is local for both $L_1$ and $L_2$ satisfies 
    \[L_1\one \otimes L_2\one \otimes Y \simeq L_1 \circ (L_2 Y) \simeq L_1 Y \simeq Y\]
    and is thus local for $L_1 \otimes L_2$. This proves that $L_1 \otimes L_2(\CC) = L_1 \CC \cap L_2 \CC$. The result for acyclics follows from the result for local objects by recalling that the acyclics are left-orthogonal to the locals.

    Since all three localisations are smashing we have 
    \[L_1 \circ L_2 \simeq L_1\one \otimes L_2\one \otimes - \simeq L_1 \otimes L_2 \simeq L_2\one \otimes L_1\one \otimes - \simeq L_2 \circ L_1\]
    by symmetry of the monoidal structure on $\CC$.

    Assume $L_1$ is finite. Let $X$ be any acyclic for $L_1|_{L_2\CC}$, that is $X$ is $L_2$-local and $L_1$-acyclic. Then $X$ can be written as a colimit of compact $L_1$-acyclics, $X = \colim_{i \in I} X_i$, since $L_1$ is finite. Each $X_i$ is compact in $\CC$. $L_2$-localisation preserves colimits because it is smashing, so 
    \[X \simeq L_2 X \simeq \colim_{i \in I} L_2 X_i\]
    and we know $L_1 L_2 X_i \simeq L_2 L_1 X_i \simeq 0$ so the $L_2 X_i$ remain $L_1$-acyclic. Moreover, each $L_2 X_i$ is compact in $L_2 \CC$ because smashing localisations send compacts to compacts, although $L_2 X_i$ may not be compact when viewed as an object of $\CC$ via the subcategory inclusion. Thus $X$ is a colimit of compact $L_1|_{L_2\CC}$-acyclics, so $L_1|_{L_2\CC}$ is finite.
\end{proof}

Lemma \ref{lem: combining smashing locs} can be summarised as the following commuting diagram of five smashing localisations, labelled by their idempotent algebras. 
\[\begin{tikzcd}[column sep={4em,between origins}, row sep={4em,between origins}]
& L_1\CC \arrow[rd, "L_2\one"] &                     \\
\CC \arrow[rr, "L_1\one \otimes L_2\one"] \arrow[ru, "L_1\one"] \arrow[rd, "L_2\one"'] &                          & L_1\CC \cap L_2 \CC \\
& L_2\CC \arrow[ru, "L_1\one"'] &                    
\end{tikzcd}\]

Let $L_1 \otimes L_2$ denote the smashing localisation whose idempotent algebra is $L_1\one \otimes L_2\one$, whenever $L_1$ and $L_2$ are both smashing.

\begin{prop}\label{prop: En adjunction for locs}
    Let $L: \CC \to \mathscr{D}$ be a functor between presentably symmetric monoidal $\infty$ categories which is symmetric monoidal and preserves small colimits. Then the adjunction $L \dashv R$ induces an adjunction between $\mathbb{E}_n$ algebras in $\CC$ and $\mathbb{E}_n$ algebras in $\mathscr{D}$. On the level of underlying objects and morphisms, this new adjunction agrees with $L \dashv R$.
\end{prop}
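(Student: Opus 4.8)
The plan is to first produce the two functors on $\en$-algebras from the monoidal structures carried by $L$ and its right adjoint, and then promote the adjunction $L \dashv R$ to these categories by recognising the right-hand functor as a right adjoint, via the adjoint functor theorem.

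First, since $L$ is symmetric monoidal it is in particular lax symmetric monoidal, hence defines a map of $\infty$-operads $\CC^\otimes \to \mathscr{D}^\otimes$; postcomposition with this map carries $\en$-algebras (and their morphisms) to $\en$-algebras, giving a functor $L_{\en} \colon \Alg_{\en}(\CC) \to \Alg_{\en}(\mathscr{D})$. By \cite{ha}, Corollary 7.3.2.7, the right adjoint $R$ inherits a canonical lax symmetric monoidal structure, and postcomposition along the associated operad map gives $R_{\en} \colon \Alg_{\en}(\mathscr{D}) \to \Alg_{\en}(\CC)$. The forgetful functors $\oblv_\CC \colon \Alg_{\en}(\CC) \to \CC$ and $\oblv_\mathscr{D} \colon \Alg_{\en}(\mathscr{D}) \to \mathscr{D}$ are evaluation at a single colour of $\en$, and postcomposition with an operad map does not alter the value on that colour, so $\oblv_\mathscr{D} \circ L_{\en} \simeq L \circ \oblv_\CC$ and $\oblv_\CC \circ R_{\en} \simeq R \circ \oblv_\mathscr{D}$. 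This is exactly the claim that the two new functors agree with $L$ and $R$ on underlying objects and morphisms, which is the last sentence of the statement.

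It then remains to establish the adjunction $L_{\en} \dashv R_{\en}$. Because $\CC$ and $\mathscr{D}$ are presentably symmetric monoidal, $\Alg_{\en}(\CC)$ and $\Alg_{\en}(\mathscr{D})$ are presentable and their forgetful functors create small limits and sifted (in particular filtered) colimits; see \cite{ha}, Section 3.2. Using the two commuting squares above, $R_{\en}$ then preserves small limits (as $R$ does) and is accessible (as $R$ is, since filtered colimits of algebras are detected underlying), so the adjoint functor theorem produces a left adjoint $L'$ to $R_{\en}$. To identify $L'$ with $L_{\en}$, write $\Free_\CC \dashv \oblv_\CC$ and $\Free_\mathscr{D} \dashv \oblv_\mathscr{D}$ for the free-$\en$-algebra adjunctions: for $X \in \CC$ and $B \in \Alg_{\en}(\mathscr{D})$ the equivalences
\begin{align*}
\Map(L'\Free_\CC X, B) &\simeq \Map_\CC(X, \oblv_\CC R_{\en} B) \simeq \Map_\CC(X, R\,\oblv_\mathscr{D} B) \\
&\simeq \Map_\mathscr{D}(LX, \oblv_\mathscr{D} B) \simeq \Map(\Free_\mathscr{D} LX, B)
\end{align*}
give $L'\Free_\CC X \simeq \Free_\mathscr{D} LX$, while $L_{\en}\Free_\CC X \simeq \Free_\mathscr{D} LX$ as well because the free $\en$-algebra is assembled from tensor powers and colimits, all of which the symmetric monoidal, colimit-preserving functor $L$ preserves. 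Both $L'$ and $L_{\en}$ preserve sifted colimits — for $L_{\en}$ since $L$ does and the forgetful functors create them — so writing an arbitrary $\en$-algebra as the geometric realisation of its bar resolution by free algebras gives $L_{\en} \simeq L'$, hence $L_{\en} \dashv R_{\en}$.

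The step I expect to be the main obstacle is this last identification. The functor $L_{\en}$ is defined intrinsically, by composing operad maps, whereas the adjoint functor theorem only supplies an abstract left adjoint $L'$; matching the two relies on the comparison $L_{\en} \Free_\CC \simeq \Free_\mathscr{D} L$, which genuinely uses that $L$ is symmetric monoidal (not merely lax) and colimit-preserving, so that it intertwines the free-$\en$-algebra monads on $\CC$ and $\mathscr{D}$. One could alternatively invoke the general fact that a symmetric monoidal adjunction induces an adjunction on $\mathcal{O}$-algebras for every $\infty$-operad $\mathcal{O}$, but carrying out the argument as above keeps the discussion self-contained.
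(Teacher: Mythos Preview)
Your argument is correct. The paper's main proof takes a slightly more direct route: rather than invoking the adjoint functor theorem on $R_{\en}$, it verifies the adjunction identity $\Map(L_{\en}Z, Y) \simeq \Map(Z, R_{\en}Y)$ by hand, first for $Z$ free (using the same chain of equivalences you wrote) and then for general $Z$ by writing it as a sifted colimit of frees and using that $L_{\en}$ preserves sifted colimits. So the paper never produces an abstract left adjoint $L'$ that then has to be identified with $L_{\en}$; it works with $L_{\en}$ throughout. Your approach trades that direct computation for an appeal to the adjoint functor theorem plus an identification step, and in fact the paper records essentially the mirror image of your strategy in a remark immediately after its proof (apply the adjoint functor theorem to $L_{\en}$ instead, then identify the resulting right adjoint with $R_{\en}$). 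Both approaches use the same core ingredients: the $\Free \dashv \oblv$ adjunctions, the equivalence $L_{\en}\Free_\CC \simeq \Free_\DD L$ coming from $L$ being symmetric monoidal and colimit-preserving, and sifted-colimit preservation. Your final identification $L' \simeq L_{\en}$ via the bar resolution is stated briskly; if you wanted to be more careful you would note that a sifted-colimit-preserving functor out of $\Alg_{\en}(\CC)$ is determined by its restriction along $\Free_\CC$, and that both $L'\Free_\CC$ and $L_{\en}\Free_\CC$ are naturally equivalent to $\Free_\DD L$.
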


Because $L$ preserves small colimits, the adjoint functor theorem tells us it has a right adjoint. Because $L$ is symmetric monoidal, its right adjoint is lax symmetric monoidal, and hence both functors map $\mathbb{E}_n$ algebras to $\mathbb{E}_n$ algebras, and morphisms of algebras to morphisms of algebras. We thus obtain induced functors between $\mathbb{E}_n$ algebras in $\CC$ and $\mathbb{E}_n$ algebras in $\mathscr{D}$. But it is not immediately clear that these functors again form an adjunction.

\begin{proof}
    Here is a helpful reference diagram which summarises the functors involved in this proof. It contains three known adjunctions, and we are trying to show that the fourth pair of functors $(L', R')$ is also an adjunction. The two squares involving $L, L'$ both commute, and the square involving $R$ and $\oblv$ commutes, but the square involving $R$ and $\Free$ does not necessarily commute. 

    \[\begin{tikzcd}[column sep={8em,between origins}, row sep={8em,between origins}]
	\CC & \DD \\
	{\Alg_{\E_n}(\CC)} & {\Alg_{\E_n}(\DD)}
	\arrow["L"{name=L}, shift left=1.5, from=1-1, to=1-2]
    \arrow["R"{name=R}, shift left=1.5, from=1-2, to=1-1]
    \arrow[from=L, to=R, phantom, "\dashv"{font=\tiny, rotate=-90}]
	\arrow["{\Free_\CC}"'{name=freeC}, shift right=1.5, from=1-1, to=2-1]
    \arrow["{\oblv_\CC}"'{name=oblvC}, shift right=1.5, from=2-1, to=1-1]
    \arrow[from=freeC, to=oblvC, phantom, "\dashv"{font=\tiny}]
	\arrow["{\Free_\DD}"'{name=freeD}, shift right=1.5, from=1-2, to=2-2]
    \arrow["{\oblv_\DD}"'{name=oblvD}, shift right=1.5, from=2-2, to=1-2]
    \arrow[from=freeD, to=oblvD, phantom, "\dashv"{font=\tiny}]
	\arrow["{L'}", shift left=1.5, from=2-1, to=2-2]
	\arrow["{R'}", shift left=1.5, from=2-2, to=2-1]
\end{tikzcd}\]

    Recall there is an adjunction 
    \[\begin{tikzcd}[column sep={huge}]
	    {\Free_\CC: \CC} & {\Alg_{\E_n}(\CC): \oblv_\CC,}
	    \arrow[""{name=right},shift left=1.5, from=1-1, to=1-2]
	    \arrow[""{name=left}, shift left=1.5, from=1-2, to=1-1]
        \arrow[from=left, to=right, phantom, "\dashv"{font=\tiny, rotate=-90}]
    \end{tikzcd}\]
    where $\oblv_\CC$ is the forgetful functor mapping an $\E_n$ algebra to its underlying object. We call an object in the essential image of $\Free_\CC$ a \emph{free $\E_n$ algebra}. The forgetful functor $\oblv_\CC$ is conservative, preserves sifted colimits, and creates small limits (that is, a diagram in $\Alg_{\E_n}(\CC)$ has a limit if and only if its image under $\oblv_\CC$ has a limit, and moreover $\oblv_\CC$ preserves the limit -- see \cite{ha}, Corollary 3.2.2.5). 
    
    Because both $L$ and $R$ are lax symmetric monoidal, they induce functors between $\E_n$ algebras in $\CC$ and $\E_n$ algebras in $\mathscr{D}$. We get
    \[L': \Alg_{\E_n}(\CC) \to \Alg_{\E_n}(\mathscr{D}) \quad \text{and} \quad R': \Alg_{\E_n}(\mathscr{D}) \to \Alg_{\E_n}(\CC).\]
    These induced functors commute with the forgetful functor by definition, i.e. they come equipped with natural equivalences
    \[L \circ \oblv_\CC \simeq \oblv_\DD \circ L' \quad \text{and} \quad \oblv_\CC \circ R' \simeq R \circ \oblv_\DD.\]
    Since $L$ is symmetric monoidal, it also commutes with the free $\E_n$ algebra functor in the sense that there is a natural equivalence $L' \circ \Free_\CC \simeq \Free_\DD \circ L$, compatible with the one for $\oblv$. Note that because $R$ need not be symmetric monoidal, it is not necessarily the case that $R$ commutes with $\Free$. 

    Using the adjunctions $\Free_\CC \dashv \oblv_\CC$, $\Free_\DD \dashv \oblv_\DD$, and $L \dashv R$, as well as the fact that $L$ commutes with $\Free$ and $R$ commutes with $\oblv$, we can compute that $L'$ and $R'$ behave as an adjoint pair when $L'$ is restricted to free algebras. Explicitly, we have a natural equivalence between two mapping space functors on $\CC^{\op} \otimes \Alg_{\E_n}(\DD)$. Let $A \in \CC, Y \in \Alg_{\E_n}(\DD)$ and $X = \Free_\CC(A) \in \Alg_{\E_n}(\CC)$. Then
    \begin{align*}
        \Map_{\Alg_{\E_n}(\CC)}(X, R'Y) &\simeq \Map_{\Alg_{\E_n}(\CC)}(\Free_\CC(A), R'Y)\\
        &\simeq \Map_{\CC}(A, \oblv_\CC R'Y)\\
        &\simeq \Map_{\CC}(A, R \oblv_\DD Y)\\
        &\simeq \Map_{\DD}(LA, \oblv_\DD Y)\\
        &\simeq \Map_{\Alg_{\E_n}(\DD)}(\Free_\DD LA, Y)\\
        &\simeq \Map_{\Alg_{\E_n}(\DD)}(L' \Free_\CC A, Y)\\
        &\simeq \Map_{\Alg_{\E_n}(\DD)}(L'X, Y).
    \end{align*}
    The composite equivalence 
    \[\Map_{\Alg_{\E_n}(\CC)}(\Free_\CC(A), R'Y) \simeq \Map_{\Alg_{\E_n}(\DD)}(L'\Free_\CC(A), Y),\]
    is natural in $\CC^{\op} \otimes \Alg_{\E_n}(\DD)$. 

    Any $Z \in \Alg_{\E_n}(\CC)$ can be expressed as a sifted colimit of free $\E_n$ algebras, and this can be done functorially. Write $Z = \colim_n \Free_\CC A_n$ and we obtain a natural equivalence 
    \begin{align*}
    \Map_{\Alg_{\E_n}(\CC)}(Z, R'Y) &\simeq \Map_{\Alg_{\E_n}(\CC)}(\colim_n \Free_\CC A_n, R'Y)\\ 
    &\simeq \lim_n \Map_{\Alg_{\E_n}(\CC)}(\Free_\CC A_n, R'Y)\\
    &\simeq \lim_n \Map_{\Alg_{\E_n}(\DD)}(L' \Free_\CC A_n, Y)\\
    &\simeq \Map_{\Alg_{\E_n}(\DD)}(\colim_n L' \Free_\CC A_n, Y)\\
    &\simeq \Map_{\Alg_{\E_n}(\DD)}(L' \colim_n \Free_\CC A_n, Y)\\
    &\simeq \Map_{\Alg_{\E_n}(\DD)}(L'Z, Y).
    \end{align*}
    In the second-last step we used that $L'$ commutes with sifted colimits (and we had written $Z$ as a sifted colimit of free algebras). To see that $L'$ commutes with sifted colimits, we simply note that $L'$ commutes with $\oblv$ and both $L$ and $\oblv$ preserve sifted colimits ($L$ preserves all small colimits because it is a left adjoint). For a sifted colimit $\colim_n X_n \in \Alg_{\E_n}(\CC)$,
    \begin{align*}
        \oblv_\DD L' \colim_n X_n &\simeq L \oblv_\CC \colim_n X_n\\
        &\simeq \colim_n L \oblv_\CC X_n\\
        &\simeq \colim_n \oblv_\DD L' X_n\\
        &\simeq \oblv_\DD \colim_n L' X_n,
    \end{align*}
    and since $\oblv_\DD$ is conservative we conclude that $L' \colim_n X_n \simeq \colim_n L' X_n$. All of this is appropriately natural. 

    Since we have established an equivalence 
    \[\Map_{\Alg_{\E_n}(\CC)}(Z, R'Y) \simeq \Map_{\Alg_{\E_n}(\DD)}(L'Z, Y),\]
    natural in $\Alg_{\E_n}(\CC)^{\op} \otimes \Alg_{\E_n}(\DD)$, we have shown that $L' \dashv R'$ as desired.
\end{proof}

\begin{remark}
    In the proof of Proposition \ref{prop: En adjunction for locs} we observed that $L$ and $R$ both induce functors on the level of $\E_n$ algebras, and then we checked that these induced functors $L'$ and $R'$ again form an adjunction. Here is the outline of a different proof one could give. 
    
    Define $L'$ as before, and by the same arguments $L'$ commutes with $\Free$ and with $\oblv$, and preserves sifted colimits. Check that $L'$ commutes with all colimits by showing that $L'$ also commutes with coproducts. Write a coproduct of $\E_n$ algebras in $\CC$ as a coproduct of sifted colimits of free algebras, which allows us to reduce to showing that $L'$ commutes with coproducts of free algebras. A coproduct of free algebras is computed by a tensor product, so $L'$ indeed preserves such coproducts because $L$ is symmetric monoidal. Since $L'$ preserves all small colimits, we conclude it possesses a right adjoint, which we call $R'$. 

    To show that $R'$ agrees with the functor induced by $R$ on $\E_n$ algebras, we now wish to produce a natural transformation $\oblv_\CC \circ R' \simeq R \circ \oblv_\DD$. We have 
    \begin{align*} 
        (L' \circ \Free_\CC) &\dashv (\oblv_\CC \circ R')\\
        (\Free_\DD \circ L) &\dashv (R \circ \oblv_\DD)\\
        \Free_\DD \circ L &\simeq L' \circ \Free_\CC,
    \end{align*}
    so by uniqueness of adjoints, we obtain a corresponding identification $R \circ \oblv_\DD \simeq \oblv_\CC \circ R'$ because they are both right adjoint to the same functor.
\end{remark}

\begin{lem}\label{lem: En refinement}
    Let $R_2 \in \CAlg(\CC)$ be an idempotent algebra and $R_1 \in \Alg_{\mathbb{E}_n}(\CC)$. Let $L$ be the smashing localisation of $\CC$ computed by tensoring with $R_2$. Then $\Map_{\mathbb{E}_n}(R_2, R_1) \simeq \Map_{\mathbb{E}_0}(R_2, R_1)$ via forgetting the $\mathbb{E}_n$ structure. This mapping space is contractible if and only if $R_1$ is $L$-local, and otherwise it is empty.
\end{lem}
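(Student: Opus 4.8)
The plan is to establish both clauses simultaneously by showing that, according to whether or not the underlying object of $R_1$ is $L$-local, the two mapping spaces in question are \emph{both} contractible or \emph{both} empty. The forgetful functor $\Alg_{\E_n}(\CC) \to \Alg_{\E_0}(\CC)$ (restriction of structure along the operad map $\E_0 \to \E_n$) carries $R_2$ and $R_1$ to their underlying $\E_0$-algebras, so it induces a comparison map between the two mapping spaces; once both are known to be contractible, or both empty, this map is automatically an equivalence, which is exactly the asserted identification ``via forgetting the $\E_n$-structure''. I take $n \ge 1$; for $n = 0$ the comparison is the identity. Throughout I use, from the discussion following Lemma \ref{lem: acyclics gen by}, that the localisation map of an object $X$ is $e \otimes \id_X \colon X \to R_2 \otimes X$ (with $e\colon\one\to R_2$ the idempotent unit), that $X$ is $L$-local iff this map is an equivalence, and that $R_2 \otimes X$ is always $L$-local.

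\emph{The local case.} By Proposition \ref{prop: smashing loc modules over unit} there is a symmetric monoidal equivalence $\Mod_{R_2}(\CC)^\otimes \simeq (L\CC)^\otimes$, and by Lemma \ref{lem: qsmashing is really symm mon} the inclusion $\iota\colon L\CC \hookrightarrow \CC$ is symmetric monoidal except that it does not preserve the unit; in particular $\iota$ is strong on the tensor product, hence induces a fully faithful functor $\Alg_{\E_m}(L\CC) \to \Alg_{\E_m}(\CC)$ for every $m$. Since $L\CC$ is closed under $\otimes$ and the unit map $\one \to B$ of any $\E_m$-algebra $B$ with $L$-local underlying object factors essentially uniquely through $e\colon\one\to R_2 = \one_{L\CC}$, the essential image of this functor is precisely the $\E_m$-algebras whose underlying object is $L$-local. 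Thus when $R_1$ is $L$-local it lifts essentially uniquely to an $\E_n$-algebra (and to an $\E_0$-algebra) in $L\CC \simeq \Mod_{R_2}(\CC)$; since $R_2$ is the unit object of $\Mod_{R_2}(\CC)$, hence the initial object of $\Alg_{\E_m}(\Mod_{R_2}(\CC))$, I conclude $\Map_{\Alg_{\E_n}(\CC)}(R_2, R_1) \simeq \Map_{\Alg_{\E_n}(\Mod_{R_2}(\CC))}(R_2, R_1) \simeq *$, and likewise with $\E_0$ in place of $\E_n$.

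\emph{The non-local case.} Here I show there is not even an $\E_0$-algebra map $R_2 \to R_1$; as both mapping spaces have this space as their target, both are then empty. Suppose $f \colon R_2 \to R_1$ is a morphism in $\CC$ with $f \circ e$ homotopic to the unit $u$ of $R_1$. Using the multiplication $m$ on $R_1$ (available since $n \ge 1$), set $g := m \circ (f \otimes \id_{R_1}) \colon R_2 \otimes R_1 \to R_1$. The left unit axiom gives $g \circ (e \otimes \id_{R_1}) \simeq \id_{R_1}$, so $g$ retracts the localisation map $e \otimes \id_{R_1}\colon R_1 \to R_2 \otimes R_1$. Since $R_2 \otimes R_1$ is $L$-local, two maps $R_2 \otimes R_1 \to R_2 \otimes R_1$ agreeing after precomposition with $e \otimes \id_{R_1}$ are homotopic (universal property of localisation); applying this to $(e \otimes \id_{R_1}) \circ g$ and $\id_{R_2 \otimes R_1}$ upgrades the retraction to a two-sided inverse. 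Hence $e \otimes \id_{R_1}$ is an equivalence, i.e.\ $R_1$ is $L$-local, contradicting our assumption. (Compactly: an $\E_1$-algebra map $R_2 \to R_1$ makes $R_1$ an $R_2$-module, forcing $L$-locality by Proposition \ref{prop: smashing loc modules over unit}; the argument above extracts the same conclusion from only an $\E_0$-map.)

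I expect the main obstacle to be the bookkeeping in the local case: verifying that $\Alg_{\E_n}(\iota)$ is fully faithful with essential image the $\E_n$-algebras whose underlying object is $L$-local, and that under the equivalence $\Mod_{R_2}(\CC) \simeq L\CC$ the unit $R_2$ of $\Mod_{R_2}(\CC)$ corresponds to $R_2$ carrying its restricted $\E_\infty$-structure, so that initiality may be invoked. The remaining manipulations — the unit axiom and the universal property of localisation — are routine.
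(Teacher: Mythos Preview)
Your proof is correct and follows essentially the same strategy as the paper: split into the local and non-local cases and show both mapping spaces are contractible, respectively empty. The non-local case is identical to the paper's argument (the paper phrases the ``other composite'' check via the identity $e\otimes\id_{R_2}\simeq\id_{R_2}\otimes e$ rather than the universal property of localisation, but these are the same point). In the local case the paper invokes the adjunction of Proposition~\ref{prop: En adjunction for locs} directly to obtain $\Map_{\E_n}(L\one,LR_1)\simeq\Map_{\E_n}(\one,LR_1)\simeq *$, whereas you repackage this as full faithfulness of $\Alg_{\E_m}(\iota)$ together with initiality of the unit in $\Alg_{\E_m}(L\CC)$; these are equivalent formulations of the same adjunction fact, and your anticipated bookkeeping (identifying the essential image and matching the $\E_\infty$-structures on $R_2$) is exactly what the paper's more direct invocation of Proposition~\ref{prop: En adjunction for locs} absorbs.
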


\begin{proof}
    Since $R_2$ is an idempotent algebra, its unit map $e_2: \one \to R_2$ induces a smashing localisation which is a symmetric monoidal functor $L: \CC \to L\CC$, with right adjoint the subcategory inclusion (Lemmas \ref{lem: class of smashing locs} and \ref{lem: qsmashing is really symm mon}). The local objects for this localisation are precisely the $R_2$-modules in $\CC$, by Proposition \ref{prop: smashing loc modules over unit}. 

    To prove the result, we will establish the following two points.
    \begin{itemize}
        \item If $R_1$ is $L$-local then $\Map_{\mathbb{E}_n}(R_2, R_1) \simeq \Map_{\mathbb{E}_0}(R_2, R_1) \simeq *$.
        \item $\Map_{\mathbb{E}_0}(R_2, R_1)$ is nonempty if and only if $R_1$ is $L$-local.
    \end{itemize}

    This is sufficient because if $\Map_{\mathbb{E}_0}(R_2, R_1)$ is nonempty then both mapping spaces are contractible and $R_1$ is $L$-local, and if $\Map_{\mathbb{E}_0}(R_2, R_1)$ is empty then $R_1$ is not local and the forgetful map $\Map_{\mathbb{E}_n}(R_2, R_1) \to \Map_{\mathbb{E}_0}(R_2, R_1)$ tells us $\Map_{\mathbb{E}_n}(R_2, R_1)$ must also be empty.

    To the first point, we can build a commuting diagram 
    \[\begin{tikzcd}[column sep={10em,between origins}, row sep={6em,between origins}]
    {\Map_{\En}(L\one, LR_1)} \arrow[r] \arrow[d, "\simeq"'] & {\Map_{\mathbb{E}_0}(L\one, LR_1)} \arrow[d, "\simeq"] \\
    {\Map_{\mathbb{E}_n}(\one, LR_1)} \arrow[r, "\simeq"]   & {\Map_{\mathbb{E}_0}(\one, LR_1)}                     
    \end{tikzcd}\]
    whose horizontal maps are forgetting the $\mathbb{E}_n$ structure, and vertical maps are guaranteed by the adjunction of Proposition \ref{prop: En adjunction for locs} to be equivalences. The two terms in the bottom row are clearly contractible, as there is a contractible space of $\mathbb{E}_n$ maps from the unit to any $\mathbb{E}_n$ ring, and both $L$ and its adjoint are (at least lax) symmetric monoidal, so $LR_1$ is an $\mathbb{E}_n$ algebra in $\CC$.

    As a consequence, the top horizontal map is an equivalence between contractible spaces. If $R_1$ is local, then the canonical map $R_1 \to LR_1$ is an equivalence in $\CC$ and so we can extend this diagram to 
    \[\begin{tikzcd}[column sep={10em,between origins}, row sep={6em,between origins}]
        {\Map_{\mathbb{E}_n}(R_2, R_1)} \arrow[d, "\simeq"'] \arrow[r] & {\Map_{\mathbb{E}_0}(R_2, R_1)} \arrow[d, "\simeq"] \\
        {\Map_{\En}(R_2, LR_1)} \arrow[r, "\simeq"]                  & {\Map_{\mathbb{E}_0}(R_2, LR_1).}                 
        \end{tikzcd}\]
        Recall that since $L\one = R_2$, the bottom horizontal map is the same one we just proved was an equivalence, and now the top horizontal map must be an equivalence (again between contractible spaces). This proves the first point.

        Now let $f: R_2 \to R_1$ be $\mathbb{E}_0$. To show that $R_1$ is $L$-local, we will check that the canonical map $e_2 \otimes \id_{R_1}: R_1 \to R_2 \otimes R_1 \simeq LR_1$ is an equivalence by providing its inverse. The inverse is given by $m_1 \circ (f \otimes \id_{R_1})$, where $m_i: R_i \otimes R_i \to R_i$ denotes the multiplication. We know $f \circ e_2 \simeq e_1$ because $f$ is $\mathbb{E}_0$, so 
        \[m_1 \circ (f \otimes \id_{R_1}) \circ (e_2 \otimes \id_{R_1}) \simeq m_1 \circ ((f \circ e_2) \otimes \id_{R_1}) \simeq m_1 \circ (e_1 \otimes \id_{R_1}) \simeq \id_{R_1},\]
        because the multiplication on $R_1$ is unital. This computation is summarised in the diagram
        \[\begin{tikzcd}[column sep={6em,between origins}, row sep={6em,between origins}]
            R_1 \arrow[r, "e_2 \otimes \id "] 
                \arrow[d, "\id"'] 
                \arrow[rd, "e_1 \otimes \id" sloped] 
            & R_2 \otimes R_1 \arrow[d, "f \otimes \id"] \\
            R_1 & R_1 \otimes R_1. \arrow[l, "m_1"]          
            \end{tikzcd}\]
        For the other composition, let $g = m_1 \circ (f \otimes \id_{R_1})$ as shorthand. Rewrite 
        \[(e_2 \otimes \id_{R_1}) \circ g \simeq e_2 \otimes g \simeq (\id_{R_2} \otimes g) \circ (e_2 \otimes \id_{R_2} \otimes \id_{R_1}),\] 
        and now observe that $e_2 \otimes \id_{R_2} \simeq \id_{R_2} \otimes e_2$ because both are inverse to the multiplication map $m_2$ on the idempotent algebra $R_2$. This means 
        \[(e_2 \otimes \id_{R_1}) \circ g \simeq 
        (\id_{R_2} \otimes g) \circ (\id_{R_2} \otimes e_2 \otimes \id_{R_1}) \simeq \id_{R_2} \otimes (g \circ (e_2 \otimes \id_{R_1})) \simeq \id,\]
        by reducing to the composition we already checked. The upshot is that $g$ and $e_2 \otimes \id_{R_1}$ are mutual inverses, so $R_1$ is local. Moreover, checking the second composition did not depend on anything about the maps involved -- since $R_2$ was idempotent, once we checked the first composite the second followed purely formally. We will state this explicitly as Lemma \ref{lem: key iso} later. 

        Finally, if $R_1$ is local then by the universal property of localisation the $\mathbb{E}_n$ (and in particular $\mathbb{E}_0$) unit map $\one \to R_1$ factors through $L\one \simeq R_2$ to give an $\mathbb{E}_0$ map $R_2 \to R_1$. 
\end{proof}

The next result collects some easy but useful consequences of Lemma \ref{lem: En refinement} for future reference.

\begin{cor}\label{cor: very useful En refinement}
    Let $R_2 \in \CAlg(\CC)$ be idempotent and $L$ denote the localisation of $\CC$ given by tensoring with $R_2$. Let $R_1$ be another object of $\CC$.
    \begin{enumerate}
        \item $\Map_{\mathbb{E}_0}(R_2, R_2) \simeq \Map_{\mathbb{E}_\infty}(R_2, R_2) \simeq *$. \label{cor sub: end of idem}
        \item If $R_1$ is $\mathbb{E}_n$ and $L$-local then $\Map_{\mathbb{E}_m}(R_2, R_1) \simeq *$ for all $0 \leq m \leq n$. \label{cor sub: intermediate n}
        \item $R_2$ has a canonical $\mathbb{E}_n$ structure for every $n$, induced by forgetting from its canonical $\mathbb{E}_\infty$ structure which comes from idempotence. This is the unique unital $\mathbb{E}_n$ structure on $R_2$. \label{cor sub: unique En str on idem}
    \end{enumerate}
\end{cor}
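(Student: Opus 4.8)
The three parts should all fall out of Lemma~\ref{lem: En refinement} once one records that $R_2 = L\one$ is itself $L$-local (equivalently, idempotence makes the localisation map $R_2 \to R_2 \otimes R_2$ an equivalence), so that every $\E_n$-algebra with underlying object $R_2$, and more generally every $L$-local $\E_n$-algebra, sits in the situation of that Lemma. Throughout, write $R_2^{\mathrm{can}}$ for $R_2$ equipped with its canonical $\E_\infty$-structure coming from idempotence (Lemma~\ref{lem: class of smashing locs}).

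For \ref{cor sub: end of idem} I would apply Lemma~\ref{lem: En refinement} with $R_1 = R_2$: since $R_2$ is $L$-local, $\Map_{\E_n}(R_2, R_2) \simeq \Map_{\E_0}(R_2, R_2) \simeq *$ for every finite $n$, which already gives the $\E_0$ statement. For the $\E_\infty$ statement I would pass to the limit along the tower of operad maps $\E_0 \to \E_1 \to \cdots$, using $\Alg_{\E_\infty}(\CC) \simeq \lim_n \Alg_{\E_n}(\CC)$, so that $\Map_{\E_\infty}(R_2, R_2) \simeq \lim_n \Map_{\E_n}(R_2, R_2) \simeq *$ (the mapping space is nonempty since $R_2$ carries its canonical $\E_\infty$-structure). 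For \ref{cor sub: intermediate n}, given the $\E_n$-algebra $R_1$ and $0 \le m \le n$, restrict its structure to get $R_1 \in \Alg_{\E_m}(\CC)$, still $L$-local, and apply Lemma~\ref{lem: En refinement} with $m$ in place of $n$ to obtain $\Map_{\E_m}(R_2, R_1) \simeq \Map_{\E_0}(R_2, R_1) \simeq *$.

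For \ref{cor sub: unique En str on idem}, existence together with the fact that the canonical $\E_n$-structure is the restriction of the $\E_\infty$-one is immediate from Lemma~\ref{lem: class of smashing locs} and the forgetful functors $\Alg_{\E_\infty}(\CC) \to \Alg_{\E_n}(\CC)$. For uniqueness, let $A$ be an $\E_n$-algebra with $\oblv A \simeq R_2$ whose underlying $\E_0$-structure (pointing) is the idempotent map. Then $A$ is $L$-local, so Lemma~\ref{lem: En refinement} supplies an essentially unique $\E_n$-map $\phi \colon R_2^{\mathrm{can}} \to A$; its image under $\Alg_{\E_n}(\CC) \to \Alg_{\E_0}(\CC)$ is an $\E_0$-endomorphism of $(R_2, e_2)$, hence homotopic to $\id_{R_2}$ by \ref{cor sub: end of idem}. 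Since the forgetful functor to $\CC$ is conservative and $\oblv\phi \simeq \id_{R_2}$, the map $\phi$ is an equivalence of $\E_n$-algebras, and $\Map_{\E_n}(R_2^{\mathrm{can}}, R_2^{\mathrm{can}}) \simeq *$ (as in \ref{cor sub: end of idem}) leaves no room for a nontrivial automorphism, so the space of such structures is contractible.

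The only point that genuinely needs care is the meaning of ``unital $\E_n$-structure'' in \ref{cor sub: unique En str on idem}: the bare space of $\E_0$-structures on $R_2$ is $\Map_\CC(\one, R_2)$, which is in general not contractible (it is the endomorphism space of the unit of $L\CC$), so one really does have to remember the unit map as part of the data, as in the argument above. A tidier alternative framing, which I would probably include as a remark, is to invoke Proposition~\ref{prop: En adjunction for locs} for the smashing localisation $L$: it exhibits the induced functor $R' \colon \Alg_{\E_n}(L\CC) \to \Alg_{\E_n}(\CC)$ as fully faithful with essential image the $L$-local $\E_n$-algebras (the counit $L'R' \to \id$ is an equivalence because on underlying objects it is the $L$-localisation of an already local object), reducing \ref{cor sub: unique En str on idem} to the standard fact that the tensor unit of a symmetric monoidal $\infty$-category carries an essentially unique $\E_n$-algebra structure. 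I expect this bookkeeping around unitality to be the only real content; the rest is a direct citation of Lemma~\ref{lem: En refinement}.
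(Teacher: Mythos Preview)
Your proof is correct and matches the paper's approach in all three parts. The one place you diverge is in \ref{cor sub: end of idem}: you pass through $\Alg_{\E_\infty}(\CC) \simeq \lim_n \Alg_{\E_n}(\CC)$ to handle the $\E_\infty$ case, whereas the paper simply invokes Lemma~\ref{lem: En refinement} directly with $n=\infty$ (nothing in its statement or proof restricts $n$ to be finite, and $R_2$ already carries an $\E_\infty$ structure). Your limit argument is valid but introduces an external dependency that is not needed here. For \ref{cor sub: unique En str on idem} you are slightly more explicit than the paper about the conservativity step, which is a good thing; the paper compresses this to ``the identity is $\E_0$, hence refines to $\E_n$, so the structures agree.'' Your closing caveat about the meaning of ``unital'' and the alternative framing via Proposition~\ref{prop: En adjunction for locs} are reasonable remarks but go beyond what the paper records.
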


\begin{proof}
    (\ref{cor sub: end of idem}) is a clear special case of Lemma \ref{lem: En refinement}. For (\ref{cor sub: intermediate n}), simply note that an $\mathbb{E}_n$ algebra has an induced $\mathbb{E}_m$ structure for $0 \leq m \leq m$ by forgetting. For (\ref{cor sub: unique En str on idem}), the identity endomorphism on $R_2$ -- thought of as a map from $R_2$ with the canonical $\mathbb{E}_\infty$ structure induced by idempotence to $R_2$ with some arbitrary unital $\mathbb{E}_n$ structure -- is $\mathbb{E}_0$, and hence refines to a map of $\mathbb{E}_n$ algebras. This means the two $\mathbb{E}_n$ structures agree. This point also holds with $n = \infty$, and explains why the $\mathbb{E}_\infty$ structure on $R_2$ is canonical. 
\end{proof}

We next turn to a discussion of the special properties of the partial order on localisations when restricted to tensor-compatible localisations.

\begin{lem}\label{lem: smash loc partial order via ring map}
    If $L_1$ and $L_2$ are localisations satisfying $L_1 \leq L_2$ then there is an $L_1$-local equivalence $L_2 \one \to L_1 \one$. If $L_1$ and $L_2$ are tensor-compatible then this refines to an $\mathbb{E}_\infty$ algebra map. If in addition $L_2$ is smashing then the converse holds: if there exists an $\mathbb{E}_\infty$ map $L_2\one \to L_1\one$ then $L_1 \leq L_2$.
\end{lem}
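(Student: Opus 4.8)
The plan is to address the three assertions separately; each is essentially formal, using the universal property of localisation together with Propositions~\ref{prop: En adjunction for locs} and~\ref{prop: smashing loc modules over unit}. Throughout I take $L_1 \le L_2$ to mean that every $L_1$-local object is $L_2$-local, equivalently that every $L_2$-local equivalence is an $L_1$-local equivalence; this is the convention forced on us by the direction of the ring map in the statement.

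\emph{Existence of the $L_1$-local equivalence.} First I would observe that $L_1\one$ is $L_1$-local, hence $L_2$-local since $L_1 \le L_2$. Applying the universal property of $L_2$-localisation to the $L_1$-localisation map $\one \to L_1\one$, this map factors, uniquely up to a contractible space of choices, through the $L_2$-localisation map as $\one \xrightarrow{\eta} L_2\one \xrightarrow{\psi} L_1\one$. Now $\eta$ is an $L_2$-local equivalence, hence an $L_1$-local equivalence because $L_1 \le L_2$, while the composite $\one \to L_1\one$ is an $L_1$-local equivalence; since $L_1$-local equivalences form a strongly saturated class they satisfy two-out-of-three, so $\psi$ is an $L_1$-local equivalence.

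\emph{Refinement to an $\E_\infty$ map.} Assume now $L_1$ and $L_2$ are tensor-compatible. By Lemma~\ref{lem: monoidal loc is monoidal} each $L_i \colon \CC \to L_i\CC$ is symmetric monoidal with lax symmetric monoidal inclusion $\iota_i$, so $L_i\one$, being the monoidal unit of $L_i\CC$ and hence the initial commutative algebra there, inherits a commutative algebra structure in $\CC$. I would then apply the $\E_\infty$ analogue of Proposition~\ref{prop: En adjunction for locs} (whose proof goes through verbatim for $\E_\infty$) to the symmetric monoidal, colimit-preserving functor $L_2$, obtaining an adjunction $L_2' \dashv \iota_2'$ on commutative algebras lifting $L_2 \dashv \iota_2$ on underlying objects. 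Since $L_2\iota_2 \simeq \id$ (the localisation has fully faithful right adjoint) and $\CAlg(\CC) \to \CC$ is conservative, $L_2'\iota_2' \simeq \id$, so $\iota_2'$ is fully faithful with essential image the commutative algebras whose underlying object is $L_2$-local. As $L_2\one$ corresponds to the initial object of $\CAlg(L_2\CC)$, it is the initial $L_2$-local commutative algebra in $\CC$, so in particular there is an essentially unique $\E_\infty$ map $L_2\one \to L_1\one$. Its underlying map lies over $\one$ — an algebra map respects units — and such maps are unique up to contractible choice by the factorisation of the previous paragraph, so it agrees with $\psi$. Hence $\psi$ refines to an $\E_\infty$ map.

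\emph{The converse.} Suppose in addition that $L_2$ is smashing and an $\E_\infty$ map $f \colon L_2\one \to L_1\one$ is given; I would show every $L_1$-local object $X$ is $L_2$-local. Since $L_1$ is tensor-compatible, $X$ is canonically an $L_1\one$-module by Lemma~\ref{lem: monoidal loc modules over unit}, and restriction of scalars along $f$ makes $X$ an $L_2\one$-module. Because $L_2$ is smashing, Proposition~\ref{prop: smashing loc modules over unit} identifies $L_2\CC$ with $\Mod_{L_2\one}(\CC)$ through the forgetful functor, so the underlying object $X$ lies in $L_2\CC$; that is, $X$ is $L_2$-local. Hence $L_1 \le L_2$.

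\emph{Main obstacle.} The first and third parts are immediate from the cited results once the ordering convention is fixed. The delicate step is the middle one: promoting $\psi$ to a commutative algebra map. Its content is the identification of $L_2\one$ with the initial $L_2$-local commutative algebra and the check that the resulting $\E_\infty$ map has underlying map $\psi$; this requires the (routine) $\E_\infty$ version of Proposition~\ref{prop: En adjunction for locs} and some bookkeeping with underlying objects. An alternative would be to verify directly that the natural transformation $L_2 \Rightarrow L_1$ refining $\psi$ is lax symmetric monoidal and evaluate it at the commutative algebra $\one$, but that merely relocates the work into a coherence computation.
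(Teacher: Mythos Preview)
Your proposal is correct and follows essentially the same route as the paper. For parts one and three you match the paper's argument almost verbatim: the paper also obtains the comparison map from the natural transformation $L_2 \Rightarrow L_1$ (equivalently, your factorisation through the $L_2$-localisation map), and for the converse it likewise uses Lemma~\ref{lem: monoidal loc modules over unit} to make any $L_1$-local $X$ an $L_1\one$-module, then restricts scalars along $\phi$ and invokes the identification of $L_2$-local objects with $L_2\one$-modules. The only mild difference is in part two: the paper simply observes that the comparison map $L_2\one \to L_1\one$ \emph{is} the $L_1$-localisation map applied to $L_2\one$ (using $L_1 L_2\one \simeq L_1\one$), and since $L_1$ is symmetric monoidal this map is $\E_\infty$; you instead characterise $L_2\one$ as the initial $L_2$-local commutative algebra via Proposition~\ref{prop: En adjunction for locs} and produce the unique $\E_\infty$ map to $L_1\one$. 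These are two sides of the same coin --- the paper's phrasing is shorter but tacitly relies on exactly the adjunction on $\CAlg$ you spell out, so your version is arguably more honest about where the coherence comes from.
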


We will most commonly use this result in the case when $L_2$ is smashing and $L_1$ is tensor-compatible. Then the condition $L_1 \leq L_2$ is equivalent to supplying an $\mathbb{E}_\infty$ ring map $\phi: L_2\one \to L_1\one$. Such a map $\phi$ is unique, and automatically an $L_1$-equivalence.

\begin{proof}
    The condition $L_1 \leq L_2$ provides a natural transformation $L_2 \implies L_1$, so in particular we have a comparison map at the object $\one \in \CC$. When both localisations are tensor-compatible they are both symmetric monoidal functors, so $L_1\one$ and $L_2\one$ inherit $\mathbb{E}_\infty$ structures in their respective local categories from $\one \in \CC$. The comparison map $L_2\one \to L_1\one$ is $L_1$-localisation, and therefore carries a canonical $\mathbb{E}_\infty$ structure since $L_1$ is symmetric monoidal. The induced $\mathbb{E}_\infty$ structure on $L_1\one$ given by $L_1$-localising $\one$ agrees with the one induced by first $L_2$-localising and then further $L_1$-localising.

    Conversely, suppose now that $L_2$ is smashing. An $\mathbb{E}_\infty$ map $\phi: L_2\one \to L_1\one$ gives $L_1\one$ the structure of an $L_2\one$-module, and since $L_2$ is smashing its local objects are precisely the $L_2\one$-modules. Explicitly, we have the commuting diagram
    \[\begin{tikzcd}[column sep={9em,between origins}, row sep={6em,between origins}]
        L_2\one \otimes L_1\one \arrow[r, "\phi \otimes \id_1"]                                                
        & L_1\one \otimes L_1\one \arrow[d, "\mu_1"] \\
        L_1\one \arrow[u, "e_2 \otimes \id_1"] 
                \arrow[ru, "e_1 \otimes \id_1" sloped] 
                \arrow[r, "\id_1"'] 
        & L_1\one                                   
        \end{tikzcd}\]
        where $e_i$ denotes the identity for the $\mathbb{E}_\infty$ ring $L_i\one$ and $\mu_i$ its multiplication. Therefore $L_1\one$ is $L_2$-local, because the identity on $L_1\one$ factors through $L_2(L_1\one)$.
    
     More generally, if $L_1X$ is any $L_1$-local object, then it is a module over $L_1\one$ by Lemma \ref{lem: monoidal loc modules over unit} and thus the ring map $L_2\one \to L_1\one$ gives $L_1X$ the structure of an $L_2\one$-module. Then $L_1X$ is $L_2$-local, because $L_2$ is smashing so local objects are precisely $L_2\one$-modules. Hence all $L_1$-local objects are $L_2$-local and thus $L_1 \leq L_2$.

    The multiplication on $L_1\one$ is 
    \[L_1L_2\one \otimes_{L_1\CC} L_1L_2\one := L_1(L_2\one \otimes_{L_2\CC} L_2\one) \simeq L_1(L_2\one \otimes_\CC L_2\one) \simeq L_1L_2\one \simeq L_1\one\]
    which establishes idempotence. This relies on $L_1$ being tensor-compatible and $L_2$ being smashing. The map $\phi$ is an $L_1$-equivalence because we have diagrams
    \[\begin{tikzcd}
        \one \arrow[rd, "e_1"'] \arrow[r, "e_2"] & L_2\one \arrow[d, "\phi"] & \one \arrow[r, "e_2"] \arrow[rd, "e_1"'] & L_2\one \arrow[d, "F_{L_2\one}"] \\
                                                 & L_1\one                   &                                          & L_1\one                         
        \end{tikzcd}\]
        where $e_i$ denotes the unit of the $\mathbb{E}_\infty$ ring $L_i\one$. The left hand diagram commutes because $\phi$ is $\mathbb{E}_\infty$. The right hand diagram commutes because $L_1 \leq L_2$, with $F_{L_2\one}$ denoting the natural transformation for $L_1$-localisation restricted to $L_2$-local objects (that is, $L_1$-localisation factors through $L_2$-localisation). We know $L_1\one$ is $L_2$-local, so by the universal property of $L_2$-localisation there is a unique map $L_2\one \to L_1L_2\one = L_1\one$ making this diagram commute, and thus $\phi \simeq F_{L_2\one}$. That is, there is a unique ring map $L_2\one\to L_1\one$ and it is an $L_1$-local equivalence.
\end{proof}

\subsection{Central maps}\label{subsec: central maps}

In this Subsection we describe another way to build idempotent algebras (and thus smashing localisations), starting with a map which is not idempotent but instead satisfies the weaker condition of \emph{centrality}. The free $\mathbb{E}_1$-algebra on a central map turns out to be idempotent, which we prove as Corollary \ref{cor: Jalpha is idempotent}. This perspective on smashing localisations suggests a natural notion of finiteness, which we will compare to the usual definition in Subsection \ref{subsec: finiteness}.

\begin{constr}\label{constr: Jalpha}
Given an $\E_0$ algebra $\alpha: \one \to A$ in $\CC$, we may consider the \emph{free unital $\E_1$-monoid} on $\alpha$, which we denote $J_\alpha$. It is characterised by the property that the unit $\overline{\alpha}$ of $J_\alpha$ factors through $\alpha$, and $J_\alpha$ is the universal $\E_1$ monoid with this property: any unital map $A \to R$ with $R$ a unital $\E_1$ monoid factors uniquely through $J_\alpha$ as an $\mathbb{E}_1$ map $J_\alpha \to R$.
\[\begin{tikzcd}[column sep={4em,between origins}]
    \one \arrow[r, "\alpha"] \arrow[rd, "e_R"'] & A \arrow[d] \arrow[r, "c"] & J_\alpha \arrow[ld, "\exists !", dashed] \\
                                                & R                          &                                         
    \end{tikzcd}\]

    There is hence a canonical map $c: A \to J_\alpha$, compatible with the unit as $c \circ \alpha \simeq \overline{\alpha}$. Put differently, $J_\alpha$ is the free $\mathbb{E}_1$ monoid on the $\mathbb{E}_0$ monoid $\alpha: \one \to A$. 
\end{constr}

\begin{lem}\label{lem: Jalpha existence}
    Given a map $\alpha: \one \to A$ in $\CC$, there exists $J_\alpha \in \Alg_{\mathbb{E}_1}(\CC)$ satisfying the universal property of Construction \ref{constr: Jalpha}. 
\end{lem}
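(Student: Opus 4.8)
The plan is to realise $J_\alpha$ as the value of a free functor, i.e.\ as the image of $\alpha$ under a left adjoint to the forgetful functor that discards the multiplication. Recall that an $\E_0$ algebra in $\CC$ is the same datum as an object under the unit, so $\Alg_{\E_0}(\CC) \simeq \CC_{\one/}$; in particular $\Alg_{\E_0}(\CC)$ is presentable, and so is $\Alg_{\E_1}(\CC)$ since $\CC$ is presentably symmetric monoidal (\cite{ha}, Section 3.2). The operad map $\E_0 \to \E_1$ induces a forgetful functor $\oblv: \Alg_{\E_1}(\CC) \to \Alg_{\E_0}(\CC)$ which retains the underlying object together with its unit map but forgets the multiplication. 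Comparing with Construction \ref{constr: Jalpha}, the Lemma is precisely the statement that $\oblv$ admits a left adjoint $\Free_{\E_1}: \Alg_{\E_0}(\CC) \to \Alg_{\E_1}(\CC)$; granting this, one sets $J_\alpha := \Free_{\E_1}(\alpha)$, the free $\E_1$ monoid on the $\E_0$ monoid $\alpha$, and takes $c: A \to J_\alpha$ to be the component at $\alpha$ of the unit of the adjunction.

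To construct the left adjoint I would invoke the adjoint functor theorem for presentable $\infty$-categories: both source and target of $\oblv$ are presentable, so it suffices to check that $\oblv$ preserves small limits and is accessible. Both properties descend from the forgetful functors down to $\CC$. The forgetful functor $\Alg_{\E_1}(\CC) \to \CC$ creates small limits and preserves sifted colimits (\cite{ha}, Corollary 3.2.2.5), while the forgetful functor $\Alg_{\E_0}(\CC) \simeq \CC_{\one/} \to \CC$ is conservative and, as an undercategory projection, preserves small limits and filtered colimits. Since the triangle of forgetful functors commutes and $\CC_{\one/} \to \CC$ is conservative, a standard diagram chase shows that $\oblv$ preserves small limits and filtered colimits; hence $\oblv$ is limit-preserving and accessible, and so admits a left adjoint $\Free_{\E_1}$.

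It remains to identify $J_\alpha := \Free_{\E_1}(\alpha)$ with the object characterised in Construction \ref{constr: Jalpha}. The adjunction furnishes, for each $R \in \Alg_{\E_1}(\CC)$, a natural equivalence $\Map_{\Alg_{\E_1}(\CC)}(J_\alpha, R) \simeq \Map_{\Alg_{\E_0}(\CC)}(\alpha, \oblv R)$. Under $\Alg_{\E_0}(\CC) \simeq \CC_{\one/}$, a point of the right-hand side is exactly a morphism $A \to R$ in $\CC$ equipped with a homotopy between the composite $\one \xrightarrow{\alpha} A \to R$ and the unit $e_R$ of $R$ --- that is, a ``unital map from $A$ to the $\E_1$ monoid $R$'' in the sense of the Construction. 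The unit map $c: A \to J_\alpha$ is itself such a unital map (being a morphism of $\E_0$ algebras it satisfies $c \circ \alpha \simeq \overline{\alpha}$), and naturality of the equivalence above in $R$ says precisely that every unital map $A \to R$ factors through $c$ uniquely as an $\E_1$ map $J_\alpha \to R$. This is the universal property of Construction \ref{constr: Jalpha}.

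I do not expect a substantial obstacle: the work is the verification of the adjoint functor theorem hypotheses, which is routine once the behaviour of forgetful functors on $\infty$-operadic algebra categories is in hand. The only mild care needed is the passage between the abstract universal property of $\Free_{\E_1}$ and the concrete formulation in Construction \ref{constr: Jalpha}, via $\Alg_{\E_0}(\CC) \simeq \CC_{\one/}$. Two alternatives are worth recording: one can instead cite directly the existence of operadic left Kan extension along $\E_0 \to \E_1$ for presentably symmetric monoidal $\CC$ (\cite{ha}, Section 3.1.3), bypassing the adjoint functor theorem entirely; or, when $\CC$ is stable, one can present $J_\alpha$ explicitly as a sifted colimit of tensor algebras $\coprod_{n \geq 0} X^{\otimes n}$, using that every $\E_0$ algebra is a sifted colimit of free ones of the form $\one \oplus X$ and that $\Free_{\E_1}$ preserves colimits.
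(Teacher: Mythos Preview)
Your proposal is correct and follows essentially the same approach as the paper: both define $J_\alpha$ as the image of $\alpha$ under the left adjoint to the forgetful functor $\Alg_{\E_1}(\CC) \to \Alg_{\E_0}(\CC)$, verify the existence of this adjoint via the adjoint functor theorem (limit preservation and accessibility, the latter via sifted/filtered colimits), and read off the universal property from the adjunction with $c$ as the unit. Your version is somewhat more detailed in justifying the limit and filtered-colimit preservation via the conservative triangle of forgetful functors to $\CC$, and you record useful alternative constructions, but the substance is the same.
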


\begin{proof}
    The $\mathbb{E}_1$ algebra $J_\alpha$ is $j(\alpha: \one \to A)$ where $j$ is left adjoint to the forgetful functor $F: \Alg_{\E_1}(\CC) \to \Alg_{\E_0}(\CC)$. Indeed, given $j \dashv F$, and $R \in \Alg_{\E_1}(\CC)$,
    \[\Map_{\E_1}(j(A), R) \simeq \Map_{\E_0}(A, F(R))\]
    as required by the universal property. The requisite $\E_0$ map $c: A \to F(j(A))$ is provided as the unit of the adjunction. So we merely need to verify that this forgetful functor has a left adjoint. By the adjoint functor theorem, we must check that $F$ is accessible and preserves all small limits. Indeed the forgetful functor from $\E_1$ to $\E_0$ algebras preserves small limits and sifted colimits. 
\end{proof}

\begin{defn}\label{def: central}
A map $\alpha: \one \to A$ is \emph{central} if 
\[\alpha \otimes \id_A \simeq \id_A \otimes \alpha : A \to A \otimes A,\] 
i.e. these two maps are homotopic.
\end{defn}

We are not requiring that either of the two maps itself be an equivalence, just that the maps agree with one another. Any idempotent map is central (see Definition \ref{def: idempotent}), but the converse does not hold in general.

In Corollary \ref{cor: Jalpha is idempotent} we establish that $J_\alpha$ is idempotent, but before we can do so we need a few technical Lemmas.

    \begin{lem}\label{lem: still central}
        If $\alpha$ is central, so is the induced unit map $\overline{\alpha}: \one \to J_\alpha$.
    \end{lem}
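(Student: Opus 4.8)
I would prove centrality of $\overline{\alpha}\colon \one \to J_\alpha$ by reducing it to the centrality of $\alpha$ itself, exploiting the universal property that characterises $J_\alpha$. The key observation is that the two maps $\overline{\alpha}\otimes \id_{J_\alpha}$ and $\id_{J_\alpha}\otimes \overline{\alpha}\colon J_\alpha \to J_\alpha \otimes J_\alpha$ are both $\E_0$-algebra maps (indeed maps of left $J_\alpha$-modules in an appropriate sense), and one wants to check they agree. Rather than comparing them directly, I would compare the two composites with $c\colon A \to J_\alpha$, i.e.\ look at $A \xrightarrow{c} J_\alpha \xrightarrow{\overline{\alpha}\otimes \id} J_\alpha \otimes J_\alpha$ versus $A \xrightarrow{c} J_\alpha \xrightarrow{\id \otimes \overline{\alpha}} J_\alpha \otimes J_\alpha$, and use a universal-property argument to promote a homotopy between these to a homotopy between the original maps.

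First I would set up the target $J_\alpha \otimes J_\alpha$ as an $\E_1$-algebra (tensor product of $\E_1$-algebras, which makes sense because $\CC$ is symmetric monoidal—actually one only needs it as an $\E_0$-algebra for the universal property, with unit $\overline{\alpha}\otimes\overline{\alpha}$). Both $\overline{\alpha}\otimes \id_{J_\alpha}$ and $\id_{J_\alpha}\otimes\overline{\alpha}$ are $\E_1$-algebra maps $J_\alpha \to J_\alpha \otimes J_\alpha$ sending the unit to the unit, hence by the universal property of Construction \ref{constr: Jalpha} each is determined by its restriction along $c$ to an $\E_0$-map $A \to J_\alpha \otimes J_\alpha$. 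So it suffices to produce a homotopy of $\E_0$-maps between $(\overline{\alpha}\otimes \id_{J_\alpha})\circ c$ and $(\id_{J_\alpha}\otimes \overline{\alpha})\circ c$. Now $(\overline{\alpha}\otimes\id_{J_\alpha})\circ c$ factors as $A \xrightarrow{\alpha} A\otimes A \xrightarrow{?}\cdots$—more precisely, using $c\circ\alpha \simeq \overline{\alpha}$, I would rewrite $(\overline{\alpha}\otimes\id_{J_\alpha})\circ c \simeq (\id_{J_\alpha}\otimes c)\circ(\overline{\alpha}\otimes\id_A)\circ(\text{unit of }A) $ and similarly for the other side, ultimately tracing both back through $\alpha\otimes\id_A$ and $\id_A\otimes\alpha$ on $A \to A\otimes A$, which are homotopic by hypothesis. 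Chasing this identification through the functoriality of $-\otimes-$ and the compatibility $c\circ\alpha\simeq\overline{\alpha}$ gives the required homotopy of $\E_0$-maps.

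**The main obstacle.** The delicate point is bookkeeping the $\E_0$-structures: the universal property of $J_\alpha$ upgrades $\E_0$-maps out of $A$ to $\E_1$-maps out of $J_\alpha$, but to apply it I must verify that $\overline{\alpha}\otimes\id_{J_\alpha}$ and $\id_{J_\alpha}\otimes\overline{\alpha}$ genuinely are $\E_1$-algebra maps into $J_\alpha\otimes J_\alpha$ equipped with the correct algebra structure, and that "restriction along $c$" is the right inverse equivalence $\Map_{\E_1}(J_\alpha, R)\xrightarrow{\simeq}\Map_{\E_0}(A, R)$—this is exactly the content of Lemma \ref{lem: Jalpha existence}, so it is available. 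The genuinely homotopy-coherent part—that a homotopy $\alpha\otimes\id_A \simeq \id_A\otimes\alpha$ can be transported coherently enough to conclude—should be handled by staying entirely within mapping spaces: I never construct a homotopy by hand, I only ever invoke equivalences of mapping spaces and the stated homotopy. I expect the write-up to be short once the diagram
\[
\begin{tikzcd}
\one \arrow[r, "\alpha"] \arrow[d, "\alpha"'] & A \arrow[d, "\id_A \otimes \alpha"] \arrow[r, "c"] & J_\alpha \arrow[d, "\id_{J_\alpha}\otimes \overline{\alpha}"] \\
A \arrow[r, "\alpha \otimes \id_A"'] & A \otimes A \arrow[r, "c \otimes c"'] & J_\alpha \otimes J_\alpha
\end{tikzcd}
\]
and its mirror image (with the roles of the two tensor factors swapped) are both seen to commute, after which centrality of $\alpha$ forces the two maps $J_\alpha \to J_\alpha\otimes J_\alpha$ to agree on the image of $c$, hence everywhere by the universal property.
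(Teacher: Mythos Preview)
Your proposal is correct and follows essentially the same approach as the paper: both arguments use the universal property of $J_\alpha$ as the free $\E_1$-algebra on the $\E_0$-algebra $(A,\alpha)$, applied with target $R = J_\alpha \otimes J_\alpha$, to reduce the comparison of $\overline{\alpha}\otimes\id_{J_\alpha}$ and $\id_{J_\alpha}\otimes\overline{\alpha}$ to the comparison of their restrictions along $c$, which in turn factor through $(c\otimes c)\circ(\alpha\otimes\id_A)$ and $(c\otimes c)\circ(\id_A\otimes\alpha)$ and are therefore homotopic by the centrality hypothesis on $\alpha$. The paper's proof is terser and leaves implicit the verification you single out as the ``main obstacle'' (that both maps are genuinely $\E_1$ into $J_\alpha\otimes J_\alpha$ with its tensor-product algebra structure), but your more careful bookkeeping of this point is warranted and correct.
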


    \begin{proof}
        By universality of $J_\alpha$, the maps $\id \otimes \overline{\alpha}$ and $\overline{\alpha} \otimes \id$ are the unique (unital monoid) maps extending
        \[\begin{tikzcd}[row sep={0.5em}]
            A \arrow[r, "\id \otimes \alpha"] & A \otimes A \arrow[r] & J_\alpha \otimes J_\alpha,\\ 
            A \arrow[r, "\alpha \otimes \id"] & A \otimes A \arrow[r] & J_\alpha \otimes J_\alpha
        \end{tikzcd}\]
        respectively. But by centrality of $\alpha$, these maps are homotopic.
    \end{proof}
    
    \begin{lem}\label{lem: key iso}
        Let $\alpha$ be central, and assume we are given a unital map $\mu: A \otimes M \to M$, that is the diagram 
        \[\begin{tikzcd}[column sep={6em,between origins}]
            M \arrow[r, "\alpha \otimes \id_M"] \arrow[rd, "\id_M"'] & A \otimes M \arrow[d, "\mu"] \\
                                                                     & M                           
            \end{tikzcd}\]
        commutes. Then $\alpha \otimes \id_M$ and $\mu$ are inverse isomorphisms.
    \end{lem}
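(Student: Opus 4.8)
The plan is to check the two composites directly. One direction is immediate: the hypothesis that $\mu$ is unital is precisely the homotopy $\mu \circ (\alpha \otimes \id_M) \simeq \id_M$ expressed by the commuting triangle in the statement. So the whole content of the lemma is the reverse composite $(\alpha \otimes \id_M) \circ \mu \colon A \otimes M \to A \otimes M$, which we must identify with $\id_{A \otimes M}$, and this is exactly the formal manipulation already foreshadowed at the end of the proof of Lemma \ref{lem: En refinement}.

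First I would rewrite $(\alpha \otimes \id_M) \circ \mu$, using the left unitor identifications $\one \otimes M \simeq M$ and $\one \otimes (A \otimes M) \simeq A \otimes M$, as the map $\alpha \otimes \mu$. Next I would factor this through the outer $A$-slot: $\alpha \otimes \mu \simeq (\id_A \otimes \mu) \circ (\alpha \otimes \id_{A \otimes M})$, i.e. multiply by $\alpha$ on the left and then apply $\mu$ to the inner $A \otimes M$. Now centrality of $\alpha$ enters: from $\alpha \otimes \id_A \simeq \id_A \otimes \alpha$ we get $\alpha \otimes \id_{A \otimes M} = (\alpha \otimes \id_A) \otimes \id_M \simeq (\id_A \otimes \alpha) \otimes \id_M$. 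Substituting gives $(\alpha \otimes \id_M) \circ \mu \simeq (\id_A \otimes \mu) \circ (\id_A \otimes \alpha \otimes \id_M) \simeq \id_A \otimes \bigl(\mu \circ (\alpha \otimes \id_M)\bigr)$, and the bracketed map is $\id_M$ by unitality, so $(\alpha \otimes \id_M) \circ \mu \simeq \id_{A \otimes M}$. Together with the free direction, $\alpha \otimes \id_M$ and $\mu$ are mutually inverse equivalences. In a careful write-up I would record this as a single commuting diagram, in the style of the diagram used for Lemma \ref{lem: En refinement}.

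The only real obstacle is bookkeeping: justifying the passage from $(\alpha \otimes \id_M) \circ \mu$ to $\alpha \otimes \mu$ via the coherence isomorphisms relating $\one \otimes M$, $M$, $A \otimes M$ and $\one \otimes A \otimes M$, and making sure the homotopy coming from centrality is tensored with $\id_M$ in the correct (innermost $M$) slot. No use of the symmetry of the monoidal structure is needed — only centrality of $\alpha$ and the two given homotopies — which is the point of isolating this as a standalone lemma.
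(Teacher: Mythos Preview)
Your proof is correct and is essentially identical to the paper's: both reduce to checking $(\alpha \otimes \id_M) \circ \mu \simeq \id_{A \otimes M}$ by factoring through $A \otimes A \otimes M$, invoking centrality to swap $\alpha \otimes \id_{A \otimes M}$ for $\id_A \otimes \alpha \otimes \id_M$, and then using unitality to collapse $\id_A \otimes (\mu \circ (\alpha \otimes \id_M))$ to the identity. The paper packages this as the commuting square you describe at the end, so even your suggested presentation matches.
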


    Lemma \ref{lem: key iso} is useful because it allows us to check that a pair of maps are mutually inverse by computing only one of the two compositions. 

    \begin{proof}
        We need only check the other composition, that is we want to show $A \otimes M \xrightarrow{\mu} M \xrightarrow{\alpha \otimes \id} A \otimes M$ is the identity. Consider the diagram
        \[\begin{tikzcd}[column sep={9em,between origins}, row sep={6em,between origins}]
            A \otimes M \arrow[r, "\mu"] \arrow[d, "\alpha \otimes \id_{A \otimes M}", bend left] \arrow[d, "\id_A\otimes \alpha \otimes \id_M"', bend right] & M \arrow[d, "\alpha \otimes \id_M"] \\
            A \otimes A \otimes M \arrow[r, "\id_A \otimes \mu"']                                                                                             & A \otimes M                        
            \end{tikzcd}\]
        which commutes because the two vertical maps $A \otimes M \to A \otimes A \otimes M$ are homotopic by centrality of $\alpha$. The composite around the top of the diagram is the one we are interested in, and around the bottom is $\id_A \otimes (\mu \circ (\alpha \otimes \id_M)) = \id_A \otimes \id_M$ by assumption. 
    \end{proof}

    \begin{cor}\label{cor: two more isos}
        If $\alpha$ is central then the maps 
        \begin{gather*}
            \alpha \otimes \id_{J_\alpha}: J_\alpha \to A \otimes J_\alpha\\
            \overline{\alpha} \otimes \id_{J_\alpha}: J_\alpha \to J_\alpha \otimes J_\alpha
        \end{gather*}
        are equivalences. 
    \end{cor}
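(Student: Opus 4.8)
The plan is to deduce both statements from Lemma \ref{lem: key iso}, applied once with the central map $\alpha: \one \to A$ itself and once with the central map $\overline{\alpha}: \one \to J_\alpha$ provided by Lemma \ref{lem: still central}.

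For the first map, I would take $M = J_\alpha$ and produce the required unital map $\mu: A \otimes J_\alpha \to J_\alpha$ by restricting the $\E_1$-multiplication $m: J_\alpha \otimes J_\alpha \to J_\alpha$ along the canonical map $c: A \to J_\alpha$ from Construction \ref{constr: Jalpha}; that is, set $\mu := m \circ (c \otimes \id_{J_\alpha})$. To check this is unital in the sense of Lemma \ref{lem: key iso}, precompose with $\alpha \otimes \id_{J_\alpha}$: using $c \circ \alpha \simeq \overline{\alpha}$ and unitality of $m$ with unit $\overline{\alpha}$, we get
\[\mu \circ (\alpha \otimes \id_{J_\alpha}) \simeq m \circ \bigl((c \circ \alpha) \otimes \id_{J_\alpha}\bigr) \simeq m \circ (\overline{\alpha} \otimes \id_{J_\alpha}) \simeq \id_{J_\alpha}.\]
Since $\alpha$ is central, Lemma \ref{lem: key iso} then shows that $\alpha \otimes \id_{J_\alpha}$ and $\mu$ are mutually inverse equivalences.

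For the second map, I would apply Lemma \ref{lem: key iso} directly with the role of $\alpha$ played by $\overline{\alpha}: \one \to J_\alpha$ (central by Lemma \ref{lem: still central}), the role of $A$ played by $J_\alpha$, again $M = J_\alpha$, and $\mu$ the multiplication $m: J_\alpha \otimes J_\alpha \to J_\alpha$ itself. This $\mu$ is unital because $\overline{\alpha}$ is by definition the unit of $J_\alpha$, so $m \circ (\overline{\alpha} \otimes \id_{J_\alpha}) \simeq \id_{J_\alpha}$, and Lemma \ref{lem: key iso} immediately yields that $\overline{\alpha} \otimes \id_{J_\alpha}$ is an equivalence with inverse $m$.

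There is no genuine obstacle here: the content is entirely carried by the two preceding lemmas, and the only point requiring care is verifying that the map $\mu$ in the first part really satisfies the commutativity hypothesis of Lemma \ref{lem: key iso} — which is exactly the displayed computation above. These two equivalences are what will be used to exhibit $\overline{\alpha}$ as an idempotent map in Corollary \ref{cor: Jalpha is idempotent}.
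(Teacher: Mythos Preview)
Your proposal is correct and essentially identical to the paper's own proof: both parts are deduced from Lemma~\ref{lem: key iso}, using $\mu = m \circ (c \otimes \id_{J_\alpha})$ for the first claim and invoking centrality of $\overline{\alpha}$ (Lemma~\ref{lem: still central}) together with unitality of the multiplication $m$ for the second.
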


    By symmetry, it is clear that $\id_{J_\alpha} \otimes \alpha$ and $\id_{J_\alpha} \otimes \overline{\alpha}$ must also be equivalences. Indeed, $\overline{\alpha}$ is central so this second map is homotopic to $\overline{\alpha} \otimes \id_{J_\alpha}$.

    \begin{proof}
	    For the first claim, it is enough by Lemma \ref{lem: key iso} to provide a map $A \otimes J_\alpha \to J_\alpha$ and check that the composite
        $J_\alpha \xrightarrow{\alpha \otimes \id} A \otimes J_\alpha \longrightarrow J_\alpha$
        is $\id_{J_\alpha}$. Consider the diagram 
        \[\begin{tikzcd}[column sep={6em,between origins}, row sep={6em,between origins}]
            J_\alpha    \arrow[d, "\alpha \otimes \id"'] 
                        \arrow[rd, "\overline{\alpha} \otimes \id"' sloped] 
                        \arrow[rrd, "\id"] \\
            A \otimes J_\alpha \arrow[r, "c \otimes \id"']
            & J_\alpha \otimes J_\alpha \arrow[r, "\mu"'] 
            & J_\alpha
            \end{tikzcd}\]
        where $\mu$ is the multiplication map in $J_\alpha$, coming from its $\E_1$-ring structure. This commutes because $\overline{\alpha}$ is the unit in $J_\alpha$. 
        
        For the second claim, the inverse to $\overline{\alpha} \otimes \id_{J_\alpha}$ is the multiplication map $\mu$ in $J_\alpha$, and the composite $\mu \circ (\overline{\alpha} \otimes \id_{J_\alpha}): J_\alpha \to J_\alpha$ is the identity because $\overline{\alpha}$ is the unit of $J_\alpha$. The other composite can be shown to be the identity by writing out the same diagram as in the proof of Lemma \ref{lem: key iso} with $J_\alpha$ in place of both $A$ and $M$, and $\overline{\alpha}$ in place of $\alpha$. The key fact we are exploiting here is that $\overline{\alpha}$ is itself central, see Lemma \ref{lem: still central}.
    \end{proof}

    \begin{lem}\label{lem: idempotent characterisation}
            Let $X$ be an object of a symmetric monoidal category $\CC$. Given maps 
            \begin{gather*}
                e: \one \to X\\
                m: X \otimes X \to X
            \end{gather*}
            where $e$ is central and $m \circ (e \otimes \id) \simeq \id_X$, it follows that $X$ is an idempotent ($\E_\infty$) algebra. 
        \end{lem}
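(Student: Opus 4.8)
The plan is to bootstrap from Lemma \ref{lem: key iso} and then invoke the standard theory of idempotent objects recalled after Definition \ref{def: idempotent}. First I would apply Lemma \ref{lem: key iso} taking for its ``$\alpha$'' our map $e \colon \one \to X$ (so that the object named $A$ in that lemma is our $X$), taking for its ``$M$'' the object $X$, and taking for its ``$\mu$'' the map $m \colon X \otimes X \to X$. The hypotheses are exactly met: $e$ is central by assumption, and the unitality condition $m \circ (e \otimes \id_X) \simeq \id_X$ is precisely our hypothesis. Lemma \ref{lem: key iso} then tells us that $e \otimes \id_X$ and $m$ are mutually inverse equivalences between $X$ and $X \otimes X$.

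In particular $e \otimes \id_X \colon X \to X \otimes X$ is an equivalence. Since $\CC$ is symmetric monoidal, postcomposing with the autoequivalence of $X \otimes X$ that swaps the two factors shows that $\id_X \otimes e$ is an equivalence as well, so by Definition \ref{def: idempotent} the map $e$ is idempotent; that is, $e$ exhibits $X$ as an idempotent object. By \cite{ha}, Proposition 4.8.2.7 --- see the discussion following Definition \ref{def: idempotent}, and Lemma \ref{lem: class of smashing locs} --- such an $X$ carries a canonical $\E_\infty$ algebra structure whose unit is $e$ and whose multiplication is inverse to $e \otimes \id_X$, and with respect to which $X$ is an idempotent algebra. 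Since inverses in an $\infty$-category are unique up to contractible choice and we have already identified $m$ as the inverse of $e \otimes \id_X$, this $\E_\infty$ multiplication is $m$, so the structure we obtain genuinely refines the given data $(e, m)$.

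I do not expect any real obstacle: all of the homotopy-coherent bookkeeping is packaged into Lemma \ref{lem: key iso}, which was designed precisely for this kind of one-sided verification, while the passage from ``$e \otimes \id_X$ is an equivalence'' to ``$X$ is an idempotent $\E_\infty$ algebra'' is the content of \cite{ha}, Section 4.8.2. The one point that deserves a sentence of care is the compatibility of the canonical $\E_\infty$ structure with the given multiplication $m$, which I would justify by uniqueness of inverses as above.
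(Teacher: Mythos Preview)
Your proposal is correct and follows essentially the same route as the paper's proof: apply Lemma~\ref{lem: key iso} with $\alpha = e$, $M = X$, $\mu = m$ to conclude that $e \otimes \id_X$ is an equivalence, deduce that $e$ is idempotent, and then invoke \cite{ha}, Proposition~4.8.2.7 to obtain the $\E_\infty$ structure. Your additional remark that the resulting multiplication agrees with $m$ by uniqueness of inverses is a nice clarification that the paper leaves implicit.
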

    
        \begin{proof}
            Since $e$ is central and $m \circ (e \otimes \id) \simeq \id_X$, by Lemma \ref{lem: key iso} we know $e \otimes \id$ and $m$ are inverse equivalences. Hence $e$ is an idempotent map so $X$ is an idempotent algebra with unit $e$. See \cite{ha}, Proposition 4.8.2.7 and also the discussion immediately preceding Lemma \ref{lem: class of smashing locs}.
        \end{proof}

    \begin{cor}\label{cor: Jalpha is idempotent}
        If $\alpha$ is central then $\overline{\alpha}$ is idempotent and hence $J_\alpha$ is an idempotent algebra. The functor $\CC \to \CC$ given by $E \mapsto J_\alpha \otimes E$ is hence a smashing localisation, whose essential image is those objects of $\CC$ which admit the structure of a $J_\alpha$-module.
    \end{cor}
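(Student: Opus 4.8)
The plan is to deduce everything from the technical lemmas already assembled, with essentially no new computation. The crucial observation is that $J_\alpha$ comes equipped with exactly the data that Lemma \ref{lem: idempotent characterisation} takes as input: a unit map $\overline{\alpha}: \one \to J_\alpha$, and a multiplication $\mu: J_\alpha \otimes J_\alpha \to J_\alpha$ coming from its $\E_1$-structure.

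First I would verify the two hypotheses of Lemma \ref{lem: idempotent characterisation} for $X = J_\alpha$, $e = \overline{\alpha}$, $m = \mu$. Centrality of $\overline{\alpha}$ is precisely the content of Lemma \ref{lem: still central}. The relation $\mu \circ (\overline{\alpha} \otimes \id_{J_\alpha}) \simeq \id_{J_\alpha}$ is just the left unit axiom for the $\E_1$-algebra $J_\alpha$. Lemma \ref{lem: idempotent characterisation} then yields that $J_\alpha$ is an idempotent ($\E_\infty$) algebra whose unit map is $\overline{\alpha}$; in particular $\overline{\alpha}$ is an idempotent map in the sense of Definition \ref{def: idempotent}. (One could also argue more directly: Corollary \ref{cor: two more isos} already establishes that $\overline{\alpha} \otimes \id_{J_\alpha}$ is an equivalence, which by Definition \ref{def: idempotent} says $\overline{\alpha}$ is idempotent. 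I would route through Lemma \ref{lem: idempotent characterisation} because it was designed for exactly this situation and keeps the bookkeeping cleanest.)

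Next I would invoke the standard correspondence recorded in Lemma \ref{lem: class of smashing locs} (and \cite{ha}, Propositions 4.8.2.7 and 4.8.2.9): an idempotent algebra, equivalently an idempotent map $\overline{\alpha}: \one \to J_\alpha$, produces a smashing localisation $E \mapsto J_\alpha \otimes E$ of $\CC$. The identification of the essential image is then immediate from Proposition \ref{prop: smashing loc modules over unit}: for a smashing localisation $L$ the forgetful functor $\Mod_{L\one}(\CC)^\otimes \to \CC^\otimes$ is a symmetric monoidal equivalence onto $(L\CC)^\otimes$, and here $L\one \simeq J_\alpha$, so the local objects are exactly those admitting the structure of a $J_\alpha$-module.

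I do not expect any genuine obstacle — this corollary is pure assembly of prior results. The one point requiring a moment's care is matching up the units: the idempotent algebra structure furnished by Lemma \ref{lem: idempotent characterisation} has unit the prescribed map $e = \overline{\alpha}$, so the smashing localisation it generates really does have $L\one \simeq J_\alpha$ with the expected unit map, and hence the module-theoretic description of $L\CC$ transports over unchanged.
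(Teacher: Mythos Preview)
Your proposal is correct and matches the paper's proof essentially line for line: both feed $\overline{\alpha}$ and the $\E_1$-multiplication on $J_\alpha$ into Lemma \ref{lem: idempotent characterisation} (with centrality of $\overline{\alpha}$ supplied by Lemma \ref{lem: still central} and the unit axiom), then quote Proposition \ref{prop: smashing loc modules over unit} for the essential image. The paper additionally remarks that the resulting $\E_\infty$ structure on $J_\alpha$ agrees with its original $\E_1$ structure via Corollary \ref{cor: very useful En refinement}, but this is a side observation not needed for the stated corollary.
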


    \begin{proof}
        We use the characterisation of idempotent algebras given in Lemma \ref{lem: idempotent characterisation}, and Corollary \ref{cor: two more isos} supplies the requisite properties for $\bar{\alpha}: \one \to J_\alpha$.

        By Corollary \ref{cor: two more isos}, we know $\overline{\alpha}$ is an idempotent map. Therefore $J_\alpha$ is an idempotent algebra for its unit map $\overline{\alpha}$, and the multiplication $(\overline{\alpha} \otimes \id)^{-1}: J_\alpha \otimes J_\alpha \to J_\alpha$ refines to an $\mathbb{E}_\infty$ algebra structure on $J_\alpha$. This is compatible with the $\mathbb{E}_1$ structure which $J_\alpha$ was constructed to have since they have the same unit map. (By Corollary \ref{cor: very useful En refinement} (\ref{cor sub: unique En str on idem}), any $\mathbb{E}_1$ structure on $J_\alpha$ which has the same $\mathbb{E}_0$ restriction, i.e. the same unit, as the canonical $\mathbb{E}_\infty$ structure must itself be the canonical induced $\mathbb{E}_1$ structure.)
    
        For the characterisation of the essential image of $J_\alpha$-localisation, refer to Proposition 4.8.2.10 of \cite{ha}, included in this document as Proposition \ref{prop: smashing loc modules over unit}. 
    \end{proof}

    \begin{lem}\label{lem: check local on A}
        Let $\alpha$ be central. An object $X \in \CC$ is $J_\alpha$-local if and only if the map $\alpha \otimes \id_X: X \to A \otimes X$ is an equivalence.
    \end{lem}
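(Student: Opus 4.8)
The plan is to combine two facts. First, since $J_\alpha$ is idempotent (Corollary~\ref{cor: Jalpha is idempotent}), the $J_\alpha$-local objects are exactly the $J_\alpha$-modules (Proposition~\ref{prop: smashing loc modules over unit}), and $X$ is local if and only if the localisation map $\overline{\alpha}\otimes\id_X\colon X\to J_\alpha\otimes X$ is an equivalence. Second, Lemma~\ref{lem: key iso} applies both to $\alpha$ and to the central map $\overline{\alpha}$ (central by Lemma~\ref{lem: still central}), so it suffices to produce, in each direction, the single unital map needed to invoke it. Write $c\colon A\to J_\alpha$ for the canonical map with $c\circ\alpha\simeq\overline{\alpha}$.

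For the forward direction, suppose $X$ is $J_\alpha$-local, so it carries a $J_\alpha$-module structure $\mu\colon J_\alpha\otimes X\to X$. Set $\nu:=\mu\circ(c\otimes\id_X)\colon A\otimes X\to X$. Unitality of $\mu$ together with $c\circ\alpha\simeq\overline{\alpha}$ gives $\nu\circ(\alpha\otimes\id_X)\simeq\mu\circ(\overline{\alpha}\otimes\id_X)\simeq\id_X$, so $\nu$ exhibits $X$ as a unital $A$-module in the sense of Lemma~\ref{lem: key iso}. That lemma forces $\alpha\otimes\id_X$ to be an equivalence, with inverse $\nu$.

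For the converse, suppose $\alpha\otimes\id_X$ is an equivalence; I must manufacture a $J_\alpha$-module structure on $X$. Since $\CC$ is presentably symmetric monoidal it is closed, so $X$ has an endomorphism $\E_1$-algebra $\operatorname{End}(X)$, with the features that $\E_1$-algebra maps $J_\alpha\to\operatorname{End}(X)$ correspond to $J_\alpha$-module structures on $X$, and arbitrary maps $A\to\operatorname{End}(X)$ correspond by adjunction to maps $A\otimes X\to X$. Taking $\nu:=(\alpha\otimes\id_X)^{-1}$, the identity $\nu\circ(\alpha\otimes\id_X)\simeq\id_X$ says precisely that the adjoint map $A\to\operatorname{End}(X)$ is unital, that is, is a map of $\E_0$-algebras out of $\alpha\colon\one\to A$. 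The universal property of $J_\alpha$ (Construction~\ref{constr: Jalpha}) extends it uniquely to an $\E_1$-algebra map $J_\alpha\to\operatorname{End}(X)$, i.e.\ to a unital $J_\alpha$-module structure $\mu\colon J_\alpha\otimes X\to X$. Now Lemma~\ref{lem: key iso}, applied to the central map $\overline{\alpha}$ with this $\mu$, shows $\overline{\alpha}\otimes\id_X$ is an equivalence, so $X$ is $J_\alpha$-local.

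The forward direction is essentially formal. The crux is the converse: the free-$\E_1$-algebra universal property of $J_\alpha$ only sees unital maps out of $A$ into $\E_1$-algebras, so the hypothesis that $\alpha\otimes\id_X$ is invertible has to be repackaged as such a map into $\operatorname{End}(X)$ before Lemma~\ref{lem: key iso} can finish the job. One could instead try to build $\mu$ by hand as a telescope along iterates of $\alpha\otimes\id_X$ using centrality, but routing through $\operatorname{End}(X)$ is cleaner and avoids any hypothesis that $\CC$ be stable.
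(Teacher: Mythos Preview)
Your proof is correct and the converse direction is essentially identical to the paper's: both route through the internal hom $\operatorname{End}(X)=\Hom(X,X)$, use the adjunction to turn the inverse of $\alpha\otimes\id_X$ into an $\E_0$ map $A\to\operatorname{End}(X)$, extend to $J_\alpha$ via its universal property, and finish with Lemma~\ref{lem: key iso} applied to $\overline{\alpha}$.

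Your forward direction differs slightly. The paper argues by a two-out-of-three in the commuting square
\[\begin{tikzcd}
X \arrow[r,"\alpha\otimes\id_X"] \arrow[d,"\overline{\alpha}\otimes\id_X"'] & A\otimes X \arrow[d,"\id_A\otimes\overline{\alpha}\otimes\id_X"] \\
J_\alpha\otimes X \arrow[r,"\alpha\otimes\id_{J_\alpha}\otimes\id_X"'] & A\otimes J_\alpha\otimes X,
\end{tikzcd}\]
using that $\alpha\otimes\id_{J_\alpha}$ is an equivalence by Corollary~\ref{cor: two more isos}. You instead pull back the $J_\alpha$-module action along $c$ and apply Lemma~\ref{lem: key iso} directly to $\alpha$. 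Your version is arguably more uniform, since both directions then rest on the same lemma; the paper's diagram chase avoids invoking the module structure explicitly but requires the earlier computation of $\alpha\otimes\id_{J_\alpha}$.
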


    \begin{proof}
        Since $J_\alpha$ is idempotent, we know that local objects are precisely $J_\alpha$-modules (Proposition \ref{prop: smashing loc modules over unit}), and an object $X$ is local if and only if $\overline{\alpha} \otimes \id_X: X \to J_\alpha \otimes X$ is an equivalence. But in fact, since $J_\alpha$ came from a central map, we can alternately check locality using $\alpha$.

        Suppose first that $X$ is local, so $\overline{\alpha} \otimes \id_X: X \to J_\alpha \otimes X$ is an equivalence. We have a commuting diagram 
        \[\begin{tikzcd}[column sep={6em,between origins}, row sep={6em,between origins}]
            X & & {A \otimes X} \\
            {J_\alpha \otimes X} & & {A \otimes J_\alpha \otimes X,}
            \arrow["{\alpha \otimes \id_X}", from=1-1, to=1-3]
            \arrow["{\overline{\alpha} \otimes \id_X}"', "\simeq", from=1-1, to=2-1]
            \arrow["{\id_A\otimes \overline{\alpha} \otimes \id_X}", "\simeq"', from=1-3, to=2-3]
            \arrow["{\alpha \otimes \id_{J_\alpha} \otimes \id_X}"', "\simeq", from=2-1, to=2-3]
        \end{tikzcd}\]
        where $\overline{\alpha} \otimes \id_X$ is an equivalence because $X$ is local, and $\alpha \otimes \id_{J_\alpha}$ is an equivalence by Corollary \ref{cor: two more isos}. Hence $\alpha \otimes \id_X$ is an equivalence.

        Conversely, suppose $\alpha \otimes \id_X$ is an equivalence and let $\varphi: A \otimes X \to X$ be an inverse.
        Since $\CC$ is presentably symmetric monoidal, it has an internal hom that is right adjoint to the tensor product, i.e. \[\Map_\CC(A, \Hom(B, C)) \simeq \Map_\CC(A \otimes B, C),\]  
        with $\Hom(-,-): \CC \otimes \CC \to \CC$ denoting the internal hom. Then $\Hom(X, X)$ is automatically an $\E_1$ algebra in $\CC$, and $\varphi$ corresponds via the adjunction to an $\E_0$ map $\widetilde{\varphi}: A \to \Hom(X, X)$. Indeed $\widetilde{\varphi}$ is $\E_0$ because the unit of $\Hom(X, X)$ corresponds to $\id_X$ under the adjunction, and we know $\varphi \circ (\alpha \otimes \id_X) \simeq \id_X$, so $\widetilde{\varphi} \circ \alpha$ is the unit of $\Hom(X, X)$. Then by the universal property of $J_\alpha$, we obtain an $\E_1$ map $\widetilde{\psi}: J_\alpha \to \Hom(X, X)$. The adjunction produces a corresponding map $\psi: J_\alpha \otimes X \to X$. Transporting the commuting diagram
        \[\begin{tikzcd}[column sep={5em,between origins}, row sep={5em,between origins}]
            \one & A & {J_\alpha} \\
            & {\Hom(X, X)}
            \arrow["\alpha", from=1-1, to=1-2]
            \arrow[from=1-1, to=2-2]
            \arrow["c", from=1-2, to=1-3]
            \arrow["{\widetilde{\varphi}}"', from=1-2, to=2-2]
            \arrow["{\widetilde{\psi}}", from=1-3, to=2-2]
        \end{tikzcd}\]
        along the adjunction, we obtain the diagram
        \[\begin{tikzcd}[column sep={7em,between origins}, row sep={5em,between origins}]
            X & {A \otimes X} & {J_\alpha \otimes X} \\
            & X,
            \arrow["{\alpha \otimes \id_X}", from=1-1, to=1-2]
            \arrow["{\id_X}"', from=1-1, to=2-2]
            \arrow["{c \otimes \id_X}", from=1-2, to=1-3]
            \arrow["\varphi"', from=1-2, to=2-2]
            \arrow["\psi", from=1-3, to=2-2]
        \end{tikzcd}\]
        and hence $\psi \circ (\overline{\alpha} \otimes \id_X) \simeq \id_X$. By centrality of $\overline{\alpha}$ and Lemma \ref{lem: key iso}, we conclude that $\overline{\alpha} \otimes \id_X$ is an equivalence with inverse $\psi$, so $X$ is local. 
    \end{proof}

    There are some situations where taking the idempotent algebra on a central map gains us nothing new, for example if the central map was already idempotent. A perhaps more interesting example is when the object $A$ has a ring structure. If the unit map for a ring is central, it is automatically idempotent. This perfectly replicates the situation for discrete rings. 

    \begin{cor}\label{cor: Jalpha eq Jalphabar}
        If $\alpha$ happens to be idempotent rather than merely central, then $J_\alpha \simeq A$. It follows that in general, $J_{\overline{\alpha}} \simeq J_\alpha$.
    \end{cor}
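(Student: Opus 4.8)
The plan is to prove the first assertion, $J_\alpha \simeq A$, directly from universal properties, and then obtain the second assertion by applying the first to the map $\overline{\alpha}$.

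Suppose $\alpha\colon \one \to A$ is idempotent. Then $A$ is an idempotent $\E_\infty$-algebra with unit $\alpha$ (the discussion preceding Lemma \ref{lem: class of smashing locs}); restricting gives it a unital $\E_1$-structure, which is the only one by Corollary \ref{cor: very useful En refinement}(\ref{cor sub: unique En str on idem}) and whose underlying $\E_0$-algebra is exactly $\alpha$. I would now invoke Lemma \ref{lem: En refinement} with $R_2 = A$ (an idempotent algebra) and $R_1 = R$ an arbitrary $\E_1$-algebra, taking $n = 1$: forgetting the $\E_1$-structure yields an equivalence $\Map_{\E_1}(A, R) \xrightarrow{\simeq} \Map_{\E_0}(A, R)$, where on the right $A$ carries the $\E_0$-structure $\alpha$. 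Comparing with Construction \ref{constr: Jalpha} and Lemma \ref{lem: Jalpha existence}, this says precisely that $A$, equipped with the $\E_0$-map $\id_A$, solves the same universal problem as $J_\alpha$ with its structure map $c$. Hence by the Yoneda lemma the unique $\E_1$-algebra map $\phi\colon J_\alpha \to A$ with $\oblv(\phi)\circ c \simeq \id_A$ is a two-sided inverse to $c$; in particular the canonical map $c\colon A \to J_\alpha$ is an equivalence.

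An alternative route for the first assertion, avoiding Lemma \ref{lem: En refinement}, is to compare the two associated smashing localisations. By Proposition \ref{prop: smashing loc modules over unit} (and the remarks following it) the local objects for the localisation attached to the idempotent algebra $A$ are the $X$ for which $\alpha\otimes\id_X$ is an equivalence; by Lemma \ref{lem: check local on A} these are also exactly the $J_\alpha$-local objects. Since a smashing localisation is determined by its subcategory of local objects (Lemma \ref{lem: class of smashing locs}), the two localisations coincide, so $A \simeq L\one \simeq J_\alpha$, and one checks $c$ realises this equivalence using that $\Map_{\E_0}(A, J_\alpha) \simeq *$.

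Finally, for the second assertion: when $\alpha$ is merely central, Corollary \ref{cor: Jalpha is idempotent} tells us that $\overline{\alpha}\colon \one \to J_\alpha$ is an idempotent $\E_0$-algebra, so the first assertion applies verbatim with $\alpha$ replaced by $\overline{\alpha}$ and $A$ replaced by $J_\alpha$, giving $J_{\overline{\alpha}} \simeq J_\alpha$. The only point requiring care is the bookkeeping in the second paragraph — matching the $\E_0$-structure appearing in Lemma \ref{lem: En refinement} with the one in the universal property of $J_\alpha$, and promoting ``$A$ and $J_\alpha$ corepresent the same functor'' to ``the specific map $c$ is an equivalence'' — but this is routine naturality.
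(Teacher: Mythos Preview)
Your proposal is correct. Your primary argument---showing via Lemma \ref{lem: En refinement} that $A$ itself satisfies the universal property defining $J_\alpha$, then invoking Yoneda---is exactly the alternative proof the paper records in the Remark immediately following Corollary \ref{cor: Jalpha eq Jalphabar}. The paper's \emph{main} proof is slightly more hands-on: it writes down the $\E_0$ map $c\colon A\to J_\alpha$ and the $\E_1$ map $\phi\colon J_\alpha\to A$ explicitly, refines both to $\E_\infty$ via Lemma \ref{lem: En refinement}, reads off $\phi\circ c\simeq\id_A$ from the universal-property diagram, and then forces $c\circ\phi\simeq\id_{J_\alpha}$ using that the space of $\E_0$-endomorphisms of an idempotent algebra is contractible (Corollary \ref{cor: very useful En refinement}(\ref{cor sub: end of idem})). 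Your Yoneda route is cleaner since it handles both composites at once; the paper's main proof trades that for explicit control over the inverse map. Your second alternative, comparing the two smashing localisations via Lemma \ref{lem: check local on A}, does not appear in the paper and is an independent verification. The deduction of $J_{\overline{\alpha}}\simeq J_\alpha$ from the first part is identical to the paper's.
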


    \begin{proof}
        When $\alpha$ is idempotent, $A$ itself is an idempotent algebra. We know $J_\alpha$ is also an idempotent algebra, by Corollary \ref{cor: Jalpha is idempotent}. We have an $\mathbb{E}_0$ map $c: A \to J_\alpha$ by construction of $J_\alpha$. Moreover, the commuting diagram 
        \[\begin{tikzcd}
            \one \arrow[rd, "\alpha"'] \arrow[r, "\alpha"] & A \arrow[d, "\id"] \arrow[r, "c"] & J_\alpha \arrow[ld, "\exists \phi"] \\
                                                           & A                                 &                                    
            \end{tikzcd}\]
            gives us, by universal property of $J_\alpha$, an $\mathbb{E}_1$ map $\phi: J_\alpha \to A$. By Lemma \ref{lem: En refinement}, both $c$ and $\phi$ refine uniquely to $\mathbb{E}_\infty$. We already know $\phi \circ c \simeq \id_A$. The other composite is an endomorphism of $J_\alpha$ which is at least $\mathbb{E}_0$, so it must be the identity -- by Corollary \ref{cor: very useful En refinement} the space of $\mathbb{E}_0$ endomorphisms of an idempotent algebra is contractible. Hence $c$ and $\phi$ are inverse $\mathbb{E}_\infty$ maps so $J_\alpha \simeq A$.

        Now in the case of general $\alpha$, the map $\overline{\alpha}$ is central and idempotent, so if we repeat the process again to obtain $J_{\overline{\alpha}}$ we have obtained nothing new, and we find that $J_\alpha \simeq J_{\overline{\alpha}}$. That is, the structure map $J_\alpha \to J_{\overline{\alpha}}$ in the universal property of $J_{\overline{\alpha}}$ is forced to be an isomorphism.
    \end{proof}

    \begin{remark}
        Here is another proof of the same fact: we can directly check that $A$ satisfies the universal property of $J_\alpha$ in the case where $\alpha$ is idempotent. We must show that an $\mathbb{E}_0$ map $A \to R_1$ refines uniquely to $\mathbb{E}_1$, where $R_1$ is $\mathbb{E}_1$. But since $A$ is an idempotent algebra, this is part of the content of Lemma \ref{lem: En refinement}.

        One could imagine a similar construction to $J_\alpha$ but producing an $\E_n$ algebra: namely, we could take the left adjoint to the forgetful functor from $E_n$ algebras to $\E_0$ algebras and apply it to a given $\E_0$ algebra $\alpha: \one \to A$, as in Lemma \ref{lem: Jalpha existence}. Such adjoints exist for general $n$ by the same considerations as for $\E_1$. However, doing this would not give us anything new as $J_\alpha$ (defined as an $\E_1$ algebra) already satisfies the $\E_n$ version of the universal property. Given an $\E_0$ map $J_\alpha \to R$ with $R$ an $\E_n$ algebra, the map refines uniquely to $\E_n$, by Lemma \ref{lem: En refinement} combined with the fact that $J_\alpha$ is idempotent.
    \end{remark}

    \begin{lem}\label{lem: central implies idempotent for a ring}
        If $A$ is $\mathbb{E}_n$ and its unit map $\alpha: \one \to A$ is central, then $\alpha$ is automatically idempotent and hence $J_\alpha \simeq A$.
    \end{lem}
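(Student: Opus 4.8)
The plan is to reduce immediately to Lemma \ref{lem: idempotent characterisation}. An $\mathbb{E}_n$ structure on $A$ (with $n \geq 1$) supplies in particular an $\mathbb{E}_1$ structure, hence a multiplication map $m: A \otimes A \to A$ which is unital, so that $m \circ (\alpha \otimes \id_A) \simeq \id_A$. Since $\alpha$ is central by hypothesis, we are exactly in the situation of Lemma \ref{lem: idempotent characterisation} with $X = A$, $e = \alpha$, and this $m$: we conclude that $A$ is an idempotent algebra with unit $\alpha$, i.e. $\alpha$ is an idempotent map.

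Once $\alpha$ is known to be idempotent, the identification $J_\alpha \simeq A$ is precisely the content of Corollary \ref{cor: Jalpha eq Jalphabar}. Concretely, one could instead invoke Lemma \ref{lem: key iso} directly: centrality of $\alpha$ together with $m \circ (\alpha \otimes \id_A) \simeq \id_A$ forces $\alpha \otimes \id_A$ and $m$ to be mutually inverse equivalences, and then $\alpha$ is idempotent by Definition \ref{def: idempotent} (it suffices for one of the two maps $\alpha \otimes \id$, $\id \otimes \alpha$ to be an equivalence).

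There is no real obstacle here: the statement is a direct corollary of the technical lemmas already assembled in this subsection, and the proof amounts to checking that the $\mathbb{E}_1$-multiplication on $A$ plays the role of the map $m$ in Lemma \ref{lem: idempotent characterisation}. The one point worth flagging explicitly in the write-up is that $n \geq 1$ is needed so that a multiplication map exists (for $n = 0$ there is no such map and the statement is vacuous or ill-posed); beyond the unit and the multiplication, none of the higher $\mathbb{E}_n$ coherences are used, which is why the conclusion lands on the $\mathbb{E}_\infty$ idempotent structure regardless of $n$ — consistent with Corollary \ref{cor: very useful En refinement}(\ref{cor sub: unique En str on idem}).
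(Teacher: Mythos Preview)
Your proposal is correct and essentially identical to the paper's proof: the paper invokes Lemma~\ref{lem: key iso} directly (with $M=A$ and $\mu$ the $\mathbb{E}_1$ multiplication) and then Corollary~\ref{cor: Jalpha eq Jalphabar}, while you route through Lemma~\ref{lem: idempotent characterisation}, which is itself just a thin wrapper around Lemma~\ref{lem: key iso}. Your remark that only the $\mathbb{E}_1$ part of the structure is used, and the flag that $n\geq 1$ is needed, are both apt and match the paper's reasoning.
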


    \begin{proof}
        We have a multiplication map $\mu: A \otimes A \to A$ because $A$ is at least $\mathbb{E}_1$. Moreover, $\mu \circ (\alpha \otimes \id) \simeq \id \simeq \mu \circ (\id \otimes \alpha)$. By Lemma \ref{lem: key iso} with $M = A$, it follows that $\mu$ and $\alpha \circ \id$ are inverse equivalences. Hence $A$ is an idempotent algebra (because we showed that its unit map $\alpha$ is an idempotent map). Then Corollary \ref{cor: Jalpha eq Jalphabar} tells us $J_\alpha \simeq A$.
    \end{proof}

    \begin{cor}\label{cor: central map universality}
        Every smashing localisation arises from a central map. Indeed, for a smashing localisation $L$, the unit map $\alpha: \one \to L\one$ is idempotent and hence central, and then $J_\alpha \simeq L\one$ recovers the smashing localisation.
    \end{cor}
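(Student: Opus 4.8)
The plan is to read the statement directly off the machinery already assembled. First I would invoke Lemma \ref{lem: class of smashing locs}: a smashing localisation $L$ of $\CC$ determines an idempotent algebra $L\one$ whose unit map $\alpha \colon \one \to L\one$ is an idempotent map in the sense of Definition \ref{def: idempotent}. By the observations in that Definition, an idempotent map satisfies $\alpha \otimes \id \simeq \id \otimes \alpha$, which is precisely the condition of centrality (Definition \ref{def: central}). So $\alpha$ is central and the whole theory of $J_\alpha$ applies to it; in particular it is legitimate to form $L_\alpha$.

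Next I would apply Corollary \ref{cor: Jalpha eq Jalphabar} with $A = L\one$. Since $\alpha$ is not merely central but genuinely idempotent, that Corollary supplies an equivalence $J_\alpha \simeq L\one$ (of $\E_\infty$, hence in particular $\E_1$, algebras), compatible with the canonical map $c \colon L\one \to J_\alpha$ and the unit maps. This identifies the idempotent algebra produced from $\alpha$ with the idempotent algebra $L\one$ we started with.

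Finally I would compare the two localisation functors. By Corollary \ref{cor: Jalpha is idempotent}, $J_\alpha$ is idempotent and its associated smashing localisation is $L_\alpha(E) = J_\alpha \otimes E$. Transporting along the equivalence $J_\alpha \simeq L\one$ gives a natural equivalence $J_\alpha \otimes E \simeq L\one \otimes E \simeq LE$, the last step being the defining property of the smashing localisation $L$; naturality in $E$ is immediate since every equivalence in sight is natural in $E$. Hence $L_\alpha \simeq L$, and every smashing localisation arises from a (necessarily idempotent) central map.

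I expect no real obstacle: the content lives entirely in Lemma \ref{lem: class of smashing locs}, Definition \ref{def: idempotent}, Corollary \ref{cor: Jalpha is idempotent}, and Corollary \ref{cor: Jalpha eq Jalphabar}, and this Corollary is simply their repackaging. The only point deserving a moment's care is the compatibility of the equivalence $J_\alpha \simeq L\one$ with the respective localisation functors, and that is formal because a smashing localisation is determined by its value on $\one$ together with tensoring (Lemma \ref{lem: class of smashing locs}).
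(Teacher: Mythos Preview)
Your proposal is correct and follows essentially the same approach as the paper: establish that the unit map $\alpha: \one \to L\one$ is idempotent (hence central) via Lemma \ref{lem: class of smashing locs} and Definition \ref{def: idempotent}, then apply Corollary \ref{cor: Jalpha eq Jalphabar} to conclude $J_\alpha \simeq L\one$. Your version is slightly more explicit about the final identification of the localisation functors, but the paper's proof is otherwise the same argument in abbreviated form.
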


    \begin{proof}
        Centrality of $\alpha$ follows directly from the fact that the localisation $L$ is given by smashing with $L\one$, and $L\one \otimes L\one \simeq L\one$. The maps $\alpha \otimes \id$ and $\id \otimes \alpha$ are the same after postcomposition with this identification. Then by Corollary \ref{cor: Jalpha eq Jalphabar}, since $L\one$ is already idempotent we have $L\one \simeq J_\alpha$.
    \end{proof}

    We next need a way to relate properties of $J_\alpha$ to properties of the central map $\alpha$ we started with. In light of Lemma \ref{lem: acyclics gen by}, in a nice setting like the category of spectra we know that the acyclics for the smashing localisation $L_\alpha$ are generated under colimits by the single acyclic $\cof(\one \to J_\alpha)$. Thus this cofibre completely characterises the localisation. To compute the localisation corresponding to a given central map, as we will later want to do, we must understand this cofibre. So we will need a more explicit construction of $J_\alpha$ which allows us to relate the properties of $\cof(\one \to J_\alpha)$ to $\cof \alpha$. 

    \begin{constr}\label{constr: explicit Jalpha}
        Given a map $\alpha: \one \to A$, we build the sequential colimit of powers of $A$, with maps given by tensoring with $\alpha$. That is, consider the diagram
        \[\begin{tikzcd}[column sep={4em}]
            \one \arrow[r, "\alpha"] & A \arrow[r, "\id \otimes \alpha"] & A^{\otimes 2} \arrow[r, "\id^{2} \otimes \alpha"] & A^{\otimes 3} \arrow[r, "\id^3 \otimes \alpha"] & \ldots,
        \end{tikzcd}\] 
        and let $J_\alpha'$ denote its colimit in $\CC$. For ease of reference, let $\mathfrak{J}_\alpha$ denote the diagram itself.
    \end{constr}

    Now we must show that Construction \ref{constr: Jalpha} and Construction \ref{constr: explicit Jalpha} agree when $\alpha$ is central. This will allow us to compute $\cof (\one \to J_\alpha) \simeq \cof (\one \to J_\alpha')$ in terms of $\cof \alpha$. The motivation for Construction \ref{constr: explicit Jalpha} is that $J_\alpha$ is determined by the property that $\alpha$ acts as its identity, so we formally invert multiplication by $\alpha$ to construct it. 
    
    Construction \ref{constr: explicit Jalpha} makes sense even when $\alpha$ is not central, but we should not expect it to compute $J_\alpha$ in this case because we have only formally inverted \emph{right} multiplication by $\alpha$. When $\alpha$ is central it commutes with the identity on $A$, so we should expect all multiplication by $\alpha$ on $J_\alpha'$ to be unital -- and hence $J_\alpha'$ to agree with $J_\alpha$ in this case.


    \begin{lem}\label{lem: Jalpha' localises to Jalpha}
        There is a canonical unit map $e': \one \to J_\alpha'$. If $\alpha$ is central, this unit map is a $J_\alpha$-local equivalence. 
    \end{lem}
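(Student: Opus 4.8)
The plan is to take $e'\colon \one\to J_\alpha'$ to be the structure map from the initial vertex of the diagram $\mathfrak{J}_\alpha$ to its colimit, and then to check directly that $J_\alpha\otimes e'$ is an equivalence. Since $J_\alpha$ is idempotent (Corollary \ref{cor: Jalpha is idempotent}), the associated localisation $L_\alpha = J_\alpha\otimes -$ is smashing, so a map $f$ is a $J_\alpha$-local equivalence precisely when $L_\alpha f = J_\alpha\otimes f$ is an equivalence; this is the reformulation I would use.

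First I would observe that because $\CC$ is presentably symmetric monoidal, the functor $J_\alpha\otimes -$ preserves colimits. Hence $J_\alpha\otimes J_\alpha'$ is the colimit of the sequential diagram $J_\alpha\otimes\mathfrak{J}_\alpha$, namely
\[
J_\alpha \;\longrightarrow\; J_\alpha\otimes A \;\longrightarrow\; J_\alpha\otimes A^{\otimes 2}\;\longrightarrow\;\cdots,
\]
whose $k$-th transition map is $\id_{J_\alpha\otimes A^{\otimes k}}\otimes\alpha$, and under this identification $J_\alpha\otimes e'$ is the canonical map from the initial vertex $J_\alpha$ into the colimit.

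The key step is that every transition map of $J_\alpha\otimes\mathfrak{J}_\alpha$ is an equivalence. Each term $J_\alpha\otimes A^{\otimes k}$ equals $L_\alpha(A^{\otimes k})$ and is therefore $J_\alpha$-local. By Lemma \ref{lem: check local on A}, for a $J_\alpha$-local object $X$ the map $\alpha\otimes\id_X\colon X\to A\otimes X$ is an equivalence; postcomposing with the symmetry autoequivalence of $A\otimes X$ (which interchanges $\alpha\otimes\id_X$ and $\id_X\otimes\alpha$, exactly as in Definition \ref{def: idempotent}) shows $\id_X\otimes\alpha$ is an equivalence as well. Taking $X = J_\alpha\otimes A^{\otimes k}$ shows each transition map is an equivalence. (Alternatively, one could invoke Corollary \ref{cor: two more isos} to see that each $J_\alpha\otimes A^{\otimes k}$ is abstractly equivalent to $J_\alpha$, but one still needs the transition maps themselves to be equivalences, so the locality argument is the cleanest route.) A sequential colimit of equivalences is computed by its initial vertex, with the canonical map from that vertex an equivalence; hence $J_\alpha\otimes e'$ is an equivalence, and so $e'$ is a $J_\alpha$-local equivalence.

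I do not expect a genuine obstacle here: the only mathematical content is that the terms $J_\alpha\otimes A^{\otimes k}$ are local, which forces multiplication by $\alpha$ to act invertibly on them — and this is precisely where centrality of $\alpha$ is used, through Lemma \ref{lem: check local on A} and Corollary \ref{cor: two more isos}. The remaining care is purely bookkeeping: correctly identifying $J_\alpha\otimes e'$ with the colimit structure map, and tracking the braiding when passing between left and right tensoring by $\alpha$.
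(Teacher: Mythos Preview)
Your proposal is correct and follows essentially the same line as the paper: tensor $\mathfrak{J}_\alpha$ with $J_\alpha$, observe that all transition maps become equivalences, and conclude that the structure map from the initial vertex is an equivalence. The only cosmetic difference is that the paper cites Corollary~\ref{cor: two more isos} directly to identify each $J_\alpha\otimes A^{\otimes k}$ with $J_\alpha$ (and each transition map with the identity), whereas you route through Lemma~\ref{lem: check local on A} together with the braiding; both produce the same conclusion.
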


    \begin{proof}
        The map $\one \to J_\alpha'$ comes from the diagram $\mathfrak{J}_\alpha$ computing $J_\alpha'$. By definition $\one$ is the initial term in this diagram, and $J_\alpha'$ is the colimit over the diagram, so $J_\alpha'$ comes equipped with a unit map $e': \one \to J_\alpha'$. 

        Localisation with respect to $J_\alpha$ is computed by tensoring with $J_\alpha$, so we must show that $\id_{J_\alpha} \otimes e': J_\alpha \to J_\alpha \otimes J_\alpha'$ is an equivalence. Consider the diagram $J_\alpha \otimes \mathfrak{J}_\alpha$, which computes $J_\alpha \otimes J_\alpha'$ as its colimit. Each term of the diagram $J_\alpha \otimes \mathfrak{J}_\alpha$ is a copy of $J_\alpha$, since $J_\alpha \otimes A^{\otimes n} \simeq J_\alpha$ by Corollary \ref{cor: two more isos} applied inductively. Each map in the diagram $J_\alpha \otimes \mathfrak{J}_\alpha$ is canonically homotopic to the identity on $J_\alpha$ under this identification of terms with $J_\alpha$. To see this, note that $\id_{J_\alpha} \otimes \id_A \simeq \id_{J_\alpha}$ under the identification $\id_{J_\alpha} \otimes \alpha: J_\alpha \xrightarrow{\simeq} J_\alpha \otimes A$ of Corollary \ref{cor: two more isos}, and we have commuting diagrams
            \[\begin{tikzcd}[column sep={7em,between origins}, row sep={5em,between origins}]
                {J_\alpha \otimes A^{\otimes n}} && {J_\alpha \otimes A^{\otimes (n+1)}} \\
                & {J_\alpha}
                \arrow["{\id_{J_\alpha} \otimes \id_A^{n} \otimes \alpha}", from=1-1, to=1-3]
                \arrow["{\id_{J_\alpha} \otimes \alpha^{n}}", "\simeq"', from=2-2, to=1-1]
                \arrow["{\id_{J_\alpha} \otimes \alpha^{n+1}}"', "\simeq", from=2-2, to=1-3]
            \end{tikzcd}\]
        showing that all the maps in the diagram $J_\alpha \otimes \mathfrak{J}_\alpha$ reduce to $\id_{J_\alpha}$. Such homotopies are canonical in the sense that the space of unital endomorphisms of the idempotent algebra $J_\alpha$ is contractible, by Corollary \ref{cor: very useful En refinement} (\ref{cor sub: end of idem}). Hence $J_\alpha \otimes J_\alpha'$ is the colimit of a sequential diagram where all the terms are $J_\alpha$ and all the maps are $\id_{J_\alpha}$, so $J_\alpha \simeq J_\alpha \otimes J_\alpha'$ (and all the maps from terms of $J_\alpha \otimes \mathfrak{J}_\alpha$ to this colimit are the identity on $J_\alpha$). Moreover, the map 
            \[\id_{J_\alpha} \otimes e': J_\alpha \to J_\alpha \otimes J_\alpha'\]
        identifies with the map $J_\alpha \otimes \one \to J_\alpha \otimes J_\alpha'$ coming from the first term of the diagram $J_\alpha \otimes \mathfrak{J}_\alpha$ mapping to the colimit. Hence $\id_{J_\alpha} \otimes e' \simeq \id_{J_\alpha}$ under our identification of $J_\alpha \otimes J_\alpha'$ with $J_\alpha$, and in particular $\id_{J_\alpha} \otimes e'$ is an equivalence. 
    \end{proof}

    \begin{lem}\label{lem: Jalpha' is local for Jalpha}
        If $\alpha$ is central then $J_\alpha'$ is $J_\alpha$-local (i.e. local with respect to the smashing localisation computed by tensoring with $J_\alpha$).
    \end{lem}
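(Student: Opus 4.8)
The plan is to invoke Lemma \ref{lem: check local on A}: since $\alpha$ is central, $J_\alpha'$ is $J_\alpha$-local precisely when $\alpha \otimes \id_{J_\alpha'}\colon J_\alpha' \to A \otimes J_\alpha'$ is an equivalence, so everything reduces to showing that this one map is invertible, and I would do so using Lemma \ref{lem: key iso}. First I would unwind the two colimit descriptions. By Construction \ref{constr: explicit Jalpha} we have $J_\alpha' = \colim \mathfrak{J}_\alpha$, and since $A \otimes (-)$ preserves colimits, $A \otimes J_\alpha' = \colim (A \otimes \mathfrak{J}_\alpha)$. Using associativity, the diagram $A \otimes \mathfrak{J}_\alpha$ is identified with $A \xrightarrow{\id_A \otimes \alpha} A^{\otimes 2} \xrightarrow{\id^2 \otimes \alpha} A^{\otimes 3} \to \cdots$, that is, with $\mathfrak{J}_\alpha$ reindexed along the shift $\mathbb{N} \to \mathbb{N}$, $n \mapsto n+1$ (equivalently, $\mathfrak{J}_\alpha$ with its initial term $\one$ deleted). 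Since this shift is cofinal, the canonical comparison map $\mu\colon A \otimes J_\alpha' \to J_\alpha'$ — the one sending the $n$-th cocone leg of $A \otimes \mathfrak{J}_\alpha$, whose source is $A \otimes A^{\otimes n} = A^{\otimes(n+1)}$, to the $(n{+}1)$-st cocone leg $\iota_{n+1}\colon A^{\otimes(n+1)} \to J_\alpha'$ of $\mathfrak{J}_\alpha$ — is an equivalence; this step uses no centrality.

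By Lemma \ref{lem: key iso} applied with the central map $\alpha$ and $M = J_\alpha'$ (or simply because a one-sided inverse of the equivalence $\mu$ is automatically an equivalence), it now suffices to produce a homotopy $\mu \circ (\alpha \otimes \id_{J_\alpha'}) \simeq \id_{J_\alpha'}$. I would compute this composite on the cocone of $J_\alpha' = \colim\mathfrak{J}_\alpha$: restricting to the $n$-th leg $\iota_n\colon A^{\otimes n} \to J_\alpha'$, functoriality of $\otimes$ gives $(\alpha \otimes \id_{J_\alpha'}) \circ \iota_n \simeq (\id_A \otimes \iota_n) \circ (\alpha \otimes \id_{A^{\otimes n}})$, and then by definition of $\mu$ we get $\mu \circ (\alpha \otimes \id_{J_\alpha'}) \circ \iota_n \simeq \iota_{n+1} \circ (\alpha \otimes \id_{A^{\otimes n}})$. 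On the other hand $\id_{J_\alpha'} \circ \iota_n = \iota_n \simeq \iota_{n+1} \circ (\id_{A^{\otimes n}} \otimes \alpha)$, since $\id_{A^{\otimes n}} \otimes \alpha$ is the structure map of $\mathfrak{J}_\alpha$ out of its $n$-th term. Thus we are reduced to the term-wise comparison $\iota_{n+1} \circ (\alpha \otimes \id_{A^{\otimes n}}) \simeq \iota_{n+1} \circ (\id_{A^{\otimes n}} \otimes \alpha)$, which follows from centrality: the single homotopy $\alpha \otimes \id_A \simeq \id_A \otimes \alpha$, tensored with identities and concatenated, moves the factor $\alpha$ past each of the $n$ copies of $A$ in turn, yielding $\alpha \otimes \id_{A^{\otimes n}} \simeq \id_{A^{\otimes n}} \otimes \alpha$.

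The hard part will be the coherence: the term-wise homotopies above must be assembled into a single homotopy of maps out of the colimit $J_\alpha'$. Concretely I would package $\alpha \otimes \id_{(-)}$ and the structure-map transformation as two natural transformations of diagrams $\mathfrak{J}_\alpha \rightrightarrows A \otimes \mathfrak{J}_\alpha$ (the first obtained by whiskering the natural transformation $\id_\CC \Rightarrow A \otimes (-)$ with $\mathfrak{J}_\alpha$, the second by whiskering the unit $\id_{\mathbb{N}} \Rightarrow (n \mapsto n+1)$ with $\mathfrak{J}_\alpha$), and show that centrality makes these two natural transformations homotopic; applying $\colim$ then yields the homotopy on $J_\alpha'$, with $\mu$ being the equivalence induced by the second transformation. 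This is the one genuinely technical point — it cannot be shortcut as in the proof of Lemma \ref{lem: Jalpha' localises to Jalpha}, where tensoring everything with $J_\alpha$ made all the maps in sight endomorphisms of the idempotent algebra $J_\alpha$, whose endomorphism spaces are contractible (Corollary \ref{cor: very useful En refinement}\,(\ref{cor sub: end of idem})); here $A$ enjoys no such property, so the coherence of the central homotopies must be spelled out directly. Once the homotopy $\mu \circ (\alpha \otimes \id_{J_\alpha'}) \simeq \id_{J_\alpha'}$ is in hand, Lemma \ref{lem: key iso} gives that $\alpha \otimes \id_{J_\alpha'}$ is an equivalence, and Lemma \ref{lem: check local on A} concludes that $J_\alpha'$ is $J_\alpha$-local.
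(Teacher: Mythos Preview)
Your proposal is correct and follows essentially the same approach as the paper: both reduce via Lemma~\ref{lem: check local on A} to showing $\alpha \otimes \id_{J_\alpha'}$ is an equivalence, both exploit the cofinality equivalence between $A \otimes \mathfrak{J}_\alpha$ and the shifted $\mathfrak{J}_\alpha$, and both land on exactly the same coherence obligation---that the iterated centrality homotopies $\alpha \otimes \id_{A^{\otimes n}} \simeq \id_{A^{\otimes n}} \otimes \alpha$ assemble compatibly across the diagram---which the paper likewise flags but does not fully spell out. The only cosmetic difference is packaging: the paper writes $\alpha \otimes \id_{J_\alpha'}$ as a composite $\sigma \circ \mu \circ \varphi^{-1}$ of three componentwise equivalences (with centrality appearing as the naturality of the middle map $\mu$), whereas you show the cofinality map is a one-sided inverse to $\alpha \otimes \id_{J_\alpha'}$ and conclude via Lemma~\ref{lem: key iso}; your route avoids the auxiliary permutation $\sigma$ and is arguably slightly cleaner.
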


    See Construction \ref{constr: explicit Jalpha}, Construction \ref{constr: Jalpha}, and Corollary \ref{cor: Jalpha is idempotent}.

    \begin{proof}
        To show that $J_\alpha'$ is $J_\alpha$-local, we must show that the map $\overline{\alpha} \otimes \id: J_\alpha' \to J_\alpha \otimes J_\alpha'$ is an equivalence. By Lemma \ref{lem: check local on A}, this reduces to showing that $\alpha \otimes \id: J_\alpha' \to A \otimes J_\alpha'$ is an equivalence, which we do by writing it as the composite of three other maps which are all equivalences. 

        First, we have a map $\varphi: A \otimes J_\alpha' \to J_\alpha'$ by identifying the diagram $A \otimes \mathfrak{J}_\alpha$ with a subdiagram of $\mathfrak{J}_\alpha$ and thus obtaining an induced map between their colimits. Indeed, $A \otimes \mathfrak{J}_\alpha$ is the subdiagram
        \[\begin{tikzcd}[column sep={4em}]
            A \arrow[r, "\id \otimes \alpha"] & A^{\otimes 2} \arrow[r, "\id^{2} \otimes \alpha"] & A^{\otimes 3} \arrow[r, "\id^3 \otimes \alpha"] & \cdots.
        \end{tikzcd}\]

        The subdiagram inclusion inducing $\varphi$ is cofinal, simply because $\mathfrak{J}_\alpha$ is a sequential colimit. Moreover $\varphi$ has inverse given on the level of diagrams by the map 
        \[\begin{tikzcd}[column sep={4em}, row sep={4em}]
            \one \arrow[r, "\alpha"] \arrow[d, "\alpha"] & A \arrow[r, "\id \otimes \alpha"] \arrow[d, "\id \otimes \alpha"] & A^{\otimes 2} \arrow[r, "\id^{2} \otimes \alpha"] \arrow[d, "\id^{2} \otimes \alpha"] & \cdots  \\
            A \arrow[r, "\id \otimes \alpha"]            & A^{\otimes 2} \arrow[r, "\id^{2} \otimes \alpha"]                 & A^{\otimes 3} \arrow[r, "\id^3 \otimes \alpha"]                                       & \cdots,
        \end{tikzcd}\]
        which shifts each term in the diagram computing $J_\alpha'$ along by one, using the shift maps from the diagram itself. This much is true for any sequential colimit. Thus $\varphi^{-1}$ is given componentwise by $\id_{A^{\otimes n}} \otimes \alpha$, and is an equivalence.
        
        Next we build a diagram of the form
        \begin{equation}\label{eq: Jalpha' is local for Jalpha big diagram}
        \begin{tikzcd} 
            J_\alpha' \arrow[r, "\varphi^{-1}"] \arrow[rr, "\id \otimes \alpha", bend left] \arrow[rrr, "\alpha \otimes \id", bend right] & A \otimes J_\alpha' \arrow[r, "\mu"] & J_\alpha' \otimes A \arrow[r, "\sigma"] & A \otimes J_\alpha. \tag{$*$}
            \end{tikzcd}
        \end{equation}
        Each of the terms of this diagram has a corresponding expression as a sequential colimit, derived from the sequential colimit which computes $J_\alpha'$, and each of the maps can be defined in terms of its components on the terms of these colimits. 

        The map $\mu$ is given componentwise by the identity on $A^{\otimes n}$, which induces a map of diagrams due to the centrality of $\alpha$. The map $\sigma$ is given componentwise on $A^{\otimes n}$ by the permutation $(1n)$ of factors of $A$, that is it swaps the first and last factor of $A$ in each component of the colimit diagram. This is naturally a map of diagrams from $A \otimes J_\alpha'$ to $J_\alpha' \otimes A$. Since $\mu$ and $\sigma$ are both componentwise equivalences, the induced maps on the level of colimits are equivalences. Then $\alpha \otimes J_\alpha'$ is the composite of the three equivalences $\varphi^{-1}, \mu$, and $\sigma$ and is therefore an equivalence as desired.

        To satisfy themself that the above maps assemble correctly, we invite the reader to consider the commutativity of the following diagram, being a componentwise version of (\ref{eq: Jalpha' is local for Jalpha big diagram}). That is, the colimit of each column of this diagram computes the corresponding term of that one. 
        \[\begin{tikzcd}[column sep={3em}, row sep={3em}]
        \one \arrow[d, "\alpha"] \arrow[r, "\alpha"]                      & A \arrow[d, "\id \otimes \alpha"] \arrow[r, "\id"]                 & A \arrow[d, "\alpha \otimes \id"] \arrow[r, "\id"]                      & A \arrow[d, "\id \otimes \alpha"]               \\
        A \arrow[d, "\id \otimes \alpha"] \arrow[r, "\id \otimes \alpha"] & A^{\otimes 2} \arrow[d, "\id^2 \otimes \alpha"] \arrow[r, "\id^2"] & A^{\otimes 2} \arrow[d, "\alpha \otimes\id^2"] \arrow[r, "\sigma_{12}"] & A^{\otimes 2} \arrow[d, "\id^2 \otimes \alpha"] \\
        A^{\otimes 2} \arrow[d] \arrow[r, "\id^2 \otimes \alpha"]         & A^{\otimes 3} \arrow[d] \arrow[r, "\id^3"]                         & A^{\otimes 3} \arrow[d] \arrow[r, "\sigma_{13}"]                        & A^{\otimes 3} \arrow[d]\\
        \vdots & \vdots & \vdots & \vdots
        \end{tikzcd}\]

        The squares in the right column can be filled with homotopies coming from the symmetric monoidal structure of the category $\CC$. The middle column is filled by homotopies guaranteed to exist by centrality of $\alpha$. It is clear from this componentwise diagram that $\id_{J_\alpha'} \otimes \alpha \simeq \mu \circ \varphi^{-1}$ and $\alpha \otimes \id_{J_\alpha'}\simeq \sigma \circ \mu \circ \varphi^{-1}$, since these relations hold componentwise. 
    \end{proof}

    The next Proposition is a direct consequence of Lemmas \ref{lem: Jalpha' is local for Jalpha} and \ref{lem: Jalpha' localises to Jalpha}.

    \begin{prop}\label{prop: Jalpha Jalpha' agree}
        If $\alpha$ is central, $J_\alpha \simeq J_\alpha'$ as $\E_0$ algebras. That is, Constructions \ref{constr: Jalpha} and \ref{constr: explicit Jalpha} agree. In particular $e'$ is idempotent.
    \end{prop}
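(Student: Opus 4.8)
The plan is to deduce this immediately from Lemmas \ref{lem: Jalpha' localises to Jalpha} and \ref{lem: Jalpha' is local for Jalpha} via the universal property of localisation. First I would recall from Corollary \ref{cor: Jalpha is idempotent} that $J_\alpha$ is an idempotent algebra, so its associated smashing localisation $L_\alpha$ carries $\one$ to $J_\alpha \otimes \one \simeq J_\alpha$ with localisation map the unit $\overline{\alpha}: \one \to J_\alpha$. The universal property of localisation then says: for any $L_\alpha$-local equivalence $f: \one \to Z$ whose target $Z$ is $L_\alpha$-local, there is an essentially unique equivalence $J_\alpha \xrightarrow{\simeq} Z$ under $\one$ (i.e. commuting with $\overline{\alpha}$ and $f$).

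Next I would apply this with $Z = J_\alpha'$ and $f = e': \one \to J_\alpha'$ the canonical unit map of Construction \ref{constr: explicit Jalpha}. Lemma \ref{lem: Jalpha' localises to Jalpha} tells us $e'$ is an $L_\alpha$-local equivalence, and Lemma \ref{lem: Jalpha' is local for Jalpha} tells us $J_\alpha'$ is $L_\alpha$-local, so the hypotheses are met and we obtain an equivalence $J_\alpha \xrightarrow{\simeq} J_\alpha'$ commuting with $\overline{\alpha}$ and $e'$. Since an equivalence of objects under $\one$ is exactly an equivalence of $\E_0$ algebras, this gives $J_\alpha \simeq J_\alpha'$ as $\E_0$ algebras, which is precisely the assertion that Constructions \ref{constr: Jalpha} and \ref{constr: explicit Jalpha} agree. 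For the final sentence, idempotence of a map $\one \to X$ is the condition that $e' \otimes \id_X$ be an equivalence, a property of the underlying $\E_0$ algebra; so it transports along the equivalence just constructed from the fact (Corollary \ref{cor: Jalpha is idempotent}) that $\overline{\alpha}$ is idempotent.

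Since all the substance lives in the two cited lemmas, there is no real obstacle here. The only point needing a little care is to keep track that the comparison equivalence is produced as a map under $\one$, so that it genuinely refines to an equivalence of $\E_0$ algebras rather than merely of underlying objects; this is automatic from the shape of the universal property, namely that a local equivalence into a local object is the localisation map, uniquely compatibly with the structure map out of $\one$.
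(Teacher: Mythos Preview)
Your proposal is correct and takes essentially the same approach as the paper: both deduce the result directly from Lemmas \ref{lem: Jalpha' localises to Jalpha} and \ref{lem: Jalpha' is local for Jalpha}. The only difference is cosmetic---you invoke the universal property of localisation abstractly (a local equivalence into a local object is the localisation map, uniquely under $\one$), while the paper spells out the same content as an explicit zigzag $J_\alpha \xrightarrow{\id \otimes e'} J_\alpha \otimes J_\alpha' \xleftarrow{\overline{\alpha} \otimes \id} J_\alpha'$ and then checks via a small commuting diagram that this identification is under $\one$.
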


    \begin{proof}
        By Lemma \ref{lem: Jalpha' localises to Jalpha} we know $J_\alpha \simeq J_\alpha \otimes J_\alpha'$, and by Lemma \ref{lem: Jalpha' is local for Jalpha} we know $J_\alpha \otimes J_\alpha' \simeq L_\alpha J_\alpha' \simeq J_\alpha'$, since $J_\alpha'$ is already local. Combining these two facts, we see that $J_\alpha \simeq J_\alpha'$. 

        Moreover, we can relate $J_\alpha$ and $J_\alpha'$ via explicit isomorphisms. We know that the map $\id_{J_\alpha} \otimes e': J_\alpha \to J_\alpha \otimes J_\alpha'$ is an equivalence by Lemma \ref{lem: Jalpha' localises to Jalpha}, and the map $\overline{\alpha} \otimes \id_{J_\alpha'}: J_\alpha' \to J_\alpha \otimes J_\alpha'$ is an equivalence by Lemma \ref{lem: Jalpha' is local for Jalpha}. 

        The commuting diagram 
        \[\begin{tikzcd}[column sep={4em}, row sep={3em}]
            {J_\alpha'} & {J_\alpha \otimes J_\alpha'} & {J_\alpha} \\
            & \one
            \arrow["{\overline{\alpha} \otimes \id}", from=1-1, to=1-2]
            \arrow["{\id \otimes e'}"', from=1-3, to=1-2]
            \arrow["{e'}", from=2-2, to=1-1]
            \arrow["{\overline{\alpha}}"', from=2-2, to=1-3]
        \end{tikzcd}\]
    then tells us that the identification $J_\alpha'\simeq J_\alpha$ is as $\E_0$ algebras with respect to the unit maps already established. Since $\overline{\alpha}$ is known to be idempotent and $e'$ identifies with $\overline{\alpha}$ under the equivalence $J_\alpha'\simeq J_\alpha$, we conclude that $e'$ is idempotent.
    \end{proof}

    Now that $J_\alpha$ and $J_\alpha'$ agree as $\E_0$ algebras, we obtain an $\E_\infty$ structure on $J_\alpha'$ transferred from $J_\alpha$. This is the unique $\E_\infty$ structure on $J_\alpha'$ compatible with its $\E_0$ structure, because $e'$ is idempotent --- see Corollary \ref{cor: very useful En refinement}.

    \begin{remark}\label{rmk: explicit maps Jalpha Jalpha'}
        We can now describe explicit maps in each direction between $J_\alpha$ and $J_\alpha'$. There is a canonical map $A \to J_\alpha'$ since $A$ appears as a term in the diagram $\mathfrak{J}_\alpha$, and this map is compatible with $e'$. Then by the universal property of $J_\alpha$, we obtain a commuting diagram 
        \[\begin{tikzcd}[column sep={3em}]
            \one & A & {J_\alpha} \\
            & {J_\alpha',}
            \arrow["\alpha", from=1-1, to=1-2]
            \arrow["{e'}"', from=1-1, to=2-2]
            \arrow[from=1-2, to=1-3]
            \arrow[from=1-2, to=2-2]
            \arrow["{\varphi}", from=1-3, to=2-2]
        \end{tikzcd}\]
        with the map $\varphi$ being the unique $\E_1$ map fitting into the diagram. By Lemma \ref{lem: En refinement}, $\varphi$ refines uniquely to $\E_\infty$. The only obstruction to obtaining such a map $\varphi$ directly from Constructions \ref{constr: Jalpha} and \ref{constr: explicit Jalpha} is that we did not yet have an $\E_1$ structure on $J_\alpha'$, so we would have needed to build one explicitly using the diagram $\mathfrak{J}_\alpha$. 

        In the other direction, since we know that $J_\alpha$ is an idempotent algebra we have maps $A^{\otimes n} \to J_\alpha^{\otimes n} \to J_\alpha$ by taking a tensor power of the canonical map $c: A \to J_\alpha$ and postcomposing with the multiplication on $J_\alpha$. We can see that these are compatible with the maps in the diagram $\mathfrak{J}_\alpha$ by using that $c \circ \alpha \simeq \overline{\alpha}$ and $\overline{\alpha}$ is the identity on $J_\alpha$ and thus compatible with the multiplication. 

        Since the diagram $\mathfrak{J}_\alpha$ maps to $J_\alpha$ and $J_\alpha'$ is the colimit of the diagram, we obtain an induced map $J_\alpha' \to J_\alpha$. This induced map is $\E_0$ essentially because the map $c: A \to J_\alpha$ is $\E_0$.
    \end{remark}

    We can use the identification $J_\alpha \simeq J_\alpha'$ to compute $\cof \overline{\alpha}$ in terms of $\cof \alpha$. 
    \begin{lem}\label{lem: baralpha in terms of alpha}
        Let $\CC$ be stable and $\alpha$ be central. Then $\cof \overline{\alpha}$ can be computed as the colimit of a (sequential) diagram with terms of the form $\cof(\alpha^{\otimes n})$, where $\cof(\alpha^{\otimes (n+1)})$ is an extension of $\cof \alpha$ by $A \otimes \cof(\alpha^{\otimes n})$.
    \end{lem}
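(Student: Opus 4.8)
The plan is to transport the computation through the equivalence $J_\alpha \simeq J_\alpha'$ of $\E_0$-algebras furnished by Proposition \ref{prop: Jalpha Jalpha' agree} (this is the only place centrality of $\alpha$ is used). That equivalence matches the unit maps, identifying $\overline{\alpha}\colon \one \to J_\alpha$ with $e'\colon \one \to J_\alpha'$, so it suffices to compute $\cof e'$. Recall $J_\alpha'$ is the colimit of the sequential diagram $\mathfrak{J}_\alpha$ whose $n$-th term is $A^{\otimes n}$ (with $A^{\otimes 0} = \one$) and whose transition maps are $\id_A^{\otimes n}\otimes\alpha$, and that $e'$ is the canonical map to the colimit from the initial term. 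First I would assemble the composite maps along $\mathfrak{J}_\alpha$ into a map of $\mathbb{N}$-indexed diagrams $\underline{\one}\to\mathfrak{J}_\alpha$ whose component at stage $n$ is $\alpha^{\otimes n}\colon\one\to A^{\otimes n}$; the relevant squares commute because $(\id_A^{\otimes n}\otimes\alpha)\circ\alpha^{\otimes n}\simeq\alpha^{\otimes(n+1)}$, up to the coherence isomorphisms of the monoidal unit. Coherently, this natural transformation is obtained by restricting $\mathfrak{J}_\alpha$ along the functor $\mathbb{N}\to\Fun(\Delta^1,\mathbb{N})$ sending $n$ to the arrow $(0\leq n)$, so there is nothing to verify by hand. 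Viewed as a single diagram $\mathbb{N}\to\Fun(\Delta^1,\CC)$, its colimit, computed separately in source and target, is exactly the arrow $e'\colon\one=\colim\underline{\one}\to\colim\mathfrak{J}_\alpha=J_\alpha'$.

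The cofibre functor $\Fun(\Delta^1,\CC)\to\CC$ preserves colimits (it is computed by a pushout, and colimits commute with colimits), so
\[\cof\overline{\alpha}\simeq\cof e'\simeq\colim_n\cof(\alpha^{\otimes n}),\]
a sequential diagram with terms $\cof(\alpha^{\otimes n})$ whose transition maps are those induced on cofibres by the squares above. This settles the first assertion.

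For the extension statement I would factor $\alpha^{\otimes(n+1)}\simeq(\id_A\otimes\alpha^{\otimes n})\circ\alpha$, with $\alpha\colon\one\to A$ and $\id_A\otimes\alpha^{\otimes n}\colon A\to A\otimes A^{\otimes n}=A^{\otimes(n+1)}$, and apply the standard ``cofibre of a composite'' cofibre sequence in the stable category $\CC$ (obtained by pasting two pushout squares). This yields a cofibre sequence
\[\cof\alpha\longrightarrow\cof(\alpha^{\otimes(n+1)})\longrightarrow\cof(\id_A\otimes\alpha^{\otimes n}),\]
and since $A\otimes-$ is a left adjoint on the presentably symmetric monoidal $\CC$ it preserves colimits, hence cofibres, so the right-hand term is $A\otimes\cof(\alpha^{\otimes n})$. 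Thus $\cof(\alpha^{\otimes(n+1)})$ is an extension of $\cof\alpha$ by $A\otimes\cof(\alpha^{\otimes n})$, as required.

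I do not expect a serious obstacle here: once Proposition \ref{prop: Jalpha Jalpha' agree} is available the argument is entirely formal. The only point needing a little care is promoting the stagewise collection $(\alpha^{\otimes n})_n$ to an honest map of sequential diagrams $\underline{\one}\to\mathfrak{J}_\alpha$, which is why I would package it via the functor $n\mapsto(0\leq n)$ described above.
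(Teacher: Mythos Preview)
Your proof is correct and follows essentially the same approach as the paper: both invoke Proposition \ref{prop: Jalpha Jalpha' agree} to replace $\overline{\alpha}$ by $e'$, compute $\cof e'$ as the sequential colimit of $\cof(\alpha^{\otimes n})$ by commuting cofibres past the colimit defining $J_\alpha'$, and then obtain the extension by factoring $\alpha^{\otimes(n+1)}$ as $(\id_A\otimes\alpha^{\otimes n})\circ\alpha$ (the paper packages this last step as a $3\times 3$ grid of cofibre sequences, which amounts to the same thing). Your extra care in building the map of diagrams via $n\mapsto(0\leq n)$ is a nice touch but not something the paper spells out.
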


    \begin{proof}
        By Proposition \ref{prop: Jalpha Jalpha' agree} we have $\cof \overline{\alpha} \simeq \cof(e')$, so we need only compute $\cof(e')$. Because cofibres and colimits commute, from Construction \ref{constr: explicit Jalpha} we know \hbox{$\cof(e': \one \to J_\alpha')$} is the colimit of the diagram 
        \[\begin{tikzcd}
            0 & {\cof \alpha} & {\cof(\alpha^{\otimes 2})} & {\cof(\alpha^{\otimes 3})} & {\cdots,}
            \arrow[from=1-1, to=1-2]
            \arrow[from=1-2, to=1-3]
            \arrow[from=1-3, to=1-4]
            \arrow[from=1-4, to=1-5]
        \end{tikzcd}\]
        whose maps between cofibres are induced from the maps in the diagram $\mathfrak{J}_\alpha$.

        Consider the diagram
        \[\begin{tikzcd}[column sep={9em,between origins}, row sep={6em,between origins}]
            \one & A & {\cof \alpha} \\
            \one & {A^{\otimes (n+1)}} & {\cof(\alpha^{\otimes (n+1)})} \\
            0 & {\cof(\id_A \otimes \alpha^{\otimes n})} & {\cof(\id_A \otimes \alpha^{\otimes n}),}
            \arrow["\alpha", from=1-1, to=1-2]
            \arrow[from=1-1, to=2-1, equal]
            \arrow[from=1-2, to=1-3]
            \arrow["{\id \otimes \alpha^{\otimes n}}", from=1-2, to=2-2]
            \arrow[from=1-3, to=2-3]
            \arrow["{\alpha^{\otimes (n+1)}}", from=2-1, to=2-2]
            \arrow[from=2-1, to=3-1]
            \arrow[from=2-2, to=2-3]
            \arrow[from=2-2, to=3-2]
            \arrow[from=2-3, to=3-3]
            \arrow[from=3-1, to=3-2]
            \arrow[from=3-2, to=3-3, equal]
        \end{tikzcd}\]
        whose rows and columns are all cofibre sequences. Thus $\cof(\alpha^{\otimes (n+1)})$ is an extension of $\cof \alpha$ by $\cof(\id_A \otimes \alpha^{\otimes n}) \simeq A \otimes \cof(\alpha^{\otimes n})$. Thus all of the terms in the colimit computing $\cof(e')$ are built from $\cof \alpha$ via iterated extensions and tensoring with $A$.
    \end{proof}
    
    Although this description is not completely explicit, it does allow us to transfer properties of interest from $\alpha$ to $\overline{\alpha}$. We will use this later in Lemma \ref{lem: cof Jbeta correct type} to allow us to explicitly compute the localisation corresponding to a given central map $\alpha$. 
    
    For now, we use Lemma \ref{lem: baralpha in terms of alpha} to prove a useful fact about the acyclic objects for a localisation presented by a central map. 

\begin{lem}\label{lem: compact acyclics generate}
    Let $\CC$ be stable and have the property that its compact objects agree with its dualisable objects (this is some weak form of rigidity). If $\alpha: \one \to A$ is central and $A$ is compact, then $\cof(\one \to J_\alpha = L\one)$ is a (sequential) colimit of compact $L$-acyclic objects in $\CC$.
\end{lem}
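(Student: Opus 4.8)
The strategy is to feed the sequential description of $\cof(\one \to J_\alpha)$ provided by Lemma \ref{lem: baralpha in terms of alpha} through two observations: that compactness propagates through the tensor powers $A^{\otimes n}$ and their cofibres, and that each $\cof(\alpha^{\otimes n})$ is killed by $J_\alpha$.

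\emph{Compactness of the terms.} The unit $\one$ is dualisable, hence compact by hypothesis; $A$ is compact, hence dualisable; therefore every tensor power $A^{\otimes n}$ is dualisable (duals of a tensor product are computed factorwise), hence compact. Since $\CC$ is stable, the full subcategory of compact objects is closed under finite colimits, retracts and shifts, so the cofibre of any map between compact objects is again compact. Applying this to $\alpha^{\otimes n}\colon \one \to A^{\otimes n}$, each term $\cof(\alpha^{\otimes n})$ of the diagram in Lemma \ref{lem: baralpha in terms of alpha} is compact.

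\emph{Acyclicity of the terms.} By Corollary \ref{cor: Jalpha is idempotent} the functor $L = J_\alpha \otimes -$ is a smashing localisation, so its acyclics are exactly the $X$ with $J_\alpha \otimes X \simeq 0$; being smashing, $L$ is tensor-compatible, so by Lemma \ref{lem: equiv monoidal loc} the acyclics form a tensor ideal, and as $L$ is exact they are closed under extensions. Now $\cof\alpha$ is acyclic because $J_\alpha \otimes \cof\alpha \simeq \cof(\id_{J_\alpha} \otimes \alpha) \simeq 0$, using that $\id_{J_\alpha} \otimes \alpha$ is an equivalence (Corollary \ref{cor: two more isos} and the remark following it). By Lemma \ref{lem: baralpha in terms of alpha}, $\cof(\alpha^{\otimes(n+1)})$ is an extension of $\cof\alpha$ by $A \otimes \cof(\alpha^{\otimes n})$; if $\cof(\alpha^{\otimes n})$ is acyclic then so is $A \otimes \cof(\alpha^{\otimes n})$ (tensor ideal), hence so is the extension. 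By induction every $\cof(\alpha^{\otimes n})$ is $L$-acyclic. (Alternatively one checks directly that $\id_{J_\alpha} \otimes \alpha^{\otimes n}\colon J_\alpha \to J_\alpha \otimes A^{\otimes n}$ is an equivalence, by writing $\alpha^{\otimes n}$ as a composite of maps $\id \otimes \alpha$ and using the symmetry of $\otimes$ to reduce each stage to a tensor of the equivalence $\id_{J_\alpha} \otimes \alpha$ with an identity, exactly as in the proof of Lemma \ref{lem: Jalpha' localises to Jalpha}; then $J_\alpha \otimes \cof(\alpha^{\otimes n}) \simeq 0$.)

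\emph{Conclusion.} By Lemma \ref{lem: baralpha in terms of alpha}, $\cof(\one \to J_\alpha) \simeq \cof\overline\alpha$ is the colimit of the sequential diagram $0 \to \cof\alpha \to \cof(\alpha^{\otimes 2}) \to \cdots$, every term of which we have shown to be compact and $L$-acyclic; this is the desired presentation. The only step carrying genuine content is the first: the hypothesis identifying compact and dualisable objects is exactly what makes compactness survive passage to tensor powers and cofibres, and I expect that — rather than the formal cofibre-sequence bookkeeping — to be the main obstacle.
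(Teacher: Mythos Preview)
Your proof is correct and follows the same strategy as the paper: invoke Lemma~\ref{lem: baralpha in terms of alpha} for the sequential presentation, use the compact $=$ dualisable hypothesis to propagate compactness through tensor powers and cofibres, and verify acyclicity of each $\cof(\alpha^{\otimes n})$. The only cosmetic difference is in the acyclicity step: the paper observes directly that $\alpha^{\otimes n}$ is a composite of maps in the diagram $\mathfrak{J}_\alpha$ and hence an $L$-local equivalence (via the proof of Lemma~\ref{lem: Jalpha' localises to Jalpha}), whereas your primary argument inducts using the tensor-ideal and extension-closure properties of the acyclics --- but you also mention the paper's route as your alternative, so the two are effectively interchangeable.
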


Note that $\one \in \CC$ is necessarily compact in this case, because the monoidal unit is by definition dualisable (and self-dual). Then compactness of $\cof \alpha$ is equivalent to compactness of the codomain $A$.

\begin{proof}
    Let $C = \cof(\one \to J_\alpha)$. The canonical map $\one \to L\one$ is a local equivalence, so its cofibre is acyclic. This much is true for any localisation. Since $\cof \alpha$ is compact, by Lemma \ref{lem: baralpha in terms of alpha} we know $\cof(\alpha^{\otimes n})$ is also compact. This is because an extension of compact objects is compact, and a tensor product of dualisable objects is dualisable, thus a tensor product of two compact objects is compact given our hypothesis on $\CC$. Lemma \ref{lem: baralpha in terms of alpha} tells us that $\cof(\alpha^{\otimes n})$ is built from the compact objects $A$ and $\cof \alpha$ by iterated extensions and tensor products. 

    Next we want to show that each $\cof(\alpha^{\otimes n})$ is acyclic. It follows from (the proof of) Lemma \ref{lem: Jalpha' localises to Jalpha} that every composite map in the diagram $\mathfrak{J}_\alpha$ is an $L$-local equivalence. In particular, $\alpha^{\otimes n}: \one \to A^{\otimes n}$ is one of the composites in this diagram, so it is a local equivalence. The cofibre of a local equivalence is acyclic.
\end{proof}

To conclude our discussion, we would like to give a characterisation of when two central maps give rise to the same smashing localisation. In the following, let $\alpha: \one \to A$ and $\beta: \one \to B$ be central maps in a presentably symmetric monoidal category $\CC$.

\begin{notation}
    If $\alpha$ is a central map, let $L_\alpha$ denote the smashing localisation produced by $\alpha$, as in Construction \ref{constr: Jalpha} (and see also Corollary \ref{cor: Jalpha is idempotent}).
\end{notation}

Our first observation is that if $\alpha$ is central, then $\alpha^{\otimes n}$ is also central. Moreover these two central maps produce the same smashing localisation.

\begin{lem}\label{lem: power of central map}
    Let $n \geq 1$. If $\alpha$ is central then $\alpha^{\otimes n}$ is central, and $L_{\alpha} \simeq L_{\alpha^{\otimes n}}$.
\end{lem}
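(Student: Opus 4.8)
The plan is to prove the two assertions in turn, deducing the second from the first.

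\emph{Centrality of $\alpha^{\otimes n}$.} I would show that iterating the single centrality homotopy for $\alpha$ makes the positions of the tensor factors of $\alpha^{\otimes n}$ irrelevant. For each size-$n$ subset $S \subseteq \{1, \dots, 2n\}$ let $g_S \colon A^{\otimes n} \to A^{\otimes 2n}$ be the map that puts a copy of $\alpha \colon \one \to A$ in every output slot indexed by $S$ and routes the $n$ input factors, in order, through the slots not in $S$. If $S' = (S \setminus \{i\}) \cup \{i+1\}$ with $i \in S$ and $i+1 \notin S$, then $g_S$ and $g_{S'}$ agree outside the block of slots $\{i, i+1\}$, where they are respectively $\alpha \otimes \id_A$ and $\id_A \otimes \alpha$ (as maps $A \to A^{\otimes 2}$ on the relevant input factor); tensoring the centrality homotopy $\alpha \otimes \id_A \simeq \id_A \otimes \alpha$ with the identity on the remaining factors gives $g_S \simeq g_{S'}$. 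A short combinatorial argument shows any two $n$-element subsets of $\{1, \dots, 2n\}$ are joined by a chain of such adjacent moves (e.g.\ sort a subset towards $\{1, \dots, n\}$ one step at a time), so all the $g_S$ are homotopic. Since $g_{\{1, \dots, n\}} = \alpha^{\otimes n} \otimes \id_{A^{\otimes n}}$ and $g_{\{n+1, \dots, 2n\}} = \id_{A^{\otimes n}} \otimes \alpha^{\otimes n}$, this is exactly centrality of $\alpha^{\otimes n}$. (Equivalently, one can isolate the statement that a tensor product of central maps is central and induct on $n$; the argument is the same.)

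\emph{Equality of the localisations.} Now that $\alpha^{\otimes n}$ is known to be central, Proposition \ref{prop: Jalpha Jalpha' agree} applies to both $\alpha$ and $\alpha^{\otimes n}$, giving $J_\alpha \simeq J_\alpha' = \colim \mathfrak{J}_\alpha$ and $J_{\alpha^{\otimes n}} \simeq J_{\alpha^{\otimes n}}' = \colim \mathfrak{J}_{\alpha^{\otimes n}}$ as $\E_0$-algebras (Construction \ref{constr: explicit Jalpha}). The composite of $n$ consecutive transition maps $\id_{A^{\otimes m}} \otimes \alpha$ in $\mathfrak{J}_\alpha$ is $\id_{A^{\otimes m}} \otimes \alpha^{\otimes n}$, so $\mathfrak{J}_{\alpha^{\otimes n}}$ is precisely the restriction of $\mathfrak{J}_\alpha$ to the cofinal subsequence of indices $0, n, 2n, \dots$. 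A sequential colimit is unchanged under passage to a cofinal subsequence, and the resulting equivalence $J_\alpha' \simeq J_{\alpha^{\otimes n}}'$ respects the maps from the common initial term $\one = A^{\otimes 0}$ into the colimit; it is thus an equivalence of idempotent maps $\one \to J$, so by Lemma \ref{lem: class of smashing locs} the two present the same smashing localisation, i.e.\ $L_\alpha \simeq L_{\alpha^{\otimes n}}$. (Alternatively, tensor with an arbitrary $X$: since $- \otimes X$ preserves colimits, the same subsequence observation shows $X \to J_\alpha \otimes X$ is an equivalence if and only if $X \to J_{\alpha^{\otimes n}} \otimes X$ is, so the two localisations have the same local objects.)

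I expect the first step to be the main obstacle: the slot-shuffling argument is conceptually simple but needs care to make precise — verifying that adjacent moves connect all $n$-subsets, and that the local centrality homotopy genuinely assembles into a homotopy of the global maps $A^{\otimes n} \to A^{\otimes 2n}$ (keeping track of the coherence isomorphisms for the monoidal structure). Once centrality is in hand, the identification of the two localisations is essentially formal, given Proposition \ref{prop: Jalpha Jalpha' agree} and the stability of sequential colimits under cofinal restriction.
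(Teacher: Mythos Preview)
Your argument is correct. The centrality step is more detailed than the paper's (which simply asserts it is ``essentially immediate by unwinding the definition''); your slot-shuffling argument is a perfectly good way to make that precise, and indeed the paper later gives an inductive version of the same idea in Lemma~\ref{lem: composite of cc locs is cc}.

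For the equality $L_\alpha \simeq L_{\alpha^{\otimes n}}$, your route genuinely differs from the paper's. The paper works with the universal property of Construction~\ref{constr: Jalpha}: it builds an $\E_1$ map $\varphi\colon J_\alpha \to J_{\alpha^{\otimes n}}$ from the $\E_0$ map $A \xrightarrow{\id \otimes \alpha^{n-1}} A^{\otimes n} \to J_{\alpha^{\otimes n}}$, and an $\E_1$ map $\psi\colon J_{\alpha^{\otimes n}} \to J_\alpha$ from the canonical $\E_0$ map $c_n\colon A^{\otimes n} \to J_\alpha$ supplied by the colimit diagram; both composites are then $\E_0$ endomorphisms of idempotent algebras, hence identities by Corollary~\ref{cor: very useful En refinement}. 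You instead identify $\mathfrak{J}_{\alpha^{\otimes n}}$ with the cofinal subsequence of $\mathfrak{J}_\alpha$ and invoke Proposition~\ref{prop: Jalpha Jalpha' agree}. Your approach is more concrete and avoids the back-and-forth with universal properties; the paper's is more in keeping with its emphasis on $J_\alpha$ as a free $\E_1$ algebra, and has the minor advantage of not needing to verify that the transition-map composites in $\mathfrak{J}_\alpha$ really match those in $\mathfrak{J}_{\alpha^{\otimes n}}$ (though this is straightforward, as you note). Either way the conclusion is the same once one knows both $J$'s are idempotent.
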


\begin{proof}
    Centrality of $\alpha^{\otimes n}$ is essentially immediate by unwinding the definition. 

    The commuting diagram 
    \[\begin{tikzcd}[column sep={6em,between origins}, row sep={6em,between origins}]
            \one & A & {J_\alpha} \\
            & {A^{\otimes n}} & {J_{\alpha^{\otimes n}}}
            \arrow["\alpha", from=1-1, to=1-2]
            \arrow["{\alpha^{n}}"', from=1-1, to=2-2]
            \arrow["{c_{(1)}}", from=1-2, to=1-3]
            \arrow["{\id \otimes \alpha^{n-1}}", from=1-2, to=2-2]
            \arrow["\varphi", dashed, from=1-3, to=2-3]
            \arrow["{c_{(n)}}"', from=2-2, to=2-3]
        \end{tikzcd}\]
    tells us that, since $J_{\alpha^{\otimes n}}$ is $\E_\infty$ and in particular $\E_1$, there is an induced map $\varphi: J_\alpha \to J_{\alpha^{\otimes n}}$ coming from the universal property of $J_\alpha$ and which is the unique $\E_1$ map fitting into this diagram. 

    To obtain a map in the other direction, we exploit the colimit construction of $J_\alpha$ (see Construction \ref{constr: explicit Jalpha} and Proposition \ref{prop: Jalpha Jalpha' agree}). This tells us that $J_\alpha$ comes equipped with a canonical $\E_0$ map $c_n: A^{\otimes n} \to J_\alpha$, for each $n$, since $A^{\otimes n}$ appears in the diagram $\mathfrak{J}_\alpha$. Then using the universal property of $J_{\alpha^{\otimes n}}$, we have a diagram 
    \[\begin{tikzcd}[column sep={5em,between origins}, row sep={5em,between origins}]
        \one & {A^{\otimes n}} & {J_{\alpha^{\otimes n}}} \\
        & {J_\alpha}
        \arrow["{\alpha^{\otimes n}}", from=1-1, to=1-2]
        \arrow["{\overline{\alpha}}"', from=1-1, to=2-2]
        \arrow["{c_{(n)}}", from=1-2, to=1-3]
        \arrow["{c_n}"', from=1-2, to=2-2]
        \arrow["\psi", dashed, from=1-3, to=2-2]
    \end{tikzcd}\]
    producing an $\E_1$ map $\psi: J_{\alpha^{\otimes n}} \to J_\alpha$. By Lemma \ref{lem: En refinement}, the maps $\psi$ and $\phi$ both refine to $\E_\infty$ and are inverse to one another, simply because $J_\alpha$ and $J_{\alpha^{\otimes n}}$ are idempotent. Thus $J_\alpha$ and $J_{\alpha^{\otimes n}}$ are canonically equivalent (the $\E_\infty$, or indeed $\E_0$, mapping space between them is contractible) so their smashing localisations are canonically identified.
\end{proof}

We already have a means of comparing two localisations. Recall that $L \leq L'$ means containment on local subcategories or reverse containment for acyclics. When $L$ and $L'$ are smashing, this condition is equivalent to supplying an $\E_0$ map $L'\one \to L\one$ (combine Lemmas \ref{lem: smash loc partial order via ring map} and \ref{lem: En refinement}). 

Thus the condition $L_\alpha \leq L_\beta$ is equivalent to the claim that the unit of $J_\alpha$ factors through the unit of $J_\beta$, i.e. $\overline{\alpha} \simeq f \circ \overline{\beta}$ for some comparison map $f$. The space of such comparison maps $f$ is \emph{a priori} either empty or contractible, so a specific choice of $f$ provides no additional data. One could ask whether, given such a factoring $\overline{\alpha} \simeq f \circ \overline{\beta}$, it is possible to produce a factoring on the level of the central maps $\alpha$ and $\beta$ we started with. 

Lemma \ref{lem: power of central map} suggests that we may not always be able to obtain a factoring of the form $g \circ \beta \simeq \alpha$. Indeed, since $L_{\alpha} \simeq L_{\alpha^{\otimes n}}$ we have $L_\alpha \leq L_{\alpha^{\otimes n}}$, so we would in particular need to produce a family of $\E_0$ maps of the form $g: A^{\otimes n} \to A$ with $g \circ \alpha^{\otimes n} \simeq \alpha$. Such maps may not exist, as we see in the following example. 

\begin{example}
    Let $I$ denote the Brown-Comenetz dual of the sphere. We need two properties of this spectrum. First, $I \otimes I \simeq 0$ but $I$ itself is nonzero (that is, $I$ is tensor-nilpotent). Second, there is a nonzero map $\S \to I$. Indeed, the defining property of $I$ is that for $X$ any spectrum, $[X, I] \cong \hom(\pi_0 X, \Q/\Z)$. Thus $[\S, I] \cong \hom(\Z, \Q/\Z) \cong \Q/\Z$. In particular, $[\S, I]$ is nontrivial so it contains some nonzero map $f$. 

    Any map $\S \to I$ is central, simply because of the property that $I \otimes I \simeq 0$. For the map $f$ to be central we need $f \otimes \id \simeq \id \otimes f$, but the codomain of these maps is $I \otimes I \simeq 0$ so this is automatic. We claim that there is no $\E_0$ map $I \otimes I \to I$. There are no nonzero maps $0 \simeq I \otimes I \to I$, so the composite $\S \to I \otimes I \to I$ is also nullhomotopic, but the central map $f: \S \to I$ was chosen to be nonzero. The same argument shows there is no $\E_0$ map $I^{\otimes n} \to I$ for any $n \geq 2$, when the unit $f: \S \to I$ is chosen to be nonzero. 
\end{example}

But Lemma \ref{lem: power of central map} also suggests a way to repair this problem: instead of asking that $g \circ \beta \simeq \alpha$, we could ask for a factoring of the form $g \circ \beta \simeq \alpha^{\otimes n}$ for some $n \gg 0$. Replacing $\alpha$ by its tensor power is reasonable since all the tensor powers of a central map induce the same localisation. Then we have the following results.

\begin{lem}\label{lem: central map divisibility easy}
    If there exists a map $g: B \to A^{\otimes n}$ such that $g \circ \beta \simeq \alpha^{\otimes n}$, then $L_\alpha \leq L_\beta$.
\end{lem}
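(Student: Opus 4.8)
The plan is to reduce the statement to producing an $\E_0$ map between idempotent algebras. As recalled immediately before the statement, for smashing localisations the relation $L_\alpha \leq L_\beta$ is equivalent to the existence of an $\E_0$ map $L_\beta\one \to L_\alpha\one$, i.e.\ an $\E_0$ map $f\colon J_\beta \to J_\alpha$ with $f \circ \overline{\beta} \simeq \overline{\alpha}$ (combine Lemmas \ref{lem: smash loc partial order via ring map} and \ref{lem: En refinement}). So it suffices to build such a map. I would also use Lemma \ref{lem: power of central map}: since $n \geq 1$ and $\alpha$ is central, $\alpha^{\otimes n}$ is central and $L_\alpha \simeq L_{\alpha^{\otimes n}}$, with $J_\alpha \simeq J_{\alpha^{\otimes n}}$ compatibly with the unit maps. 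This lets me work with $J_{\alpha^{\otimes n}}$, whose universal property is adapted to the shape of the hypothesis $g \circ \beta \simeq \alpha^{\otimes n}$.

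First I would form the composite $B \xrightarrow{g} A^{\otimes n} \xrightarrow{c} J_{\alpha^{\otimes n}}$, where $c$ is the canonical map attached to $J_{\alpha^{\otimes n}}$ in Construction \ref{constr: Jalpha} for the $\E_0$ algebra $\alpha^{\otimes n}\colon \one \to A^{\otimes n}$; by construction $c$ is unital, that is $c \circ \alpha^{\otimes n} \simeq \overline{\alpha^{\otimes n}}$. Using the hypothesis $g \circ \beta \simeq \alpha^{\otimes n}$, the composite $c \circ g$ then satisfies $(c \circ g) \circ \beta \simeq \overline{\alpha^{\otimes n}}$, which is exactly the statement that $c \circ g$ is a unital ($\E_0$) map from the $\E_0$ algebra $\beta\colon \one \to B$ to the unital $\E_1$-monoid (in fact $\E_\infty$ algebra) $J_{\alpha^{\otimes n}}$.

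Then I would invoke the universal property of $J_\beta$ (Construction \ref{constr: Jalpha}): a unital map $B \to J_{\alpha^{\otimes n}}$ into a unital $\E_1$-monoid factors uniquely through $J_\beta$ as an $\E_1$-algebra map $f\colon J_\beta \to J_{\alpha^{\otimes n}}$. Being a map of $\E_1$-algebras, $f$ is unital, so $f \circ \overline{\beta} \simeq \overline{\alpha^{\otimes n}}$. Forgetting the $\E_1$ structure, $f$ is an $\E_0$ map $L_\beta\one = J_\beta \to J_{\alpha^{\otimes n}} = L_{\alpha^{\otimes n}}\one$, and the recalled equivalence gives $L_{\alpha^{\otimes n}} \leq L_\beta$; composing with the identification $J_{\alpha^{\otimes n}} \simeq J_\alpha$ of Lemma \ref{lem: power of central map} (equivalently, using $L_\alpha \simeq L_{\alpha^{\otimes n}}$ directly) upgrades this to $L_\alpha \leq L_\beta$.

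There is no genuine obstacle here; the only point requiring a moment's attention is the verification that $c \circ g$ is really a \emph{unital} map, since this is the sole place where the hypothesis $g \circ \beta \simeq \alpha^{\otimes n}$ is used, together with the defining unitality of $c$. Everything else is formal manipulation of universal properties. In particular, no centrality of $g$ is needed, and $\beta$ must be central only insofar as this is what makes $J_\beta$ idempotent and $L_\beta$ a smashing localisation in the first place.
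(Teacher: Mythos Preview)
Your proposal is correct and follows essentially the same route as the paper. The paper composes $g$ with the canonical $\E_0$ map $A^{\otimes n}\to J_\alpha$ (noting this is equivalent to using $A^{\otimes n}\to J_{\alpha^{\otimes n}}$ via Lemma~\ref{lem: power of central map}), then applies the universal property of $J_\beta$ and the equivalence from Lemmas~\ref{lem: smash loc partial order via ring map} and~\ref{lem: En refinement}; you do the same, just choosing to work with $J_{\alpha^{\otimes n}}$ explicitly and identifying it with $J_\alpha$ at the end.
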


\begin{proof}
    By composing $g$ with the canonical $\E_0$ map $A^{\otimes n} \to J_\alpha$ (or, equivalently by Lemma \ref{lem: power of central map}, $A^{\otimes n} \to J_{\alpha^{\otimes n}}$) we obtain an $\E_0$ map $\overline{g}: B \to J_\alpha$. Then the universal property of $J_\beta$ produces an $\E_1$ map $J_\beta \to J_\alpha$, which tells us (combining Lemmas \ref{lem: smash loc partial order via ring map} and \ref{lem: En refinement}) that $L_\alpha \leq L_\beta$.
\end{proof}

The converse is a little less robust, but still holds given some appropriate compactness hypotheses.
\begin{lem}\label{lem: central map divisibility hard}
    Suppose $\one \in \CC$ is compact. If $L_\alpha \leq L_\beta$ and $B$ is compact, then there is a map $g: B \to A^{\otimes n}$ such that $g \circ \beta \simeq \alpha^{\otimes n}$.
\end{lem}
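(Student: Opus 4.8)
The plan is to exploit the sequential-colimit model $J_\alpha \simeq J_\alpha' = \colim_n A^{\otimes n}$ from Proposition \ref{prop: Jalpha Jalpha' agree}, and to use compactness twice: once for $B$, to bring the relevant map down to a finite stage of the colimit, and once for $\one$, to rigidify a homotopy at a finite stage.

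First I would translate the hypothesis $L_\alpha \leq L_\beta$ into algebra. By the discussion preceding Lemma \ref{lem: central map divisibility easy} (equivalently, by combining Lemmas \ref{lem: smash loc partial order via ring map} and \ref{lem: En refinement}) there is a comparison map $f\colon J_\beta \to J_\alpha$ with $f \circ \overline{\beta} \simeq \overline{\alpha}$. Precomposing with the canonical $\E_0$ map $c\colon B \to J_\beta$ of Construction \ref{constr: Jalpha}, which satisfies $c \circ \beta \simeq \overline{\beta}$, yields a map $d := f \circ c\colon B \to J_\alpha$ with $d \circ \beta \simeq \overline{\alpha}$. I would then fix the identification of $J_\alpha$ with $\colim_n A^{\otimes n}$, writing $c_n\colon A^{\otimes n} \to J_\alpha$ for the cocone maps and $t_{n,m}\colon A^{\otimes n} \to A^{\otimes m}$ (for $m \geq n$) for the transition maps, which are given by tensoring on the right with powers of $\alpha$. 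By functoriality of the monoidal product, the composite of the first $n$ transition maps $\one \to A^{\otimes n}$ is $\alpha^{\otimes n}$, so $\overline{\alpha} \simeq c_n \circ \alpha^{\otimes n}$ for every $n$, and likewise $t_{n,m} \circ \alpha^{\otimes n} \simeq \alpha^{\otimes m}$ (no centrality is needed for either identity; one should just check these are compatible with the $\E_0$-equivalence $J_\alpha \simeq J_\alpha'$, which follows from Proposition \ref{prop: Jalpha Jalpha' agree} and Remark \ref{rmk: explicit maps Jalpha Jalpha'}). Since $B$ is compact, $\Map(B,-)$ commutes with this filtered colimit, so $d$ factors up to homotopy as $d \simeq c_n \circ h$ for some $n$ and some $h\colon B \to A^{\otimes n}$.

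The main obstacle is that this $h$ need not satisfy $h \circ \beta \simeq \alpha^{\otimes n}$ on the nose: all that comes out of the above is $c_n \circ (h \circ \beta) \simeq d \circ \beta \simeq \overline{\alpha} \simeq c_n \circ \alpha^{\otimes n}$, i.e. the two maps $\one \to A^{\otimes n}$ agree only after applying $c_n$. This is exactly why the statement must allow $\alpha$ to be replaced by a tensor power. To finish, I would invoke compactness of $\one$: then $\Map(\one,-)$ also commutes with $\colim_n A^{\otimes n}$, so $h \circ \beta$ and $\alpha^{\otimes n}$ must already become homotopic after some transition map, i.e. there is $m \geq n$ with $t_{n,m} \circ (h \circ \beta) \simeq t_{n,m} \circ \alpha^{\otimes n} \simeq \alpha^{\otimes m}$. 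Setting $g := t_{n,m} \circ h\colon B \to A^{\otimes m}$ then gives $g \circ \beta \simeq \alpha^{\otimes m}$, which is the assertion with $m$ in place of $n$; since the statement only claims the existence of some tensor power, this completes the argument.
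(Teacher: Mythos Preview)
Your argument is correct and is essentially the same as the paper's: both produce an $\E_0$ map $B \to J_\alpha$, pass to the sequential colimit $J_\alpha \simeq \colim_n A^{\otimes n}$, and use compactness to descend to a finite stage. The only difference is packaging. The paper observes that the diagram $\mathfrak{J}_\alpha$ computes the same colimit in $\Alg_{\E_0}(\CC)$ (the forgetful functor preserves sifted colimits) and that $(\one \to B)$ is a compact $\E_0$ algebra, so the $\E_0$ map factors through some $A^{\otimes n}$ directly as an $\E_0$ map; compactness of $\one$ is hidden inside the claim that $(\one \to B)$ is compact in the slice $\CC_{\one/}$. You instead unpack this into two steps, first using compactness of $B$ to get $h\colon B \to A^{\otimes n}$ and then compactness of $\one$ to push to a higher stage where $h \circ \beta$ and $\alpha^{\otimes n}$ actually agree. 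Your version makes the role of the hypothesis that $\one$ is compact more transparent; the paper's version is terser.
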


\begin{proof}
    Since $L_\alpha \leq L_\beta$ we obtain by Lemma \ref{lem: smash loc partial order via ring map} an $\E_0$ map $J_\beta \to J_\alpha$. The canonical map $B \to J_\beta$ is also $\E_0$, so by composing we produce an $\E_0$ map $f: B \to J_\alpha$. By Proposition \ref{prop: Jalpha Jalpha' agree} we may identify $J_\alpha$ with the colimit of Construction \ref{constr: explicit Jalpha}. Since $B$ is compact, $f$ must factor through some finite stage of the colimit, producing a map $g: B \to A^{\otimes n}$ for some $n \geq 0$. In fact, since the forgetful functor from $\E_0$ algebras to underlying objects preserves sifted colimits and the colimit diagram $\mathfrak{J}_\alpha$ is sifted (indeed, it is filtered), the same diagram still produces $J_\alpha$ when the colimit is computed in $\E_0$ algebras instead of in $\CC$. So the $\E_0$ map $f$ on the compact $\E_0$ algebra $\one \to B$ factors to produce an $\E_0$ map $B \to A^{\otimes n}$. This is the map $g$ we wanted.
\end{proof}

\begin{prop}\label{prop: compact central map class divisibility}
    Suppose $\one \in \CC$ is compact. Let $\alpha: \one \to A$ and $\beta: \one \to B$ be central maps with $A$ and $B$ both compact in $\CC$. Then $L_\alpha \simeq L_\beta$ if and only if there exist $n, m \gg 0$ and maps $f: A \to B^{\otimes n}, g: B \to A^{\otimes m}$ such that $f \circ \alpha \simeq \beta^{\otimes n}$ and $g \circ \beta \simeq \alpha^{\otimes m}$.
\end{prop}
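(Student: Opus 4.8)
The plan is to deduce the Proposition immediately from Lemmas \ref{lem: central map divisibility easy} and \ref{lem: central map divisibility hard}, combined with the antisymmetry of the partial order on localisations: two localisations with the same local subcategory (equivalently, the same acyclics) are equivalent, so $L_\alpha \simeq L_\beta$ is the same as the conjunction $L_\alpha \leq L_\beta$ and $L_\beta \leq L_\alpha$.

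First I would treat the ``if'' direction. Suppose we are given $n, m$ and maps $f : A \to B^{\otimes n}$, $g : B \to A^{\otimes m}$ with $f \circ \alpha \simeq \beta^{\otimes n}$ and $g \circ \beta \simeq \alpha^{\otimes m}$. Applying Lemma \ref{lem: central map divisibility easy} to the map $g$ (with the roles of the lemma's $n$ and of the pair $(\alpha,\beta)$ as written) gives $L_\alpha \leq L_\beta$. Applying the same Lemma with the roles of $\alpha$ and $\beta$ interchanged — that is, to the map $f$, using $f \circ \alpha \simeq \beta^{\otimes n}$ — gives $L_\beta \leq L_\alpha$. Hence $L_\alpha \simeq L_\beta$. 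For the ``only if'' direction, assume $L_\alpha \simeq L_\beta$, so in particular $L_\alpha \leq L_\beta$. Since $\one$ and $B$ are compact in $\CC$, Lemma \ref{lem: central map divisibility hard} produces an integer $m \geq 0$ and a map $g : B \to A^{\otimes m}$ with $g \circ \beta \simeq \alpha^{\otimes m}$. Symmetrically, $L_\beta \leq L_\alpha$ together with compactness of $\one$ and $A$ yields $n \geq 0$ and $f : A \to B^{\otimes n}$ with $f \circ \alpha \simeq \beta^{\otimes n}$. If one wishes the indices to be genuinely large one may replace $m$ by $m+1$ using the map $(\id_{A^{\otimes m}} \otimes\, \alpha) \circ g : B \to A^{\otimes(m+1)}$, whose composite with $\beta$ is $\alpha^{\otimes m}\otimes\alpha \simeq \alpha^{\otimes(m+1)}$, and iterate; but for the statement as written, which only asserts the existence of suitable $n,m$, this bookkeeping is unnecessary.

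There is no real obstacle here: all of the substance is already contained in the two cited Lemmas, and in particular in Lemma \ref{lem: central map divisibility hard}, whose proof disposed of the one delicate point, namely that an $\E_0$ map out of a compact object into the filtered colimit $J_\alpha' \simeq J_\alpha$ must factor through a finite stage $A^{\otimes m}$ (even when the colimit is computed in $\E_0$ algebras). The only things to be careful about when writing the proof are the index bookkeeping for $n$ and $m$ and the correct use of the $\alpha \leftrightarrow \beta$, $A \leftrightarrow B$ symmetry when invoking each Lemma in both directions.
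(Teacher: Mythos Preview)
Your proposal is correct and follows exactly the paper's approach: the paper's proof is the single line ``Combine Lemmas \ref{lem: central map divisibility easy} and \ref{lem: central map divisibility hard},'' and you have simply spelled out how that combination goes, applying each lemma once in each direction via the $\alpha \leftrightarrow \beta$ symmetry.
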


\begin{proof}
    Combine Lemmas \ref{lem: central map divisibility easy} and \ref{lem: central map divisibility hard}. 
\end{proof}

Proposition \ref{prop: compact central map class divisibility} suggests an order relation on central maps between compact objects. We say $\alpha \leq \beta$ if $g \circ \beta \simeq \alpha^{\otimes m}$ for some $g$ and $m$, and take the order relation this generates. Two central maps are equivalent under this relation (i.e. $\alpha \leq \beta$ and $\beta \leq \alpha$) if and only if they produce the same smashing localisation. In Section \ref{subsec: alg central to central} we will see a class of maps which we call \emph{algebraically central}, and which have the property that some tensor power of the map is central, although the map itself may not be. We can then extend our equivalence relation to all algebraically central maps, and we know that each equivalence class contains some central maps, and two choices of central map will produce the same localisation if and only if they lie in the same equivalence class. In light of this it is reasonable to think of centrality as really being a property of these equivalence classes, since all the relevant properties are stable under taking arbitrarily large tensor powers. 

\subsection{Finiteness conditions}\label{subsec: finiteness}

    In this Subsection we will compare two notions of finiteness for smashing localisations, and eventually (at the end of Section \ref{sec: the Lnf example}) we will be able to give a full classification of smashing localisations of spectra satisfying either condition. These two notions turn out to be related but inequivalent in many categories of interest.

    \begin{defn}[Miller, \cite{millerfinite}, Definition 3]
        A localisation is \emph{finite} if the acyclic objects are generated under colimits by acyclic compact objects. 
    \end{defn}

    Recall that the compact objects in spectra are finite spectra, hence the name. Since the acyclic objects for any localisation are closed under colimits, we are essentially requiring that everything about the localisation be captured by the compact objects in its kernel.

    \begin{defn}\label{def: compactly central}
        Call a map $\alpha: \one \to A$ \emph{compact} if its cofibre is a compact object\footnote{We call such a map compact because we are usually in a setting where the unit itself is compact, so a map out of the unit with compact cofibre/codomain represents a compact object of the slice category under the unit. This usage is perhaps nonstandard.} in $\CC$. If $\alpha$ is a compact central map, then we say the resulting localisation $L_\alpha$ is \emph{compactly central}. Here $L_\alpha$ is given by tensoring with the idempotent algebra $J_\alpha$ of Construction \ref{constr: Jalpha}.
    \end{defn}

    If the unit in $\CC$ is itself a compact object then compactness of $\alpha$ is equivalent to compactness of the codomain $A$. A localisation arising from a central map is necessarily smashing, so a compactly central localisation is smashing. In nice settings all finite localisations are smashing, see Lemma \ref{lem: fin implies smashing} and Corollaries \ref{cor: sp fin implies smashing} and \ref{cor: bous fin implies smashing}. This suggests it may be fruitful to investigate the relationship between finite and compactly central localisations.

    \begin{lem}\label{lem: fin implies smashing}
        Suppose $\CC$ is stable and has the property that all compact objects are dualisable, and $L$ is a localisation of $\CC$ that is both tensor-compatible and finite. Then $L$ is smashing.
    \end{lem}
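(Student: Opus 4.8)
The plan is to reduce everything to Corollary~\ref{cor: monoidal + local mult = smashing}: since $L$ is already tensor-compatible, it suffices to show that the tensor product of two $L$-local objects is again $L$-local. So fix $L$-local objects $X$ and $Y$. In the stable setting an object is local precisely when it is right-orthogonal to every acyclic, and the class of objects right-orthogonal to a fixed object is closed under colimits (mapping out of a colimit is a limit of mapping spaces, and a limit of contractible spaces is contractible). Because $L$ is finite, the acyclics are generated under colimits by compact acyclics, so it is enough to prove $\Map_\CC(K, X \otimes Y) \simeq *$ for an arbitrary \emph{compact} acyclic object $K$.

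The heart of the argument is the claim that $K^\vee \otimes X \simeq 0$, where $K^\vee$ is the dual of $K$, which exists since compact objects of $\CC$ are dualisable. I would prove this by showing $K^\vee \otimes X$ is simultaneously acyclic and local. It is acyclic: first $K^\vee$ itself is acyclic, because $K$ is acyclic, the acyclics form a tensor ideal by Lemma~\ref{lem: equiv monoidal loc} (using tensor-compatibility), so $K^\vee \otimes K$ is acyclic, and the triangle identities exhibit $K^\vee$ as a retract of $K^\vee \otimes K \otimes K^\vee$ while the acyclics are closed under retracts; then the tensor-ideal property again makes $K^\vee \otimes X$ acyclic. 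It is local: $K^\vee \otimes X \simeq \underline{\Hom}(K, X)$ since $K$ is dualisable, and for any object $W$ an internal hom $\underline{\Hom}(W, X)$ into a local object $X$ is local whenever $L$ is tensor-compatible, because $\Map_\CC(Z, \underline{\Hom}(W, X)) \simeq \Map_\CC(Z \otimes W, X) \simeq *$ for acyclic $Z$ (as $Z \otimes W$ is acyclic and $X$ is local). An object that is both acyclic and local is zero, so $K^\vee \otimes X \simeq 0$.

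Granting this, dualisability of $K$ gives
\[\Map_\CC(K, X \otimes Y) \simeq \Map_\CC(\one, K^\vee \otimes X \otimes Y) \simeq \Map_\CC\bigl(\one, (K^\vee \otimes X) \otimes Y\bigr) \simeq \Map_\CC(\one, 0) \simeq *,\]
which completes the reduction, and Corollary~\ref{cor: monoidal + local mult = smashing} then yields that $L$ is smashing.

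The only genuinely new input is the observation that $K^\vee \otimes X$ is at once acyclic and local; everything else is formal. The one subtlety to watch is that the hypothesis only supplies ``compact $\Rightarrow$ dualisable'' and not the converse, so the argument must not invoke compactness of $K^\vee$, of $\one$, or of $K^\vee \otimes X$ — and indeed it uses only that $K^\vee$ is dualisable and acyclic.
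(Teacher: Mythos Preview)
Your proof is correct and follows essentially the same strategy as the paper: reduce to Corollary~\ref{cor: monoidal + local mult = smashing}, test locality of $X \otimes Y$ against compact acyclics using dualisability, and use that $K^\vee \otimes X \simeq 0$. Two small remarks. First, there is a terminology slip: the class you want closed under colimits is the \emph{left}-orthogonal of the fixed object $X \otimes Y$, not the right-orthogonal; your parenthetical (``mapping out of a colimit is a limit'') shows you have the correct statement in mind. Second, your route to $K^\vee \otimes X \simeq 0$ via ``acyclic and local'' is slightly longer than necessary: the paper observes directly that $\Map_\CC(W, K^\vee \otimes X) \simeq \Map_\CC(W \otimes K, X) \simeq 0$ for \emph{every} $W$ (since $W \otimes K$ is acyclic and $X$ is local), so $K^\vee \otimes X \simeq 0$ by Yoneda, with no separate acyclicity argument for $K^\vee$ needed.
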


    \begin{proof}
        First note that for a compact acyclic object $Z$ and a local object $LX$, we have $DZ \otimes LX \simeq 0$. This is because, for an arbitrary object $Y$,
        \[\Map(Y, DZ \otimes LX) \simeq \Map(Y \otimes Z, LX) \simeq 0\]
        because $L$ is tensor-compatible, so the acyclic objects form a tensor ideal (relying on stability of $\CC$), so $Y \otimes Z$ is acyclic, and the mapping space from an acyclic to a local is always contractible. But if every mapping space to $DZ \otimes LX$ is contractible then by Yoneda $DZ \otimes LX$ itself is contractible.

        Next we claim $LX \otimes Y$ is local for any objects $X$ and $Y$. With $Z$ as before, we have 
        \[\Map(Z, LX\otimes Y) \simeq \Map(\one, DZ \otimes LX \otimes Y) \simeq \Map(\one, 0) \simeq 0.\]
        If now $A$ is any acyclic, by finiteness of $L$ then $A$ can be represented as a colimit of finite acyclics and so 
        \[\Map(A, LX\otimes Y) \simeq 0.\]

        By Corollary \ref{cor: monoidal + local mult = smashing}, the fact that $LX \otimes Y$ is local allows us to conclude that $L$ is smashing. 
    \end{proof}

    \begin{cor}[Miller, \cite{millerfinite}, Proposition 9]\label{cor: sp fin implies smashing}
        In $\Sp$, and in any category arising as a smashing localisation of $\Sp$, all finite localisations are smashing.
    \end{cor}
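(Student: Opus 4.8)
The strategy is to invoke Lemma \ref{lem: fin implies smashing}: it suffices to show that, in $\Sp$ and in any $\CC$ arising as a smashing localisation of $\Sp$, every finite localisation is tensor-compatible, the remaining hypotheses of that Lemma being verified along the way. By Proposition \ref{prop: smashing loc modules over unit} such a $\CC$ is symmetric monoidally equivalent to $\Mod_R(\Sp)$ for the $\E_\infty$-ring $R = L_0\one$; in particular $\CC$ is presentably symmetric monoidal, stable and compactly generated, its compact objects are precisely the perfect $R$-modules, and a perfect $R$-module is dualisable, being a retract of a finite cell $R$-module, so all compact objects of $\CC$ are dualisable. Thus the ambient hypotheses of Lemma \ref{lem: fin implies smashing} hold, and by the compactly generated case of Proposition \ref{prop: when all locs are monoidal} it remains only to check that an $L$-acyclic object tensored with a compact object of $\CC$ stays $L$-acyclic.

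The key point is that the subcategory $\CC^\omega$ of compact objects is the thick subcategory of $\CC$ generated by the monoidal unit, since perfect $R$-modules are exactly the retracts of finite cell $R$-modules. This forces \emph{every} thick subcategory $\mathcal{T} \subseteq \CC^\omega$ to be a thick $\otimes$-ideal: the full subcategory $\{Z \in \CC^\omega : Z \otimes T \in \mathcal{T} \text{ for all } T \in \mathcal{T}\}$ is closed under shifts, cofibres and retracts, because $\mathcal{T}$ is and each functor $-\otimes T$ is exact, and it contains $\one$, hence it is all of $\CC^\omega$. I would then apply this with $\mathcal{T}$ the subcategory of compact $L$-acyclic objects, which is thick in $\CC^\omega$ since $L$ is a localisation of a stable category.

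Now let $A$ be any $L$-acyclic object and $Z$ a compact object of $\CC$. Since $L$ is finite we may write $A \simeq \colim_i A_i$ with each $A_i$ a compact $L$-acyclic, and since tensoring preserves colimits, $A \otimes Z \simeq \colim_i (A_i \otimes Z)$. Each $A_i \otimes Z$ lies in the thick $\otimes$-ideal of compact acyclics by the previous paragraph, hence is acyclic, and a colimit of acyclics is acyclic; so $A \otimes Z$ is acyclic. By Proposition \ref{prop: when all locs are monoidal}, $L$ is tensor-compatible, and then Lemma \ref{lem: fin implies smashing} shows $L$ is smashing. I expect the only point requiring care is the reduction to a module category over $R$ and the ensuing description of $\CC^\omega$ as the thick subcategory generated by the unit; granting that, the observation that thick subcategories are automatically $\otimes$-ideals here, together with the colimit computation, is routine. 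The case $\CC = \Sp$ is just $R = \S$, where the assertion $\Sp^\omega = \operatorname{Thick}(\S)$ is the classical fact that the compact spectra are the finite spectra.
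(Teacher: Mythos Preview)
Your argument is correct, but it takes a longer route than the paper's. The paper invokes the \emph{first} clause of Proposition~\ref{prop: when all locs are monoidal} directly: in $\Sp$ the unit $\S$ generates the whole category under colimits, and since a smashing localisation preserves all colimits and sends $\S$ to the new unit, the same holds in any smashing localisation $\CC$ of $\Sp$. This immediately gives that \emph{every} accessible localisation of $\CC$ is tensor-compatible, with no further work, and then Lemma~\ref{lem: fin implies smashing} finishes. Your approach instead reduces to $\Mod_R$, identifies $\CC^\omega$ as the thick subcategory generated by the unit, proves that thick subcategories of $\CC^\omega$ are automatically $\otimes$-ideals, and then uses finiteness of $L$ plus the second clause of Proposition~\ref{prop: when all locs are monoidal} to get tensor-compatibility. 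All of that is valid, but note that your observation ``$\CC^\omega$ is thickly generated by $\one$'' together with ``$\CC$ is compactly generated'' already implies that $\one$ generates $\CC$ under colimits, at which point the paper's shortcut is available and the $\otimes$-ideal argument and the appeal to finiteness of $L$ become unnecessary. The upside of your route is that it makes explicit why compact acyclics form a $\otimes$-ideal in $\CC^\omega$, which is of independent interest; the paper's route is shorter and establishes tensor-compatibility for all localisations, not only finite ones.
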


    The proof we give here differs from Miller's original one, as we prefer to proceed from general structural properties.

    \begin{proof}
        We have dualisability of compact objects. By Lemma \ref{lem: fin implies smashing}, it is sufficient to show that all localisations are monoidal. By Proposition \ref{prop: when all locs are monoidal}, this is true in $\Sp$ since the monoidal unit generates the whole category under colimits. Passing to a smashing localisation of $\Sp$ preserves all colimits so this condition still holds there.
    \end{proof}

    \begin{cor}\label{cor: bous fin implies smashing}
        On a stable category $\CC$ whose compact objects are all dualisable, any finite Bousfield localisation is smashing.
    \end{cor}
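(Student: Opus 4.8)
The plan is to deduce this corollary from Lemma~\ref{lem: fin implies smashing} by checking that a Bousfield localisation on such a category is automatically tensor-compatible. Recall that a Bousfield localisation is, by definition, a localisation whose acyclic objects are the $E$-acyclics for some object $E$, i.e. those $X$ with $E \otimes X \simeq 0$. The key observation is that this class of acyclics is manifestly a tensor ideal: if $E \otimes X \simeq 0$ then $E \otimes (X \otimes Y) \simeq (E \otimes X) \otimes Y \simeq 0$ for any $Y \in \CC$, using only associativity and symmetry of the monoidal product. By Lemma~\ref{lem: equiv monoidal loc} (condition~\ref{cond: tensor ideal} $\Leftrightarrow$ condition~\ref{cond: my monoidal}), since $\CC$ is stable this is exactly the statement that the localisation is tensor-compatible.

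Concretely, I would argue as follows. First, let $L$ be a finite Bousfield localisation of $\CC$, say with Bousfield class given by $E \in \CC$, so that the $L$-acyclics are precisely $\{X : E \otimes X \simeq 0\}$. Second, observe this class is a tensor ideal by the one-line computation above, hence $L$ is tensor-compatible by Lemma~\ref{lem: equiv monoidal loc}. Third, we are given that $\CC$ is stable and that its compact objects are dualisable, and that $L$ is finite by hypothesis. Thus all the hypotheses of Lemma~\ref{lem: fin implies smashing} are met, and we conclude $L$ is smashing.

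I do not anticipate a genuine obstacle here; the corollary is essentially a packaging of Lemma~\ref{lem: fin implies smashing} together with the trivial fact that Bousfield localisations are tensor-compatible. The only point requiring a moment's care is to record that, in the stable setting, ``the acyclics form a tensor ideal'' is equivalent to tensor-compatibility (this is precisely where stability of $\CC$ is used, via Lemma~\ref{lem: equiv monoidal loc}), so that Lemma~\ref{lem: fin implies smashing} applies verbatim. One might also remark that Corollary~\ref{cor: sp fin implies smashing} does not formally subsume this statement, since an abstract $\CC$ as in the hypothesis need not be a smashing localisation of $\Sp$; nonetheless the proof strategy is the same, replacing ``generated by the unit'' with ``Bousfield'' as the mechanism guaranteeing tensor-compatibility.
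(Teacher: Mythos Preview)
Your proposal is correct and matches the paper's proof essentially verbatim: show that Bousfield localisations are tensor-compatible because their acyclics form a tensor ideal (via Lemma~\ref{lem: equiv monoidal loc}), then invoke Lemma~\ref{lem: fin implies smashing}. The paper's version is slightly terser but the logic is identical.
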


    \begin{proof}
        All Bousfield localisations on stable categories are monoidal because the acyclic objects by definition form a tensor ideal. Recall that a Bousfield localisation is defined by a choice of object $E \in \CC$ and the acyclic objects are $Z \in \CC$ such that $Z \otimes E \simeq 0$. We conclude by Lemma \ref{lem: fin implies smashing} that a finite Bousfield localisation is smashing.
    \end{proof}

    \begin{lem}\label{lem: compactly central implies finite}
        If $\CC$ is rigid (compactly generated and the dualisable objects are precisely the compact objects) and stable then any compactly central localisation of $\CC$ is finite.
    \end{lem}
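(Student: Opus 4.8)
The plan is to bootstrap from Lemma~\ref{lem: compact acyclics generate}, which already expresses the single acyclic object $\cof(\one \to L\one)$ as a sequential colimit of compact acyclics, and then use compact generation of $\CC$ to upgrade this to a statement about \emph{all} acyclic objects.

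Let $L = L_\alpha$ be a compactly central localisation, so $L$ arises from a compact central map $\alpha\colon \one \to A$ with $A$ (hence $\cof\alpha$) compact. By Corollary~\ref{cor: Jalpha is idempotent}, $L$ is smashing with idempotent algebra $L\one = J_\alpha$; write $C := \cof(\one \to L\one)$. First I would invoke Lemma~\ref{lem: acyclics gen by}: since $L$ is smashing, every $L$-acyclic object has the form $X \otimes C$ for some $X \in \CC$. Next, Lemma~\ref{lem: compact acyclics generate} applies --- its hypotheses hold because $\CC$ is stable and, by rigidity, its compact objects coincide with its dualisable objects, and $A$ is compact --- so it gives a sequential colimit $C \simeq \colim_i C_i$ with each $C_i$ a compact $L$-acyclic. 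Finally, compact generation of $\CC$ lets me write any $X$ as a filtered colimit $X \simeq \colim_j P_j$ with each $P_j$ compact.

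Putting these together, any $L$-acyclic is $X \otimes C \simeq \colim_{i,j} (P_j \otimes C_i)$, since tensoring commutes with colimits. Each $P_j \otimes C_i$ is $L$-acyclic because $L$ is smashing, so the $L$-acyclics form a tensor ideal; and each $P_j \otimes C_i$ is compact because $P_j$ and $C_i$ are compact, hence dualisable by rigidity, hence their tensor product is dualisable, hence compact again by rigidity. Thus every $L$-acyclic is a colimit of compact $L$-acyclic objects. Conversely compact $L$-acyclics are acyclic and the acyclics are closed under colimits, so the subcategory they generate under colimits is exactly the subcategory of acyclics. Hence $L$ is finite.

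I do not expect a serious obstacle here: the substance is carried entirely by Lemma~\ref{lem: compact acyclics generate}, and the remaining work is bookkeeping --- interchanging the two colimit presentations (of $C$ and of a general $X$) and checking that $P_j \otimes C_i$ remains simultaneously compact and $L$-acyclic. The two features that make this go through are precisely rigidity (compact $=$ dualisable, and the dualisables are closed under $\otimes$) and the fact that a smashing localisation has acyclics forming a tensor ideal; it is worth being slightly careful that "generated under colimits by compact acyclics" is equivalent to "every acyclic is a colimit of compact acyclics", which is what the argument actually produces.
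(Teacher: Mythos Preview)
Your proposal is correct and follows essentially the same route as the paper's proof: both show that every acyclic has the form $X \otimes C$ with $C = \cof(\one \to J_\alpha)$, write $C$ as a colimit of compact acyclics via Lemma~\ref{lem: compact acyclics generate} and $X$ as a colimit of compacts via compact generation, then use rigidity and the tensor-ideal property of smashing acyclics to conclude that each $P_j \otimes C_i$ is compact acyclic. The only cosmetic difference is that the paper spells out the cofibre sequence $X \to LX \to C \otimes X$ directly rather than citing Lemma~\ref{lem: acyclics gen by}.
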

    
    \begin{proof}
        A compactly central localisation $L$ is presented by some central map $\alpha: \one \to A$ whose cofibre is compact. Then $L$ is naturally isomorphic to the functor given by tensoring with the corresponding idempotent algebra $J_\alpha$. In particular we know $L$ is smashing and therefore tensor-compatible. Let $C = \cof(\one \to J_\alpha)$, and observe that $C$ is acyclic as the cofibre of a local equivalence. For any object $X \in \CC$ we have the cofibre sequence 
        \[X \to LX = J_\alpha \otimes X \to C \otimes X,\]
        computing the acyclic and local parts of $X$. Hence any acyclic has the form $C \otimes X$ for some $X \in \CC$ (because every acyclic is the acyclic part of itself). By Lemma \ref{lem: compact acyclics generate} we can write $C \simeq \colim_n A_n$ as a colimit of compact acyclics, and because $\CC$ is compactly generated we can write $X \simeq \colim_m B_m$ as a colimit of compact objects, not necessarily acyclic. Then
        \[C \otimes X \simeq \colim_n A_n \otimes \colim_m B_m \simeq \colim_{m, n} A_n \otimes B_m,\]
        and each $A_n \otimes B_m$ is compact acyclic. Acyclicity follows because $L$ is tensor-compatible, so the acyclic objects form a tensor ideal (and $A_n$ is $L$-acyclic). The tensor product of two compact objects is compact because this holds for dualisable objects and we assumed that dualisability agrees with compactness in $\CC$. Thus $C \otimes X$ is a colimit of compact acyclics, and therefore $L$ is a finite localisation.
    \end{proof}

    \begin{cor}
        For a localisation $L$ of a stable rigid category $\CC$, 
        \[\text{compactly central} \implies (\text{finite and tensor-compatible}) \implies \text{smashing}.\]
    \end{cor}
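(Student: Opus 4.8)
The plan is to deduce both implications directly from lemmas already in hand, so that the corollary is essentially a bookkeeping statement combining Lemma \ref{lem: compactly central implies finite} with Lemma \ref{lem: fin implies smashing}.

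First I would treat the implication ``compactly central $\implies$ finite and tensor-compatible''. If $L$ is compactly central then by definition $L \simeq L_\alpha$ for a compact central map $\alpha\colon \one \to A$, and Corollary \ref{cor: Jalpha is idempotent} tells us this localisation is computed by tensoring with the idempotent algebra $J_\alpha$, hence is smashing. A smashing localisation is in particular tensor-compatible: one can either observe directly that $L(X\otimes Y)\simeq L\one\otimes X\otimes Y\simeq LX\otimes LY$, so the natural transformation $L(F\otimes\id)$ is an equivalence, or note via Lemma \ref{lem: acyclics gen by} that the acyclics all have the form $X\otimes\cof\alpha$ and so constitute a tensor ideal, which by Lemma \ref{lem: equiv monoidal loc} is equivalent to tensor-compatibility. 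For finiteness, since $\CC$ is stable and rigid---compactly generated with dualisable objects equal to compact objects---the hypotheses of Lemma \ref{lem: compactly central implies finite} are met, and that lemma immediately yields that $L$ is finite.

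Next I would treat ``finite and tensor-compatible $\implies$ smashing''. Here rigidity of $\CC$ gives in particular that every compact object of $\CC$ is dualisable, so the hypotheses of Lemma \ref{lem: fin implies smashing} are satisfied verbatim, and that lemma concludes that $L$ is smashing.

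I do not expect a genuine obstacle: all the substantive work has already been carried out in Lemmas \ref{lem: compactly central implies finite} and \ref{lem: fin implies smashing}, and the only point in the present proof requiring more than a citation is the observation that a compactly central (hence smashing) localisation is automatically tensor-compatible, which as indicated above is immediate.
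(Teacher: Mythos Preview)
Your proposal is correct and matches the paper's intent exactly: the corollary is stated in the paper without a separate proof environment, being an immediate combination of Lemma \ref{lem: compactly central implies finite} (which already notes in its proof that a compactly central localisation is smashing and therefore tensor-compatible) and Lemma \ref{lem: fin implies smashing}. Your writeup simply makes these citations explicit, which is precisely what is needed.
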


    The second implication is often strict: the disproof of the telescope conjecture \cite{telescope} shows that $E(n)$-localisation is (smashing but) not finite, i.e. $L_n \not\simeq L_n^f$. We will see that in $\Sp$, being compactly central is a strictly stronger condition than finiteness, but in $\Sp_{(p)}$ they are equivalent. This is proven as Theorem \ref{thm: cf class in p-local spectra}. A compactly central localisation in $\Sp$ turns out to be one which inverts only finitely many primes, but in $\Sp_{(p)}$ we have already inverted all the primes except $p$ so every finite localisation inverts at most one prime, and is then compactly central. The proof of this result has two key ingredients. One is showing that the localisation $L_n^f$ is compactly central. The other is showing that localisations of $\Sp$ which invert an infinite set of primes -- such as rationalisation -- cannot be compactly central.

    There is a universal process by which we can take any localisation and produce a finite localisation. 
    \begin{defn}[Miller, \cite{millerfinite}]\label{def: finitise}
        Let $L$ be a localisation of $\CC$. The \emph{finitisation} of $L$, denoted $L^f$, is a new localisation of $\CC$ which we define by specifying its acyclics. The acyclics of $L^f$ are generated (under colimits) by the acyclics of $L$ which are compact in $\CC$. 
    \end{defn}

    For a subcategory of $\CC$ to be the acyclics of some localisation, it is necessary and sufficient for it to be closed under colimits in $\CC$ and have small generation. See Proposition \ref{prop: acyclics loc} in the Appendix. Then for any localisation $L$, we actually get a new localisation $L^f$. Essentially by definition, $L^f$ is a finite localisation.

    \begin{lem}
        There is a natural transformation $L^f \implies L$. In fact $L \leq L^f$.
    \end{lem}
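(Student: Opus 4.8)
The plan is to work entirely with acyclic subcategories. First recall that $L^f$ genuinely exists: its prescribed acyclics --- the closure under colimits of the compact $L$-acyclic objects --- form a small-generated subcategory closed under colimits, hence are the acyclics of a unique localisation by Proposition~\ref{prop: acyclics loc}. So it suffices to compare the acyclics of $L$ and of $L^f$, and then read off both the order relation and the comparison transformation.

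The key step is the containment $\{\text{acyclics of }L^f\}\subseteq\{\text{acyclics of }L\}$. Every generator of the acyclics of $L^f$ is, by construction, an $L$-acyclic object (namely a compact one); and the $L$-acyclics are closed under colimits in $\CC$, as is true for the acyclics of any localisation (cf.\ Lemma~\ref{lem: acyclics gen by}). Hence the colimit-closure of those generators, which is precisely the subcategory of $L^f$-acyclics, still lies inside the $L$-acyclics.

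By the definition of the order on localisations --- containment of local subcategories, equivalently reverse containment of acyclics --- this containment says exactly $L\leq L^f$, i.e.\ that every $L$-local object is $L^f$-local. The natural transformation $L^f\implies L$ is then the comparison map attached to this inequality, exactly as in the discussion around Lemma~\ref{lem: smash loc partial order via ring map}: concretely, for $X\in\CC$ the object $LX$ is $L$-local, hence $L^f$-local, so the localisation map $X\to LX$ factors uniquely through $X\to L^f X$, and the resulting map $L^f X\to LX$ is natural in $X$. The argument is pure bookkeeping; the only thing to watch is the direction of the inequality --- passing to the finitisation shrinks the kernel of $L$ (it keeps only the compact part), hence shrinks the acyclics, which is why one obtains $L\leq L^f$ rather than the reverse.
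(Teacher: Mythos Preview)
Your proof is correct and follows essentially the same route as the paper: establish the containment of acyclic subcategories, read off $L \leq L^f$, and then extract the natural transformation from the universal property of localisation. The paper phrases the last step slightly differently (factoring the adjunction for $L$ through that for $L^f$ and taking the unit of the resulting intermediate localisation), but this is the same construction you give. One minor point: your cross-reference to Lemma~\ref{lem: smash loc partial order via ring map} is not quite on target since that lemma concerns ring maps between idempotent algebras; the general partial-order discussion you want is in Remark~\ref{rmk: loc partial order}.
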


    For an introduction to the partial order on localisations of $\CC$, see Remark \ref{rmk: loc partial order} in the Appendix.

    \begin{proof}
        Since the acyclics of $L^f$ are a (full) subcategory of the acyclics of $L$, we have the reverse inclusion on local objects and thus $L \leq L^f$. Moreover there are subcategory inclusions 
        \[L\CC \into L^f\CC \into \CC.\] 
        This means that every $L$-local object is already $L^f$-local, hence $L^f \circ L \simeq L$. In fact the whole adjunction for $L$ factors through the adjunction for $L^f$, producing a new localisation functor $L|_{L^f\CC}: L^f\CC \to L\CC$ which is left adjoint to the subcategory inclusion $L\CC \into L^f\CC$. The unit of the adjunction for this new localisation gives the natural transformation $L^f \implies L$. (Recall that the adjunctions for the two existing localisations give comparisons $\id_\CC \implies L$ and $\id_{\CC} \implies L^f$.)
    \end{proof}

    If $L$ is already a finite localisation, then $L = L^f$ because they have the same collection of acyclics, and the comparison map is an equivalence between them, given by the identity on objects and morphisms of $\CC$. Otherwise, $L^f > L$, meaning it $L^f$ has more local objects (or equivalently fewer acyclics). 

    \begin{lem}\label{lem: finitisation univ prop}
        $L^f$ is the minimal finite localisation satisfying $L \leq L^f$.
    \end{lem}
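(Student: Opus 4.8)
The plan is to check the two things "minimal element" requires: first that $L^f$ is itself a finite localisation with $L \leq L^f$, and second that $L^f \leq M$ for every finite localisation $M$ satisfying $L \leq M$. The first is already established: $L^f$ is finite by construction (see Definition \ref{def: finitise}), and the preceding Lemma gives $L \leq L^f$.

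For the second, I would work entirely at the level of acyclic subcategories. Recall that $L \leq M$ is equivalent to the inclusion of the $M$-acyclics into the $L$-acyclics, and likewise $L^f \leq M$ is equivalent to the $M$-acyclics being contained in the $L^f$-acyclics; so it suffices to prove the latter containment. Let $Z$ be a compact $M$-acyclic object. Since $L \leq M$, the object $Z$ is also $L$-acyclic, and it is compact in $\CC$; hence $Z$ is one of the generators of the $L^f$-acyclics, so in particular $Z$ is $L^f$-acyclic. Now invoke finiteness of $M$: an arbitrary $M$-acyclic is a colimit of compact $M$-acyclics, hence a colimit of $L^f$-acyclic objects, hence $L^f$-acyclic because the $L^f$-acyclics are closed under colimits. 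Therefore every $M$-acyclic is $L^f$-acyclic, which is exactly $L^f \leq M$, completing the argument.

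There is no serious obstacle here; the proof is a short bookkeeping exercise with the partial order on localisations and the generation conditions defining $L^f$ and the finiteness of $M$. The only point that needs a moment of care is keeping the direction of the order relation straight against the containments of acyclic subcategories, together with the observation that compactness of an object is a property measured in $\CC$ and so does not depend on which localisation we are regarding the object as acyclic for.
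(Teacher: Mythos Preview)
Your proof is correct and follows essentially the same route as the paper's: both arguments pass to acyclic subcategories, observe that compact $M$-acyclics are compact $L$-acyclics (from $L \leq M$) and hence $L^f$-acyclics by definition, and then use finiteness of $M$ to conclude all $M$-acyclics are $L^f$-acyclic. Your write-up is slightly more explicit about the direction of the order relation and about closure of the $L^f$-acyclics under colimits, but the content is the same.
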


    \begin{proof}
        Let $F$ be another finite localisation. If $L \leq F$ then the finite $F$-acyclic objects of $\CC$ are also $L$-acyclic. But all finite $L$-acyclic objects are $L^f$-acyclic by definition. Since $F$ is finite, any $F$-acyclic object is generated by the finite $F$-acyclics, hence any $F$-acyclic object is generated by the finite $L^f$-acyclics. Therefore any $F$-acyclic is $L^f$-acyclic and we have $L^f \leq F$. 
    \end{proof}

    \begin{remark}
        This universal property of finitisation can be rephrased in the following way. $L^f$ is a finite localisation that $L$ factors through, giving $L \simeq L|_{L^f\CC} \circ L^f$ where $L|_{L^f\CC}$ is a further localisation of the subcategory of $L^f$-local objects in $\CC$. If $F$ is any finite localisation which factors $L$ as $L \simeq L|_{F\CC} \circ F$, then there is a further factorisation $L^f \simeq L^f|_{F\CC} \circ F$. The comparison $F \implies L$ factors as $F \implies L^f \implies L$.
    \end{remark}

    \begin{remark}
        Finitisation gives a method for producing a finite localisation from any (accessible) localisation. This is universal in the sense that it is the closest approximation by a finite localisation from above. Since finite localisations are always smashing, this means we can functorially produce a smashing localisation from any given localisation. 
        
        However, as a smashing localisation we do not know whether $L^f$ is universal. There may exist a non-finite smashing localisation sitting between $L^f$ and $L$. So it would be interesting to develop a construction analogous to finitisation which outputs the closest approximation from above by a smashing localisation. One can of course write down the desired universal property for such a thing, but it is not immediately clear that a localisation satisfying the universal property actually exists, or how to explicitly construct it (in terms of, for example, acyclics) if so. In Subsection \ref{subsec: univ smash loc} we take a systematic approach to this problem.
    \end{remark}

    We have seen that all compactly central localisations are finite, at least in nice settings. In Section \ref{sec: the Lnf example} we work to understand the extent to which this is a strict inclusion. The situation in $p$-local spectra turns out to be quite simple: all finite localisations are compactly central, which we prove as Theorem \ref{thm: cf class in p-local spectra} by exhibiting $L_n^f$ as compactly central. However, in $\Sp$ the inclusion is strict, and we give a classification of compactly central localisations among all finite localisations as Theorem \ref{thm: not compactly central}. Roughly speaking, they are the finite localisations for which only finitely many primes are inverted. As an example, $p$-localisation and rationalisation are both finite but not compactly central.

\subsection{Universal smashing localisations}\label{subsec: univ smash loc}

In this Subsection we outline a construction of the universal smashing localisation approximating a given localisation, with a universal property analogous to that of finitisation.

\begin{lem}\label{lem: tensor smashing maximal}
    Let $L_i$ be a smashing localisation for $i = 1, 2$. Then $L_1 \otimes L_2$ is the maximal tensor-compatible localisation that satisfies $L_1 \otimes L_2 \leq L_i$ for $i = 1, 2$. Moreover it is smashing, hence it is also the maximal such smashing localisation.
\end{lem}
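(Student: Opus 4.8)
The plan is to reduce the statement entirely to the explicit description of $L_1 \otimes L_2$ furnished by Lemma~\ref{lem: combining smashing locs}, together with the characterisation of the order relation on localisations as inclusion of subcategories of local objects. First I would recall that, by definition, $L_1 \otimes L_2$ is the smashing localisation whose idempotent algebra is $L_1\one \otimes L_2\one$; in particular it is smashing --- hence quasismashing, hence tensor-compatible --- so it both belongs to the class over which maximality is being asserted and satisfies the final sentence of the statement. Lemma~\ref{lem: combining smashing locs} furthermore identifies its subcategory of local objects as $L_1\CC \cap L_2\CC$.

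Granting this, both halves of the claim become formal. Since $L_1\CC \cap L_2\CC \subseteq L_i\CC$ for $i = 1,2$, and $L \leq L'$ means exactly $L\CC \subseteq L'\CC$, we get $L_1 \otimes L_2 \leq L_i$ for $i = 1,2$. For maximality, if $M$ is a tensor-compatible localisation with $M \leq L_1$ and $M \leq L_2$, then $M\CC$ lies in both $L_1\CC$ and $L_2\CC$, hence in $L_1\CC \cap L_2\CC = (L_1 \otimes L_2)\CC$, so $M \leq L_1 \otimes L_2$. Thus $L_1 \otimes L_2$ is not merely a maximal element but the greatest element of the poset of tensor-compatible localisations below both $L_1$ and $L_2$. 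Since smashing localisations form a subclass of the tensor-compatible ones and $L_1 \otimes L_2$ is itself smashing, the same inequality then holds for every smashing $M$ below both $L_i$, which gives the last assertion.

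I do not expect a real obstacle; the only ingredient that is not pure bookkeeping is the equality $(L_1 \otimes L_2)\CC = L_1\CC \cap L_2\CC$, which is already part of Lemma~\ref{lem: combining smashing locs}. The point worth stating carefully is that tensor-compatibility of the competitor $M$ is never actually used in the maximality argument itself: it is needed only so that $L_1 \otimes L_2$ is a legitimate member of the class being maximised over, and that in turn follows from its being smashing. If desired one could also note explicitly that the order on localisations is antisymmetric, a localisation being determined by its reflective subcategory, so that ``maximal'' here genuinely coincides with ``greatest''; but this is not logically necessary for the argument.
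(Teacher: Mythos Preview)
Your proof is correct, but it takes a different route from the paper's. The paper argues algebraically via Lemma~\ref{lem: smash loc partial order via ring map}: the unit maps $L_i\one \xrightarrow{\id_i \otimes e_j} L_1\one \otimes L_2\one$ give $L_1 \otimes L_2 \leq L_i$, and for maximality one tensors the two ring maps $L_i\one \to L_3\one$ and postcomposes with the multiplication on $L_3\one$. You instead work categorically, invoking only the identification $(L_1 \otimes L_2)\CC = L_1\CC \cap L_2\CC$ from Lemma~\ref{lem: combining smashing locs} and the definition of $\leq$ as containment of local subcategories. Your approach is shorter and, as you note, shows that tensor-compatibility of the competitor $M$ is never used in the maximality step --- a fact the paper establishes separately and more laboriously in Lemma~\ref{lem: univ loc is univ mon loc}. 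The paper's ring-map approach, on the other hand, is the template for the infinite-index generalisation in Lemma~\ref{lem: tensor smashing maximal big}, where the local subcategory of the infinite tensor product is not identified in advance and one must work directly with the idempotent algebras.
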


\begin{proof}
    We use Lemma \ref{lem: smash loc partial order via ring map}. The ring map $L_1\one \xrightarrow{\id_1 \otimes e_2} L_1\one \otimes L_2\one$ tells us that $L_1 \otimes L_2 \leq L_1$, and symmetrically $L_1 \otimes L_2 \leq L_2$. 

    If $L_3 \leq L_1$ and $L_3 \leq L_2$ with $L_3$ some tensor-compatible localisation, then we obtain ring maps $L_1\one \to L_3\one$ and $L_2\one \to L_3\one$. Tensoring these together and postcomposing with the multiplication on $L_3\one$, we have a ring map 
    \[L_1\one \otimes L_2\one \to L_3\one \otimes L_3\one \xrightarrow{\mu_3} L_3\one,\]
    and thus $L_3 \leq L_1 \otimes L_2$. This establishes maximality of $L_1 \otimes L_2$. It is necessary here that $L_1\one \otimes L_2\one$ be idempotent, i.e. we need both $L_1$ and $L_2$ to be smashing. 
\end{proof}

\begin{cor}\label{cor: tensor smashing maximal multi}
    For a finite collection of smashing localisations $\{L_i\}_{i=1}^N$, the product $L := \bigotimes_{i=1}^N L_i$ is the maximal tensor-compatible (and also maximal smashing) localisation with the property that $L \leq L_i$ for all $1 \leq i \leq N$.
\end{cor}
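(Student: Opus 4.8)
The plan is to induct on $N$, with the inductive step powered by Lemma \ref{lem: tensor smashing maximal}. The base case $N = 1$ is trivial, since then $L = L_1$ is visibly the maximum of the set of tensor-compatible localisations lying below $L_1$. Before running the induction I would record the preliminary observation that a finite tensor product of smashing localisations is again smashing: iterating Lemma \ref{lem: combining smashing locs}, the functor $\bigotimes_{i=1}^N L_i$ is the smashing localisation attached to the idempotent algebra $L_1\one \otimes \cdots \otimes L_N\one$, and by associativity and symmetry of $\otimes$ on $\CC$ this algebra, hence the localisation, is independent of the order and the bracketing. In particular $M := \bigotimes_{i=1}^{N-1} L_i$ is a smashing localisation and $\bigotimes_{i=1}^N L_i \simeq M \otimes L_N$.

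For the inductive step, assume the statement for $N - 1$, so that $M$ is the maximal tensor-compatible localisation with $M \leq L_i$ for $1 \leq i \leq N-1$. Applying Lemma \ref{lem: tensor smashing maximal} to the two smashing localisations $M$ and $L_N$, the localisation $L = M \otimes L_N$ is smashing and is the maximal tensor-compatible localisation satisfying both $L \leq M$ and $L \leq L_N$. It therefore suffices to show that, for a tensor-compatible localisation $L'$, the condition ``$L' \leq L_i$ for all $1 \leq i \leq N$'' is equivalent to ``$L' \leq M$ and $L' \leq L_N$''; granting this, the two maxima coincide and the proof is complete (and $L$ is in particular the maximal smashing such localisation, since $L$ is itself smashing, exactly as in Lemma \ref{lem: tensor smashing maximal}).

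The equivalence of these two conditions is the only thing that genuinely needs an argument, and it is short. If $L' \leq M$, then since $M \leq L_i$ for $i \leq N-1$ (part of the $N-1$ case, or directly the ring map $L_i\one \to \bigotimes_{j \leq N-1} L_j\one$ together with Lemma \ref{lem: smash loc partial order via ring map}), transitivity of the partial order on localisations gives $L' \leq L_i$ for $i \leq N-1$, and combined with $L' \leq L_N$ this yields $L' \leq L_i$ for all $i$. Conversely, if $L' \leq L_i$ for all $i$, then in particular $L' \leq L_i$ for $i \leq N-1$, so the maximality clause of the inductive hypothesis forces $L' \leq M$, while $L' \leq L_N$ holds by assumption. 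There is no serious obstacle here; the only points requiring care are the two uses of the inductive hypothesis, namely that $M$ is genuinely smashing (so it is a legitimate input to Lemma \ref{lem: tensor smashing maximal}) and that $M$ is maximal among tensor-compatible localisations below the first $N-1$ factors, which is precisely what lets us convert ``below each of $L_1, \dots, L_{N-1}$'' into ``below $M$''.
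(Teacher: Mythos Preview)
Your proof is correct and is exactly the paper's approach: the paper's entire proof reads ``This follows immediately by induction from Lemma \ref{lem: tensor smashing maximal}'', and you have simply written out that induction in full, including the one nontrivial point (that the inductive hypothesis lets you replace the conditions $L' \leq L_1, \dots, L' \leq L_{N-1}$ by the single condition $L' \leq M$).
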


This follows immediately by induction from Lemma \ref{lem: tensor smashing maximal}. In fact, we can take a tensor product of arbitrarily many idempotent algebras and it behaves similarly.

\begin{defn}
    Let $I$ be an arbitrary indexing set and $L_i$ be a smashing localisation for each $i \in I$. For finite sets $A \subseteq B \subset I$, we have a canonical map
    \[\bigotimes_{a \in A} L_a\one \to \bigotimes_{b \in B} L_b\one\]
    given by tensoring together the components $\id_a$ for $a \in A \subseteq B$ and $e_b: \one \to L_b\one$ for $b \in B\setminus A$. Consider the poset of finite subsets of $I$ ordered by inclusion, and define the tensor product of all the idempotent algebras in $I$ as the (filtered) colimit over this poset, 
    \[\bigotimes_{i\in I} L_i\one := \underset{\text{fin}\, A \subseteq I}{\colim} \bigotimes_{a \in A} L_a\one.\]
    Let $\bigotimes_{i\in I} L_i$ denote the smashing localisation whose idempotent algebra is $\bigotimes_{i\in I} L_i\one$.
\end{defn}

\begin{lem}\label{lem: tensor smashing maximal big}
    For any indexing set $I$ with each $L_i$ a smashing localisation, $L = \bigotimes_{i \in I} L_i$ is the maximal tensor-compatible (and also maximal smashing) localisation that satisfies $L \leq L_i$ for all $i \in I$.
\end{lem}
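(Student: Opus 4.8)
The plan is to reduce the infinite case to the finite case handled in Corollary \ref{cor: tensor smashing maximal multi}, using the filtered colimit description of $\bigotimes_{i \in I} L_i\one$ and the fact that smashing localisations correspond to idempotent $\E_\infty$ algebras under $\leq$ (Lemmas \ref{lem: smash loc partial order via ring map} and \ref{lem: class of smashing locs}). First I would verify that $L := \bigotimes_{i \in I} L_i$ is well-defined, i.e.\ that $R := \bigotimes_{i \in I} L_i\one$ really is an idempotent algebra. Since each finite partial tensor product $R_A := \bigotimes_{a \in A} L_a\one$ is idempotent (Corollary \ref{cor: tensor smashing maximal multi} or iterated Lemma \ref{lem: combining smashing locs}), and the transition maps $R_A \to R_B$ are the unit maps of idempotent algebras hence localisation maps, the colimit $R = \colim_{A} R_A$ over the filtered poset of finite subsets is a filtered colimit of idempotent algebras along idempotent-algebra maps. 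One checks $R \otimes R \simeq R$ by commuting the tensor product (a colimit) past the filtered colimit: $R \otimes R \simeq \colim_{A,B} R_A \otimes R_B \simeq \colim_A R_A \otimes R_A \simeq \colim_A R_A = R$, using cofinality of the diagonal in the product of the filtered poset with itself, and that $R_A \otimes R_A \simeq R_A$. The unit map $\one \to R$ is then idempotent, so $L$ is a genuine smashing localisation.

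Next I would show $L \leq L_i$ for every $i \in I$. For a fixed $i$, the singleton $\{i\}$ is a finite subset, so there is a canonical $\E_\infty$ map $L_i\one = R_{\{i\}} \to R$ in the colimit diagram; composing appropriately (or directly: the structure map into the filtered colimit is an $\E_\infty$ algebra map). By Lemma \ref{lem: smash loc partial order via ring map}, since $L$ is smashing and $L_i$ is tensor-compatible, the existence of an $\E_\infty$ ring map $L_i\one \to L\one = R$ gives exactly $L \leq L_i$. (Alternatively, one can see the $L$-local objects are $R$-modules, each of which restricts along $L_i\one \to R$ to an $L_i\one$-module, hence is $L_i$-local since $L_i$ is smashing.)

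For maximality, suppose $L_3$ is a tensor-compatible localisation with $L_3 \leq L_i$ for all $i \in I$. By Lemma \ref{lem: smash loc partial order via ring map} we obtain $\E_\infty$ ring maps $L_i\one \to L_3\one$ for each $i$. For each finite $A \subseteq I$, tensoring these maps together and composing with the iterated multiplication on $L_3\one$ (which is a commutative algebra, being in the image of a lax symmetric monoidal localisation) gives an $\E_\infty$ map $R_A = \bigotimes_{a \in A} L_a\one \to L_3\one$; these are compatible with the transition maps $R_A \to R_B$ because inserting a unit $e_b: \one \to L_b\one$ and then multiplying by the image of $\one$ does nothing. Hence they assemble to an $\E_\infty$ map $R = \colim_A R_A \to L_3\one$. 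Applying Lemma \ref{lem: smash loc partial order via ring map} once more — $L$ is smashing, $L_3$ is tensor-compatible, and we have produced an $\E_\infty$ ring map $L\one \to L_3\one$ — yields $L_3 \leq L$. Since $L$ itself is smashing (a fortiori tensor-compatible), it is both the maximal tensor-compatible and the maximal smashing localisation below all the $L_i$.

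The main obstacle I expect is the bookkeeping around compatibility of the cocone maps $R_A \to L_3\one$ with the colimit diagram — one must check the relevant triangles commute coherently (not just on homotopy categories) so that they genuinely induce a map out of the colimit in $\CAlg(\CC)$. This is where the hypothesis that $L_3$ is tensor-compatible (so $L_3\one$ is a commutative algebra with a well-behaved multiplication) is doing real work, and where care is needed; the rest is a routine transfer of the finite-case argument through the filtered colimit.
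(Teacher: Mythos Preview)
Your proof is correct and follows essentially the same route as the paper: establish that the filtered colimit $R$ is an idempotent algebra, use the structure maps $R_{\{i\}} \to R$ to get $L \leq L_i$, and for maximality assemble the maps $R_A \to L_3\one$ into a map out of the colimit via Lemma \ref{lem: smash loc partial order via ring map}. Your flagged coherence worry is in fact a non-issue: each $R_A$ is idempotent and $L_3\one$ is $R_A$-local (being $L_a$-local for every $a \in A$), so by Lemma \ref{lem: En refinement} the space $\Map_{\E_\infty}(R_A, L_3\one)$ is contractible and the required compatibilities are automatic.
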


\begin{proof}
    A filtered colimit of $\mathbb{E}_\infty$ algebras can be computed on the level of underlying objects (\cite{ha}, Corollary 3.2.3.2), and indeed a filtered colimit of idempotent algebras remains idempotent, so $L$ defines a smashing localisation. Since any individual $i \in I$ is a one-element finite subset of $I$, each $L_i$ belongs to the diagram over which $L$ is a colimit and we have maps $L_i\one \to L\one$. Hence $L \leq L_i$ for all $i \in I$.

    If $L'$ is any tensor-compatible localisation satisfying $L' \leq L_i$ for all $i \in I$, then by Lemma \ref{lem: smash loc partial order via ring map} we have ring maps $L_i\one \to L'\one$ for all $i \in I$. By taking finite tensor products of these we obtain ring maps 
    \[\bigotimes_{a \in A} L_a\one \to L'\one\]
    for every finite subset $A$ of $I$, which are compatible with inclusions of finite subsets. This family of maps uniquely determines a map $L\one \to L'\one$ since $L\one$ is the colimit. Hence $L' \leq L$ so $L$ is maximal as claimed. We are using here that $L$ is smashing and $L'$ is tensor-compatible, to apply the full strength of Lemma \ref{lem: smash loc partial order via ring map}. 
\end{proof}

\begin{lem}\label{lem: univ loc is univ mon loc}
    Let $\CC$ be a presentably symmetric monoidal stable category. Given a set of localisations $\{L_i\}_{i \in I}$ on $\CC$, there is a maximal localisation $L$ with the property $L \leq L_i$ for all $i \in I$. If each $L_i$ is tensor-compatible then there is also a maximal tensor-compatible localisation $L^{mon}$ such that $L^{mon} \leq L^i$ for all $i \in I$. Moreover $L$ and $L^{mon}$ agree (when all the $L_i$ are tensor-compatible).
\end{lem}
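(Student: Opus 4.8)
The plan is to construct the maximal localisation $L$ directly by prescribing its acyclic objects, relying on the characterisation of Proposition~\ref{prop: acyclics loc}, and then to show that when every $L_i$ is tensor-compatible this same $L$ is automatically tensor-compatible; this forces $L^{mon}$ to coincide with $L$.

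First I would fix, for each $i\in I$, the subcategory $\mathcal{A}_i\subseteq\CC$ of $L_i$-acyclic objects. By Proposition~\ref{prop: acyclics loc} each $\mathcal{A}_i$ is closed under colimits and admits a small set $G_i$ of colimit-generators. Since $I$ is a set, $G:=\bigcup_{i\in I}G_i$ is again a set, so the colimit-closure $\mathcal{A}$ of $G$ has small generation and is the subcategory of acyclics of a localisation $L$, again by Proposition~\ref{prop: acyclics loc}. Because $\mathcal{A}_i$ is the colimit-closure of $G_i\subseteq G$, we have $\mathcal{A}_i\subseteq\mathcal{A}$, so the acyclics of $L$ contain those of each $L_i$, i.e.\ $L\le L_i$ (see Remark~\ref{rmk: loc partial order}). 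For maximality I would observe that any localisation $L'$ with $L'\le L_i$ for all $i$ has acyclics containing every $\mathcal{A}_i$, hence containing $G$, hence --- being closed under colimits --- containing $\mathcal{A}$; thus $L'\le L$. This establishes the first assertion.

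Next, assuming all the $L_i$ are tensor-compatible, by Lemma~\ref{lem: equiv monoidal loc} each $\mathcal{A}_i$ is a tensor ideal, and I would upgrade this to $\mathcal{A}$ being a tensor ideal. The device is to consider the subcategory $\mathcal{A}':=\{X\in\CC : X\otimes W\in\mathcal{A}\text{ for every }W\in\CC\}$: since $-\otimes W$ is cocontinuous (as $\CC$ is presentably symmetric monoidal) and $\mathcal{A}$ is closed under colimits, $\mathcal{A}'$ is closed under colimits; and $\mathcal{A}'$ contains each $\mathcal{A}_i$ because $\mathcal{A}_i$ is a tensor ideal contained in $\mathcal{A}$. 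Hence $\mathcal{A}'\supseteq G$ and therefore $\mathcal{A}'\supseteq\mathcal{A}$, which says precisely that $\mathcal{A}$ is a tensor ideal. Stability of $\CC$ then lets Lemma~\ref{lem: equiv monoidal loc} conclude that $L$ is tensor-compatible. Since $L$ is now a tensor-compatible localisation lying below every $L_i$, and it is maximal even among all localisations with that property, it is a fortiori the maximal tensor-compatible one; so we may set $L^{mon}:=L$, and the two coincide.

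The routine bookkeeping --- accessibility of the $L_i$ so that the $\mathcal{A}_i$ have small generation, and the fact that a union of set-many sets is a set --- is what legitimises the construction of $L$, and should cause no trouble. The only point that is not purely formal is the closure statement that the colimit-closure of a family of tensor ideals is again a tensor ideal; I expect this to be the ``hard'' step only in the sense that it is the one genuine idea, and the auxiliary category $\mathcal{A}'$ dispatches it cleanly, so I anticipate no real obstacle.
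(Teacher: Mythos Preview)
Your proposal is correct and follows essentially the same approach as the paper: construct $L$ by taking the colimit-closure of the union of the $L_i$-acyclics (using small generation of each and that $I$ is a set), then verify that this subcategory is a tensor ideal when each $\mathcal{A}_i$ is. The only cosmetic difference is in that last verification: the paper writes an arbitrary $L$-acyclic as a colimit of objects drawn from the various $\mathcal{A}_i$ and pushes the tensor inside, whereas you package the same idea via the auxiliary subcategory $\mathcal{A}'$; your formulation is arguably a touch cleaner, but the content is the same.
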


\begin{proof}
    The condition $L \leq L_i$ is equivalent to requiring that the $L_i$-acyclics be contained in the $L$-acyclics. Moreover the $L$-acyclics must be closed under colimits to determine a localisation. So, take the union of all the subcategories of acyclics for the various localisations $L_i$, and the subcategory this generates under colimits is the $L$-acyclics. As long as this determines a localisation, it must be maximal under the conditions $L \leq L_i$. A proposed subcategory of acyclics which is closed under colimits determines a localisation provided that it has small generation. Each subcategory of $L_i$-acyclics has small generation, so the union of these generating sets is a generating set for the $L$-acyclics given that $I$ is a set. 

    Next we want to show that if each $L_i$ is tensor-compatible then $L$ is automatically tensor-compatible. Since $\CC$ is stable we can use the tensor-ideal characterisation in terms of acyclics, so by assumption each subcategory of $L_i$-acyclics is a tensor ideal. Let $A$ be some $L$-acyclic, and write it as a colimit $A \simeq \colim_J A_j$ with each $A_j$ an $L_{i_j}$-acyclic. The symmetric monoidal structure on $\CC$ commutes with colimits, so
    \[A \otimes X \simeq \colim_J A_j \otimes X\]
    and each $A_j \otimes X$ is $L_{i_j}$-acyclic because the $L_{i_j}$-acyclics form a tensor ideal. Hence each $A_j \otimes X$ is $L$-acyclic and so the colimit $A \otimes X$ is also $L$-acyclic. Thus $L$ is tensor-compatible, so $L$ is naturally equivalent to $L^{mon}$.
\end{proof}

\begin{remark}
    This proof likely works in the unstable setting after translating all of the statements about acyclics into stably-equivalent statements about local equivalences. That is, we need to use Condition \ref{cond: lurie monoidal} of Lemma \ref{lem: equiv monoidal loc} instead of Condition \ref{cond: tensor ideal} to characterise tensor-compatible localisations. We will not need this, so verifying or falsifying it is left as an exercise to the reader.

    If the $L_i$ are not all tensor-compatible then a universal tensor-compatible approximation $L^{mon}$ may still exist, but in general the obvious category of acyclics one writes down can fail to have small generation. We do not expect $L^{mon}$ to agree with $L$ when some of the $L_i$ are not tensor-compatible.
\end{remark}

\begin{defn}\label{def: smashification}
    The \emph{smashification} of a localisation $L$ is the minimal smashing localisation $L^{sm}$ which satisfies $L \leq L^{sm}$. 
\end{defn}

\begin{thm}\label{thm: smashification}
    Every localisation $L$ has a smashification, given by the tensor product over all smashing localisations $L'$ satisfying $L \leq L'$.
\end{thm}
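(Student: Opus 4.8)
The plan is to take $L^{sm}$ to be the tensor product $\bigotimes_{L' \in S} L'$, where $S$ is the collection of all smashing localisations $L'$ of $\CC$ satisfying $L \leq L'$, and then to check that this localisation has the universal property of Definition \ref{def: smashification}. Before anything else I would dispose of the set-theoretic issue: each smashing localisation $L'(-) \simeq L'\one \otimes -$ preserves all small colimits and is therefore accessible, and the accessible localisations of the presentable category $\CC$ form a set (equivalently, smashing localisations correspond via Lemma \ref{lem: class of smashing locs} to the set of idempotent commutative algebras of $\CC$). Hence $S$ is a set and Lemmas \ref{lem: tensor smashing maximal big} and \ref{lem: univ loc is univ mon loc} apply to the family $\{L'\}_{L'\in S}$.

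With that in place, Lemma \ref{lem: tensor smashing maximal big} tells us that $L^{sm} := \bigotimes_{L'\in S} L'$ is a well-defined smashing localisation and is the maximal tensor-compatible localisation with $L^{sm} \leq L'$ for every $L' \in S$. I would first verify $L \leq L^{sm}$. By Lemma \ref{lem: univ loc is univ mon loc} the maximal localisation lying below the family $S$ is automatically tensor-compatible, hence agrees with the maximal tensor-compatible one, which is $L^{sm}$; and the proof of that Lemma identifies its acyclics as the subcategory generated under colimits by the union $\bigcup_{L'\in S}\{L'\text{-acyclics}\}$. For each $L' \in S$ we have $L \leq L'$, so every $L'$-acyclic is $L$-acyclic; since the $L$-acyclics are closed under colimits, the subcategory generated by this union is still contained in the $L$-acyclics. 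Thus every $L^{sm}$-acyclic is $L$-acyclic, which is exactly the statement $L \leq L^{sm}$. Note this argument never uses that $L$ is smashing or even tensor-compatible, only that its acyclics are closed under colimits, which holds for any localisation.

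It remains to check minimality. If $M$ is any smashing localisation with $L \leq M$, then by definition $M \in S$, and Lemma \ref{lem: tensor smashing maximal big} gives $L^{sm} \leq L'$ for all $L' \in S$, in particular $L^{sm} \leq M$. Combining this with the previous paragraph, $L^{sm}$ is a smashing localisation satisfying $L \leq L^{sm}$ and lying below every smashing localisation above $L$; that is precisely the content of being the smashification of $L$.

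I expect the only genuine obstacle to be the set-theoretic bookkeeping --- confirming that $S$ is small, so that the arbitrary tensor product of Lemma \ref{lem: tensor smashing maximal big} is legitimate and the acyclics of $L^{sm}$ are correctly described by Lemma \ref{lem: univ loc is univ mon loc} --- together with the minor care needed to align the two ``maximal'' localisations produced by those two Lemmas. Everything else is a formal manipulation of the partial order on localisations; in particular no new analysis of acyclics or of the monoidal structure is required beyond what is already recorded in Subsections \ref{subsec: combining smash locs} and \ref{subsec: univ smash loc}.
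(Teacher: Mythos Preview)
Your argument follows the same strategy as the paper's: define $L^{sm}$ as the tensor product over $S$, use Lemma~\ref{lem: tensor smashing maximal big} for smashingness and the upper bound $L^{sm}\le L'$, use Lemma~\ref{lem: univ loc is univ mon loc} to upgrade ``maximal tensor-compatible below $S$'' to ``maximal below $S$'', and conclude $L\le L^{sm}$ plus minimality exactly as you do. The only substantive difference is the set-theoretic step.

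Here your justification is not solid. The assertion that accessible localisations of a presentable $\infty$-category form a set is not true in general (there can be a proper class of localising subcategories), and the parenthetical alternative --- that idempotent commutative algebras in $\CC$ form a set --- is also not something you have established; isomorphism classes of objects in a presentable category form a proper class, and you give no argument for why the idempotent ones are small. The paper closes this gap differently: it observes that every smashing localisation is a Bousfield localisation (namely $L_E$ for $E = L\one$), and then invokes Ohkawa's theorem that the Bousfield classes form a set. You should replace your set-theoretic paragraph with that argument; once $S$ is known to be a set, the rest of your proof is correct and matches the paper's.
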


\begin{proof}
    Let $\Idem(L)$ be the collection of all smashing localisations $L'$ with $L \leq L'$, and let $L^{sm?}$ denote the tensor product of all the localisations in $\Idem(L)$. By Ohkawa's theorem (\cite{ohkawa} gives the original proof, and \cite{dwyerohkawa} a simpler reformulation) this is well-defined -- there is a set of Bousfield classes in $\CC$, and any smashing localisation is a Bousfield class so $\Idem(L)$ is a set.
    
    By Lemma \ref{lem: tensor smashing maximal big} we know $L^{sm?}$ is smashing, and moreover it is both the maximal smashing localisation and the maximal tensor-compatible localisation bounded above by $\Idem(L)$, i.e. satisfying $L^{sm?} \leq L'$ for all $L' \in \Idem(L)$. By Lemma \ref{lem: univ loc is univ mon loc}, since each $L' \in \Idem(L)$ is smashing and thus tensor-compatible, and $L^{sm?}$ is maximal among tensor-compatible localisations bounded above by $\Idem(L)$, we see that $L^{sm?}$ is also maximal among \emph{all} localisations bounded above by $\Idem(L)$. 

    But $L$ itself is a localisation bounded above by $\Idem(L)$, and $L^{sm?}$ is the maximal such, so we have $L \leq L^{sm?}$ as desired. It is clear from the definition of $\Idem(L)$ that $L^{sm?}$ is then the minimal smashing localisation satisfying $L \leq L^{sm?}$. Hence $L^{sm?} = L^{sm}$ is the smashification of $L$.
\end{proof}

\begin{lem}\label{lem: composite of cc locs is cc}
    Assume that in $\CC$ an object is compact if and only if it is dualisable. Let $J_\alpha$ denote the free unital $\mathbb{E}_1$ monoid on a central map $\alpha$, so $J_\alpha$ is an idempotent algebra as per Corollary \ref{cor: Jalpha is idempotent}. For a finite family of central maps $\{\alpha_i\}_{i=1}^n$ then 
    \[\alpha_{[1, n]} := \bigotimes_{i=1}^n \alpha_i ~\text{is central, and}~ J_{\alpha_{[1, n]}} \simeq \bigotimes_{i=1}^n J_{\alpha_i}.\]
    Moreover if each $\alpha_i$ is compact then $\alpha_{[1, n]}$ is compact. Hence the composite of a finite family of compactly central localisations is compactly central.
\end{lem}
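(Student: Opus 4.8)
The plan is to reduce everything to the two-fold case, prove the two displayed claims separately, and then read off the statement about composites.

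\emph{Centrality.} By an easy induction it is enough to show that if $\alpha\colon\one\to A$ and $\beta\colon\one\to B$ are central then so is $\alpha\otimes\beta\colon\one\to A\otimes B$. Let $s\colon (A\otimes B)^{\otimes 2}\xrightarrow{\ \simeq\ }A^{\otimes 2}\otimes B^{\otimes 2}$ be the symmetry equivalence reordering the four tensor factors $A,B,A,B$ as $A,A,B,B$. Unwinding the coherence isomorphisms of the symmetric monoidal structure, one checks
\[
s\circ\bigl((\alpha\otimes\beta)\otimes\id_{A\otimes B}\bigr)\simeq(\alpha\otimes\id_A)\otimes(\beta\otimes\id_B),
\qquad
s\circ\bigl(\id_{A\otimes B}\otimes(\alpha\otimes\beta)\bigr)\simeq(\id_A\otimes\alpha)\otimes(\id_B\otimes\beta).
\]
Tensoring the homotopy $\alpha\otimes\id_A\simeq\id_A\otimes\alpha$ with the homotopy $\beta\otimes\id_B\simeq\id_B\otimes\beta$ identifies the two right-hand sides, and cancelling the equivalence $s$ gives $(\alpha\otimes\beta)\otimes\id_{A\otimes B}\simeq\id_{A\otimes B}\otimes(\alpha\otimes\beta)$, so $\alpha\otimes\beta$ is central. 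In particular $\alpha_{[1,n]}$ is central.

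\emph{Identifying $J_{\alpha\otimes\beta}$.} Again work with $n=2$. By Lemma~\ref{lem: combining smashing locs} the product $J_\alpha\otimes J_\beta$ of idempotent algebras is idempotent, with underlying localisation $L_\alpha\otimes L_\beta$. The canonical $\mathbb{E}_0$ maps $c_\alpha\colon A\to J_\alpha$, $c_\beta\colon B\to J_\beta$ tensor to an $\mathbb{E}_0$ map $A\otimes B\to J_\alpha\otimes J_\beta$ over the unit $\alpha\otimes\beta$ (indeed $(c_\alpha\otimes c_\beta)\circ(\alpha\otimes\beta)\simeq\overline{\alpha}\otimes\overline{\beta}$, the unit of $J_\alpha\otimes J_\beta$); since $J_\alpha\otimes J_\beta$ is $\mathbb{E}_1$, the universal property of $J_{\alpha\otimes\beta}$ yields a ring map $J_{\alpha\otimes\beta}\to J_\alpha\otimes J_\beta$, hence $L_\alpha\otimes L_\beta\leq L_{\alpha\otimes\beta}$ by Lemma~\ref{lem: smash loc partial order via ring map}. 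Conversely the unit isomorphisms give $\id_A\otimes\beta\colon A\to A\otimes B$ and $\alpha\otimes\id_B\colon B\to A\otimes B$ with $(\id_A\otimes\beta)\circ\alpha\simeq\alpha\otimes\beta\simeq(\alpha\otimes\id_B)\circ\beta$, so Lemma~\ref{lem: central map divisibility easy} (with $n=1$) gives $L_{\alpha\otimes\beta}\leq L_\alpha$ and $L_{\alpha\otimes\beta}\leq L_\beta$, whence $L_{\alpha\otimes\beta}\leq L_\alpha\otimes L_\beta$ by Lemma~\ref{lem: tensor smashing maximal}. Therefore $L_{\alpha\otimes\beta}\simeq L_\alpha\otimes L_\beta$ as smashing localisations, and comparing their canonically determined idempotent algebras (Lemma~\ref{lem: class of smashing locs}) gives $J_{\alpha\otimes\beta}\simeq J_\alpha\otimes J_\beta$. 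Induction then gives $J_{\alpha_{[1,n]}}\simeq\bigotimes_{i=1}^{n}J_{\alpha_i}$.

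\emph{Compactness and composites.} The unit $\one$ is dualisable, hence compact, so from the cofibre sequence $\one\to A_i\to\cof\alpha_i$ and stability of $\CC$, compactness of $\cof\alpha_i$ forces $A_i$ to be compact. A tensor product of dualisable objects is dualisable, hence compact, so $A_{[1,n]}=\bigotimes_i A_i$ is compact; the cofibre sequence $\one\to A_{[1,n]}\to\cof(\alpha_{[1,n]})$ then shows $\cof(\alpha_{[1,n]})$ is compact, so $\alpha_{[1,n]}$ is a compact central map and $L_{\alpha_{[1,n]}}$ is compactly central. Finally, given compactly central localisations $L_{\alpha_1},\dots,L_{\alpha_n}$, their composite (in any order) equals $\bigotimes_{i=1}^{n}L_{\alpha_i}$ by iterating Lemma~\ref{lem: combining smashing locs}; this localisation has idempotent algebra $\bigotimes_i J_{\alpha_i}\simeq J_{\alpha_{[1,n]}}$, so it is $L_{\alpha_{[1,n]}}$, which we have just seen is compactly central.

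\emph{Main obstacle.} The only delicate step is the verification that $\alpha\otimes\beta$ is central — concretely, checking that the two identifications involving the shuffle equivalence $s$ genuinely hold once the associativity and symmetry coherences of $\CC$ are accounted for. Everything else is a routine application of the order calculus for smashing localisations assembled earlier in this section.
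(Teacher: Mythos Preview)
Your proof is correct. The centrality and compactness arguments are essentially the same as the paper's (the paper presents the centrality induction step via a single permutation $\tau_{n+1}$ rather than your shuffle $s$, but the content is identical, and the compactness argument is the same up to bookkeeping).

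The identification $J_{\alpha\otimes\beta}\simeq J_\alpha\otimes J_\beta$ is where you genuinely diverge. The paper argues directly: it builds an $\mathbb{E}_1$ map $\phi\colon J_{\alpha_{[1,n]}}\to\bigotimes_i J_{\alpha_i}$ from the universal property of the source, builds maps $\psi_i\colon J_{\alpha_i}\to J_{\alpha_{[1,n]}}$ from the universal property of each $J_{\alpha_i}$ (using the inclusion $A_i\to\bigotimes_j A_j$), tensors the $\psi_i$ together, and then observes that both composites are $\mathbb{E}_0$ endomorphisms of idempotent algebras, hence the identity by Corollary~\ref{cor: very useful En refinement}. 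You instead work at the level of the partial order on smashing localisations: one ring map gives $L_\alpha\otimes L_\beta\leq L_{\alpha\otimes\beta}$, the factorisations $(\id_A\otimes\beta)\circ\alpha\simeq\alpha\otimes\beta$ together with Lemma~\ref{lem: central map divisibility easy} give the reverse inequality via Lemma~\ref{lem: tensor smashing maximal}, and equality of localisations forces equality of idempotent algebras. Your route is more conceptual and makes nice use of the order calculus already assembled in the section; the paper's route is more self-contained and yields the explicit inverse equivalences without appealing to the maximality characterisation of $L_1\otimes L_2$. Both are perfectly valid.
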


\begin{proof}
    Since each $\alpha_i$ is central, we have $\alpha_i \otimes \id_i \simeq \id_i \otimes \alpha_i$ where $\alpha_i: \one \to A_i$ and $\id_i := \id_{A_i}$. Let $A_{[1, n]} = \bigotimes_{i=1}^n A_i$, and $A_{[n, 1]}$ denote the same tensor product with the factors in reverse order. By induction we may assume $\alpha_{[1, n-1]}$ is central. Let $\tau_{n+1}$ denote the permutation on a tensor product of $n+1$ terms which is a rotation bringing the last entry to the front. Then the commuting diagram 
    \[\begin{tikzcd}[column sep={15em,between origins}]
        &  {A_{[1, n]} \otimes A_{[1, n]}} \arrow[d, "{\id_{[1, n-1]} \otimes \tau_{n+1}^{-1}}"]         \\
{A_{[1, n]}}    \arrow[r, "{\id_{[1, n-1]} \otimes \alpha_{[1, n]} \otimes \id_n}" description, bend left] 
                \arrow[r, "{\alpha_{[1, n-1]} \otimes \id_{[1, n]} \otimes \alpha_n}" description, bend right] 
                \arrow[rd, "{\alpha_{[1, n]} \otimes \id_{[1, n]}}" description, bend right] 
                \arrow[ru, "{\id_{[1, n]} \otimes \alpha_{[1, n]}}" description, bend left] 
        & {A_{[1, n-1]} \otimes A_{[1, n]} \otimes A_n} \arrow[d, "{\id_{[1, n-1]} \otimes \tau_{n+1}}"] \\
        & {A_{[1, n]} \otimes A_{[1, n]}}                                                                             
\end{tikzcd}\]

    shows that $\alpha_{[1, n]}$ is central. The vertical composite on the right is the identity on $A_{[1, n]} \otimes A_{[1, n]}$ and the middle two horizontal maps are homotopic to one another by centrality of $\alpha_{[1, n-1]}$ and of $\alpha_n$.

    Having established centrality of $\alpha_{[1, n]}$, we know $J_{\alpha_{[1, n]}}$ is an idempotent algebra. 

    To compare $J_{\alpha_{[1, n]}}$ and $\bigotimes_{i=1}^n J_{\alpha_i}$, first we use the universal property of $J_{\alpha_{[1, n]}}$. Let $c_i: A_i \to J_{\alpha_i}$ denote the canonical map obeying $e_i \simeq c_i \circ \alpha_i$, where $e_i: \one \to J_{\alpha_i}$ is the unit of the $\mathbb{E}_\infty$-ring $J_{\alpha_i}$. We have the commuting diagram 
    \[\begin{tikzcd}
        \one    \arrow[r, "{\alpha_{[1, n]}}"] 
                \arrow[rdd, "\bigotimes_{i=1}^n e_i"'] 
        & \bigotimes_{i=1}^n A_i    \arrow[dd, "\bigotimes_{i=1}^n c_i", pos=0.2] 
                                    \arrow[r, "{c_{[1, n]}}"] & {J_{\alpha_{[1, n]}}} 
                                    \arrow[ldd, "\exists!\phi", dashed] \\
        \\
        & \bigotimes_{i=1}^n J_{\alpha_i}
        \end{tikzcd}\]
    and by the universal property of $J_{\alpha_{[1, n]}}$ we obtain the $\mathbb{E}_1$ ring map $\phi$, which refines uniquely to an $\mathbb{E}_\infty$ ring map (by \ref{lem: En refinement}, since $\phi$ is at least an $\mathbb{E}_0$ map it refines uniquely to an $\mathbb{E}_\infty$ map). Note $\bigotimes_{i=1}^n e_i$ is the unit for the $\mathbb{E}_\infty$ ring $\bigotimes_{i=1}^n J_{\alpha_i}$. 

    Conversely, for any $1 \leq i \leq n$ we have a commuting diagram 
        \[\begin{tikzcd}
            \one    \arrow[r, "\alpha_i"] 
                    \arrow[rddd, "{e_{[1, n]}}"', bend right] 
                    \arrow[rd, "{\alpha_{[1, n]}}"', pos=0.8] 
            & A_i   \arrow[r, "c_i"] 
                    \arrow[d, "\ell"]          
            & J_{\alpha_i} \arrow[lddd, "\exists! \psi_i", dashed, bend left] \\
            & \bigotimes_{j=1}^n A_j \arrow[dd, "{c_{[1, n]}}"] \\
            \\
            & {J_{[1, n]}}                                              
            \end{tikzcd}\]
    with $\ell$ denoting the obvious map $\ell = \bigotimes_{j=1}^{i-1}\alpha_j \otimes \id_i \otimes \bigotimes_{j=i+1}^n \alpha_j$. We get induced $\mathbb{E}_1$ maps $\psi_i: J_{\alpha_i} \to J_{\alpha_{[1, n]}}$ for each $i$, which as before refine uniquely to $\mathbb{E}_\infty$. Then we may tensor together the $\psi_i$ to obtain an $\mathbb{E}_\infty$ map $\psi: \bigotimes_{i=1}^n J_{\alpha_i} \to J_{\alpha_{[1, n]}}$. 

    The maps $\psi$ and $\phi$ are automatically inverse, because each composite is an $\mathbb{E}_0$ endomorphism of an idempotent algebra, and by Corollary \ref{cor: very useful En refinement} idempotent algebras have no non-identity endomorphisms.

    We claim that for compact maps $f: \one \to A$ and $g: \one \to B$ then $f \otimes g$ is also compact. It follows inductively that if each $\alpha_i$ is compact, $\alpha_{[1, n]}$ must be compact. We can write $f \otimes g \simeq (\id_A \otimes g) \circ f$ and the cofibre of a composite is an extension of the two individual cofibres, so $\cof(f \otimes g)$ is an extension of $\cof f$ and $A \otimes \cof g$. Since $f$ and $g$ are both compact, $\cof f$ and $\cof g$ are compact. Since we assumed that compact objects in $\CC$ are the same as dualisable objects, the unit $\one$ is compact and so $A$ and $B$ must both be compact because they are each an extension of two compact objects. Moreover, the tensor product of two dualisable objects is dualisable essentially by definition, so the tensor product of two compact objects must be compact. Hence $A \otimes \cof g$ is compact and thus $\cof(f \otimes g)$ is compact because it is the extension of compact objects.
\end{proof}

We can interpret smashification (resp. finitisation) as the right adjoint to the forgetful functor from smashing (resp. finite) localisations to all localisations. As a category, smashing localisations on $\CC$ form a poset $\Loc_\CC^{sm}$ under the partial order on localisations, with the mapping space between any two localisations (i.e. the space of natural transformations between the corresponding endofunctors of $\CC$) either contractible or empty. As we have seen, there are forgetful inclusions 
\[\Loc_\CC^{fin} \subseteq \Loc^{sm}_\CC \subseteq \Loc^{\otimes}_\CC \subseteq \Loc_\CC,\]
where $\Loc_\CC$ denotes accessible localisations on $\CC$. The property that $L^{sm}$ is the minimal smashing localisation satisfying $L \leq L^{sm}$ is equivalent to existence of a map $L^{sm} \implies L$ together with the condition that any smashing localisation $L'$ with a map $L' \implies L$ factors as $L' \implies L^{sm} \implies L$. That is,
\[\Map(L', L^{sm}) \simeq \Map(L', L)\]
whenever $L'$ is a smashing localisation. Hence smashification is right adjoint to the forgetful functor from smashing localisations to all localisations on $\CC$. Similarly, finitisation is right adjoint to the forgetful functor from finite localisations to all localisations.

\section{\texorpdfstring{$L_n^f$}{Lnf} localisation is compactly central}\label{sec: the Lnf example}

In this Section, we chiefly work in the category $\Sp_{(p)}$ of $p$-local spectra. Since $p$-localisation is smashing, many useful properties of $\Sp$ regarding localisations carry over to our setting. The key results of this Section include a classification of all compact central maps in $p$-local spectra, and a discussion of the corresponding compactly central smashing localisations. In particular, we show that $L_{n}^f$ localisation is compactly central, as described in Subsection \ref{subsec: finiteness}. This is generally a stronger condition than finiteness. At the end of the Section, we classify all compactly central localisations in $\Sp_{(p)}$ and in $\Sp$. 

\subsection{Properties of compact central maps}\label{subsec: props of central maps}
In this Subsection, we find some general properties which a compact central map must satisfy. In Theorem \ref{thm: computing finite smashing loc from central map}, we show how to compute the compactly central smashing localisation corresponding to a compact central map. We work entirely in $\Sp_{(p)}$, with monoidal unit $\S_{(p)}$.

Let $\beta: \S_{(p)} \to A$ be a compact central map of spectra. By compactness of $\S_{(p)}$ (in $p$-local spectra), $A$ is thus compact. What can we say about the $K(n)$-homology of $\beta$? 

\begin{lem}\label{lem: central map 0 or iso on Kn}
    For a compact central map $\beta: \S_{(p)} \to A$, there exists some $0 \leq m \leq \infty$ such that $K(n)_*\beta$ is an isomorphism for $n < m$ and $K(n)_* \beta = 0$ for $n \geq m$. Also $\cof \beta$ is a type $m$ finite spectrum.
\end{lem}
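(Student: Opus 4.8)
The plan is to analyse $K(n)_*\beta$ one value of $n$ at a time, extract a ``zero or isomorphism'' dichotomy from centrality, and then assemble the answer across all $n$ using the classification of finite $p$-local spectra by type. First I would record the setup: since $\S_{(p)}$ is compact in $\Sp_{(p)}$ and $\beta$ is compact, both $A$ and $\cof\beta$ are finite $p$-local spectra. Fix $n$ with $0\le n\le\infty$ (with $K(\infty):=H\F_p$). Because $\pi_*K(n)$ is a graded field, the $K(n)$-module $K(n)\otimes A$ splits as a wedge of suspensions of $K(n)$, so $K(n)$-homology satisfies the Künneth formula: $K(n)_*(A\otimes A)\cong K(n)_*(A)\otimes_{K(n)_*}K(n)_*(A)$, and $K(n)_*(A)$ is a free graded $K(n)_*$-module.

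The heart of the argument is the dichotomy. Put $x:=(K(n)_*\beta)(1)\in K(n)_0(A)$. Applying $K(n)_*(-)$ to the homotopy $\beta\otimes\id_A\simeq\id_A\otimes\beta$, and identifying the common source $K(n)_*(\S_{(p)}\otimes A)\cong K(n)_*(A)\cong K(n)_*(A\otimes\S_{(p)})$ via the unit isomorphisms, I obtain the identity $x\otimes a=a\otimes x$ in $K(n)_*(A)\otimes_{K(n)_*}K(n)_*(A)$ for every $a\in K(n)_*(A)$. If $x=0$ then $K(n)_*\beta=0$. Otherwise, were there a homogeneous $a$ not lying in $K(n)_*\cdot x$, then $\{x,a\}$ would be $K(n)_*$-linearly independent, and extending it to a homogeneous basis of $K(n)_*(A)$ would exhibit $x\otimes a$ and $a\otimes x$ as two distinct basis elements of $K(n)_*(A)^{\otimes 2}$, contradicting the identity. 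Hence every homogeneous element of $K(n)_*(A)$ is a $K(n)_*$-multiple of the degree-$0$ class $x$, which forces $K(n)_*(A)$ to be free of rank one with generator $x$; in particular $K(n)_*\beta$ is an isomorphism. So for each $n$, $K(n)_*\beta$ is either zero or an isomorphism.

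It remains to assemble this over $n$. From the cofibre sequence $\S_{(p)}\xrightarrow{\beta}A\to\cof\beta$ and its $K(n)$-homology long exact sequence, $K(n)_*\beta$ is an isomorphism exactly when $K(n)_*\cof\beta=0$. Let $m$ be the type of the finite $p$-local spectrum $\cof\beta$ (taking $m=\infty$ if $\cof\beta\simeq 0$). By the theory of types of finite complexes (a consequence of the nilpotence theorem, \cite{NilpotenceII}), $K(n)_*\cof\beta=0$ for $n<m$ and $K(n)_*\cof\beta\ne 0$ for $n\ge m$. Therefore $K(n)_*\beta$ is an isomorphism for $n<m$; and for $n\ge m$ it is not an isomorphism, so by the dichotomy it is zero. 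This is exactly the asserted statement, with $\cof\beta$ of type $m$ by construction.

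The main obstacle is the second step — converting centrality into the dichotomy. One must transport the homotopy $\beta\otimes\id_A\simeq\id_A\otimes\beta$ correctly along the two unit isomorphisms before applying $K(n)_*$, and then run the graded linear algebra carefully: the class $x$ sits in degree $0$ while $K(n)_*(A)$ may have classes in many degrees, and the point to verify is that any homogeneous class witnessing $\dim_{K(n)_*}K(n)_*(A)\ge 2$ is automatically $K(n)_*$-independent from $x$, so that the symmetric-tensor identity $x\otimes a=a\otimes x$ cannot hold. The remaining manipulations — the long exact sequence and the invocation of the type classification — are routine.
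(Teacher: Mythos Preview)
Your proposal is correct and follows essentially the same approach as the paper: derive the ``zero or isomorphism'' dichotomy for $K(n)_*\beta$ from centrality via graded linear algebra over the field $K(n)_*$, then invoke the type of the finite spectrum $\cof\beta$ to determine where the transition occurs. The only cosmetic difference is order of exposition: the paper first fixes $m$ as the type of $\cof\beta$ and then argues the dichotomy only in the range $n\ge m$, whereas you establish the dichotomy for all $n$ first and read off $m$ afterwards; you also spell out the linear-algebra step (why $x\otimes a=a\otimes x$ forces rank one) more explicitly than the paper's one-line assertion.
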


\begin{proof}
    Since $\cof \beta$ is finite, it must have some type $m$ with $0 \leq m \leq \infty$. This means that $K(n)_* \cof \beta = 0$ for $0 \leq n < m$ and $K(n)_* \cof \beta \neq 0$ for $n \geq m$. So $K(n)_* \beta$ is an isomorphism for $0 \leq n < m$ and $K(n)_* \beta$ is not an isomorphism for $m \leq n \leq \infty$. We must show that in the range $m \leq n \leq \infty$, the condition that $\beta$ is central forces $K(n)_* \beta = 0$. 

    Since $\beta$ is central and $K(n)$ is a field, the induced map $K(n)_*\beta$ is a central map of vector spaces. That is, the two maps 
    \[\id \otimes K(n)_*\beta, K(n)_* \beta \otimes \id: K(n)_*(A) \to K(n)_* (A) \otimes K(n)_* (A)\]
    are \emph{equal} as maps of $\pi_* K(n)$-vector spaces. But a map of vector spaces $\one \to V$ cannot be central unless the map is zero or $\dim V \leq 1$, so we conclude that either $K(n)_*A$ is a 1-dimensional $\pi_* K(n)$-vector space or $K(n)_*\beta = 0$. A map between 1-dimensional vector spaces is 0 or an isomorphism, so centrality forces $K(n)_*\beta$ to be either zero or an isomorphism for each $n$.

    We already know that $K(n)_*\beta$ is not an isomorphism for $m \leq n \leq \infty$, so we conclude that $K(n)_* \beta = 0$ for each such $n$.
\end{proof}

\begin{remark}
    For a central map $\beta: \S_{(p)} \to X$ with $X$ any (not necessarily finite) spectrum, the above argument shows that each $K(n)_* \beta$ is either zero or an isomorphism. However, in the case where $X$ is not finite, $\cof \beta$ is also not finite. Thus the argument to show that $K(n)_* \beta$ is an isomorphism for some range $0 \leq n < m$ and then zero afterwards no longer goes through. 

    It turns out that for any smashing localisation of $p$-local spectra, the conclusion of Lemma \ref{lem: central map 0 or iso on Kn} still holds. To prove it, one can first establish the result for $E(n)$-localisation using its definition in terms of the $K(n)$s, and then recall that every smashing localisation of $\Sp_{(p)}$ sits between some $L_n$ and its finitisation. Explicitly, for any smashing localisation $L$ which is neither $0$ nor the identity, there exists $n \geq 0$ such that $\ker L_n^f \subseteq \ker L \subseteq \ker L_n$ or equivalently $L_n \leq L \leq L_n^f$, see \cite{barthel} Section 4. This fact is of independent interest because it establishes equivalence between two versions of the telescope conjecture: that $L_n \simeq L_n^f$, and that $L \simeq L^f$ for any smashing localisation $L$. 
\end{remark}

\begin{defn}\label{def: algebraically central}
    Call a compact map $f: \S_{(p)} \to A$ \emph{algebraically central} if it satisfies the conclusions of Lemma \ref{lem: central map 0 or iso on Kn} for some integer $m$. That is, if $K(n)_* f$ is an isomorphism for $n < m$ and $K(n)_* f = 0$ for $n \geq m$. The \emph{type} of $f$ is the integer $m$.
\end{defn}

The following Lemma justifies the use of the term \emph{type} in the Definition.

\begin{lem}\label{lem: type algebraically central}
    Let $\alpha$ be a type $m$ (compact) algebraically central map. Then $\cof \alpha$ is a type $m$ finite spectrum.
\end{lem}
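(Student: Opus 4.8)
The statement to prove is: if $\alpha$ is a type $m$ (compact) algebraically central map, then $\cof\alpha$ is a type $m$ finite spectrum. Recall that by Definition \ref{def: algebraically central}, $\alpha: \S_{(p)} \to A$ being type $m$ algebraically central means precisely that $K(n)_*\alpha$ is an isomorphism for $n < m$ and $K(n)_*\alpha = 0$ for $n \geq m$, and that $\alpha$ is compact, i.e.\ $\cof\alpha$ is a compact (finite) $p$-local spectrum. So $\cof\alpha$ is already known to be a finite spectrum; the only content is to identify its type.

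The plan is to apply $K(n)$-homology to the cofibre sequence $\S_{(p)} \xrightarrow{\alpha} A \to \cof\alpha$ and read off the long exact sequence. First I would fix $n < m$. Since $K(n)_*\alpha$ is an isomorphism, the long exact sequence
\[
\cdots \to K(n)_*\S_{(p)} \xrightarrow{K(n)_*\alpha} K(n)_*A \to K(n)_*\cof\alpha \to K(n)_{*-1}\S_{(p)} \xrightarrow{K(n)_{*-1}\alpha} \cdots
\]
forces $K(n)_*\cof\alpha = 0$, because each connecting map is squeezed between two isomorphisms. Next I would fix $n \geq m$, where $K(n)_*\alpha = 0$. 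Then the long exact sequence breaks into short exact sequences
\[
0 \to K(n)_*A \to K(n)_*\cof\alpha \to K(n)_{*-1}\S_{(p)} \to 0,
\]
and since $K(n)_*\S_{(p)} = \pi_*K(n) \neq 0$ (the unit is nonzero in $K(n)$-homology), we get $K(n)_*\cof\alpha \neq 0$. Combining the two cases: $K(n)_*\cof\alpha = 0$ for $n < m$ and $K(n)_*\cof\alpha \neq 0$ for $n \geq m$, which is exactly the definition of $\cof\alpha$ having type $m$.

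There is essentially no obstacle here — this is a routine diagram chase with the $K(n)$-homology long exact sequence, and in fact the same computation already appeared in the proof of Lemma \ref{lem: central map 0 or iso on Kn} (where it was run in the reverse direction, deducing properties of $K(n)_*\beta$ from the type of $\cof\beta$). The only minor point to be careful about is the edge case $m = \infty$: then $K(n)_*\alpha$ is an isomorphism for all finite $n$, so $K(n)_*\cof\alpha = 0$ for all $n$, meaning $\cof\alpha$ is type $\infty$, i.e.\ $K(n)$-acyclic for all $n < \infty$; this is consistent with the convention used in Lemma \ref{lem: central map 0 or iso on Kn}. One should also note $m = 0$ causes no trouble: there are simply no values $n < 0$ to check, and $K(n)_*\alpha = 0$ for all $n \geq 0$, giving $K(n)_*\cof\alpha \neq 0$ for all $n$, so $\cof\alpha$ is type $0$.
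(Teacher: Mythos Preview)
Your proof is correct and follows essentially the same approach as the paper: both argue via the $K(n)$-homology of the cofibre sequence, using that $K(n)_*\alpha$ an isomorphism forces $K(n)_*\cof\alpha = 0$, while $K(n)_*\alpha = 0$ together with $K(n)_*\S_{(p)} \neq 0$ forces $K(n)_*\cof\alpha \neq 0$. The paper's version is just more terse, phrasing the $n \geq m$ case as ``the cofibre of the zero map is a direct sum'' rather than writing out the short exact sequence.
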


\begin{proof}
    $K(n)_*\cof \alpha \simeq 0$ for $n < m$, since this is the range where $K(n)_*\alpha$ is an isomorphism. $K(n)_*\cof \alpha$ is nonvanishing for $n \geq m$, since the cofibre of the zero map is the direct sum of the domain and (the desuspension of) the codomain. Thus $\cof \alpha$ is type $m$. Finiteness of $\cof \alpha$ is by assumption since $\alpha$ is compact.
\end{proof}

 Theorem \ref{thm: alpha is central} gives a partial converse to Lemma \ref{lem: central map 0 or iso on Kn}, allowing us to produce a central map from an algebraically central map. While an algebraically central map need not be central, a sufficiently large tensor power of an algebraically central map is central.

Next we would like to compute the localisation $L_\beta$ for a compact central map $\beta$, to obtain a supply of compactly central localisations. One of the consequences of the thick subcategory theorem is a classification of all finite localisations on $p$-local spectra, so we need only identify which of these is $L_\beta$. Recall that a \emph{thick subcategory} is closed under weak equivalences, cofibre sequences and retracts. Since the subcategory of acyclics for a localisation possesses all of these closure\footnote{See Proposition \ref{prop: acyclics loc} in the Appendix.} properties, it must be thick. If the localisation is in addition finite, then its finite acyclics generate all acyclics and form a thick subcategory of $p$-local finite spectra. The (proper nontrivial) thick subcategories of $p$-local finite spectra are precisely the full subcategories on spectra of type $\geq n$, for some $n$ (see \cite{NilpotenceII}, Theorem 7). These are also the kernels of $K(n)_*$ on $p$-local finite spectra.

At this juncture it is useful to recall some background regarding the localisation $L_n^f$ of $p$-local spectra. The Bousfield localisation $L_n := L_{E(n)}$ at Morava $E$-theory is called \emph{chromatic localisation}. It can be computed via 
\[L_{E(n)} \simeq L_{K(0) \vee K(1) \vee \cdots \vee K(n)} =: L_{K(\leq n)},\]
and a result of Hopkins-Ravenel shows it is smashing, see Theorem 7.5.6 in \cite{ravenel92}. As with any localisation, $L_n$ has a finitisation $L_n^f$ (see Definition \ref{def: finitise}) which is defined by taking the acyclics to be the finite acyclics of $L_n$. The Morava $K$-theories have the property that any $K(n)$-acyclic ($p$-local) finite spectrum is automatically $K(n-1)$-acyclic, and so 
\[L_n^f = L_{K(\leq n)}^f = L_{K(n)}^f\]
because these localisations have identical sets of finite acyclics. Thus the finite acyclics of $L_n^f$ are those finite spectra $X$ for which $X \otimes K(n) = 0$, which is precisely the type $\geq n+1$ finite spectra. To identify $L_{\beta}$ with some $L_{n}^f$ is straightforward -- it is sufficient, for example, to find a single type $n+1$ finite acyclic and a single type $n$ finite spectrum which is not acyclic. We have the following results. 

\begin{lem}\label{lem: acyclic example}
    Let $\beta$ be compact central and $\cof \beta$ have type $m+1$. Since $\cof \beta$ is acyclic for $L_\beta$, all $p$-local finite spectra of type $m+1$ are acyclic, and $L_{\beta}$ agrees with $L_{n}^f$ for some $n \leq m$.
\end{lem}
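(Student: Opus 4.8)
The plan is to reduce everything to the thick subcategory theorem, so the proof is really just bookkeeping once one knows that finite localisations of $\Sp_{(p)}$ are determined by their compact acyclics. If $\cof\beta \simeq 0$ then $\beta$ is an equivalence, $L_\beta$ is the identity localisation $L_\infty^f$, and there is nothing to prove, so assume $\cof\beta \not\simeq 0$. First I would record that $L_\beta$ is a finite localisation: since $\S_{(p)}$ is compact and $\beta$ is a compact central map, Lemma~\ref{lem: compact acyclics generate} says $\cof(\S_{(p)} \to J_\beta)$ is a sequential colimit of compact $L_\beta$-acyclics, and since $\Sp_{(p)}$ is generated under colimits by its monoidal unit, Lemma~\ref{lem: acyclics gen by} says every $L_\beta$-acyclic is a colimit of copies of this cofibre; hence the $L_\beta$-acyclics are generated under colimits by compact acyclics. (Alternatively, invoke Lemma~\ref{lem: compactly central implies finite} directly, using that $\Sp_{(p)}$ is rigid.)

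Next I would consider the full subcategory $\mathcal{T} \subseteq \Sp_{(p)}^{\mathrm{fin}}$ of $p$-local finite spectra that are $L_\beta$-acyclic. Acyclics for any localisation are closed under equivalences, cofibre sequences and retracts, so $\mathcal{T}$ is a thick subcategory; by the thick subcategory theorem \cite{NilpotenceII} it equals $\mathcal{C}_{n+1}$, the full subcategory of type $\ge n+1$ finite spectra, for some $n$ (with the conventions $\mathcal{C}_0 = \Sp_{(p)}^{\mathrm{fin}}$ and $\mathcal{C}_\infty = 0$). Now $\cof\beta$ is $L_\beta$-acyclic and has type $m+1$, so it lies in $\mathcal{T}$; moreover the thick subcategory it generates is again some $\mathcal{C}_j$ by \cite{NilpotenceII}, and this $\mathcal{C}_j$ is contained in $\mathcal{C}_{m+1}$ while containing the type-$(m+1)$ object $\cof\beta \notin \mathcal{C}_{m+2}$, which forces $j = m+1$. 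Therefore $\mathcal{C}_{m+1} \subseteq \mathcal{T} = \mathcal{C}_{n+1}$, so $n \le m$, and in particular every type $m+1$ finite spectrum belongs to $\mathcal{T}$, i.e.\ is $L_\beta$-acyclic.

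Finally I would observe that $L_n^f$ is, by its definition as a finitisation (Definition~\ref{def: finitise}), the finite localisation whose acyclics are generated under colimits by the $L_n$-acyclic finite spectra, which as recalled just before this Lemma are precisely the type $\ge n+1$ finite spectra, that is $\mathcal{C}_{n+1} = \mathcal{T}$. Thus $L_\beta$ and $L_n^f$ are finite localisations whose compact acyclics coincide, hence their full subcategories of acyclics coincide, and so $L_\beta \simeq L_n^f$. The one step that needs genuine care is the identification of the thick subcategory generated by a type-$(m+1)$ finite spectrum with $\mathcal{C}_{m+1}$; everything else follows formally from the thick subcategory theorem together with the definition of finitisation.
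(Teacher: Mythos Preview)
Your argument is correct and follows essentially the same route as the paper: both reduce to the thick subcategory theorem after observing that $\cof\beta$ is a compact $L_\beta$-acyclic of type $m+1$. The only notable difference is in what each version chooses to spell out: you explicitly argue that $L_\beta$ is finite (via Lemma~\ref{lem: compactly central implies finite} or the combination of Lemmas~\ref{lem: acyclics gen by} and~\ref{lem: compact acyclics generate}), which the paper leaves to the surrounding discussion, whereas the paper explicitly justifies why $\cof\beta$ is $L_\beta$-acyclic (because $\beta$ is an $L_\beta$-local equivalence, by Lemma~\ref{lem: Jalpha' localises to Jalpha}), which you take for granted from the lemma statement. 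Neither omission is serious, but you may want to add a clause explaining the acyclicity of $\cof\beta$ for completeness.
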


\begin{proof}
    The central map $\beta$ is a local equivalence for $L_{\beta}$ essentially by construction of the localisation. Indeed, this fact follows from Lemma \ref{lem: Jalpha' localises to Jalpha} (all of the maps in the diagram $\mathfrak{J}_\beta$ of Construction \ref{constr: explicit Jalpha} are local equivalences). Hence its cofibre is acyclic. This gives a bound on the possible types of finite acyclics for $L_\beta$. Specifically, since type $m+1$ finite spectra are acyclic, all finite spectra of type $\geq m+1$ must be acyclic, so $L_\beta \simeq L_n^f$ for some $0 \leq n \leq m$.
\end{proof}

\begin{lem}\label{lem: cof Jbeta correct type}
    Let $\beta: \S_{(p)} \to A$ be compact central. If $\cof \alpha$ has type $m$ then $\cof \overline{\beta}$ satisfies: $K(n)_* \cof \overline{\beta} = 0$ for $n < m$ and $K(m)_* \cof \overline{\beta} \neq 0$.
\end{lem}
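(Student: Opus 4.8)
The plan is to compute $\cof\overline{\beta}$ using the sequential-colimit presentation of Lemma \ref{lem: baralpha in terms of alpha}. Since $\beta$ is central, that Lemma and its proof identify $\cof\overline{\beta} \simeq \colim_k \cof(\beta^{\otimes k})$, where $\cof(\beta^{\otimes 1}) = \cof\beta$, each $\cof(\beta^{\otimes (k+1)})$ is an extension of $\cof\beta$ by $A \otimes \cof(\beta^{\otimes k})$, and the transition maps are those induced from the diagram $\mathfrak{J}_\beta$ of Construction \ref{constr: explicit Jalpha}. We may assume $\cof\beta \neq 0$, so that $m$ is finite.

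For $n < m$: by hypothesis $\cof\beta$ is a finite spectrum of type $m$, hence $K(n)$-acyclic. The $K(n)$-acyclic spectra form a thick tensor ideal closed under all colimits, and by the presentation above each $\cof(\beta^{\otimes k})$ is built from $\cof\beta$ by iterated extensions and tensoring with $A$ (this is recorded at the end of the proof of Lemma \ref{lem: baralpha in terms of alpha}); so each $\cof(\beta^{\otimes k})$, and therefore their colimit $\cof\overline\beta$, is $K(n)$-acyclic. Thus $K(n)_*\cof\overline\beta = 0$ for $n < m$.

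For $n = m$ I would argue by contradiction. As in the proof of Lemma \ref{lem: acyclic example}, the central map $\beta$ is a local equivalence for $L_\beta$, so $\cof\beta$ is $L_\beta$-acyclic. Since $\Sp_{(p)}$ is generated under colimits by $\S_{(p)}$, Lemma \ref{lem: acyclics gen by} says the $L_\beta$-acyclic spectra are generated under colimits by the single spectrum $\cof(\one \to J_\beta) = \cof\overline\beta$. Now suppose $K(m)_*\cof\overline\beta = 0$, i.e. $\cof\overline\beta$ is $K(m)$-acyclic. The $K(m)$-acyclic spectra are also closed under colimits, so this would force every $L_\beta$-acyclic spectrum to be $K(m)$-acyclic; in particular $K(m)_*\cof\beta = 0$, contradicting that $\cof\beta$ has type $m$. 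Hence $K(m)_*\cof\overline\beta \neq 0$.

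I expect the only point needing real care is that $K(m)$-nonvanishing is not washed out by the infinite colimit defining $J_\beta$; in the argument above this is handled automatically, since $\cof\overline\beta$ generates $\ker L_\beta$ under colimits and that kernel already contains a spectrum, $\cof\beta$, with nontrivial $K(m)$-homology. A more computational alternative uses that centrality forces $K(m)_*\beta = 0$ (Lemma \ref{lem: central map 0 or iso on Kn}): then over the graded field $\pi_*K(m)$ one gets split short exact sequences $0 \to K(m)_*A^{\otimes k} \to K(m)_*\cof(\beta^{\otimes k}) \to K(m)_{*-1}\S_{(p)} \to 0$ whose transition maps vanish on the first summand and are the identity on the last, so that $K(m)_*\cof\overline\beta \cong K(m)_{*-1}\S_{(p)} \neq 0$.
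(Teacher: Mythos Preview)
Your proof is correct. The $n < m$ argument matches the paper's exactly. For $n = m$, the paper takes what you call the ``more computational alternative'': since $\beta$ is algebraically central (Lemma \ref{lem: central map 0 or iso on Kn}), $K(m)_*\beta = 0$, so every map in the diagram $K(m)_*\mathfrak{J}_\beta$ vanishes and hence $K(m)_*J_\beta = 0$; the cofibre sequence $\S_{(p)} \to J_\beta \to \cof\overline\beta$ then gives $K(m)_*\cof\overline\beta \cong K(m)_{*-1}\S_{(p)} \neq 0$. Your primary argument is genuinely different: rather than computing anything, you use that $\cof\overline\beta$ generates all $L_\beta$-acyclics under colimits (Lemma \ref{lem: acyclics gen by}) together with the existence of a specific $L_\beta$-acyclic, namely $\cof\beta$, with nontrivial $K(m)$-homology. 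This is a clean structural route and requires less input than the paper's (in particular it does not invoke Lemma \ref{lem: central map 0 or iso on Kn}); the paper's computation, on the other hand, yields the sharper conclusion $K(n)_*\cof\overline\beta \cong K(n)_{*-1}\S_{(p)}$ for all $n \geq m$, which is recorded in the remark following the proof.
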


We might say that $\cof \overline{\beta}$ has the same type as $\cof \beta$, although one must keep in mind that in general $\cof \overline{\beta}$ is not finite because $J_\beta$ is not finite. Indeed, if $J_\beta$ is finite then $J_\beta = \S_{(p)}$ or $J_\beta = 0$, and the corresponding localisation is one of the two trivial ones. For a spectrum $X$ which is not necessarily finite, we define its type to be the minimal integer $n$ such that $K(n)_* X \neq 0$. Note that zero is no longer the only spectrum of type $\infty$, as any nilpotent spectrum has this type. Moreover, it is not necessarily the case that a spectrum $X$ of type $n$ has $K(m)_* X \neq 0$ for all $m > n$ (although of course this is true when $X$ happens to be finite).

\begin{proof}
    We use the description of $\cof \overline{\beta}$ in terms of $\cof \beta$ given in Lemma \ref{lem: baralpha in terms of alpha}. 

    Since $\cof \beta$ is finite of type $m$, we know $K(n)_* \cof \beta = 0$ for $0 \leq n < m$. Then by the K\"{u}nneth formula for $K(n)$ homology, $K(n)_* A \otimes \cof \beta = 0$ for $0 \leq n < m$ also. By Lemma \ref{lem: baralpha in terms of alpha} we know $\cof(\beta^{\otimes 2})$ is an extension of $\cof \beta$ and $A \otimes \cof \beta$, so $K(n)_* \cof(\beta^{\otimes 2}) = 0$ for $0 \leq n < m$ also. Proceeding in this way, we find inductively that 
    \[K(n)_* (A \otimes \cof (\beta^{\otimes j})) = 0 = K(n)_* \cof (\beta^{\otimes j}),\]  
    for $0 \leq n < m$ and every $j > 0$. Moreover, since $A$ and $\cof \beta$ are both finite, $A \otimes \cof \beta$ is finite. Our inductive argument then shows that each $\cof(\beta^{\otimes j})$ and each $A \otimes \cof(\beta^{\otimes j})$ is finite, and they all have type at least $m$. 

    Since $\cof \overline{\beta}$ is thus the colimit of finite type $\geq m$ ($p$-local) spectra, we know that $K(n)_* \cof \overline{\beta} = 0$ for $0 \leq n < m$. Put another way, $K(n)_* \cof \overline{\beta}$ can be computed as the colimit of a diagram where all the terms vanish, so it must vanish. 

    Even if we knew that each $\cof(\beta^{\otimes j})$ was type exactly $m$, i.e. $K(m)_* \cof(\beta^{\otimes j}) \neq 0$ for every $j$, we would not be able to conclude directly that $K(m)_* \cof \overline{\beta}$ was nonzero -- indeed, the colimit of a diagram of nonzero terms can vanish. Instead we take a slightly different approach. 

    Recall Construction \ref{constr: explicit Jalpha}, and consider the diagram $K(m)_* \mathfrak{J}_\beta$ whose colimit computes $K(m)_* J_\beta$. Since $\beta$ is central it is in particular algebraically central (Lemma \ref{lem: central map 0 or iso on Kn}), and so $K(m)_* \beta = 0$. Hence in the diagram $K(m)_* \mathfrak{J}_\beta$, all of the maps are zero. The colimit of this diagram therefore vanishes, so $K(m)_* \mathfrak{J}_\beta = 0$. Then since $K(m)_* \S_{(p)} \neq 0$ for all $m$, we conclude that 
    \[K(m)_* \cof(\overline{\beta}: \S_{(p)} \to J_{\beta}) \cong K(m)_* \S_{(p)} \neq 0.\] 
\end{proof}

Indeed, it follows from the proof of Lemma \ref{lem: cof Jbeta correct type} that $K(n)_* \cof \overline{\beta} \cong K(n)_* \S_{(p)}$ for all $n \geq m$, and $K(n)_* \cof \overline{\beta} = 0$ for $0 \leq n < m$. This makes sense since $J_\beta$ is an $\E_\infty$ ring, so we know \emph{a priori} that there exists some $m$ such that $K(n)_* J_\beta = 0$ if and only if $n \geq m$. 

\begin{thm}\label{thm: computing finite smashing loc from central map}
    Let $\beta$ be a compact central map and $L_\beta$ its corresponding smashing localisation. If $\cof \beta$ has type $\infty$, then $L_\beta = \id$ is the trivial localisation where every object is already local. If $\cof \beta$ has type $0$, then $L_\beta = 0$ where 0 is the only local object. If $\cof \beta$ has type $m$ for $0 < m < \infty$, then $L_\beta \simeq L_{m-1}^f$ is the finitisation of $E(m-1)$-localisation.
\end{thm}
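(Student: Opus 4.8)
The plan is to show that $L_\beta$ is a finite localisation and then to identify it by its thick subcategory of compact acyclic objects. Since $\Sp_{(p)}$ is rigid and stable, Lemma~\ref{lem: compactly central implies finite} applies and $L_\beta$ is finite; a finite localisation is determined by the thick subcategory of $p$-local finite spectra formed by its compact acyclics, since these generate all the acyclics under colimits. Writing $\mathcal{T}_\beta$ for the thick subcategory of compact $L_\beta$-acyclic spectra, it therefore suffices, for $0 < m < \infty$, to prove that $\mathcal{T}_\beta$ is exactly the subcategory of $p$-local finite spectra of type $\geq m$, because that is the thick subcategory of compact acyclics of $L_{m-1}^f$ (recall $L_n^f = L_{K(n)}^f$ has as finite acyclics precisely the type $\geq n+1$ spectra).

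For the containment $\mathcal{T}_\beta \subseteq \{\mathrm{type} \geq m\}$: by Lemma~\ref{lem: acyclics gen by}, and since $\S_{(p)}$ generates $\Sp_{(p)}$ under colimits, the $L_\beta$-acyclics are generated under colimits by the single object $\cof\overline\beta$, while by Lemma~\ref{lem: cof Jbeta correct type} we have $K(j)_*\cof\overline\beta = 0$ for every $j < m$. Each $K(j)_*$ is a homology theory, so $\{X : K(j)_* X = 0\}$ is a localising subcategory of $\Sp_{(p)}$; it contains $\cof\overline\beta$, hence it contains every $L_\beta$-acyclic object. Thus a compact $L_\beta$-acyclic spectrum $Z$ has $K(j)_* Z = 0$ for all $j < m$, i.e.\ $Z$ has type $\geq m$. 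For the reverse containment, $\cof\beta$ is itself a compact $L_\beta$-acyclic spectrum (it is the cofibre of the local equivalence $\beta$, as in the proof of Lemma~\ref{lem: acyclic example}) and has type $m$ by hypothesis; hence the thick subcategory $\mathcal{T}_\beta$ contains a type $m$ spectrum, so by the thick subcategory theorem it contains every $p$-local finite spectrum of type $\geq m$. Therefore $\mathcal{T}_\beta = \{\mathrm{type} \geq m\}$, and $L_\beta$ and $L_{m-1}^f$ are finite localisations with the same compact acyclics, so $L_\beta \simeq L_{m-1}^f$, the finitisation of $E(m-1)$-localisation. The case $m = 0$ is covered by the same argument: then $\cof\beta$ has type $0$, so $\mathcal{T}_\beta$ contains $\S_{(p)}$ and hence is all of the $p$-local finite spectra, forcing $L_\beta = L_{-1}^f = 0$.

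For $m = \infty$ I would argue directly. A $p$-local finite spectrum all of whose Morava $K$-theories vanish is zero (by the thick subcategory theorem), so $\cof\beta \simeq 0$ and $\beta$ is an equivalence. An equivalence $\one \to A$ is in particular idempotent (Definition~\ref{def: idempotent}), so $A$ is an idempotent algebra and Corollary~\ref{cor: Jalpha eq Jalphabar} gives $J_\beta \simeq A \simeq \S_{(p)}$; hence $L_\beta = \id$.

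The crux is the containment $\mathcal{T}_\beta \subseteq \{\mathrm{type} \geq m\}$, and within it the point that passing to the localising subcategory generated by $\cof\overline\beta$ cannot create objects with nonzero $K(j)$-homology for $j < m$. This rests on $K(j)_*$ commuting with colimits (so that its acyclics form a localising subcategory) together with the computation of Lemma~\ref{lem: cof Jbeta correct type} that $\cof\overline\beta$ is $K(j)$-acyclic exactly in the low degrees $j < m$; the remaining steps are routine invocations of the thick subcategory theorem and the already established properties of $J_\beta$.
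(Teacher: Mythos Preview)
Your proof is correct and, for the main case $0 < m < \infty$ and for $m = \infty$, follows essentially the same route as the paper: identify $L_\beta$ as finite via Lemma~\ref{lem: compactly central implies finite}, bound the compact acyclics from above using the $K(j)$-acyclicity of $\cof\overline\beta$ (Lemma~\ref{lem: cof Jbeta correct type}), and from below using $\cof\beta$ together with the thick subcategory theorem. The only visible difference is that where you invoke the localising-subcategory property of $K(j)$-acyclics, the paper phrases the same step via the tensor-ideal form of Lemma~\ref{lem: acyclics gen by} (every acyclic is $X \otimes \cof\overline\beta$).

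The one place your argument genuinely diverges is $m = 0$: you absorb it into the general thick-subcategory argument, whereas the paper treats it separately by observing that $K(n)_*\beta = 0$ for all $n$ (Lemma~\ref{lem: central map 0 or iso on Kn}), invoking nilpotence detection to conclude $\beta$ is smash-nilpotent, and then noting that inverting a nilpotent map forces $L_\beta \S_{(p)} = 0$. Your approach is more uniform and avoids the direct appeal to the nilpotence theorem here; the paper's approach is more elementary in that it does not need the thick subcategory theorem for this case and gives a pleasant ring-theoretic picture (inverting a nilpotent kills the ring). Both are perfectly valid.
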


\begin{proof}
    If $\cof \beta$ has type $\infty$ then $\cof \beta = 0$, so $\beta$ is an isomorphism, and $A \simeq \S_{(p)}$. An automorphism of $\S_{(p)}$ is idempotent, so the corresponding smashing localisation is given by tensoring with $\S_{(p)}$ along this automorphism and is therefore the identity, with every object already local. 
    
    If $\cof \beta$ has type 0 then by Lemma \ref{lem: central map 0 or iso on Kn} we know $K(n)_* \beta = 0$ for every $n$. The collection $\{K(n)\}_{n=0}^\infty$ jointly detects nilpotence by \cite{NilpotenceII} Theorem 3, so $\beta$ is nilpotent. But $\beta$ is a local equivalence for the smashing localisation $L_\beta$ by construction, so any power of $\beta$ is also a local equivalence, and hence the map $\beta^{\otimes N} = 0: \S_{(p)} \to A^{\otimes N}$ is a local equivalence with $N \gg 0$. Then $L_\beta \S_{(p)} = 0$ and so $L_\beta = 0$, because it localises the unit to zero. This is analogous to the classical fact that a localisation of a discrete ring yields the zero ring if and only if at least one nilpotent element is inverted. 

    We know from the thick subcategory theorem that $L_\beta = L_n^f$ for some $n$ or $L_\beta = 0$, and Lemma \ref{lem: acyclic example} tells us $n \leq m-1$ when $\cof \beta$ has type $m > 0$. Alternatively it follows from Lemma \ref{lem: cof Jbeta correct type} that $n \leq m-1$, since $\cof \overline{\beta}$ is acyclic of type $m$. But in fact, since $J_\beta$ is the idempotent algebra giving the localisation $L_\beta$, knowing $\cof(\overline{\beta}: \S_{(p)} \to J_\beta)$ is type $m$ completely determines that $n = m-1$. 
    
    Since the localisation $L_\beta$ is compactly central, it is both finite -- by Lemma \ref{lem: compactly central implies finite} -- and smashing. In $\Sp_{(p)}$, this means any acyclic is generated under colimits by the single acyclic object $\cof(\overline{\beta}: \S_{(p)} \to J_\beta)$, see Lemma \ref{lem: acyclics gen by}. Indeed, every acyclic has the form $X \otimes \cof \overline{\beta}$ for some object $X$. But $\cof \overline{\beta}$ has type $m$ so $X \otimes \cof \overline{\beta}$ remains type $\geq m$ (in the sense that $K(n)_* (X \otimes \cof \overline{\beta})$ vanishes for all $0 \leq n < m$), and thus all acyclics are of type $\geq m$. This is just the fact that type $\geq m$ ($p$-local) spectra from a tensor ideal. Hence the minimal type of any finite acyclic is $m$, witnessed by $\cof \beta$ as per Lemma \ref{lem: acyclic example}, and the collection of finite acyclics is precisely the type $\geq m$ finite spectra. Then $L_\beta = L_{m-1}^f$.
\end{proof}

We have seen so far that any compact central map must be algebraically central. We have also seen how to compute the corresponding smashing localisation. It depends only on the type of the finite spectrum that is the cofibre of the central map we started with.

\subsection{Central maps from algebraically central maps}\label{subsec: alg central to central}

In this Subsection, we approach the problem of classifying compactly central localisations from the other direction. We would like to construct a reliable supply of compact central maps. We first consider the following question: how far is an algebraically central map from being central? We prove in Theorem \ref{thm: alpha is central} that a sufficiently large tensor power of an algebraically central map must be central. This is a partial converse to Lemma \ref{lem: central map 0 or iso on Kn} and allows us to more easily produce central maps. In Theorem \ref{thm: main result lnf compactly central} we exploit this to show that $L_n^f$ is compactly central, and we then obtain our first classification result, Theorem \ref{thm: main result lnf compactly central}, that all finite localisations of $p$-local spectra are actually compactly central. We again work entirely in $\Sp_{(p)}$.

\begin{notation}
    In this Subsection, let $\alpha: \S_{(p)} \to A$ denote a (compact) algebraically central map and $\alpha_m$ denote a type $m$ algebraically central map. Define $x = \id_A \otimes \alpha: A \to A \otimes A$ and $y = \alpha \otimes \id_A$. The notation $x_m, y_m$ may also be used when we need to specify the type of the algebraically central map used to construct $x$ and $y$.
\end{notation}

\begin{lem}\label{lem: x y isos Morava K}
    $K(n)_* x = 0 = K(n)_*y$ for $n \geq m$, and $K(n)_* x \simeq K(n)_*y$ are homotopic isomorphisms for $0 \leq n < m$.
\end{lem}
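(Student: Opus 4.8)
The plan is to pass to Künneth. Since $\pi_*K(n)$ is a graded field, the Künneth theorem gives $K(n)_*(A \otimes A) \cong K(n)_*A \otimes_{\pi_*K(n)} K(n)_*A$, and under this identification (together with the canonical equivalences $A \otimes \S_{(p)} \simeq A \simeq \S_{(p)} \otimes A$) the maps $K(n)_*x$ and $K(n)_*y$ become $\id_V \otimes \iota$ and $\iota \otimes \id_V$ respectively, where $V := K(n)_*A$ and $\iota := K(n)_*\alpha : \pi_*K(n) = K(n)_*\S_{(p)} \to V$ is the unit map. Writing $\bar\alpha := \iota(1) \in V$, both maps then send a class $v \in V$ to $v \otimes \bar\alpha$, resp.\ $\bar\alpha \otimes v$, in $V \otimes_{\pi_*K(n)} V$. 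So the lemma reduces to elementary linear algebra over $\pi_*K(n)$, using the defining property of an algebraically central map of type $m$.

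For $n \geq m$ algebraic centrality gives $\iota = 0$, so $\bar\alpha = 0$ and hence $K(n)_*x$ and $K(n)_*y$ are both the zero map. For $0 \leq n < m$ algebraic centrality gives that $\iota$ is an isomorphism, so $V$ is a free graded $\pi_*K(n)$-module with basis $\bar\alpha$; tensoring an isomorphism with the identity produces an isomorphism, so $K(n)_*x$ and $K(n)_*y$ are both isomorphisms. To see they coincide, write $v = r\bar\alpha$ with $r \in \pi_*K(n)$; the balancing relation in $V \otimes_{\pi_*K(n)} V$ then gives $v \otimes \bar\alpha = (r\bar\alpha) \otimes \bar\alpha = \bar\alpha \otimes (r\bar\alpha) = \bar\alpha \otimes v$. (No signs appear, as $\pi_*K(n)$ is concentrated in even degrees and so honestly commutative.) If one wants a homotopy rather than an equality of maps on homotopy groups, it follows for free: $K(n)$ is a field spectrum, so a map of $K(n)$-modules such as $K(n) \otimes x$ is detected up to homotopy by its effect on $\pi_*$.

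I expect no serious obstacle here; the statement is essentially bookkeeping. The one place to be careful is the Künneth identification --- verifying that $K(n)_*$ sends $\id_A \otimes \alpha$ and $\alpha \otimes \id_A$ into the correct tensor slots --- together with the observation that, over the commutative ring $\pi_*K(n)$, the relation $v \otimes \bar\alpha = \bar\alpha \otimes v$ holds precisely because $\bar\alpha$ generates $V$.
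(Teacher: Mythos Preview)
Your argument is correct. The $n \geq m$ case is handled identically to the paper. For $0 \leq n < m$, the paper takes a slightly more diagrammatic route: rather than invoking K\"unneth and computing in coordinates, it observes that the square
\[
\begin{tikzcd}[column sep={3em}, row sep={3em}]
\S_{(p)} \arrow[r] \arrow[d, "\alpha"'] & \S_{(p)} \otimes \S_{(p)} \arrow[d, "\alpha \otimes \alpha"] \\
A \arrow[r, "x"'] & A \otimes A
\end{tikzcd}
\]
commutes, and after applying $K(n)_*$ the top, left, and right arrows are isomorphisms, forcing $K(n)_* x$ to be one as well; the same square with $y$ in place of $x$ has the same three surrounding isomorphisms (because the left and right unit maps $\S_{(p)} \to \S_{(p)} \otimes \S_{(p)}$ are homotopic), so $K(n)_* x = K(n)_* y$. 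Your explicit K\"unneth computation and the paper's diagram chase are really two presentations of the same idea---both ultimately rest on the fact that when $K(n)_*\alpha$ is an isomorphism, $K(n)_*A$ is one-dimensional, so there is only one place for everything to go. The paper's version avoids naming a basis; yours makes the equality $v \otimes \bar\alpha = \bar\alpha \otimes v$ completely transparent.
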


\begin{proof}
    For $n \geq m$, we know $K(n)_*\alpha_m = 0$ so $K(n)_*(\id_A \otimes \alpha) = 0 = K(n)_*(\alpha \otimes \id_A)$ also. Thus $K(n)_*x = 0 = K(n)_*y$ for $n \geq m$. For $n < m$, consider the commuting square
    \[\begin{tikzcd}[column sep={2em}, row sep={2em}]
        \S_{(p)} \arrow[r] \arrow[d, "\alpha"']         & \S_{(p)} \otimes \S_{(p)} \arrow[d, "\alpha \otimes \alpha"] \\
        A \arrow[r, "x"'] & A \otimes A                                     
    \end{tikzcd}\]
    where the top map is the unit from the $\E_\infty$ ring structure on $\S_{(p)}$ and is an isomorphism (note $\S_{(p)}$ remains $\E_\infty$ since $p$-localisation is smashing). Applying $K(n)_*$ everywhere, all of the maps except possibly $x$ are isomorphisms, and thus $x$ must be too. The argument is identical for $x$ replaced by $y$. In fact, this shows that $K(n)_*x \simeq K(n)_*y$ for $n < m$  since they fit into the same commuting square of isomorphisms. We are using the fact that $\S_{(p)}$ is an idempotent algebra, so that the left and right units $\S_{(p)} \to \S_{(p)} \otimes \S_{(p)}$ are homotopic (these maps appear as the top horizontal maps in the commutative squares for $x$ and $y$, so we need them to agree).
\end{proof}

\begin{cor}\label{cor: properties of epsilon}
    Let $\varepsilon = x - y$. Then $\e$ is nilpotent and has finite order, that is $\e^{\otimes k} = 0$ for $k \gg 0$ and $p^j\e = 0$ for $j \gg 0$.
\end{cor}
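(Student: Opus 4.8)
The plan is to derive both statements from Lemma \ref{lem: x y isos Morava K} together with the nilpotence theorem and a short rationalisation argument; this is really a bookkeeping corollary rather than a theorem requiring a new idea.

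First I would observe that $\varepsilon = x - y$ is an element of the abelian group $[A, A \otimes A]$, where both $A$ and $A \otimes A$ are finite $p$-local spectra ($A$ is compact because $\S_{(p)}$ and $\cof \alpha$ are, as noted earlier). Applying $K(n)_*$ is additive, so $K(n)_* \varepsilon = K(n)_* x - K(n)_* y$, and by Lemma \ref{lem: x y isos Morava K} this difference is zero for every $0 \leq n \leq \infty$: for $n \geq m$ both summands vanish, and for $n < m$ the two summands are the \emph{same} isomorphism. Since $A$ and $A \otimes A$ are finite and $K(n)$ is a field, $K(n) \wedge A$ and $K(n) \wedge (A \otimes A)$ are free $K(n)$-modules, so the vanishing $K(n)_* \varepsilon = 0$ is equivalent to $K(n) \wedge \varepsilon$ being nullhomotopic. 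Then $\varepsilon$ is a map out of the finite spectrum $A$ which is null after smashing with $K(n)$ for all $0 \leq n \leq \infty$, so the nilpotence theorem (\cite{NilpotenceII}, Theorem 3, in its smash-nilpotence form) forces $\varepsilon^{\otimes k} \colon A^{\otimes k} \to A^{\otimes 2k}$ to be nullhomotopic for $k \gg 0$. This is the same argument used for $\beta$ in the proof of Theorem \ref{thm: computing finite smashing loc from central map}, now applied to $\varepsilon$.

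For finite order I would use the $n = 0$ case: $H\Q \wedge \varepsilon \simeq 0$, so $\varepsilon$ is rationally trivial. Because $A$ is finite, $[A, A \otimes A] \otimes \Q \cong [A_\Q, (A \otimes A)_\Q]$ and the latter is computed in (graded) $\Q$-vector spaces, where a map that vanishes on homology is zero; hence $\varepsilon$ maps to $0$ in $[A, A \otimes A] \otimes \Q$, i.e. $\varepsilon$ is a torsion element of the $\Z_{(p)}$-module $[A, A \otimes A]$. Since every prime different from $p$ is invertible in $\Z_{(p)}$, a torsion element is annihilated by a power of $p$, so $p^j \varepsilon = 0$ for $j \gg 0$. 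The only steps that need any care are the translation between ``$K(n)_* \varepsilon = 0$'' and ``$K(n) \wedge \varepsilon \simeq 0$'' (valid only because $A$ and $A \otimes A$ are finite) and invoking the smash-product form of the nilpotence theorem for a map out of a finite spectrum detected by all the $K(n)$, including $n = 0$ and $n = \infty$; otherwise the proof is routine.
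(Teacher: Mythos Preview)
Your proof is correct and follows essentially the same approach as the paper: deduce $K(n)_*\varepsilon = 0$ for all $n$ from Lemma~\ref{lem: x y isos Morava K}, invoke the Hopkins--Smith nilpotence theorem for smash nilpotence, and use the $n=0$ case together with finiteness of $A$ to conclude $\varepsilon$ is torsion and hence $p$-power torsion in $\Sp_{(p)}$. Your extra care in translating between $K(n)_*\varepsilon = 0$ and $K(n)\wedge\varepsilon \simeq 0$ is not strictly needed (the nilpotence theorem as stated in \cite{NilpotenceII} is already phrased in terms of $K(n)_*$), but it does no harm.
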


\begin{proof}
    An immediate consequence of Lemma \ref{lem: x y isos Morava K} is that $K(n)_*\e = 0$ for all $n \geq 0$. The property $K(0)_*\e = 0$ means $\e$ is zero after rationalisation, and since it is a map of finite spectra it follows that $\e$ is torsion. Explicitly, because rationalisation is computed as a colimit it commutes with mapping spaces between finite spectra, so we have 
    \[L_\Q \Map(X, Y) \simeq \Map(X, L_\Q Y) \simeq \Map(L_\Q X, L_\Q Y)\]
    for finite spectra $X$ and $Y$. Then $L_\Q \e = 0$ so it is zero on the RHS so it is zero on the LHS so $\e$ is torsion. We are working $p$-locally, so being torsion means $p^j \e = 0$ for $j \gg 0$. Nilpotence of $\e$ follows immediately from the fact that the spectra $\{K(n)\}_{0 \leq n \leq \infty}$ collectively detect nilpotence, see Theorem 3 of \cite{NilpotenceII}. Note that the notion of nilpotence we mean is what is there called \emph{smash nilpotence}. 
\end{proof}

\begin{prop}\label{prop: almost power of alpha is central}
    For $x = \id_A \otimes \alpha$ and $y = \alpha \otimes \id_A$ as previously, we have $x^{\otimes p^N} \simeq y^{\otimes p^N}$ for $N \gg 0$.
\end{prop}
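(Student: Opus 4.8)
The plan is to treat $x^{\otimes p^N}$ by a binomial expansion in the morphism group $[A^{\otimes p^N},(A\otimes A)^{\otimes p^N}]$, which is a finite abelian $p$-group since $A$ is a $p$-local finite spectrum. Because the smash product is biadditive on morphisms in the stable homotopy category, writing $x=y+\varepsilon$ with $\varepsilon=x-y$ gives
\[
x^{\otimes p^N}\;\simeq\;\sum_{S\subseteq\{1,\dots,p^N\}}T_S,
\]
where $T_S\colon A^{\otimes p^N}\to(A\otimes A)^{\otimes p^N}$ carries the factor $\varepsilon$ in the tensor slots indexed by $S$ and the factor $y$ in the remaining slots. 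By Corollary \ref{cor: properties of epsilon} we may fix $k_0$ and $j_0$ with $\varepsilon^{\otimes k_0}=0$ and $p^{j_0}\varepsilon=0$. Conjugating by the permutation automorphisms of source and target that gather the $\varepsilon$-slots to the front shows $T_S\simeq\tau\circ\bigl(\varepsilon^{\otimes|S|}\otimes y^{\otimes(p^N-|S|)}\bigr)\circ\sigma$ for suitable permutations $\sigma,\tau$; hence $T_S\simeq 0$ whenever $|S|\ge k_0$, since $\varepsilon^{\otimes|S|}$ is then a tensor multiple of $\varepsilon^{\otimes k_0}=0$. Consequently
\[
x^{\otimes p^N}-y^{\otimes p^N}\;\simeq\;\sum_{i=1}^{k_0-1}\Sigma_i,\qquad \Sigma_i:=\sum_{|S|=i}T_S,
\]
and every summand $T_S$ with $|S|=i\ge 1$ contains a tensor factor $\varepsilon$, so is annihilated by $p^{j_0}$; in particular each $\Sigma_i$ is $p^{j_0}$-torsion.

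The crux is then to show that, with $1\le i\le k_0-1$ fixed and $N$ large, $\Sigma_i$ is divisible by $p^{\,v_p\binom{p^N}{i}}$ in the target group. Granting this, and recalling the standard computation $v_p\binom{p^N}{i}=N-v_p(i)\ge N-\lfloor\log_p(k_0-1)\rfloor$, which exceeds $j_0$ once $N\gg 0$: writing $\Sigma_i=p^{\,v_p\binom{p^N}{i}}\cdot\zeta$ and using that $\Sigma_i$ (being built from $\varepsilon$) is $p^{j_0}$-torsion forces $\Sigma_i=0$. Hence $x^{\otimes p^N}\simeq y^{\otimes p^N}$ for all $N$ at least some bound depending only on $k_0$ and $j_0$. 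To obtain the divisibility of $\Sigma_i$ I would partition the $i$-element subsets $S$ into orbits for the cyclic rotation group $C_{p^N}$ acting on $\{1,\dots,p^N\}$, observing that $T_S$ and $T_{S'}$ in a common orbit are conjugate via the corresponding rotation automorphism on source and target, and that the orbit sizes — and the way the orbit sums aggregate — are exactly what is controlled by the arithmetic of $p$-adic valuations of binomial coefficients developed in Section \ref{subsec: combinatorial}.

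The step I expect to be the main obstacle is precisely this last divisibility. The naive identity $\Sigma_i=\binom{p^N}{i}\cdot T_{S_0}$ is false: the $\binom{p^N}{i}$ maps $T_S$ are only related to one another by symmetry isomorphisms of $A^{\otimes p^N}$ and $(A\otimes A)^{\otimes p^N}$, which are genuinely nontrivial self-equivalences, so one cannot extract the factor $p^{v_p\binom{p^N}{i}}$ from a single term. The divisibility must instead be read off from the orbit structure of the rotation action on subsets, which is why a dedicated combinatorial input on valuations of binomial coefficients is needed rather than a one-line appeal to $v_p\binom{p^N}{i}\to\infty$.
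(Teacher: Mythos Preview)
Your approach is essentially the paper's --- expand $(y+\varepsilon)^{\otimes p^N}$ and kill the cross terms using nilpotence of $\varepsilon$ together with $p$-divisibility --- but you are being more careful about a point the paper glosses over. The paper invokes Proposition~\ref{prop: nilp diff} as ``a straightforward commutative algebra result'' and uses the ordinary binomial theorem; you correctly observe that the graded ring $\bigoplus_n[A^{\otimes n},(A\otimes A)^{\otimes n}]$ under $\otimes$ is not commutative, so the terms $T_S$ with $|S|=i$ cannot be collected into $\binom{p^N}{i}\,\varepsilon^{\otimes i}\otimes y^{\otimes(p^N-i)}$. This is a genuine issue: in an arbitrary associative ring the conclusion of Proposition~\ref{prop: nilp diff} can fail (in $M_2(\F_p)$ take $y=\left(\begin{smallmatrix}1&0\\0&0\end{smallmatrix}\right)$ and $\varepsilon=\left(\begin{smallmatrix}0&1\\0&0\end{smallmatrix}\right)$; then $\varepsilon^2=0=p\varepsilon$ but $x^n-y^n=\varepsilon$ for all $n\ge 1$), so some use of the symmetric monoidal structure beyond associativity really is required.

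That said, your proposed fix has two gaps. First, the implication ``$\Sigma_i=p^{\nu}\zeta$ and $\Sigma_i$ is $p^{j_0}$-torsion with $\nu\ge j_0$, hence $\Sigma_i=0$'' is false as stated --- witness $p\in\Z/p^2$. What the commutative argument actually uses is that the \emph{quotient} $\zeta$ is $p^{j_0}$-torsion (being a $p$-unit times $\varepsilon^i y^{p^N-i}$); your orbit construction would have to produce a $\zeta$ that still visibly carries an $\varepsilon$-factor. Second, and more fundamentally, the $C_{p^N}$-orbit argument does not deliver the divisibility you want: an orbit sum $\sum_{\rho}\rho\cdot T_{S_0}$ is a sum of $p^{N-d}$ conjugate but \emph{distinct} maps, and having $p^{N-d}$ summands does not make their sum $p^{N-d}$-divisible in the target abelian group. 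Section~\ref{subsec: combinatorial} controls orbit \emph{sizes}, not divisibility of orbit \emph{sums}; pushing the argument through requires a genuinely different mechanism for extracting the factor of $p$.
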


\begin{proof}
    Combine Corollary \ref{cor: properties of epsilon} with a straightforward commutative algebra result, proved as Proposition \ref{prop: nilp diff}.
\end{proof}

\begin{thm}\label{thm: alpha is central}
    If $\alpha$ is algebraically central then $\alpha^{\otimes p^N} \otimes \id \simeq \id \otimes \alpha^{\otimes p^N}$ for $N \gg 0$.
\end{thm}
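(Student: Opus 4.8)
The plan is to deduce this immediately from Proposition \ref{prop: almost power of alpha is central} by reorganising tensor factors. Recall $x = \id_A \otimes \alpha$ and $y = \alpha \otimes \id_A$, both maps $A \to A \otimes A$, and that Proposition \ref{prop: almost power of alpha is central} supplies, for $N \gg 0$, a homotopy
\[x^{\otimes p^N} \simeq y^{\otimes p^N} : A^{\otimes p^N} \to (A \otimes A)^{\otimes p^N}.\]
All that remains is to identify these two tensor powers with the two maps appearing in the statement.

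For any objects $B, C \in \CC$ and any $k \geq 1$ the symmetric monoidal structure on $\CC$ provides a canonical ``shuffle'' equivalence $\tau_k : (B \otimes C)^{\otimes k} \xrightarrow{\simeq} B^{\otimes k} \otimes C^{\otimes k}$, natural in $B$ and $C$. I would take $B = C = A$ and $k = p^N$. Writing $x = \id_A \otimes \alpha$ as the composite $A \simeq A \otimes \S_{(p)} \xrightarrow{\id_A \otimes \alpha} A \otimes A$, then passing to $p^N$-fold tensor powers and using naturality of $\tau$ together with the canonical identification $\S_{(p)}^{\otimes p^N} \simeq \S_{(p)}$ and the unitors, one gets
\[\tau_{p^N} \circ x^{\otimes p^N} \simeq \id_{A^{\otimes p^N}} \otimes \alpha^{\otimes p^N},\]
where $\alpha^{\otimes p^N} : \S_{(p)} \to A^{\otimes p^N}$. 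The analogous computation starting from $y = \alpha \otimes \id_A$ gives
\[\tau_{p^N} \circ y^{\otimes p^N} \simeq \alpha^{\otimes p^N} \otimes \id_{A^{\otimes p^N}}.\]
Since $x^{\otimes p^N}$ and $y^{\otimes p^N}$ have the same target $(A \otimes A)^{\otimes p^N}$, we may postcompose the homotopy of Proposition \ref{prop: almost power of alpha is central} with the single equivalence $\tau_{p^N}$, obtaining
\[\id_{A^{\otimes p^N}} \otimes \alpha^{\otimes p^N} \simeq \alpha^{\otimes p^N} \otimes \id_{A^{\otimes p^N}},\]
which is exactly the assertion that $\alpha^{\otimes p^N}$ is central (we are implicitly using $\S_{(p)}^{\otimes p^N} \simeq \S_{(p)}$ to regard $\alpha^{\otimes p^N}$ as a map out of the unit).

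The only real work is the bookkeeping in the two shuffle identifications, and the observation that the \emph{same} equivalence $\tau_{p^N}$ serves for both $x^{\otimes p^N}$ and $y^{\otimes p^N}$, so that postcomposing the existing homotopy is legitimate. Beyond that there is no content at this stage: all the substance sits in Proposition \ref{prop: almost power of alpha is central}, which in turn rests on Corollary \ref{cor: properties of epsilon} (nilpotence and finite order of $\varepsilon = x - y$, via the Morava $K$-theory nilpotence theorem) and the commutative-algebra input Proposition \ref{prop: nilp diff}.
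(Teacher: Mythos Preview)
Your proposal is correct and follows essentially the same route as the paper: both start from Proposition~\ref{prop: almost power of alpha is central} and then postcompose the homotopy $x^{\otimes p^N} \simeq y^{\otimes p^N}$ with a single permutation of the $2p^N$ tensor factors in the target. The paper writes this permutation explicitly as an element $\sigma \in \Sigma_{2p^N}$ and calls the induced equivalence $F_\sigma$, whereas you package it as the canonical shuffle $\tau_{p^N}:(A\otimes A)^{\otimes p^N}\xrightarrow{\simeq} A^{\otimes p^N}\otimes A^{\otimes p^N}$ and invoke its naturality; these are the same equivalence, and your observation that the \emph{same} $\tau_{p^N}$ is applied to both sides is exactly the point.
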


\begin{proof}
    Proposition \ref{prop: almost power of alpha is central} tells us that $x^{\otimes p^N} \simeq y^{\otimes p^N}$ for large $N$. Consider the permutation $\sigma \in \Sigma_{2p^N}$ given by
    \[\begin{pmatrix}
        1 & \ldots & k & \ldots & p^N & p^N + 1 & \ldots & p^N + k & \ldots & 2p^N\\
        2 & \ldots & 2k & \ldots & 2p^N & 1 & \ldots & 2k -1 & \ldots & 2p^N -1
    \end{pmatrix}\]
    which collects all the even entries before all the odd entries. Let $F_\sigma$ be the monoidal endofunctor of $\Sp_{(p)}^{\otimes 2p^N}$ which permutes the factors via $\sigma$, and then
    \begin{gather*}
        F_\sigma(x^{p^N}) \simeq F_\sigma(y^{p^N})\\
        F_\sigma(x^{p^N}) = \alpha^{\otimes p^N} \otimes \id_A^{\otimes p^N} = \alpha^{\otimes p^N} \otimes \id_{A^{\otimes p^N}}\\
        F_\sigma(y^{p^N}) = \id_{A^{\otimes p^N}} \otimes \alpha^{\otimes p^N}.
    \end{gather*}
    Thus we have shown that the map $\alpha^{\otimes p^N}: \S_{(p)} \to A^{\otimes p^N}$ is central.
\end{proof}

\begin{cor}\label{cor: some central maps exist}
    For each $0 \leq m \leq \infty$, there is a compact central map $\beta_m: \S_{(p)} \to A$ with $\cof \beta_m$ of type $m$.
\end{cor}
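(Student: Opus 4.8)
The plan is to run the family of algebraically central maps produced in Section \ref{subsec: construction of algebraically central maps} through Theorem \ref{thm: alpha is central}, which upgrades algebraic centrality to genuine centrality at the cost of passing to a large $p$-power tensor power. So the first input I would invoke is, for each $0 \leq m \leq \infty$, a compact algebraically central map $\alpha_m\colon \S_{(p)} \to A$ of type $m$. For $0 < m < \infty$ this is precisely what Section \ref{subsec: construction of algebraically central maps} supplies; the two extreme cases can be written down directly, with $\alpha_\infty = \id_{\S_{(p)}}$ (algebraically central of type $\infty$, with zero cofibre, and already idempotent hence central) and $\alpha_0$ the unique map $\S_{(p)} \to 0$ (algebraically central of type $0$, with cofibre $\Sigma\S_{(p)}$, and central since both $\alpha_0 \otimes \id_0$ and $\id_0 \otimes \alpha_0$ are the unique map $0 \to 0$).

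Given such an $\alpha_m$, Theorem \ref{thm: alpha is central} provides an $N$ for which $\beta_m := \alpha_m^{\otimes p^N}\colon \S_{(p)} \to A^{\otimes p^N}$ is central. Two things then remain to check. First, $\beta_m$ is compact: writing it as an iterated composite of maps of the form $\id \otimes \alpha_m \otimes \id$, its cofibre is an iterated extension of finitely many copies of spectra of the form $A^{\otimes j} \otimes \cof\alpha_m$, each of which is finite (since $A$, being an extension of the finite spectra $\S_{(p)}$ and $\cof\alpha_m$, is itself finite). Second, $\cof\beta_m$ has type $m$: by the Künneth isomorphism for Morava $K$-theory, $K(n)_*\beta_m$ is identified with the $p^N$-fold tensor power of $K(n)_*\alpha_m$ over the graded field $K(n)_*$, which is an isomorphism for $n < m$ and zero for $n \geq m$, so $\beta_m$ is again algebraically central of type $m$; Lemma \ref{lem: type algebraically central} then gives that $\cof\beta_m$ is a type $m$ finite spectrum. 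This $\beta_m$ is the desired map.

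The substantive work has already been done elsewhere: Theorem \ref{thm: alpha is central}, which rests on the $p$-adic binomial-coefficient estimates of Section \ref{subsec: combinatorial}, together with the explicit construction of Section \ref{subsec: construction of algebraically central maps}. So there is no real obstacle in this corollary; the only point requiring a little care is the bookkeeping showing that tensoring preserves both compactness and the algebraic-centrality type, so that the pass-to-a-$p$-power step does not disturb the prescribed type $m$. The corollary is essentially the packaging of those two inputs into a usable supply of compact central maps.
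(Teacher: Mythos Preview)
Your proof is correct and follows essentially the same approach as the paper: handle the extremal cases $m=0$ and $m=\infty$ directly, and for $0 < m < \infty$ take an algebraically central map of type $m$ from Proposition \ref{prop: algebraically central existence}, apply Theorem \ref{thm: alpha is central} to obtain a central tensor power, and use K\"unneth to verify the cofibre retains type $m$. Your argument for compactness via iterated extensions is slightly more explicit than the paper's, but the overall structure is the same.
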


\begin{proof}
    The two extremal cases $\beta_0: \S_{(p)} \to 0$ and $\beta_\infty = \id_{\S_{(p)}}$ are evidently central. For each $0 < m < \infty$, Proposition \ref{prop: algebraically central existence} gives us an algebraically central map $\alpha_m$ of type $m$, and by Theorem \ref{thm: alpha is central} some large power $\alpha_m^{\otimes N}$ is central. A large tensor power of a finite spectrum remains a finite spectrum, so we need only check that $\alpha_m^{\otimes N}$ has cofibre of type $m$. It follows from the K\"unneth formula for $K(n)$-homology that $\cof(\alpha_m^{\otimes N})$ has the same type as $\cof \alpha_m$.
\end{proof}

\begin{thm}\label{thm: main result lnf compactly central}
    $L_n^f$-localisation is compactly central.
\end{thm}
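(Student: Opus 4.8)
The strategy is to assemble the results of this Section: the only remaining work is to choose a compact central map whose cofibre has the right type and quote Theorem~\ref{thm: computing finite smashing loc from central map}. Concretely, I would apply Corollary~\ref{cor: some central maps exist} with $m = n+1$ to produce a compact central map $\beta_{n+1} \colon \S_{(p)} \to A$ with $\cof \beta_{n+1}$ of type $n+1$. For $0 \leq n < \infty$ we have $0 < n+1 < \infty$, so Theorem~\ref{thm: computing finite smashing loc from central map} identifies the associated smashing localisation as $L_{\beta_{n+1}} \simeq L_{(n+1)-1}^f = L_n^f$. Since $\beta_{n+1}$ is a compact central map, Definition~\ref{def: compactly central} tells us that $L_n^f \simeq L_{\beta_{n+1}}$ is compactly central.

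The extremal values of $n$ are handled the same way. The identity map $\beta_\infty = \id_{\S_{(p)}}$ is compact and central with cofibre $0$, of type $\infty$, and Theorem~\ref{thm: computing finite smashing loc from central map} identifies $L_{\beta_\infty}$ with the identity localisation $L_\infty^f$; the map $\beta_0 \colon \S_{(p)} \to 0$ is compact and (vacuously) central with cofibre of type $0$, and $L_{\beta_0} = 0 = L_{-1}^f$. Hence $L_n^f$ is compactly central for every $-1 \leq n \leq \infty$, and since the thick subcategory theorem identifies every finite localisation of $\Sp_{(p)}$ with one of the $L_n^f$, every finite localisation of $\Sp_{(p)}$ is compactly central.

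At this stage there is no genuine obstacle left: all of the difficulty has been pushed into Corollary~\ref{cor: some central maps exist} (which depends on the hard passage from algebraically central to central maps, Theorem~\ref{thm: alpha is central}, and on the explicit construction of algebraically central maps, Proposition~\ref{prop: algebraically central existence}) and into Theorem~\ref{thm: computing finite smashing loc from central map} (which depends on the thick subcategory theorem together with Lemmas~\ref{lem: acyclic example} and~\ref{lem: cof Jbeta correct type}). The proof of the present theorem is pure bookkeeping: matching the type of the cofibre to the chromatic height.
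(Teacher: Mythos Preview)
Your proposal is correct and follows essentially the same approach as the paper: apply Corollary~\ref{cor: some central maps exist} to obtain a compact central map with cofibre of the desired type, then invoke Theorem~\ref{thm: computing finite smashing loc from central map} to identify the resulting localisation as $L_n^f$. The paper's proof is terser and defers the extremal cases and the ``all finite localisations are compactly central'' conclusion to the subsequent Theorem~\ref{thm: cf class in p-local spectra}, but your version simply folds those in.
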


\begin{proof}
    Corollary \ref{cor: some central maps exist} gives us a supply of compact central maps. Theorem \ref{thm: computing finite smashing loc from central map} computes that the smashing localisation corresponding to a compact central map with cofibre of type $m$ is $L_{m-1}^f$.
\end{proof}

In fact, we have proven the following stronger result.

\begin{thm}\label{thm: cf class in p-local spectra}
    A localisation of $\Sp_{(p)}$ is finite if and only if compactly central.
\end{thm}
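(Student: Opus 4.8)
The plan is to show both implications using results already established in the excerpt. The reverse implication ``compactly central $\implies$ finite'' is immediate: $\Sp_{(p)}$ is rigid (compactly generated, with dualisable objects precisely the compact objects, since $p$-localisation of $\Sp$ is smashing) and stable, so Lemma \ref{lem: compactly central implies finite} applies directly. The forward implication ``finite $\implies$ compactly central'' is the substantive direction, but almost all of the work has been done.

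First I would invoke the thick subcategory theorem (Hopkins--Smith, \cite{NilpotenceII}, Theorem 7), as recalled in Subsection \ref{subsec: props of central maps}: every finite localisation of $\Sp_{(p)}$ is one of the localisations $L_n^f$ for $-1 \leq n \leq \infty$, where $L_{-1}^f = 0$ is the zero localisation and $L_\infty^f = \id$. Indeed, a finite localisation is determined by its thick subcategory of finite acyclics, and the proper nontrivial thick subcategories of $p$-local finite spectra are exactly the type $\geq n$ subcategories for $0 \leq n < \infty$; these correspond to $L_{n-1}^f$, while the improper cases give the two trivial localisations. So it suffices to show each $L_n^f$ (for $-1 \leq n \leq \infty$) is compactly central. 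The two trivial localisations $L_{-1}^f = 0$ and $L_\infty^f = \id$ are handled by the extremal central maps $\beta_0 : \S_{(p)} \to 0$ and $\beta_\infty = \id_{\S_{(p)}}$, whose cofibres are $\S_{(p)}$ (type $0$) and $0$ (type $\infty$) respectively; by Theorem \ref{thm: computing finite smashing loc from central map} these produce the $0$ and identity localisations. For $0 \leq n < \infty$, Corollary \ref{cor: some central maps exist} produces a compact central map $\beta_{n+1} : \S_{(p)} \to A$ whose cofibre has type $n+1 \in (0, \infty)$, and then Theorem \ref{thm: computing finite smashing loc from central map} identifies the resulting compactly central localisation $L_{\beta_{n+1}}$ with $L_{(n+1)-1}^f = L_n^f$. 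Hence every $L_n^f$ is compactly central, completing the proof.

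The proof is therefore essentially an assembly of Lemma \ref{lem: compactly central implies finite}, the thick subcategory theorem, Corollary \ref{cor: some central maps exist}, and Theorem \ref{thm: computing finite smashing loc from central map}. The main obstacle is not in this proof at all but in the inputs it cites --- specifically Corollary \ref{cor: some central maps exist}, which rests on the nontrivial Theorem \ref{thm: alpha is central} (a large tensor power of an algebraically central map is central) together with the existence of algebraically central maps of every finite type (Proposition \ref{prop: algebraically central existence}). Given those, the present statement follows formally.

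\begin{proof}
    Since $p$-localisation of $\Sp$ is smashing, $\Sp_{(p)}$ is compactly generated, stable, and presentably symmetric monoidal, with dualisable objects precisely the compact objects; that is, $\Sp_{(p)}$ is rigid. By Lemma \ref{lem: compactly central implies finite}, every compactly central localisation of $\Sp_{(p)}$ is finite.

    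Conversely, let $L$ be a finite localisation of $\Sp_{(p)}$. Its finite acyclics form a thick subcategory of $p$-local finite spectra. By the thick subcategory theorem (\cite{NilpotenceII}, Theorem 7), this subcategory is either all of the finite $p$-local spectra, or zero, or the type $\geq n+1$ finite spectra for some $0 \leq n < \infty$. Since $L$ is finite its acyclics are generated under colimits by its finite acyclics, so $L$ is determined by this data: in the three cases we have $L \simeq 0$, $L \simeq \id$, and $L \simeq L_n^f$ respectively. It therefore suffices to exhibit each of these as compactly central.

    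By Corollary \ref{cor: some central maps exist}, for each $0 \leq m \leq \infty$ there is a compact central map $\beta_m : \S_{(p)} \to A$ with $\cof \beta_m$ of type $m$, giving a compactly central localisation $L_{\beta_m}$. By Theorem \ref{thm: computing finite smashing loc from central map}, $L_{\beta_0} \simeq 0$, $L_{\beta_\infty} \simeq \id$, and $L_{\beta_{n+1}} \simeq L_n^f$ for $0 \leq n < \infty$. Thus every finite localisation of $\Sp_{(p)}$ is compactly central, and the two notions coincide.
\end{proof}
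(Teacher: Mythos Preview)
Your proof is correct and follows essentially the same approach as the paper's: both directions invoke Lemma \ref{lem: compactly central implies finite} and the thick subcategory theorem, then exhibit each $L_n^f$ and the two trivial localisations as compactly central. The only cosmetic difference is that the paper packages the nontrivial cases by citing Theorem \ref{thm: main result lnf compactly central} (just proved), whereas you unpack that step and cite Corollary \ref{cor: some central maps exist} and Theorem \ref{thm: computing finite smashing loc from central map} directly.
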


\begin{proof}
    We already know from Lemma \ref{lem: compactly central implies finite} that all compactly central localisations are finite. For the converse, note that the thick subcategory theorem classifies all finite localisations on $p$-local spectra. These are the localisations $L_n^f$ for $n \geq 0$ together with the two trivial localisations (the localisation for which all objects are acyclic, and the one for which all objects are local). In Theorem \ref{thm: main result lnf compactly central} we saw that each $L_n^f$ is compactly central, and the compact central maps $\S_{(p)} \to 0$ and $\id: \S_{(p)} \to \S_{(p)}$ induce the two trivial localisations so these are also compactly central.
\end{proof}

At this point we can fulfil an earlier promise to give a classification of compact central maps.

\begin{thm}\label{thm: classification of compact central maps}
    Let $\alpha: \S_{(p)} \to A$ be a compact map in $p$-local spectra. Then $\alpha^{\otimes N}$ is central for $N \gg 0$ if and only if $\alpha$ is algebraically central.
\end{thm}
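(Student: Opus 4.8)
The plan is to prove the two implications separately; one direction is immediate from an earlier theorem and the other needs only the Künneth formula together with an elementary fact about tensor powers of linear maps over a graded field.

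For the direction ``algebraically central $\Rightarrow$ some tensor power is central'', there is nothing new to do: this is precisely Theorem~\ref{thm: alpha is central}, which, given that $\alpha$ is algebraically central, supplies an $N$ (indeed any sufficiently large power of $p$) with $\alpha^{\otimes N} \otimes \id \simeq \id \otimes \alpha^{\otimes N}$, i.e.\ with $\alpha^{\otimes N}$ central. If one wants $\alpha^{\otimes N}$ central for all multiples of a single such exponent, one then feeds that one power into Lemma~\ref{lem: power of central map}.

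For the converse, suppose $\alpha^{\otimes N}$ is central for some $N$. First observe that $\alpha^{\otimes N}$ is a compact map: since $\S_{(p)}$ is compact so is $A$, hence $A^{\otimes N}$ is dualisable and therefore compact, and so $\cof(\alpha^{\otimes N})$ is finite. Thus Lemma~\ref{lem: central map 0 or iso on Kn} applies to $\alpha^{\otimes N}$, yielding some $0 \le m \le \infty$ for which $K(n)_*(\alpha^{\otimes N})$ is an isomorphism when $n < m$ and zero when $n \ge m$. The step that does the work is transferring this to $\alpha$ itself. Under the Künneth isomorphism $K(n)_*(A^{\otimes N}) \cong (K(n)_*A)^{\otimes_{K(n)_*} N}$ --- which is natural and multiplicative, and is exactly the input already used in the proof of Corollary~\ref{cor: some central maps exist} --- the map $K(n)_*(\alpha^{\otimes N})$ is identified with the $N$-fold tensor power of $K(n)_*\alpha \colon K(n)_*\S_{(p)} \to K(n)_*A$, where $K(n)_*\S_{(p)}$ is one-dimensional over the graded field $K(n)_*$. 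For a map out of a one-dimensional graded vector space one has: the map is an isomorphism if and only if its $N$-fold tensor power is (since $\dim V^{\otimes N} = (\dim V)^N$), and the map is zero if and only if its $N$-fold tensor power is zero (since a tensor product of nonzero vectors over a field is nonzero). Applying this for each $n$ shows $K(n)_*\alpha$ is an isomorphism for $n < m$ and zero for $n \ge m$; as $\alpha$ is compact by hypothesis, this says exactly that $\alpha$ is algebraically central, of type $m$.

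The argument has no real obstacle: once Theorem~\ref{thm: alpha is central} and Lemma~\ref{lem: central map 0 or iso on Kn} are in hand, the only thing left to verify is the linear-algebra observation that, over a (graded) field, forming tensor powers of a linear map out of a one-dimensional space neither creates nor destroys the properties ``isomorphism'' and ``zero'', together with its compatibility with the $K(n)$-homology Künneth formula --- which is routine.
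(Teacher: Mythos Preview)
Your proof is correct and follows essentially the same route as the paper: Theorem~\ref{thm: alpha is central} for one direction, and for the other Lemma~\ref{lem: central map 0 or iso on Kn} applied to $\alpha^{\otimes N}$ together with the linear-algebra observation that a tensor power of a linear map out of a one-dimensional space is an isomorphism (resp.\ zero) iff the map itself is. You are a bit more explicit than the paper in spelling out the compactness of $\alpha^{\otimes N}$ and the role of the K\"unneth formula, but the argument is the same.
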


\begin{proof}
    Theorem \ref{thm: alpha is central} gives the reverse implication. For the forwards direction, if $\alpha^{\otimes N}$ is central for $N \gg 0$ then by Lemma \ref{lem: central map 0 or iso on Kn} $\alpha^{\otimes N}$ is algebraically central. Hence $\alpha$ itself must be algebraically central. This last claim reduces to the linear algebra fact that if some tensor power of a linear map is zero or an isomorphism, the linear map itself must be zero or an isomorphism respectively.
\end{proof}

\subsection{Compactly central localisations on the category of spectra}\label{subsec: passage to spectra}

Now that we understand compactly central localisations of $p$-local spectra, we turn our attention to $\Sp$ as a whole. We first need to relate finite localisations on $\Sp$ to those on $\Sp_{(p)}$. 

Any finite localisation $F$ of $\Sp$ is smashing. Since $p$-localisation is also smashing, we may compose the two to obtain a new smashing localisation $F \circ L_{(p)}$. By Lemma \ref{lem: combining smashing locs} we also get a smashing localisation of $p$-local spectra corresponding to $F$, computed by the same idempotent algebra as $F$, since $F$ necessarily maps $p$-local spectra to $p$-local spectra. Do this for each prime $p$, and we get a family of localisations $\{F_p\}_{p ~\text{prime}}$ which together determine $F$, where $F_p = F|_{\Sp_{(p)}}$. Note that each $F_p$ is a finite localisation, again by Lemma \ref{lem: combining smashing locs}.
\begin{lem}\label{lem: fin loc sp uniqueness}
    Let $F$ be a finite localisation of $\Sp$. Then $F$ can be recovered from its behaviour on $p$-local spectra. That is, the collection of finite localisations $\{F|_{\Sp_{(p)}}\}_{p~\text{prime}}$ uniquely determines $F$. If $F$ is nonzero then $F|_{\Sp_{(p)}}$ is nonzero for every $p$. 
\end{lem}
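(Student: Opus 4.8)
\noindent
The plan is to control $F$ entirely through its compact (i.e.\ finite) acyclic objects — legitimate precisely because $F$ is finite — and to feed this into both halves of the statement. First I would record how $F|_{\Sp_{(p)}}$ is presented: by Lemma \ref{lem: combining smashing locs} the composite smashing localisation $F \otimes L_{(p)}$ has idempotent algebra $F\S \otimes \S_{(p)} \simeq (F\S)_{(p)}$, and its restriction to $\Sp_{(p)}$ is exactly $F|_{\Sp_{(p)}}$ (a $p$-local $X$ is sent to $F\S \otimes \S_{(p)} \otimes X \simeq F X$). Hence $F|_{\Sp_{(p)}}$ is the (again finite, by Lemma \ref{lem: combining smashing locs}) smashing localisation of $\Sp_{(p)}$ obtained by tensoring with $(F\S)_{(p)}$, and a $p$-local spectrum is $F|_{\Sp_{(p)}}$-acyclic precisely when it is $(F\S)_{(p)}$-acyclic.

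For the recovery statement I would show that a finite spectrum $X$ is $F$-acyclic if and only if $X_{(p)}$ is $F|_{\Sp_{(p)}}$-acyclic for every prime $p$. Indeed $FX \simeq F\S \otimes X$; a spectrum vanishes if and only if all of its $p$-localisations vanish; and $p$-localisation is smashing and symmetric monoidal, so $FX = 0$ exactly when $(F\S)_{(p)} \otimes X_{(p)} \simeq (F\S \otimes X)_{(p)}$ vanishes for every $p$, which by the previous paragraph is exactly the condition that each $X_{(p)}$ be $F|_{\Sp_{(p)}}$-acyclic. Thus the thick subcategory of compact $F$-acyclics — hence, by finiteness, the whole class of $F$-acyclics, hence $F$ itself (a localisation being determined by its acyclics) — is recovered from the family $\{F|_{\Sp_{(p)}}\}_p$.

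For the non-vanishing statement: since $F \neq 0$ the sphere $\S$ is not $F$-acyclic, so the thick subcategory $\mathcal{T}$ of compact $F$-acyclics is a \emph{proper} thick subcategory of $\Sp^{\omega}$; as $F$ is smashing, $\mathcal{T}$ is moreover a thick $\otimes$-ideal. By the thick subcategory theorem a proper thick $\otimes$-ideal of $\Sp^{\omega}$ contains only torsion finite spectra (a non-torsion finite spectrum has type $0$ at every prime and so generates $\Sp^{\omega}$). Since $F$ is finite and $-\otimes H\Q$ preserves colimits, it follows that every $F$-acyclic spectrum is rationally acyclic. In particular, if $\S_{(p)}$ were $F$-acyclic then, the $F$-acyclics forming a tensor ideal, $H\Q \simeq \S_{(p)} \otimes H\Q$ would be $F$-acyclic, hence rationally acyclic — impossible since $H\Q \otimes H\Q \simeq H\Q \neq 0$. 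Therefore $\S_{(p)}$ is not $F$-acyclic, i.e.\ $(F\S)_{(p)} \neq 0$, i.e.\ $F|_{\Sp_{(p)}} \neq 0$, for every prime $p$.

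The main obstacle is the one non-formal input used above: that a non-torsion finite spectrum generates $\Sp^{\omega}$ as a thick subcategory (equivalently, that a proper thick $\otimes$-ideal of $\Sp^{\omega}$ is all torsion). This is the global form of the thick subcategory theorem; it is deduced from the $p$-local statement (\cite{NilpotenceII}, Theorem 7) together with an arithmetic-fracture argument, and can also be phrased as the Bousfield-lattice identity $\langle Z \rangle = \langle \S \rangle$ for $Z$ a non-torsion finite spectrum. Everything else is routine bookkeeping with smashing localisations and the exactness and monoidality of $p$-localisation.
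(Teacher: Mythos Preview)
Your proof is correct and takes a genuinely different route from the paper's.

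For the recovery statement, the paper reconstructs the idempotent algebra $F\S$ from its $p$-localisations via an arithmetic fracture square, the key observation being that the gluing map $H\Q \to L_\Q \prod_p (F\S)_{(p)}$ is forced because $H\Q$ is idempotent. You instead recover the compact $F$-acyclics directly, using that $F\S \otimes X$ vanishes iff all of its $p$-localisations do; this is more elementary and makes the role of finiteness completely transparent.

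For the non-vanishing statement, the paper stays $p$-local: it shows $(F\S)_\Q = 0 \Rightarrow F_q = 0$ for every $q$ (since $L_\Q L_n^f \neq 0$ for $n \geq 0$, by the $p$-local thick subcategory theorem), hence $F\S = 0$; and it notes $F_p = 0 \Rightarrow (F\S)_\Q = 0$ because $H\Q$ is already $p$-local. Your argument instead invokes the global statement that a rationally nontrivial finite spectrum generates $\Sp^\omega$ as a thick $\otimes$-ideal (equivalently, has Bousfield class $\langle \S\rangle$). You are right to flag this as the one non-formal input: it does follow from Hopkins--Smith, but it packages the $p$-local theorem together with class-invariance and the passage from Bousfield classes to thick $\otimes$-ideals of finite spectra, so it is a somewhat heavier citation than the paper's purely $p$-local appeal. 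In exchange your deduction from it is very clean.
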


One might expect the rationalisation of $F$ to appear in the statement of the Lemma alongside the $p$-localisations. But the rationalisation of the idempotent algebra $F\S$ is an idempotent algebra over $H\Q$, and the only (nonzero) such is $H\Q$ itself. So we always have $L_\Q \circ F \simeq L_\Q$ unless $F = 0$.

\begin{proof}
    Suppose first that $L_\Q \circ F = 0$. Then 
    \[0 = L_\Q F\S = L_\Q L_{(p)} F\S = L_{\Q} F_p \S_{(p)}\] so $F_p$ is a finite localisation of $p$-local spectra whose rationalisation is $0$. The thick subcategory theorem then tells us $F_p = 0$, because $L_\Q L_n^f \simeq L_0^f \neq 0$ for every $n \geq 0$. Thus we have shown that if the rationalisation of $F$ is zero, then $F$ is zero. Similarly, if $F_p = 0$ for any individual prime $p$ then $L_\Q F\S = L_{\Q} F_p \S_{(p)} = 0$ so the rationalisation of $F$ is zero and hence $F$ is zero. 

    Assume now that $F \neq 0$. As a consequence of the above, we know $L_\Q F\S \neq 0$. Since $F\S$ is an idempotent algebra, $L_\Q F\S = H\Q \otimes F\S$ is a nonzero idempotent algebra over $H\Q$ and the only such is $H\Q$ itself, so in fact $L_\Q F\S \simeq H\Q$. We also know that $F_p \neq 0$ for every prime $p$.

    Each $F_p$ determines $F_p \S_{(p)} = L_{(p)} F\S$, and $F$ is determined by $F\S$ because it is smashing. Thus the result amounts to reconstructing $F\S$ from its $p$-localisations. This can be done using the arithmetic fracture square for $F\S$. In fact we will use a slightly modified version of the usual arithmetic fracture square, featuring $p$-localisations instead of $p$-completions. We have the homotopy pullback diagram
    \[\begin{tikzcd}[column sep={8em,between origins}, row sep={6em,between origins}]
        F\S \arrow[r] \arrow[d]       & H\Q \arrow[d]             \\
        \prod_p L_{(p)} F\S \arrow[r] & L_\Q \prod_p L_{(p)} F\S.
    \end{tikzcd}\]
    To recover $F\S$ as the pullback, we need two maps. The map 
    \[\prod_p L_{(p)}F\S \to L_\Q \prod_p L_{(p)}F\S\]
    is simply rationalisation, which is determined given each $F_p$. We claim the map
    \[\phi: H\Q \to L_\Q \prod_p L_{(p)} F\S\]
    is also uniquely determined. Since $F\S$ is an idempotent algebra, the entire fracture square refines to a diagram of $\mathbb{E}_\infty$ algebras. Then the map $\phi$ is a ring map from $H\Q$ to some other $\mathbb{E}_\infty$ algebra, but there is a contractible space of such maps because $H\Q$ is idempotent, see Lemma \ref{lem: En refinement}.
\end{proof}

\begin{remark}\label{rmk: nonzero rational acyclic}
    Any finite localisation $F$ of spectra which has a nonzero rational acyclic must be zero. Let $X$ be a nonzero rational spectrum which is $F$-acyclic, so $X$ is already $p$-local for every $p$. Then $F_p X = 0$ but $X$ is local (and nonzero) for $L_\Q$, so $F_p < L_\Q$ in the total order on finite localisations of $\Sp_{(p)}$. The only such is $F_p = 0$ and hence $F = 0$. Another way to say this is that $L_\Q$ is the minimal nonzero finite localisation.
\end{remark}

\begin{lem}\label{lem: fin loc sp existence}
    Any family $\{F_p\}_{p~\text{prime}}$ where each $F_p$ is a nonzero finite localisation of $\Sp_{(p)}$ has a corresponding finite localisation $F$ of $\Sp$, such that $F|_{\Sp_{(p)}} \simeq F_p$ for all $p$.
\end{lem}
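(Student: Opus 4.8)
The plan is to build $F$ as the finite localisation of $\Sp$ attached to an explicit thick subcategory of $\Sp^\omega$, chosen so that its image in each $\Sp_{(p)}^\omega$ recovers $\ker F_p$. Recall the dictionary: a finite localisation of $\Sp$ is determined by --- and conversely can be built from --- its thick subcategory of compact acyclic spectra (close such a thick subcategory under colimits to obtain the acyclics of a finite localisation, cf. Definition \ref{def: finitise} and Proposition \ref{prop: acyclics loc}), and the same holds over $\Sp_{(p)}$, where the thick subcategory theorem \cite{NilpotenceII} identifies the proper nonzero kernels with the categories of type $\geq k$ finite $p$-local spectra. Thus each nonzero $F_p$ is $L_{n_p}^f$ for some $0 \leq n_p \leq \infty$, with $\ker F_p$ the thick subcategory of type $\geq n_p + 1$ finite $p$-local spectra (so $\ker F_p = 0$ when $n_p = \infty$, i.e. $F_p = \id$).

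First I would set $\mathcal{T} := \{X \in \Sp^\omega : X_{(q)} \in \ker F_q \text{ for every prime } q\}$. Since each $q$-localisation functor $\Sp^\omega \to \Sp_{(q)}^\omega$ is exact and each $\ker F_q$ is thick, $\mathcal{T}$ is a thick subcategory of $\Sp^\omega$; let $F$ be the finite localisation of $\Sp$ whose acyclics are generated under colimits by $\mathcal{T}$, so that the compact $F$-acyclics are precisely $\mathcal{T}$. It then remains to identify $F|_{\Sp_{(p)}}$ with $F_p$ for each prime $p$. By Lemma \ref{lem: combining smashing locs}, $F|_{\Sp_{(p)}}$ is a finite localisation of $\Sp_{(p)}$, whose compact acyclics form the thick subcategory $\mathcal{U}_p \subseteq \Sp_{(p)}^\omega$ generated by $\{X_{(p)} : X \in \mathcal{T}\}$; since two finite localisations of $\Sp_{(p)}$ with the same compact acyclics coincide, it suffices to show $\mathcal{U}_p = \ker F_p$. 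The inclusion $\mathcal{U}_p \subseteq \ker F_p$ is immediate from the definition of $\mathcal{T}$ and thickness of $\ker F_p$.

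The reverse inclusion $\ker F_p \subseteq \mathcal{U}_p$ is the heart of the argument, and is vacuous unless $n_p < \infty$. In that case the thick subcategory theorem says $\ker F_p$ is generated by any one finite $p$-local spectrum of type exactly $n_p + 1$, so it is enough to produce $T \in \mathcal{T}$ with $T_{(p)}$ of type $n_p + 1$. I would take $T$ to be an iterated cofibre: begin with $\S/p = \cof(p \colon \S \to \S) \in \Sp^\omega$, which is $p$-local --- its homotopy groups are finite $p$-groups --- and of type $1$, and repeatedly pass to the cofibre of a $v_i$-self-map, which exists by Hopkins--Smith periodicity. Because all the relevant mapping spectra are $p$-local there is no gap between the integral and $p$-local worlds, each successive cofibre is again a $p$-local object of $\Sp^\omega$, and its type goes up by one; after $n_p$ steps this yields a $p$-local $T \in \Sp^\omega$ of type $n_p + 1$. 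Being $p$-local of positive type, $T$ has $p$-power-torsion homotopy, so $T_{(q)} = 0 \in \ker F_q$ for every $q \neq p$ while $T_{(p)} = T \in \ker F_p$; hence $T \in \mathcal{T}$, its $p$-localisation generates $\ker F_p$, and $\ker F_p \subseteq \mathcal{U}_p$. This gives $F|_{\Sp_{(p)}} \simeq F_p$ and finishes the proof.

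The main obstacle is exactly this construction --- producing, for each $p$ with $n_p < \infty$, a spectrum that is simultaneously compact in $\Sp$, supported only at $p$, and of the prescribed type $n_p + 1$ --- which relies on the existence of finite complexes of arbitrarily large type (Hopkins--Smith) and the fact that a positive-type $p$-local finite spectrum is automatically $q$-acyclic for $q \neq p$; the rest is bookkeeping with the finite-localisation/thick-subcategory dictionary and Lemma \ref{lem: combining smashing locs}. (One could instead try to assemble the idempotent algebra $F\S$ directly from the $F_p \S_{(p)}$ via the arithmetic fracture square used in the proof of Lemma \ref{lem: fin loc sp uniqueness}, but then one must separately verify that the resulting pullback is idempotent and has the correct $p$-localisations, which seems more delicate.)
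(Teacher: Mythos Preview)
Your proof is correct and uses essentially the same ingredients as the paper, but with a neater organisation. The paper also builds $F$ by declaring its compact acyclics: it picks, for each $p$, a single $p$-local finite complex $T_{n_p+1}$ of type $n_p+1$ that is compact in $\Sp$, and takes the finite localisation generated by these. The inclusion $\ker F_p \subseteq \ker(F|_{\Sp_{(p)}})$ is then immediate (as in your argument), but the reverse inclusion requires a separate step: the paper argues that any compact $F$-acyclic $p$-local spectrum not accounted for by $T_{n_p+1}$ must be rational, hence zero by Remark~\ref{rmk: nonzero rational acyclic}. Your choice to take $\mathcal{T}$ \emph{maximally}---all compact $X$ with $X_{(q)}\in\ker F_q$ for every $q$---flips which inclusion is automatic: now $\mathcal{U}_p\subseteq\ker F_p$ is tautological from the definition of $\mathcal{T}$, and the remaining direction reduces to exhibiting a single element of $\mathcal{T}$ of the correct $p$-local type, which is exactly the Smith--Toda existence statement both proofs need. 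In effect you trade the paper's rational argument for a slightly larger (but harmless) generating set; the underlying content---a $p$-torsion finite complex of prescribed type, compact in $\Sp$ and vanishing at all other primes---is identical. One small remark: your claim that the compact $F$-acyclics are \emph{precisely} $\mathcal{T}$ is true but not strictly needed; the inclusion $\mathcal{U}_p\subseteq\ker F_p$ already follows because $F_p\circ(-)_{(p)}$ is colimit-preserving and kills $\mathcal{T}$, hence kills all $F$-acyclics.
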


Here is the general strategy we want to employ: we know that we can define a localisation by specifying (a generating set for) its acyclics, and a finite localisation by its compact acyclics. Given localisations $F_p$, we take a generating set of compact acyclics for each and lift them to compact spectra. Declare the union of all these acyclics to be a generating set of acyclics for $F$, and then $F$ is evidently finite. 

An immediate problem with this program is that compact objects in $p$-local spectra need not be compact when viewed as objects of spectra via the subcategory inclusion. For example, the unit $\S_{(p)}$ is compact as a $p$-local spectrum but is not a finite spectrum. So, if we tried to perform this lifting procedure with $F_p = 0$ we would take the generating acyclic $\S_{(p)}$, lift it to $\S$, and then $F = 0$ because the unit is acyclic -- regardless of the choice of localisation we wanted at other primes $q \neq p$. Proving the Lemma thus amounts to observing that when $F_p \neq 0$, we can find a generating set of compact acyclics for $F_p$ which are already compact as spectra. Taking these as the acyclics for $F$, our choices at distinct primes are independent.

\begin{proof}
    Choosing a nonzero finite localisation $F_p$ amounts (by the thick subcategory theorem) to choosing $n_p$ with $0 \leq n_p \leq \infty$ and taking $F_p = L_{n_p}^f$. Note $L_{\infty}^f = \id_{\Sp_{(p)}}$, and we are not allowing $F_p = 0$ (we know from Lemma \ref{lem: fin loc sp uniqueness} that if $F_p = 0$ for any individual prime $p$ then $F = 0$, so we are free to ignore this trivial case).  

    The acyclics for $L_{n_p}^f$ are generated by any single p-local finite spectrum of type $n_p + 1$, so in particular we can choose a generating acyclic $T_{n_p+1}$ which is compact in spectra. Then let $F$ be the finite localisation of spectra whose acyclics are generated by the collection of all $T_{n_p+1}$. 

    To see that $F$ has the correct reductions, note that the acyclics for $F|_{\Sp_{(p)}}$ are the intersection of the $F$-acyclics with $\Sp_{(p)}$. This certainly contains $T_{n_p+1}$, so we have the inclusion on acyclics $F|_{\Sp_{(p)}} \leq F_p$.
    
    Suppose now that $X$ is some finite spectrum which is $F$-acyclic and $p$-local. We must show that $X$ has type $\geq n_p +1$ so that it is also an $F_p$-acyclic. We may assume (by possibly replacing $X$ by some iterated extension of $X$ and $T_{n_p+1}$) that $X$ is generated by the $T_{n_q+1}$s for $q \neq p$, while remaining finite $F$-acyclic $p$-local. If this new $X$ has type $\geq n_p + 1$ then our original $X$ did too, since $T_{n_p+1}$ has type $n_p + 1$, so we have not changed our goal. $X$ is now rational because it is both $p$-local and generated by $q$-local spectra for $q \neq p$, on which $p$ acts invertibly. Thus $X$ is a rational $F$-acyclic so by Remark \ref{rmk: nonzero rational acyclic} either $X = 0$ or $F = 0$. However, we know that each $L_\Q F_p = L_\Q L_{n_p}^f$ recovers rationalisation, and so after gluing all the $F_p$s together we still have that $L_\Q F$ is rationalisation. In particular $F \neq 0$. Thus $X= 0$ which has type $\infty \geq n_p + 1$ as we wanted. 
    
    Another way to think about this is that rational finite spectra are type $0$ and $p$-local for every $p$, so cannot be generated by type $\geq 1$ finite spectra. Hence if any rational spectrum is $F$-acyclic then some type $0$ spectrum must be in our generating set of $F_p$-acyclic spectra for some $p$. But then $F_p = 0$ because it has a type $0$ acyclic. Thus $F = 0$ if and only if at least one of the $F_p$ is $0$ if and only if all of the $F_p$ are $0$.
\end{proof}

\begin{cor}\label{cor: fin locs on sp}
    A nonzero finite localisation of $\Sp$ is classified by parameters 
    \[\{n_p~|~0 \leq n_p \leq \infty\}_{p ~\text{prime}}.\]
    Given any such family of parameters, there is a unique finite localisation $F$ satisfying $F|_{\Sp_{(p)}} \simeq L_{n_p}^f$ for all primes $p$.
\end{cor}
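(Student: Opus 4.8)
The plan is to package the preceding two lemmas together with the thick subcategory theorem into a single bijection. First I would recall that, by the thick subcategory theorem (Hopkins--Smith, \cite{NilpotenceII}, Theorem 7), the nonzero finite localisations of $\Sp_{(p)}$ are precisely the localisations $L_{n_p}^f$ for $0 \leq n_p \leq \infty$, where $L_\infty^f = \id_{\Sp_{(p)}}$, and distinct values of $n_p$ give non-equivalent localisations (since $L_m^f$ and $L_n^f$ have different collections of finite acyclics when $m \neq n$). Thus specifying a nonzero finite localisation of $\Sp_{(p)}$ is the same datum as specifying a parameter $n_p \in \{0, 1, \dots, \infty\}$, and a family $\{n_p\}_{p\text{ prime}}$ is the same as a family $\{F_p\}_{p\text{ prime}}$ of nonzero finite localisations of the various $\Sp_{(p)}$.

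Next I would observe that Lemma \ref{lem: fin loc sp uniqueness} provides injectivity: a nonzero finite localisation $F$ of $\Sp$ restricts to a nonzero finite localisation $F|_{\Sp_{(p)}}$ at each prime, and $F$ is recovered from the family $\{F|_{\Sp_{(p)}}\}_{p}$. Hence if two nonzero finite localisations of $\Sp$ have the same restrictions to every $\Sp_{(p)}$, they agree, so the assignment $F \mapsto \{F|_{\Sp_{(p)}}\}_p$, equivalently $F \mapsto \{n_p\}_p$, is injective. Then Lemma \ref{lem: fin loc sp existence} provides surjectivity: any family $\{F_p\}_p$ of nonzero finite localisations --- equivalently any family of parameters $\{n_p\}_p$ --- is realised as $\{F|_{\Sp_{(p)}}\}_p$ for some finite localisation $F$ of $\Sp$. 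This $F$ is nonzero precisely because each $F_p$ is nonzero: by Lemma \ref{lem: fin loc sp uniqueness}, a finite localisation of $\Sp$ with some restriction equal to $0$ would itself be $0$, whereas here all restrictions are nonzero. Combining injectivity and surjectivity gives the asserted bijection, which establishes both the classification and the final uniqueness statement.

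There is essentially no obstacle here beyond bookkeeping; the only point requiring a little care is the genuine bijectivity of the correspondence between nonzero finite localisations of $\Sp_{(p)}$ and parameters $0 \leq n_p \leq \infty$, so that no two parameter families can yield the same $F$. This is exactly the content of the thick subcategory theorem together with the observation that the finite acyclics of $L_{n_p}^f$ are the type $\geq n_p + 1$ finite spectra, which determine $n_p$.
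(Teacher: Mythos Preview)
Your proposal is correct and follows essentially the same approach as the paper: invoke Lemma \ref{lem: fin loc sp existence} for existence and Lemma \ref{lem: fin loc sp uniqueness} for uniqueness, with the thick subcategory theorem translating between nonzero finite localisations of $\Sp_{(p)}$ and parameters $n_p$. Your version is somewhat more explicit about why the correspondence is a bijection and why the resulting $F$ is nonzero, but the underlying argument is the same.
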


\begin{proof}
    Lemma \ref{lem: fin loc sp existence} gives existence of a localisation corresponding to any given parameter family, and Lemma \ref{lem: fin loc sp uniqueness} tells us every finite localisation has such parameters and establishes uniqueness. 
\end{proof}

\begin{example}
    Note that $p$-localisation itself corresponds to the parameters $n_q = 0$ for $q \neq p$ and $n_p = \infty$. Rationalisation is given by choosing $n_p = 0$ for all $p$, since $L_0 = L_0^f = L_{H\Q}$. If we $p$-localise and then $q$-localise for $p \neq q$, we recover the parameters for rationalisation. Composing two finite localisations of $\Sp$ is achieved by taking the pointwise minimum of their parameters. 
\end{example}

Next we must understand how to lift compactly central localisations from $\Sp_{(p)}$ to $\Sp$.
\begin{lem}\label{lem: lifting cc from spp to sp}
    Each nonzero compactly central localisation $L_n^f$ of $\Sp_{(p)}$ can be lifted to a compactly central localisation $F_{n, p}$ of $\Sp$ with the property that $F_{n, p}|_{\Sp_{(p)}} \simeq L_n^f$ and $F_{n, p}|_{\Sp_{(q)}} \simeq \id_{\Sp_{(q)}}$ for all primes $q \neq p$.
\end{lem}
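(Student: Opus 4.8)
The plan is to re-run the machinery of Subsections~\ref{subsec: props of central maps} and~\ref{subsec: alg central to central} directly in $\Sp$, having first arranged that the relevant central map has cofibre a $p$-local finite spectrum. When $n=\infty$ we have $L_\infty^f=\id_{\Sp_{(p)}}$ and may take $F_{\infty,p}=\id_\Sp$, presented by the central map $\id_\S$ (cofibre $0$, of ``type $\infty$''); so assume $0\le n<\infty$ and write $m=n+1$.

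First I would reduce to constructing a single map: a compact central map $\gamma\colon\S\to B$ in $\Sp$ whose cofibre $\cof\gamma$ is a $p$-local finite spectrum of type $m$. Granting such a $\gamma$, set $F_{n,p}:=L_\gamma$, which is compactly central by definition. For a prime $q\ne p$, Lemma~\ref{lem: combining smashing locs} identifies $L_\gamma|_{\Sp_{(q)}}$ with tensoring $\Sp_{(q)}$ by $L_{(q)}J_\gamma$; since $\cof\gamma$ is $p$-local, $L_{(q)}\cof\gamma\simeq 0$, and then the description of $\cof\overline\gamma$ in Lemma~\ref{lem: baralpha in terms of alpha} (using that $L_{(q)}$ is exact, monoidal and colimit-preserving) gives $L_{(q)}\cof\overline\gamma\simeq 0$, i.e.\ $L_{(q)}J_\gamma\simeq\S_{(q)}$ and $L_\gamma|_{\Sp_{(q)}}\simeq\id$. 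At $p$, the map $L_{(p)}\gamma$ is again compact and central, and the colimit model of Construction~\ref{constr: explicit Jalpha} together with Proposition~\ref{prop: Jalpha Jalpha' agree} shows $L_{(p)}J_\gamma\simeq J_{L_{(p)}\gamma}$, so $L_\gamma|_{\Sp_{(p)}}\simeq L_{L_{(p)}\gamma}$; as $\cof(L_{(p)}\gamma)\simeq\cof\gamma$ has type $m$, Theorem~\ref{thm: computing finite smashing loc from central map} gives $L_\gamma|_{\Sp_{(p)}}\simeq L_{m-1}^f=L_n^f$. (Uniqueness of $F_{n,p}$ among finite localisations with these restrictions is then Corollary~\ref{cor: fin locs on sp}.)

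To build $\gamma$, I would first produce an \emph{algebraically central} map $\gamma_0\colon\S\to B_0$ in $\Sp$ of type $m$ whose cofibre is a $p$-local finite spectrum. Choose a $p$-local finite spectrum $M$ of type $m$ built from $\S$ by successive cofibres of $v_i$-self maps ($0\le i\le n$) --- a generalised Moore spectrum, existing by Hopkins--Smith periodicity; for $n=0$ simply $M=\S/p$ --- and, raising each self map to a sufficiently large power (which is permitted), arrange that it acts as $0$ on $K(j)$-homology for every $j$ above its own height. Then for $j\ge m$ the $K(j)$-homology of $M$ is free over $K(j)_*\S$ on an exterior family of generators, with a top class, so the top-cell map $t\colon M\to\Sigma^e\S$ ($e=\dim M$) is surjective on $K(j)$-homology for all $j\ge m$, while $K(j)_*M=0$ for $j<m$. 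Setting $C:=\Sigma^{-e+1}M$, $g:=\Sigma^{-e}t\colon\Sigma^{-1}C\to\S$, and letting $\gamma_0\colon\S\to B_0:=\cof(g)$ be the canonical map, the cofibre sequence $\Sigma^{-1}C\xrightarrow{g}\S\xrightarrow{\gamma_0}B_0$ shows $\cof\gamma_0\simeq C$ is a $p$-local finite spectrum of type $m$ and that $K(j)_*\gamma_0$ is an isomorphism for $j<m$ and zero for $j\ge m$, as required. (Alternatively one can transport the construction behind Proposition~\ref{prop: algebraically central existence} to $\Sp$.)

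Finally I would run the argument of Subsection~\ref{subsec: alg central to central} for $\gamma_0$ in $\Sp$. With $x=\id_{B_0}\otimes\gamma_0$, $y=\gamma_0\otimes\id_{B_0}$ and $\varepsilon=x-y$, the argument of Lemma~\ref{lem: x y isos Morava K} gives $K(j)_*\varepsilon=0$ for all $j$; hence $\varepsilon$ is nilpotent (nilpotence theorem, \cite{NilpotenceII}) and, being rationally zero between finite spectra, torsion. The one genuinely new point --- replacing the ``work $p$-locally'' step of Corollary~\ref{cor: properties of epsilon} --- is that $\varepsilon$ has $p$-power order: for each $q\ne p$ the cofibre $C$ of $\gamma_0$ is $p$-local, so $L_{(q)}\gamma_0$ is an equivalence and both $L_{(q)}x$ and $L_{(q)}y$ coincide with the (left $=$ right) unit of the idempotent algebra $\S_{(q)}$, whence $L_{(q)}\varepsilon=0$; since this holds for all $q\ne p$, the torsion group generated by $\varepsilon$ has $p$-power order. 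Proposition~\ref{prop: almost power of alpha is central} --- whose proof, via Proposition~\ref{prop: nilp diff}, uses only nilpotence and $p$-power torsion of $\varepsilon$ --- then gives $x^{\otimes p^N}\simeq y^{\otimes p^N}$ for $N\gg 0$, and permuting tensor factors as in the proof of Theorem~\ref{thm: alpha is central} shows $\gamma:=\gamma_0^{\otimes p^N}$ is central. It is compact, $\cof\gamma$ has type $m$ by the Künneth formula, and $\cof\gamma$ is built from $C$ and $B_0$ by iterated extensions and tensor products, hence is $L_{(q)}$-acyclic for all $q\ne p$ and thus $p$-local; so $\gamma$ is the map required above. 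I expect the main obstacle to be the construction of $\gamma_0$ in $\Sp$ together with the verification that $p$-locality of $\cof\gamma_0$ forces $\varepsilon$ to be $p$-power torsion, which is exactly what lets the $\Sp_{(p)}$ arguments of Subsection~\ref{subsec: alg central to central} run verbatim in $\Sp$.
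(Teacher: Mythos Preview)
Your proof is correct and close in spirit to the paper's, but the key step is organised differently. The paper packages the lift of the algebraically central map as a separate Lemma (\ref{lem: lift alg central spp to sp}), applies Theorem~\ref{thm: alpha is central} in $\Sp_{(p)}$ to get $\alpha_{n+1}^{\otimes p^N}$ central there, and then concludes that the lift $\widetilde{\alpha}_{n+1}^{\otimes p^N}$ is central in $\Sp$ by the local-to-global principle: a map between finite spectra is zero if and only if it is zero after localising at every prime (since $[X,Y]$ is finitely generated). You instead re-run the Subsection~\ref{subsec: alg central to central} argument directly in $\Sp$, extracting from $L_{(q)}\varepsilon=0$ for $q\ne p$ only that $\varepsilon$ is $p$-power torsion, and then feeding this into Proposition~\ref{prop: nilp diff}. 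Your construction of $\gamma_0$ via the top-cell collapse is essentially the Spanier--Whitehead dual of the paper's Construction~\ref{constr of X} (bottom-cell inclusion), carried out directly in $\Sp$ rather than dualised from $\Sp_{(p)}$. The paper's route is shorter because it reuses Theorem~\ref{thm: alpha is central} as a black box plus one clean arithmetic fact; yours is more self-contained and makes explicit exactly which part of the $\Sp_{(p)}$ argument survives integrally.
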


We already know by Lemmas \ref{lem: fin loc sp uniqueness} and \ref{lem: fin loc sp existence} that there is a unique finite localisation $F_{n, p}$ of $\Sp$ having the desired restrictions at each prime. We must establish that $F_{n, p}$ is compactly central. 

\begin{proof}
    It is clear that the identity localisation of $\Sp$ is compactly central as it arises from the identity map on the unit $\S$. Thus we may assume $0 \leq n < \infty$. Fix $n$ and $p$. By Proposition \ref{prop: algebraically central existence} and Lemma \ref{lem: lift alg central spp to sp}, we have an algebraically central map $\alpha_{n+1}$ in $\Sp_{(p)}$ with cofibre of type $n+1$, and a lift $\widetilde{\alpha}_{n+1}: \S \to \widetilde{A}_{n+1} \in \Sp$ satisfying:
    \begin{itemize}
        \item $L_{(p)}\widetilde{\alpha}_{n+1} \simeq \alpha_{n+1}$;
        \item $L_{(q)}\widetilde{\alpha}_{n+1}$ is an equivalence for primes $q \neq p$;
        \item $\cof \widetilde{\alpha}_{n+1} \simeq \cof \alpha_{n+1}$ and this cofibre is compact in both $\Sp_{(p)}$ and $\Sp$. 
    \end{itemize}

    By Theorem \ref{thm: alpha is central} we know $\alpha_{n+1}^{\otimes p^N}$ is central for $N \gg 0$. Then $\widetilde{\alpha}_{n+1}^{\otimes p^N}$ is also central because it is central after localising at any prime (an equivalence $\S_{(q)} \xrightarrow{\simeq} \S_{(q)}$ is central). The cofibre of $\widetilde{\alpha}_{n+1}^{\otimes p^N}$ is a finite spectrum so the associated localisation $G$ (as in Construction \ref{constr: Jalpha} and Corollary \ref{cor: Jalpha is idempotent}) is compactly central. Since $p$-localisation is smashing and $L_{(p)}\widetilde{\alpha}_{n+1} \simeq \alpha_{n+1}$, it follows from Theorem \ref{thm: computing finite smashing loc from central map} that $G|_{\Sp_{(p)}} \simeq L_n^f$. The central map $L_{(q)}\widetilde{\alpha}_{n+1}$ is an equivalence so its associated localisation is the identity, that is $G|_{\Sp_{(q)}} \simeq \id_{\Sp_{(q)}}$. Therefore $G \simeq F_{n, p}$ is a compactly central localisation of $\Sp$.
\end{proof}

We are now prepared to classify compactly central localisations of $\Sp$.

    \begin{thm}\label{thm: not compactly central}
        A nonzero finite localisation $F$ of $\Sp$ is compactly central if and only if $F|_{\Sp_{(p)}} \simeq \id_{\Sp_{(p)}}$ for all but finitely many primes $p$. That is, the parameters of $F$ satisfy $n_p = \infty$ for all but finitely many primes $p$.
    \end{thm}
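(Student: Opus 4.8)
The plan is to prove the two implications separately, with the "only if" direction carrying the real content.

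For the "if" direction, suppose $F$ is a nonzero finite localisation whose parameters $\{n_p\}$ (in the sense of Corollary \ref{cor: fin locs on sp}) satisfy $n_p = \infty$ outside a finite set $S$ of primes; after discarding the primes of $S$ at which $n_p = \infty$ I may assume $n_p < \infty$ for all $p \in S$. For each $p \in S$, Lemma \ref{lem: lifting cc from spp to sp} supplies a compactly central localisation $F_{n_p, p}$ of $\Sp$ restricting to $L_{n_p}^f$ on $\Sp_{(p)}$ and to the identity on $\Sp_{(q)}$ for every prime $q \neq p$. I would then form the finite composite $G := \bigotimes_{p \in S} F_{n_p, p}$, which is compactly central by Lemma \ref{lem: composite of cc locs is cc} (and hence finite by Lemma \ref{lem: compactly central implies finite}). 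Computing the parameters of a composite finite localisation as the pointwise minimum (the Example following Corollary \ref{cor: fin locs on sp}) shows that $G$ has parameter $n_q$ at each $q \in S$ and $\infty$ at all other primes, i.e. exactly the parameters of $F$; so $G \simeq F$ by the uniqueness statement of Lemma \ref{lem: fin loc sp uniqueness}, and $F$ is compactly central.

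For the "only if" direction, suppose $F = L_\alpha$ for a compact central map $\alpha : \S \to A$, and set $C := \cof \alpha$, which is a finite spectrum. The first step is to observe that $C$ is rationally trivial: since $\alpha$ is an $F$-local equivalence (Lemma \ref{lem: acyclic example}) its cofibre $C$ is $F$-acyclic, hence so is $L_\Q C \simeq C \otimes H\Q$ (the $F$-acyclics form a tensor ideal as $F$ is smashing), and a nonzero rational acyclic would force $F = 0$ by Remark \ref{rmk: nonzero rational acyclic}; since $F \neq 0$ we conclude $L_\Q C = 0$. The second step converts this into a statement about finitely many primes: because $C$ is a finite spectrum, $H\Z_* C$ is bounded and degreewise finitely generated, and rational triviality makes it torsion, hence a finite abelian group; let $P$ be the finite set of primes dividing its order. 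For each prime $p \notin P$, flatness of $\Z_{(p)}$ over $\Z$ gives $H\Z_*(C_{(p)}) \cong H\Z_* C \otimes \Z_{(p)} = 0$, so $C_{(p)} = 0$ by the Hurewicz theorem ($C_{(p)}$ being bounded below). Therefore $\alpha_{(p)} : \S_{(p)} \to A_{(p)}$ is an equivalence in $\Sp_{(p)}$, and since $p$-localisation is symmetric monoidal and preserves colimits the idempotent algebra $(J_\alpha)_{(p)}$ of $F|_{\Sp_{(p)}}$ is the colimit (Construction \ref{constr: explicit Jalpha}) of a diagram all of whose maps are equivalences, hence equal to $\S_{(p)}$; thus $F|_{\Sp_{(p)}} \simeq \id_{\Sp_{(p)}}$, i.e. $n_p = \infty$, for all $p \notin P$, which is all but finitely many primes.

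The main obstacle is the necessity direction, and in particular the second step: one must recognise that compactness of $\cof\alpha$, combined with the fact that $F \neq 0$ rules out a nonzero rational acyclic, forces $H\Z_*\cof\alpha$ to be an honest finite abelian group, so that $\cof\alpha$ becomes $p$-locally contractible for all but the finitely many primes dividing its order. The sufficiency direction, by contrast, is a bookkeeping assembly of Lemmas \ref{lem: lifting cc from spp to sp}, \ref{lem: composite of cc locs is cc}, and \ref{lem: fin loc sp uniqueness} together with the parameter calculus of Corollary \ref{cor: fin locs on sp}.
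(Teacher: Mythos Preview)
Your proof is correct and takes essentially the same approach as the paper: both directions hinge on the finiteness of $H_*\cof\alpha$ and on tensoring together the primewise lifts from Lemma \ref{lem: lifting cc from spp to sp}. The only cosmetic differences are that the paper phrases the necessity endgame as a partial-order comparison $L_\alpha \geq L_{1/M}$ (rather than your direct Hurewicz argument that $C_{(p)} = 0$), and for sufficiency invokes maximality via Corollary \ref{cor: tensor smashing maximal multi} rather than your pointwise-minimum parameter calculation.
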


    \begin{proof}
        Consider a nonzero compactly central localisation of $\Sp$, represented by a compact central map $\alpha: \S \to A$. Let $P = \supp (\cof \alpha)$ be the set of prime numbers $p$ such that there is $p$-torsion in $H_* (\cof \alpha)$. Since $A$ is finite and $\S$ is finite, the long exact sequence on homology tells us that $H_* (\cof \alpha)$ is finitely generated as an abelian group. Since $L_\alpha\cof \alpha = 0$ and $L_\alpha$ is nonzero, we know that $L_\Q\cof\alpha = 0$ too. Thus $H_* (\cof \alpha)$ is rationally trivial so has no free summands, and it is a torsion group. Then $H_* (\cof \alpha)$ is finite and its support is just the finite set of prime numbers dividing its order. Let $M = LCM(P)$ be the product of all the primes in the support. Now by the universal property of localisation, we have a factorisation
        \[\begin{tikzcd}
            \S \arrow[rr] \arrow[rd] &                        & {L_{\frac1M}\S} \\
                                     & L_\alpha \S \arrow[ru] &          
            \end{tikzcd}\]
        because the (generating) $L_\alpha$-acyclic object $\cof \alpha$ is also acyclic for the localisation $L_{\frac1M}$. But this means $L_\alpha \geq L_{\frac1M}$ in the partial order, and so 
        \[L_\alpha|_{\Sp_{(p)}} \geq L_{\frac1M}|_{\Sp_{(p)}} = \begin{cases}L_0^f & p \mid M\\ L_\infty^f & p\nmid M \end{cases}\]
        for every prime $p$. The partial order for finite localisations of $\Sp_{(p)}$ is the same as the partial order on the natural numbers, so we conclude that the parameters for $L_\alpha$ satisfy the system
        \[\begin{cases}
            n_p = \infty &p \nmid M\\
            n_p \geq 0 & p \mid M.
        \end{cases}\]
        In particular, $n_p = \infty$ for all but finitely many primes.

        Conversely, let $P$ be a finite set of primes and choose $n_p$ for each $p \in P$. We must construct a compactly central localisation $F$ such that 
        \[F|_{\Sp_{(p)}} \simeq \begin{cases}L_{n_p}^f & p \in P,\\\id & q \notin P.\end{cases} \tag{$\dagger$} \label{eq: restr conds}\]
        Given any individual prime $p$ and choice $n_p$, there is a compactly central localisation $G^{(p)}$ of $\Sp$ such that $G^{(p)}|_{\Sp_{(p)}} = L_{n_p}^f$ and $G^{(p)}|_{\Sp_{(q)}} = \id$ for all primes $q \neq p$. This is the content of Lemma \ref{lem: lifting cc from spp to sp}. The tensor product of a finite collection of compact central maps is again a compact central map, so take $F := \bigotimes_{p \in P} G^{(p)}$. Lemma \ref{lem: composite of cc locs is cc} tells us $F$ is compactly central. Now Corollary \ref{cor: tensor smashing maximal multi} implies that among all finite localisations, $F$ is maximal subject to the restriction that $F \leq G^{(p)}$ for all $p \in P$. We know by Corollary \ref{cor: fin locs on sp} that a finite localisation satisfying (\ref{eq: restr conds}) exists, and $F$ is bounded above by the conditions of (\ref{eq: restr conds}), so maximality tells us $F$ is exactly as desired.
    \end{proof}

\subsection{Explicit construction of algebraically central maps}\label{subsec: construction of algebraically central maps}
The purpose of this Subsection is to establish Proposition \ref{prop: algebraically central existence} and the closely associated analogue Lemma \ref{lem: lift alg central spp to sp}, by explicitly constructing a family of algebraically central maps in $\Sp_{(p)}$. We use generalised Smith-Toda complexes to do so.

\begin{prop}\label{prop: algebraically central existence}
    For each $0 < m < \infty$, there exists an algebraically central map 
    \[\alpha_m: \S_{(p)} \to A_m \in \Sp_{(p)}\] 
    of type $m$, where $A_m$ is a finite complex. That is,
    \[K(n)_* \alpha_m = \begin{cases} \text{isomorphism} & n < m\\ 0 & n \geq m. \end{cases}\]
\end{prop}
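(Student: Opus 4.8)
The plan is to realise $\alpha_m$ as (a flip of) the bottom-cell inclusion of a generalised Smith--Toda complex of type $m$, the existence of the latter being the Hopkins--Smith periodicity theorem applied iteratively.

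\textbf{Step 1: build a type-$m$ complex with a bottom cell.} I would inductively construct a finite complex $V_{m-1} \in \Sp_{(p)}$ of type $m$ together with a map $\iota\colon \S_{(p)} \to V_{m-1}$. Start from $V_{-1} = \S_{(p)}$ (type $0$), whose degree-zero self-map is multiplication by a power of $p$; its cofibre $V_0 = \S_{(p)}/p^{i_0}$ is a generalised mod-$p$ Moore spectrum, of type $1$. Given a type-$j$ complex $V_{j-1}$, the periodicity theorem (\cite{NilpotenceII}) supplies a $v_j$-self-map $v_j^{i_j}\colon \Sigma^{d_j} V_{j-1} \to V_{j-1}$ for some exponent $i_j \gg 0$, and its cofibre $V_j$ has type $j+1$. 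After $m$ steps we obtain a finite complex $V_{m-1}$ of type $m$, and we let $\iota$ be the composite of the canonical maps $\S_{(p)} = V_{-1} \hookrightarrow V_0 \hookrightarrow \cdots \hookrightarrow V_{m-1}$. (Passing to large powers $i_j$ is unavoidable and is exactly what the periodicity theorem provides: the classical Smith--Toda complexes may fail to exist at small primes, but generalised ones always do.)

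\textbf{Step 2: compute $K(n)_*\iota$ and dualise.} For $n < m$ we have $K(n)_* V_{m-1} = 0$ since $V_{m-1}$ has type $m$. For $n \geq m$, each self-map $v_j^{i_j}$ with $j \leq m-1 < n$ induces the zero map on $K(n)$-homology, because $v_j$ acts as $0$ in $K(n)_* = \F_p[v_n^{\pm 1}]$ when $j \neq n$; hence every cofibre step splits off a shifted copy of the previous $K(n)$-homology, so $K(n)_* V_{m-1}$ is free of rank $2^m$ over $K(n)_*$ with $K(n)_*\S_{(p)}$ a split summand. Thus $K(n)_*\iota$ is split injective for every $n \geq m$. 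Note that $\iota$ itself is \emph{not} algebraically central -- on low Morava $K$-theories it is zero rather than an isomorphism -- so we must reverse its variance: passing to Spanier--Whitehead duals, $D\iota\colon DV_{m-1} \to \S_{(p)}$ is a map of finite $p$-local spectra with $DV_{m-1}$ again of type $m$, which is (split) surjective on $K(n)$-homology for $n \geq m$ and trivially zero on $K(n)$-homology for $n < m$.

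\textbf{Step 3: take the cofibre.} Set $A_m := \cof(D\iota)$ and let $\alpha_m\colon \S_{(p)} \to A_m$ be the second map in the cofibre sequence $DV_{m-1} \xrightarrow{D\iota} \S_{(p)} \xrightarrow{\alpha_m} A_m$. Then $A_m$ is a finite complex and $\cof \alpha_m \simeq \Sigma DV_{m-1}$ has type $m$, so $\alpha_m$ is compact. The long exact sequence in $K(n)$-homology then gives: for $n < m$, $K(n)_*\alpha_m$ is an isomorphism (as $K(n)_* DV_{m-1} = 0$), and for $n \geq m$, $K(n)_*\alpha_m = 0$ (as $K(n)_*(D\iota)$ is surjective). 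This is exactly the assertion that $\alpha_m$ is algebraically central of type $m$; for $m = 1$ the construction recovers (up to a unit and suspension) multiplication by $p$ on the sphere. The only genuinely hard input is the existence of the iterated self-maps in Step 1 -- the Hopkins--Smith periodicity theorem -- while Steps 2 and 3 are a bookkeeping exercise with cofibre sequences and the vanishing of $v_j$ in $K(n)$ for $j \neq n$. The same complexes, being cofibres of maps between finite spectra, admit lifts to $\Sp$, which is what is needed for the companion Lemma~\ref{lem: lift alg central spp to sp}.
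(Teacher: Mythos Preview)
Your argument is correct and follows essentially the same route as the paper: take the bottom-cell inclusion $\iota$ of a generalised Smith--Toda complex of type $m$, Spanier--Whitehead dualise, and let $\alpha_m$ be the resulting cofibre map $\S_{(p)} \to \cof(D\iota)$. The paper phrases this as ``take the fibre of $\iota$, then dualise'' rather than ``dualise, then take the cofibre,'' but these are of course the same construction; you are more explicit than the paper about why $K(n)_*\iota$ is split injective for $n \geq m$ (the self-maps $v_j$ vanish in $K(n)_*$ for $j < n$), whereas the paper simply invokes ``standard properties of generalised Smith--Toda complexes.''
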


To prove this Proposition, we simply construct a suitable map $\alpha_m$. 

\begin{constr}\label{constr of X}
    Fix $0 < m < \infty$. Let $X_m$ denote a type $m$ generalised Smith-Toda complex. We build $A_m$ and $\alpha_m$ from $X_m$. Take $f_m: \S_{(p)} \to X_m$ to be the inclusion map of a 0-dimensional cell in $X_m$, so that $f_m$ is a split monomorphism on $K(n)$-homology for $n \geq m$. Let $\varphi: Y_m \to \S_{(p)}$ denote its fibre, so we have an exact sequence
    \[Y_m \xrightarrow{\varphi_m} \S_{(p)} \xrightarrow{f_m} X_m. \tag{$\star$} \label{eq: fib seq def X A alpha}\] 
    Take Spanier-Whitehead duals and define $\alpha_m = \mathbb{D}\varphi_m$ and $A_m = \mathbb{D}Y_m$ to get the exact sequence 
    \[\mathbb{D}X_m \xrightarrow{\mathbb{D}f_m} \S_{(p)} \xrightarrow{\alpha_m} A_m.\]
\end{constr}

\begin{proof}[Proof of Proposition \ref{prop: algebraically central existence}]
    Fix $0 < m < \infty$. We show that the map $\alpha_m$ of Construction \ref{constr of X} is algebraically central. Its codomain $A_m$ is a finite $p$-local spectrum because by construction it fits into an exact sequence whose other two terms are finite.

    Since $X_m$ is a generalised Smith-Toda complex of type $m$, we know $K(n)_* X_m = 0$ for $n < m$. Hence $K(n)_* \varphi_m$ is an isomorphism for $n < m$. Dualising has the effect on $K(n)$-homology of passing to the dual vector space, and thus preserves isomorphisms. Hence $K(n)_* \alpha_m$ is an isomorphism for $n < m$.

    Now consider the case $n \geq m$. By standard properties of generalised Smith-Toda complexes, $K(n)_* f_m$ is a split monomorphism. Indeed, $K(n)_* X_m$ is isomorphic to a direct sum of $2^m$ copies (up to suspensions) of $K(n)_* \S_{(p)}$, coming from the $2^m$ cells forming the finite complex $X_m$. The inclusion map $f_m$ for a single cell splits off one of these copies of $K(n)_* \S_{(p)}$. Since $K(n)_*f_m$ is a split injection, the exact sequence (\ref{eq: fib seq def X A alpha}) tells us that $K(n)_*\varphi_m = 0$ for $n \geq m$. Dualising gives $K(n)_*\alpha_m = 0$ for $n \geq m$.
\end{proof}

We also require an analogue to Proposition \ref{prop: algebraically central existence} which holds in $\Sp$.

\begin{lem}\label{lem: lift alg central spp to sp}
    For each $0 < m  < \infty$, the map $\alpha_{m} \in \Sp_{(p)}$ of Proposition \ref{prop: algebraically central existence} lifts to a map $\widetilde{\alpha}_{m}: \S \to \widetilde{A}_{m} \in \Sp$ satisfying:
    \begin{itemize}
        \item $L_{(p)}\widetilde{\alpha}_{m} \simeq \alpha_{m}$;
        \item $L_{(q)}\widetilde{\alpha}_{m}$ is an equivalence for primes $q \neq p$;
        \item $\cof \widetilde{\alpha}_{m} \simeq \cof \alpha_{m}$ and this cofibre is compact in both $\Sp_{(p)}$ and $\Sp$. 
    \end{itemize}
\end{lem}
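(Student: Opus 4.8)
The plan is to run Construction \ref{constr of X} in $\Sp$ instead of $\Sp_{(p)}$, and then recover the $p$-local picture by applying the exact symmetric monoidal localisation functors $L_{(p)}$, $L_{(q)}$ and $L_\Q$ together with Spanier--Whitehead duality.

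The one delicate point is realising the type $m$ generalised Smith--Toda complex integrally. Recall it is constructed as an $m$-fold iterated cofibre tower, starting from $\S/p^{i_0} = \cof(p^{i_0}\colon \S \to \S)$ and successively coning off $v_i$-self maps; every stage is an honest finite spectrum whose homotopy groups are finite $p$-groups, i.e.\ which is $p$-torsion. Let $\widetilde X_m$ denote the resulting honest finite spectrum. Since each stage is already a $p$-local object, the mapping groups that carry the self maps are unchanged by $p$-localisation, so the standard $p$-local tower is precisely $L_{(p)}$ of the integral one; in particular $L_{(p)}\widetilde X_m \simeq X_m$. Being $p$-torsion, $\widetilde X_m$ has $\pi_*\widetilde X_m \otimes \Z_{(q)} \simeq 0$ for $q \neq p$, so $L_{(q)}\widetilde X_m \simeq 0$, and likewise $L_\Q\widetilde X_m \simeq 0$. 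The bottom cell inclusion is an honest map $\widetilde f_m\colon \S \to \widetilde X_m$ (as $[\S,\widetilde X_m] = \pi_0\widetilde X_m$ is already $p$-local) with $L_{(p)}\widetilde f_m \simeq f_m$.

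Now I would carry out the remainder of Construction \ref{constr of X} in $\Sp$: set $\widetilde Y_m = \fib(\widetilde f_m)$ with fibre map $\widetilde\varphi_m\colon \widetilde Y_m \to \S$, and put $\widetilde\alpha_m = \mathbb D\widetilde\varphi_m\colon \S \to \widetilde A_m := \mathbb D\widetilde Y_m$; all spectra involved are finite, so these duals are again finite spectra. Exactness of $L_{(p)}$ gives $L_{(p)}\widetilde Y_m \simeq \fib(f_m) = Y_m$, and since Spanier--Whitehead duality commutes with $L_{(p)}$ on finite spectra, $L_{(p)}\widetilde\alpha_m \simeq \mathbb D\varphi_m = \alpha_m$. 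For a prime $q \neq p$ the map $L_{(q)}\widetilde f_m$ is $\S_{(q)}\to 0$, so $L_{(q)}\widetilde\varphi_m$ is an equivalence $\S_{(q)}\xrightarrow{\simeq}\S_{(q)}$, and applying $\mathbb D$ shows $L_{(q)}\widetilde\alpha_m$ is an equivalence; the same holds rationally. Finally, rotating the cofibre sequence $\mathbb D\widetilde X_m \to \S \to \widetilde A_m$ gives $\cof\widetilde\alpha_m \simeq \Sigma\mathbb D\widetilde X_m$, which is a finite spectrum and is $p$-torsion (hence $p$-local), because $\mathbb D$ preserves the thick subcategory of $p$-torsion finite spectra and $\widetilde X_m$ lies in it. Therefore $\cof\widetilde\alpha_m \simeq L_{(p)}\cof\widetilde\alpha_m \simeq \cof\alpha_m$, and this common spectrum is compact both in $\Sp$ (it is finite) and in $\Sp_{(p)}$ (it is a finite $p$-local spectrum). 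The only step that genuinely requires care is the first one --- checking that the usual construction of $X_m$ may be taken to produce an honest finite $p$-torsion spectrum with $L_{(p)}\widetilde X_m \simeq X_m$; once that is granted, the remaining verifications are formal, being obtained by applying the exact monoidal functors $L_{(p)}$, $L_{(q)}$, $L_\Q$ and $\mathbb D$ to a single integral construction.
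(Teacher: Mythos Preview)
Your proposal is correct and follows essentially the same approach as the paper: both run Construction~\ref{constr of X} in $\Sp$ using the fact that for $m>0$ the generalised Smith--Toda complex is already a finite $p$-torsion spectrum (the paper phrases this as $\S_{(p)}/p \simeq \S/p$), and then deduce the three bullet points by applying the exact, monoidal functors $L_{(p)}$, $L_{(q)}$, and $\mathbb{D}$. Your write-up is slightly more detailed on why duality commutes with $p$-localisation and why $\cof\widetilde\alpha_m$ is $p$-local, but the argument is the same.
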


\begin{proof}
    Run through Construction \ref{constr of X} in $\Sp$. The generalised Smith-Toda complex $X_m$ is defined the same in $\Sp$ as in $\Sp_{(p)}$, because for $m > 0$ one builds $X_m$ from copies of $\S_{(p)}/p \simeq \S/p$ as an iterated mapping cone. In particular, $X_m$ as constructed in $\Sp_{(p)}$ is already compact in $\Sp$. We take $\widetilde{f}_m: \S \to X_m$ the inclusion map of a 0-dimensional cell, and define $\widetilde{\alpha}_m = \cof(\mathbb{D}\widetilde{f}_m): \S \to \widetilde{A}_m$. Since $X_m$ and $\S$ are compact spectra, $\widetilde{A}_m$ is compact. Note that $A_m$ as constructed in \ref{constr of X} is not compact in $\Sp$ because $\S_{(p)}$ is not compact in $\Sp$.

    By construction we have $L_{(p)}\widetilde{f}_m \simeq f_m$. Hence $L_{(p)}\widetilde{A}_m \simeq A_m$ and $L_{(p)}\widetilde{\alpha}_{m} \simeq \alpha_{m}$, because $p$-localisation is smashing so commutes with (co)fibres and duals. Moreover $\cof \widetilde{\alpha}_{m} \simeq \Sigma \mathbb{D} X_m \simeq \cof \alpha_{m}$.

    For a prime $q \neq p$, we have $L_{(q)}X_m \simeq 0$ because $X_m$ is built from copies of $\S/p$ and $L_{(q)}$ inverts $p$. Thus $L_{(q)}\widetilde{\alpha}_m$ is an equivalence, and as a consequence $L_{(q)}\widetilde{A}_m \simeq \S_{(q)}$. Here we are using the assumption $m \geq 1$, because $X_0 = \S$ is not built from $\S/p$ and is not killed by $p$-inversion.
\end{proof}

\subsection{Combinatorial lemmas}\label{subsec: combinatorial}

The purpose of this Subsection is to prove Proposition \ref{prop: nilp diff}, which we do by considering the $p$-adic valuations of certain binomial coefficients. The results contained herein are not novel, and are included only for completeness. Fix a prime $p$, and let $\nu_p$ denote the $p$-adic valuation.

\begin{defn}
For $n \in \Z$, let $s_p(n)$ denote the sum of the digits of $n$ when written in base $p$. That is, for 
\[n= \sum_{i=0}^N a_i p^i\]
we have $s_p(n) = \sum_{i=0}^N a_i$.
\end{defn}

\begin{lem}\label{lem: factorial val}
    \[\nu_p(n!) = \frac{n-s_p(n)}{p-1}.\]
\end{lem}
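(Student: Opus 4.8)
The plan is to prove the classical Legendre formula $\nu_p(n!) = \frac{n - s_p(n)}{p-1}$ by the standard two-step argument. First I would recall Legendre's identity
\[
\nu_p(n!) = \sum_{i \geq 1} \left\lfloor \frac{n}{p^i} \right\rfloor,
\]
which follows by counting, for each $i \geq 1$, how many of the integers $1, 2, \ldots, n$ are divisible by $p^i$; that count is exactly $\lfloor n/p^i \rfloor$, and summing over $i$ tallies each factor $k \leq n$ with multiplicity $\nu_p(k)$, giving $\nu_p(n!)$.

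Next I would substitute the base-$p$ expansion $n = \sum_{j=0}^N a_j p^j$ with $0 \leq a_j \leq p-1$. Then $\lfloor n/p^i \rfloor = \sum_{j \geq i} a_j p^{j-i}$, so
\[
\sum_{i \geq 1} \left\lfloor \frac{n}{p^i} \right\rfloor = \sum_{i \geq 1} \sum_{j \geq i} a_j p^{j-i} = \sum_{j \geq 1} a_j \sum_{i=1}^{j} p^{j-i} = \sum_{j \geq 1} a_j (p^{j-1} + p^{j-2} + \cdots + 1) = \sum_{j \geq 1} a_j \cdot \frac{p^j - 1}{p-1}.
\]
Finally, I would rewrite this as $\frac{1}{p-1}\sum_{j \geq 0} a_j(p^j - 1) = \frac{1}{p-1}\left(\sum_j a_j p^j - \sum_j a_j\right) = \frac{n - s_p(n)}{p-1}$, where I have harmlessly extended the sum to include $j = 0$ since $a_0(p^0 - 1) = 0$.

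There is no real obstacle here — this is a textbook computation. The only point requiring a moment of care is the interchange of the double sum over $i$ and $j$, which is justified since all terms are nonnegative and only finitely many are nonzero (as $a_j = 0$ for $j > N$). I would present the argument compactly, treating it as the recollection of a standard fact rather than a novel result, consistent with the remark in the paper that the material in this subsection is included only for completeness.
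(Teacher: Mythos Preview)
Your proposal is correct and follows essentially the same route as the paper's proof: start from Legendre's formula $\nu_p(n!) = \sum_{j\geq 1} \lfloor n/p^j \rfloor$, substitute the base-$p$ expansion, swap the double sum, evaluate the inner geometric series to get $\sum a_j \frac{p^j-1}{p-1}$, and note that the $j=0$ term vanishes so the sum equals $\frac{n - s_p(n)}{p-1}$. The paper is slightly terser (it omits the justification of Legendre's identity and the remark about interchanging finite sums), but the argument is identical.
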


\begin{proof}
    Write $n= \sum_{i=0}^N a_i p^i$, and compute 
    \begin{align*}
        \nu_p(n!) &= \sum_{j=1}^N \left\lfloor \frac{n}{p^j} \right\rfloor = \sum_{j=1}^N \sum_{i\geq j} a_i p^{i-j} = \sum_{i=1}^N a_i \sum_{k=0}^{i-1} p^{k}\\
        &= \sum_{i=0}^N a_i \frac{p^i-1}{p-1} = \frac{n - s_p(n)}{p-1}.
    \end{align*}
    Note the $i=0$ term of the final sum vanishes so we can freely reindex from zero.
\end{proof}

Lemma \ref{lem: factorial val} has the following immediate corollary.
\begin{cor}\label{cor: binom val}
     \[\nu_p \binom{n}{k} = \frac{s_p(k) - s_p(n-k) + s_p(n)}{p-1}.\] 
\end{cor}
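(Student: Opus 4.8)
This is an immediate consequence of Lemma \ref{lem: factorial val}. Assuming $0 \leq k \leq n$ so that $\binom{n}{k}$ is a positive integer, write it as the ratio of factorials and use additivity of $\nu_p$ under products and quotients:
\[\nu_p\binom{n}{k} = \nu_p(n!) - \nu_p(k!) - \nu_p((n-k)!).\]
Substituting the formula of Lemma \ref{lem: factorial val} into each of the three terms gives
\[\nu_p\binom{n}{k} = \frac{n - s_p(n)}{p-1} - \frac{k - s_p(k)}{p-1} - \frac{(n-k) - s_p(n-k)}{p-1}.\]
The plan is then simply to collect the numerators over the common denominator $p-1$ and observe that the linear-in-$n,k$ contributions $n - k - (n-k)$ cancel identically, leaving only the digit-sum terms $s_p(k) + s_p(n-k) - s_p(n)$.

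There is no real obstacle here: the argument is a two-line substitution, and the only points worth a sentence are (i) recording the hypothesis $0 \leq k \leq n$ (for $k$ outside this range $\binom{n}{k}=0$ and the statement is vacuous or interpreted with the convention $\nu_p(0)=\infty$), and (ii) noting that the right-hand side is manifestly a nonnegative integer, since $\nu_p\binom{n}{k} \geq 0$, which recovers the combinatorial fact that $s_p(k) + s_p(n-k) \geq s_p(n)$ with equality iff there are no carries when adding $k$ and $n-k$ in base $p$ (Kummer's theorem); this interpretation is worth stating as a remark since it is used in Subsection \ref{subsec: combinatorial} to control when binomial coefficients are divisible by $p$.
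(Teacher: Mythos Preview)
Your proof is correct and is exactly the intended argument: the paper records this as an immediate consequence of Lemma~\ref{lem: factorial val} with no further proof, and the two-line substitution you give is precisely what is meant. Note that your computation correctly yields $\tfrac{s_p(k) + s_p(n-k) - s_p(n)}{p-1}$; the displayed formula in the statement has a sign typo, as confirmed by the proof of Corollary~\ref{cor: p binom val}, which applies the formula with the signs you derived.
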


As a further corollary, we are now able to compute the $p$-adic valuations of the family of binomial coefficients with top entry a power of $p$.
\begin{cor} \label{cor: p binom val}
    Let $0 \leq k \leq n$ and $1 \leq m < p^k$ with $p \nmid m$. Then
    \[\nu_p \binom{p^n}{m\cdot p^{n-k}} = k.\] 
\end{cor}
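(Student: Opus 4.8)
The plan is to apply Corollary \ref{cor: binom val} directly, reducing everything to a computation of base-$p$ digit sums. Write $n$ in place of the exponent on top, so we want $\nu_p\binom{p^n}{m p^{n-k}}$ where $p\nmid m$ and $1\le m<p^k$. By Corollary \ref{cor: binom val}, this valuation equals $\tfrac{1}{p-1}\bigl(s_p(m p^{n-k}) + s_p(p^n - m p^{n-k}) - s_p(p^n)\bigr)$. The first thing I would note is that $s_p(p^n)=1$ trivially, and $s_p(m p^{n-k}) = s_p(m)$ since multiplying by $p^{n-k}$ only appends zeros in base $p$.

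The main work is computing $s_p(p^n - m p^{n-k})$. First I would factor out: $p^n - m p^{n-k} = p^{n-k}(p^k - m)$, so $s_p(p^n - m p^{n-k}) = s_p(p^k - m)$ by the same append-zeros observation. Now $p^k - m$ with $1\le m < p^k$ is a ``complement'' computation: I would use the standard fact that for $1\le m\le p^k$, one has $s_p(p^k - m) = k(p-1) - s_p(m-1)$, or equivalently work with $p^k - 1 - (m-1)$ and use that subtracting from the all-digits-equal-$(p-1)$ number $p^k-1$ reverses each digit to its complement: $s_p(p^k - 1 - j) = k(p-1) - s_p(j)$ for $0\le j\le p^k - 1$. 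Applying this with $j = m-1$ gives $s_p(p^k - m) = k(p-1) - s_p(m-1)$. The hypothesis $p\nmid m$ means the last base-$p$ digit of $m$ is nonzero, so $s_p(m-1) = s_p(m) - 1$. Substituting, $s_p(p^k - m) = k(p-1) - s_p(m) + 1$.

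Plugging back in: the valuation is $\tfrac{1}{p-1}\bigl(s_p(m) + (k(p-1) - s_p(m) + 1) - 1\bigr) = \tfrac{1}{p-1}\cdot k(p-1) = k$, as claimed. The only genuine obstacle I anticipate is justifying the digit-complement identity $s_p(p^k - 1 - j) = k(p-1) - s_p(j)$ cleanly; this is elementary (the base-$p$ digits of $p^k - 1$ are all $p-1$, and subtracting $j$ whose digits are all $\le p-1$ proceeds with no borrows, replacing digit $a_i$ by $p-1-a_i$), and I would state it as a one-line sub-lemma or simply cite it as standard. Everything else is bookkeeping with Corollary \ref{cor: binom val}. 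The condition $1\le m < p^k$ is exactly what ensures $m-1$ has at most $k$ base-$p$ digits so the complement computation takes place within $p^k - 1$, and $p\nmid m$ is precisely what turns $s_p(m-1)$ into $s_p(m)-1$.
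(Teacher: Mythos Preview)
Your proposal is correct and takes essentially the same approach as the paper: both apply Corollary~\ref{cor: binom val} and reduce to the identity $s_p(p^k - m) = k(p-1) + 1 - s_p(m)$. The only cosmetic difference is that you obtain this identity via the complement formula $s_p(p^k - 1 - j) = k(p-1) - s_p(j)$ with $j = m-1$ together with $s_p(m-1) = s_p(m) - 1$, whereas the paper writes out the base-$p$ expansion of $p^k - m$ directly; both arguments use $p \nmid m$ in exactly the same way.
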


\begin{proof}
    By Corollary \ref{cor: binom val} we get 
    \begin{align*}
    \nu_p \binom{p^n}{m\cdot p^{n-k}} &= \frac{s_p(m\cdot p^{n-k}) + s_p(p^{n-k}(p^k - m)) - s_p(p_n)}{p-1}\\ 
    &= \frac{s_p(m) + s_p(p^k - m) - 1}{p-1}
    \end{align*}
    since multiplying by a power of $p$ just adds a number of zeroes to the base $p$ representation. We thus reduce to showing 
    \[s_p(p^k - m) = (p-1)k + 1 - s_p(m).\]
    Now take the base $p$ expansion $m = \sum_{j=0}^{k-1} m_j p^j$, and write
    \begin{align*}
        p^k - m &= 1 + (p^k-1) - m\\
        &= 1 + \sum_{j=0}^{k-1} (p-1)p^j - \sum_{j=0}^{k-1} m_j p^j\\
        &= (p - m_0) + \sum_{j=1}^{k-1} (p-1 - m_j)p^j,
    \end{align*}
    which we claim is a base $p$ expansion. Indeed, for $j \geq 1$ we have $0 \leq m_j \leq p-1$, and further $m_0 \geq 1$ because $p \nmid m$. Thus 
    \[s_p(p^k-m) = p - m_0 + \sum_{j=1}^{k-1} p-1 - m_j = p + (k-1)(p-1) - s_p(m)\]
    as needed.
\end{proof}

We finish the section by using Corollary \ref{cor: p binom val} to establish an algebraic fact which is the key technical ingredient for our main result.
\begin{prop}\label{prop: nilp diff}
    In some ring $R$, suppose $\e = x - y$ is nilpotent with $p^j \e = 0$ for $j >> 0$. Then $x^{p^n} = y^{p^n}$ for $n >> 0$.
\end{prop}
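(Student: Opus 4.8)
The plan is to write $x = y+\varepsilon$ and expand. Since $\varepsilon = x-y$, the elements $x$ and $\varepsilon$ commute (and in any case $R$ is commutative), so the binomial theorem gives
\[
x^{p^{n}} \;=\; (y+\varepsilon)^{p^{n}} \;=\; \sum_{k=0}^{p^{n}} \binom{p^{n}}{k}\,y^{\,p^{n}-k}\varepsilon^{k},
\qquad\text{hence}\qquad
x^{p^{n}}-y^{p^{n}} \;=\; \sum_{k=1}^{p^{n}} \binom{p^{n}}{k}\,y^{\,p^{n}-k}\varepsilon^{k}.
\]
The goal is then to show that for $n$ sufficiently large every summand on the right-hand side vanishes.

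Fix $d\ge 1$ with $\varepsilon^{d}=0$ and $j\ge 0$ with $p^{j}\varepsilon=0$; note $p^{j}\varepsilon^{k}=(p^{j}\varepsilon)\varepsilon^{k-1}=0$ for every $k\ge 1$. In the displayed sum, all terms with $k\ge d$ are zero because $\varepsilon^{k}=0$, so only the finitely many indices $1\le k\le d-1$ contribute. For each such $k$, write $k=m\,p^{\nu_p(k)}$ with $p\nmid m$; once $n$ is large enough that $p^{n}>d-1$ we have $k<p^{n}$, and Corollary \ref{cor: p binom val} applies to give $\nu_p\!\binom{p^{n}}{k}=n-\nu_p(k)$. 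As $k$ ranges over the finite set $\{1,\dots,d-1\}$, the value $\nu_p(k)$ stays bounded by some constant $M$, so $\nu_p\!\binom{p^{n}}{k}\ge n-M$ uniformly in $k$. Therefore, as soon as $n\ge M+j$ (and $p^{n}>d-1$), every coefficient $\binom{p^{n}}{k}$ with $1\le k\le d-1$ is divisible by $p^{j}$, so $\binom{p^{n}}{k}\varepsilon^{k}$ is an integer multiple of $p^{j}\varepsilon^{k}=0$. Hence each surviving summand vanishes and $x^{p^{n}}=y^{p^{n}}$ for all such $n$.

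There is no serious obstacle in this argument; the only point needing a little care is that the $p$-adic valuation estimate coming from Corollary \ref{cor: p binom val} must be made \emph{uniform} over the finite window $1\le k\le d-1$ of exponents that are not already killed by nilpotence of $\varepsilon$, and that the hypothesis $k<p^{n}$ of that corollary is satisfied for the relevant $n$ — both of which are immediate once one notes that this window is finite. One should also observe that commutativity (more precisely, the fact that $x$ and $y$ commute) is exactly what licenses the binomial expansion; this is the feature of $x=\id_A\otimes\alpha$ and $y=\alpha\otimes\id_A$ exploited when Proposition \ref{prop: nilp diff} is invoked in the proof of Proposition \ref{prop: almost power of alpha is central}.
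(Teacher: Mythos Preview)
Your proof is correct and follows essentially the same approach as the paper: expand $(y+\varepsilon)^{p^n}$ binomially, kill the high-degree terms by nilpotence of $\varepsilon$, and kill the remaining terms using Corollary~\ref{cor: p binom val} to bound $\nu_p\binom{p^n}{k}$ from below so that the $p^j$-torsion of $\varepsilon$ annihilates them. The only organizational difference is that the paper reindexes the sum by $\ell = n - \nu_p(i)$ before splitting into cases, whereas you first restrict to the finite window $1\le k\le d-1$ and then apply the valuation estimate uniformly; both arrive at the same bound $n \ge j + \max_{1\le k < d}\nu_p(k)$ (the paper takes $d = p^k$ so this becomes $n \ge j+k$).
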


\begin{proof}
    Let $n \geq j + k$ where $\e^{p^k} = 0$ and $p^j \e = 0$. Expanding, we have
    \[x^{p^n} = (y+\e)^{p^n} = y^{p^n} + \sum_{i=1}^{p^n-1} \binom{p^n}{i} \e^i y^{p^n-i}\]
    since $\e^{p^n}=0$. We want to show the sum on the right vanishes. In fact, each individual term of this sum is zero. First we reindex to group terms by the $p$-adic valuation of $i$. We get
    \[\sum_{i=1}^{p^n-1} \binom{p^n}{i} \e^i y^{p^n-i} = \sum_{\ell = 1}^n \sum_{q = 0}^{p^{\ell-1} - 1} \sum_{r=1}^{p-1} \binom{p^n}{(pq+r)p^{n-\ell}} \e^{i} y^{p^n - i},\]
    where $i = (pq+r)p^{n-\ell}$, $\ell = n - \nu_p(i)$, and $1 \leq r \leq p-1$ so that $p \nmid pq+r$. The bounds on $q$ ensure that $1 \leq pq+r \leq p^\ell -1$.

    Now, by Corollary \ref{cor: p binom val} we know $\nu_p\binom{p^n}{(pq+r)p^{n-\ell}} = \ell$, so the terms with $\ell \geq j$ all vanish (because they contain a factor of $p^\ell \e$). When $\ell < j$, we have
    \[i \geq p^{n-\ell} > p^k\]
    since we chose $n \geq j + k$, so $\e^i = 0$ and these terms also vanish.
\end{proof}

\printbibliography

\appendix
\markboth{ISABEL LONGBOTTOM}{APPENDIX} 

\section{Localisations}\label{app: Bousfield}

The purpose of this Appendix is to provide an orientation for those readers unfamiliar with localisations of $\infty$-categories. The majority of the following material can be found in \cite{htt} Section 5.5.4, but here we give a much less technical presentation, omitting many proofs. Instead we focus on how to translate between different perspectives on localisation. That is, we discuss the relationships between acyclic objects, local objects, and local equivalences. We also explore how each of these can independently define a localisation.

\begin{defn}
    A functor $L: \CC \to \DD$ of $\infty$-categories is called a \emph{localisation} if it admits a fully faithful right adjoint. 
\end{defn}

Call the right adjoint $\iota$. We identify $\DD$ with its essential image under $\iota$, and think of $L$ as localising $\CC$ onto this subcategory. By abuse of notation, we will often drop $\iota$ and think of $L$ as a functor $\CC \to \CC$ which lands in the specified subcategory. Objects of $\CC$ which lie in the essential image of $\iota$ are called \emph{local}. A morphism in $\CC$ whose image under $L$ is an equivalence is a \emph{local equivalence}. If $\CC$ is stable, then it is useful to consider objects in the kernel of $L$---i.e. whose image under $L$ is $0$---which we call \emph{acyclic objects}. For any object $X$ of $\CC$, there is a universal morphism $X \to LX$, namely the unit of the adjunction. This morphism is universal in the sense that any map from $X$ to a local object of $\CC$ factors through it. This universality is just the adjunction property, i.e. for $Y$ a local object of $\CC$, 
\[\Map_\CC(LX, Y) \simeq \Map_\CC(X, Y).\]
The unit $X \to LX$ is also a local equivalence, i.e. $LX \to L\iota LX$ is an equivalence. This comes from the right adjoint $\iota$ being fully faithful, whence the counit $L \circ \iota \implies \id_\DD$ is a natural equivalence. 

In the stable setting, note that if $A$ is acyclic and $Y$ is local then
\[\Map_\CC(A, Y) \simeq \Map_\CC(LA, Y) \simeq \Map_\CC(0, Y) \simeq 0.\]
By Yoneda, the converse holds: if $\Map_\CC(A, Y) \simeq 0$ for every $Y \in \DD$ then $LA = 0$. This gives an alternate characterisation of acyclics. There are no maps from an acyclic object to a local object, and an object of $\CC$ which does not map (nontrivially) to any local object is acyclic. Thus the acyclics are the left orthogonal in $\CC$ to the local objects. Similarly, any object of $\CC$ which receives no nonzero map from an acyclic is local. Thus the collection of local objects can be recovered from the acyclic objects, and vice versa.

\begin{warning}
Localisations as we have defined above are sometimes termed \emph{reflective} localisations in the literature, with the unmodified term \emph{localisation} reserved for a more general notion. We will implicitly use localisation to mean reflective localisation throughout.
\end{warning}

In practice, one often does not start with the localisation functor $L$. We may instead want to specify the local objects, the acyclic objects, or even the class of local equivalences. To do this, we need to understand what closure properties the acyclics and local objects have, so that we can determine whether a desired subcategory has a corresponding localisation functor (and similarly for a class of morphisms).

For a full subcategory $\iota: \DD \into \CC$, to obtain a corresponding localisation functor $L$ we simply need $\iota$ to admit a left adjoint. Call $\DD$ a \emph{reflective} subcategory of $\CC$ if it has this property. In many nice settings, especially if $\CC$ is presentable, we can use the adjoint functor theorem to establish reflectivity.

\begin{prop}
    Let $\iota: \DD \to \CC$ be a full subcategory, with $\CC$ and $\DD$ both presentable $\infty$-categories. If $\iota$ preserves small limits then it has a left adjoint.
\end{prop}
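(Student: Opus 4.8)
The plan is to deduce this from the $\infty$-categorical adjoint functor theorem, \cite{htt} Corollary 5.5.2.9, which asserts that a functor between presentable $\infty$-categories admits a left adjoint if and only if it preserves small limits and is accessible. Preservation of small limits is exactly the hypothesis placed on $\iota$, so the entire content of the proof is to verify that $\iota$ is \emph{accessible}, that is, that $\iota$ commutes with $\kappa$-filtered colimits for some regular cardinal $\kappa$. Once that is done, the theorem produces a left adjoint $L$ to $\iota$; since $\iota$ is fully faithful by hypothesis, $L$ is a localisation in our sense and $\DD$ (identified with its essential image under $\iota$) is its subcategory of local objects.

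To carry out the accessibility check I would first use presentability of $\DD$ to fix a regular cardinal $\kappa_0$ with $\DD \simeq \operatorname{Ind}_{\kappa_0}(\DD^{\kappa_0})$, where $\DD^{\kappa_0}$ is essentially small, and fix a presentation of $\CC$. Since $\iota(\DD^{\kappa_0})$ is then an essentially small family of objects of the presentable category $\CC$, one chooses a regular cardinal $\kappa \geq \kappa_0$ large enough that $\iota$ carries every $\kappa_0$-compact object of $\DD$ to a $\kappa$-compact object of $\CC$. With such a $\kappa$ in hand the remaining step is to show that for a $\kappa$-filtered diagram $d_\bullet$ in $\DD$ the canonical map $\operatorname{colim} \iota(d_\bullet) \to \iota(\operatorname{colim} d_\bullet)$ is an equivalence: one reduces by a cofinality and reindexing argument to diagrams valued in $\kappa_0$-compact objects, and then uses $\kappa$-compactness in $\CC$, tested against the Yoneda embedding, to identify the two sides.

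The main obstacle is precisely this accessibility verification: unlike limit-preservation it is not handed to us by the stated hypotheses, and the reduction above secretly needs to know that $\DD$ is closed under sufficiently filtered colimits inside $\CC$, which is a genuine input beyond closure under limits. In every situation used in this paper this subtlety never bites, because the subcategory $\DD$ of interest always arises as the subcategory of $S$-local objects for a small set of morphisms $S$; in that guise accessibility is automatic (\cite{htt}, Proposition 5.5.4.2) and one may appeal to it directly rather than re-running the cardinal bookkeeping. So in the proof I would state the general principle via Corollary 5.5.2.9 and then note that for the concrete subcategories appearing later, accessibility holds for this more elementary reason.
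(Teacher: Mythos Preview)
Your approach is essentially the paper's: invoke \cite{htt} Corollary 5.5.2.9 and reduce to checking that $\iota$ is accessible. The paper dispatches this in one line, asserting that for a full subcategory inclusion into an accessible category, accessibility of $\iota$ is equivalent to accessibility of $\DD$; you instead try to unwind the cardinal bookkeeping directly and, not finding a clean argument, retreat to the $S$-local case needed in practice. Your caution is reasonable---the equivalence the paper claims is not entirely trivial and deserves a reference or a line of justification---but the overall strategy and the reference to 5.5.2.9 match exactly.
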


\begin{proof}
    Recall that an $\infty$-category is presentable if it is accessible and admits small colimits. By the adjoint functor theorem (\cite{htt} Corollary 5.5.2.9), we need $\iota$ to be accessible and preserve small limits to conclude it admits a left adjoint. Since $\iota$ is the inclusion of a full subcategory and $\CC$ is accessible, accessibility of $\iota$ is equivalent to accessibility of $\DD$.
\end{proof}

The upshot is that the subcategory $\DD$ of local objects must be accessible, have all small colimits, and be closed under taking small limits in $\CC$. Next we wish to characterise localisations in terms of the local equivalences. 

Given a family of morphisms $S$ in $\CC$, one can always construct a new $\infty$-category $\CC[S^{-1}]$ which is the universal place where all the morphisms in $S$ become equivalences. In general, however, $\CC[S^{-1}]$ is much bigger than $\CC$. We would like to understand the conditions we must impose on $S$ so that $\CC[S^{-1}]$ may be identified with a (reflective) subcategory of $\CC$. In fact, when $\CC$ is presentable, a class of morphisms in $\CC$ determines an accessible localisation functor if and only if it is \emph{strongly saturated} and \emph{of small generation}. We now define these terms.

\begin{defn}[\cite{htt} Definition 5.5.4.5]\label{def: strongly sat}
    Let $\CC$ be presentable. A collection of morphisms $S$ is \emph{strongly saturated} if it satisfies the following closure properties:
    \begin{enumerate}
        \item the pushout of $f \in S$ along any morphism in $\CC$ again lies in $S$; \label{cond: pushout}
        \item the full subcategory of $\Fun(\Delta^1, \CC)$ spanned by $S$ is closed under small colimits; and \label{cond: has colimits}
        \item for a $2$-simplex in $\CC$ witnessing the composition $f \circ g = h$, if any two of $f, g, h$ lie in $S$ then so does the third. \label{cond: 2 out of 3}
    \end{enumerate}
\end{defn}

\begin{remark}
    For $0$ an initial object of $\CC$, condition (\ref{cond: has colimits}) implies $\id_0$ lies in any strongly saturated class $S$ since it is an initial object in $\Fun(\Delta^1, \CC)$. Then by (\ref{cond: pushout}) $S$ must contain all equivalences in $\CC$ because they are pushouts of $\id_0$. Also, if $f$ and $f'$ are homotopic then $f'$ is a pushout of $f$, so $S$ is closed under homotopy equivalences. 
\end{remark}

\begin{defn}[\cite{htt} Remark 5.5.4.7]
    The intersection of any collection of strongly saturated classes is again strongly saturated. This means that any family of morphisms $S_0$ generates a minimal strongly saturated class $\overline{S_0}$, which is simply the intersection of all strongly saturated classes that contain it. A strongly saturated class $S$ is \emph{of small generation} if there is a set $S_0 \subseteq S$ with $\overline{S_0} = S$.
\end{defn}

\begin{prop}[\cite{htt} Proposition 5.5.4.15]\label{prop: morphisms loc}
    Let $\CC$ be presentable and $S$ a family of morphisms in $\CC$. Then $S$ corresponds to (is the local equivalences for) an accessible localisation if and only if $S$ is strongly saturated and of small generation.
\end{prop}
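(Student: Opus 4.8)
The plan is to cite the relevant machinery from \cite{htt} rather than reprove it, since Proposition \ref{prop: morphisms loc} is stated as being \cite{htt} Proposition 5.5.4.15 verbatim. So the ``proof'' is really an orientation: explain how the statement follows from results already in the literature and how the two directions go.

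First I would recall the basic setup established earlier in the Appendix: a localisation is a functor $L\colon \CC \to \DD$ with a fully faithful right adjoint $\iota$, and $\CC[S^{-1}]$ is the universal $\infty$-category in which the morphisms of $S$ become equivalences. Accessibility of the localisation means precisely that $L$ is an accessible functor (equivalently, by the discussion above, that $\DD \simeq L\CC$ is an accessible localisation of $\CC$ in the sense that the local objects form an accessible subcategory). The forward direction is the easier one: given an accessible localisation $L$ with local equivalences $S$, one checks $S$ is strongly saturated by verifying the three closure conditions of Definition \ref{def: strongly sat} directly. Condition (\ref{cond: pushout}) holds because $L$ preserves pushouts (it is a left adjoint, so preserves all colimits), so a pushout of a local equivalence maps to a pushout of an equivalence, which is an equivalence. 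Condition (\ref{cond: has colimits}) holds because $S$, viewed inside $\Fun(\Delta^1,\CC)$, is the preimage of the equivalences under the colimit-preserving functor induced by $L$, and equivalences in $\Fun(\Delta^1,\DD)$ are closed under colimits. Condition (\ref{cond: 2 out of 3}) is immediate from the two-out-of-three property for equivalences in $\DD$ together with functoriality of $L$. Small generation then follows from accessibility: $L$ being an accessible localisation, the unit maps $X \to LX$ for $X$ ranging over a set of $\kappa$-compact generators of $\CC$ generate $S$ as a strongly saturated class --- this is exactly the content of the relevant portion of \cite{htt} Section 5.5.4.

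For the converse, given a strongly saturated class $S$ of small generation, I would invoke \cite{htt} Proposition 5.5.4.15 (respectively 5.5.4.16–5.5.4.20, where the construction is carried out): one shows that the full subcategory $S^{-\mathrm{loc}} \subseteq \CC$ of $S$-local objects --- those $Y$ for which $\Map_\CC(f, Y)$ is an equivalence for every $f \in S$ --- is a reflective, accessible subcategory, with the reflector $L$ given pointwise by a small-object-argument-style construction using a set of generators $S_0$ of $S$. Accessibility of $S^{-\mathrm{loc}}$ is where small generation is used: it lets one express $S$-locality as the conjunction of set-many conditions, each of which cuts out an accessible subcategory, and an accessible intersection of accessible subcategories of a presentable category is accessible. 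The resulting $L$ is then an accessible localisation whose local equivalences are exactly $S$ (the local equivalences contain $S$ by construction, and conversely every local equivalence lies in $S$ because $S$ is strongly saturated and hence ``saturated'' in the sense that it equals the class of morphisms inverted by $L$).

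The main obstacle --- if one insisted on giving a self-contained argument rather than citing --- would be the construction of the reflector $L$ via the small object argument and verifying its accessibility, which is genuinely technical and occupies several pages of \cite{htt}. Since the excerpt explicitly flags this Appendix as non-novel expository material (``None of the material it contains is novel''), the appropriate move is to state the result and refer to \cite{htt}, perhaps sketching the forward direction's three closure checks since those are short, and for the converse simply pointing to Propositions 5.5.4.15 ff. I would keep the proof to a paragraph or two, emphasising the conceptual picture (strongly saturated $\leftrightarrow$ closed under the operations $L$ preserves; small generation $\leftrightarrow$ accessibility) rather than the internal mechanics.
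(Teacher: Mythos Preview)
Your proposal is correct and aligns with the paper's approach: the paper gives no proof at all for this Proposition, simply stating it with the citation to \cite{htt} Proposition 5.5.4.15 and moving on. Your sketch of the forward direction and pointer to the small-object-argument construction for the converse is more than the paper provides, and is accurate as an orientation, but the paper's own ``proof'' is just the citation.
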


Let us now consider the translation between properties of acyclics and local objects, and properties of local equivalences. In practice, when specifying the local equivalences, one provides a small collection of morphisms $S_0$ in $\CC$ to be inverted. Any such collection gives rise to an accessible localisation onto the full subcategory of $S_0$-local objects $S_0^{-1}\CC \subseteq \CC$, namely those objects $Z$ such that composition with any $s: X \to Y \in S_0$ induces a homotopy equivalence of spaces
\[\Map_\CC(Y, Z) \to \Map_\CC(X, Z).\]
A map $f: X \to Y$ in $\CC$ is an $S_0$-local equivalence if for any $S_0$-local $Z$, composition with $f$ provides a homotopy equivalence
\[\Map_\CC(Y, Z) \to \Map_\CC(X, Z).\]
The collection of $S_0$-local equivalences is of course the same thing as the strongly saturated closure $\overline{S_0}$, and an object of $\CC$ is $S_0$-local if and only if it is $\overline{S_0}$-local. Every accessible localisation of $\CC$ arises by inverting some small collection of morphisms, and the subcategories $S_0^{-1}\CC$ and $T_0^{-1}\CC$ coincide precisely when the strongly saturated closures of $S_0$ and $T_0$ agree.

To understand the closure properties of the kernel of a localisation, we must assume $\CC$ is stable. The easiest approach is then to produce a dictionary relating acyclics to either local equivalences or local objects, and translate the results we have already obtained there to some equivalent statement about acyclics. We choose to exploit the relationship between acyclics and local equivalences. 

The key idea is that a local equivalence is a map whose cofibre is acyclic. In particular, given a collection $\AA$ of objects we want to specify as acyclic, we can take $S$ to contain all morphisms in $\CC$ whose cofibre lies in $\AA$. Then the requirements for $S$ to produce a localisation of $\CC$ translate easily to conditions on $\AA$:
\begin{enumerate}
    \item Closure under pushout: $\AA$ must be closed under equivalences.
    \item Closure under small colimits: colimits commute with cofibres, so $\AA$ itself must be closed under small colimits in $\CC$ (since $\CC$ is presentable, thus cocomplete).
    \item Two-out-of-three property: the cofibre of a composite $f \circ g$ can be placed in a cofibre sequence with $\cof f$ and $\cof g$. Since $\CC$ is stable, this reduces to closure of $\AA$ under cofibres, which is automatic from closure under colimits.
    \item Small generation: This amounts to $\AA$ having small generation, i.e. being generated under the above three properties by a subset of its objects.\footnote{Let $\AA_0 \subseteq \AA$ be a small generating set. Take $S_0$ to be the (also small) collection of maps $0 \to A$ for $A \in \AA_0$. Then our choice of $\AA_0$ forces the strong saturation of $S_0$ to contain all the maps $0 \to A$ for $A \in \AA$. Moreover, if $f: X \to Y \in S$ then its cofibre lies in $\AA$. Rotating this cofibre sequence, we have a map $\fib f \to X$ and $f$ can be obtained by pushing out $\fib f \to 0$ along $\fib f \to X$. But $\fib f \to 0 \in \overline{S_0}$ from the composite $\fib f \to 0 \to \cof f$ since the other two maps lie in $\overline{S_0}$, because $\cof f \in \AA$. Hence $f \in \overline{S_0}$ so $S = \overline{S_0}$, and thus $S$ has small generation. For the other implication, take a generating set for $S$ and the cofibres form a generating set for $A$.}
\end{enumerate}

We have essentially proved the following proposition.

\begin{prop}\label{prop: acyclics loc}
    Let $\AA$ be a nonempty full (stable) subcategory of a stable presentable $\infty$-category $\CC$. Then $\AA$ is the kernel of an accessible localisation of $\CC$ if and only if the following conditions hold.
    \begin{enumerate}[label=(\alph*)]
        \item $\AA$ is closed under small colimits computed in $\CC$. \label{cond: colimit closure acyclics}
        \item There is some small $\AA_0 \subseteq \AA$ which generates $\AA$ under colimits. \label{cond: small generation acyclics}
    \end{enumerate}
\end{prop}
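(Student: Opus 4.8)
The plan is to prove the equivalence by translating between the subcategory $\AA$ and the class of \emph{local equivalences}, exploiting stability of $\CC$ to pass freely between an object and a morphism whose cofibre it is. To a full subcategory $\AA$ one associates the class $S_\AA := \{f \in \Fun(\Delta^1, \CC) : \cof f \in \AA\}$, and conversely an object $X$ is the cofibre of $0 \to X$. The main input is Proposition \ref{prop: morphisms loc}: a class of morphisms is the class of local equivalences of an accessible localisation exactly when it is strongly saturated and of small generation. So the two halves of the argument amount to matching the closure and generation conditions on $S_\AA$ with conditions (a) and (b) on $\AA$, and checking that the localisation produced by $S_\AA$ has kernel exactly $\AA$ --- using that $X$ is acyclic if and only if $0 \to X$ is a local equivalence if and only if $X = \cof(0 \to X) \in \AA$.

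First I would handle the forward direction. Suppose $\AA = \ker L$ for an accessible localisation $L$. Condition (a) is immediate: a localisation functor admits a right adjoint, hence preserves small colimits, so a colimit computed in $\CC$ of acyclics is killed by $L$ and is acyclic. For condition (b), apply Proposition \ref{prop: morphisms loc} to get a small set $S_0$ of morphisms whose strongly saturated closure $\overline{S_0}$ is the class of $L$-local equivalences; then $\AA_0 := \{\cof f : f \in S_0\}$ is a small subset of $\AA$, and I would show it generates $\AA$ under colimits by letting $\mathcal{B} \subseteq \AA$ be the full (stable) subcategory generated by $\AA_0$ under small colimits, observing that $\{f : \cof f \in \mathcal{B}\}$ is strongly saturated and contains $S_0$, hence contains $\overline{S_0}$, and therefore every $A \in \AA = \{\cof f : f \in \overline{S_0}\}$ lies in $\mathcal{B}$.

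For the reverse direction, assume (a) and (b), fix a small generating set $\AA_0 \subseteq \AA$, and set $S_0 := \{0 \to A : A \in \AA_0\}$ and $S := \overline{S_0}$. By construction $S$ is strongly saturated and of small generation, so Proposition \ref{prop: morphisms loc} yields an accessible localisation $L$ with local equivalences $S$, and it remains to identify $\ker L$ with $\AA$. One inclusion: $\{X : (0 \to X) \in S\}$ contains $\AA_0$ and is closed under small colimits, since $\overline{S_0}$ is closed under small colimits in $\Fun(\Delta^1,\CC)$ and $\colim(0 \to A_i) \simeq (0 \to \colim A_i)$, so it contains all of $\AA$. The other inclusion uses that $S_\AA = \{f : \cof f \in \AA\}$ is itself strongly saturated: closure under pushouts holds because $\cof$ is invariant under pushout, closure under small colimits because $\cof : \Fun(\Delta^1,\CC) \to \CC$ preserves colimits and $\AA$ is closed under them, and the two-out-of-three property follows from the cofibre sequence $\cof g \to \cof(h\circ g) \to \cof h$ together with $\AA$ being closed under fibres, cofibres and shifts. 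Then $S_\AA \supseteq \overline{S_0} = S$, so $X \in \ker L$ gives $(0 \to X) \in S \subseteq S_\AA$, i.e. $X \in \AA$.

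The main obstacle I anticipate is the bookkeeping connecting ``$\overline{S_0}$'' with ``the subcategory generated by $\AA_0$ under colimits'': in particular, tracking the desuspensions that appear when unwinding the two-out-of-three property and rotating cofibre sequences, so that one never leaves $\AA$. This is precisely the point where the hypothesis that $\AA$ is a \emph{stable} subcategory (rather than merely closed under colimits) is essential. Everything else is a routine transcription of the discussion preceding the statement, where these closure properties were already spelled out.
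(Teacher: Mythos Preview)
Your proposal is correct and follows essentially the same approach as the paper: both translate between $\AA$ and the class $S_\AA = \{f : \cof f \in \AA\}$ of morphisms, verify that $S_\AA$ is strongly saturated using the closure properties of $\AA$, and handle small generation via the set $S_0 = \{0 \to A : A \in \AA_0\}$, invoking Proposition~\ref{prop: morphisms loc} at the end. Your version is somewhat more explicit about the two inclusions $\AA \subseteq \ker L$ and $\ker L \subseteq \AA$, and you correctly flag the one genuine subtlety (closing $\AA_0$ under shifts so that the colimit closure is stable), which the paper leaves implicit.
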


The notions of \emph{thick} and \emph{localising} subcategories are closely related to Proposition \ref{prop: acyclics loc}. We briefly explain this relationship. 

\begin{defn}
    A subcategory $\AA$ of a stable $\infty$-category $\CC$ is called \emph{thick} if it is closed under retracts, cofibres, and suspensions. $\AA$ is called \emph{localising} if in addition it is closed under arbitrary coproducts taken in $\CC$.
\end{defn}

Heuristically, a localising subcategory behaves like the kernel of a localisation. We now make this idea precise.

\begin{lem}
    Given a localisation $L$ of a stable $\infty$-category $\CC$, the kernel of $L$ is a localising subcategory of $\CC$. Conversely, let $A$ be a set of objects of $\CC$, and denote by $\AA$ the smallest (stable) localising subcategory of $\CC$ generated by $A$. There is a localisation $L_A$ of $\CC$ whose kernel is precisely $\AA$. 
\end{lem}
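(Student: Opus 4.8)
The statement has two halves. The first half --- that the kernel of a localisation $L$ of a stable $\infty$-category is a localising subcategory --- is essentially a bookkeeping exercise: one checks that $\ker L$ is closed under retracts, cofibres, suspensions, and arbitrary coproducts. Suspension and cofibre closure follow because $L$ is exact (a left adjoint out of a stable category into a stable category preserves finite colimits, hence cofibre sequences and suspensions). Closure under retracts is automatic since $L(0) = 0$ and a retract of $0$ is $0$. For coproducts one wants $L$ to preserve them; this is where I would be slightly careful --- in the generality stated ($\CC$ merely stable, $L$ merely a localisation) $L$ need not preserve infinite coproducts, so the cleanest route is to invoke the already-developed picture: $\ker L$ consists of the acyclics, which by the discussion in the Appendix are the left orthogonal to the local objects, i.e. $A \in \ker L$ iff $\Map_\CC(A, Y) \simeq 0$ for all local $Y$. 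Since $\Map_\CC(-, Y)$ carries coproducts in $\CC$ to products of spaces, a coproduct of acyclics is acyclic; suspensions and cofibres of acyclics are acyclic because $\Map_\CC(-,Y)$ is an exact functor to spectra when $Y$ is fixed (or one just notes $\ker L$ is closed under the relevant finite colimits by exactness of $L$); retracts are handled by the same orthogonality criterion. This gives the first half cleanly without assuming $L$ preserves infinite coproducts.

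For the converse, given a \emph{set} $A$ of objects, I would let $\AA$ be the smallest localising subcategory containing $A$ and produce $L_A$ via Proposition \ref{prop: acyclics loc}. That Proposition says a nonempty full stable subcategory of a stable presentable $\infty$-category is the kernel of an accessible localisation precisely when it is closed under small colimits computed in $\CC$ and is generated under colimits by a small subset. So the two things to verify are: (i) $\AA$ is closed under all small colimits in $\CC$, and (ii) $\AA$ has small generation. For (ii), the small generating set is $A$ itself together with a small amount of closure data --- more precisely, the localising subcategory generated by a set is generated under colimits by the closure of $A$ under shifts, which is still a set, so $\AA$ is generated under colimits by a small set. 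For (i), the subtlety is that ``localising subcategory'' is defined via closure under \emph{retracts, cofibres, suspensions, and coproducts}, whereas Proposition \ref{prop: acyclics loc} wants closure under \emph{all} small colimits; I would argue that in a stable category, closure under coproducts plus cofibres (hence all finite colimits, hence in particular coequalisers realised as cofibres of appropriate maps) and retracts in fact yields closure under all small colimits, since every small colimit can be built from coproducts and coequalisers, and coequalisers in a stable category are computed from cofibres. Then Proposition \ref{prop: acyclics loc} applies directly and delivers an accessible localisation $L_A$ with $\ker L_A = \AA$.

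The main obstacle, as I see it, is the compatibility between the \emph{a priori} different notions of ``closed under colimits'' appearing in Proposition \ref{prop: acyclics loc} versus the definition of ``localising subcategory,'' and---relatedly---whether $\AA$ as generated by the four localising-closure operations automatically sits inside $\CC$ as a subcategory satisfying the hypotheses of that Proposition. I would resolve this by first passing to $\AA' :=$ the smallest full subcategory of $\CC$ closed under small colimits and containing $A$ (which by Proposition \ref{prop: acyclics loc} is the kernel of an accessible localisation $L_A$), then showing $\AA' = \AA$. The inclusion $\AA \subseteq \AA'$ is clear since colimit-closure implies localising-closure. For $\AA' \subseteq \AA$ one uses that $\AA$ is itself closed under all small colimits (the step flagged above), so $\AA$ is one of the colimit-closed subcategories containing $A$ and hence contains the smallest one, $\AA'$. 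This identification is the only genuinely non-formal point; everything else is assembling citations to Proposition \ref{prop: acyclics loc} and the orthogonality description of acyclics from the Appendix. I would not expect to need anything beyond what has already been set up.
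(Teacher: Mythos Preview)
Your proposal is correct and follows essentially the same route as the paper, with one unnecessary detour. For the first half, your concern that $L$ might fail to preserve infinite coproducts is unfounded in this context: by the paper's definition a localisation admits a fully faithful right adjoint, so $L$ is a left adjoint and preserves all small colimits. The paper therefore argues directly that $\ker L$ is closed under colimits computed in $\CC$, which immediately gives cofibres and coproducts; stability then supplies suspensions and retracts. Your orthogonality argument is valid but circuitous here.

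For the converse, your approach matches the paper's exactly: both invoke Proposition~\ref{prop: acyclics loc} and both must bridge the gap between the localising-subcategory closure conditions (retracts, cofibres, suspensions, coproducts) and closure under all small colimits. The paper handles this by citing \cite{htt} Propositions~4.4.2.6 and~4.4.2.7, which reduce colimit-closure to closure under pushouts and small coproducts, and then observes that in a stable subcategory containing $0$, closure under pushouts is equivalent to closure under cofibres. Your coproducts-plus-coequalisers reduction amounts to the same thing. The auxiliary step of introducing $\AA'$ and proving $\AA = \AA'$ is not needed once you know $\AA$ is colimit-closed, but it does no harm.
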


\begin{proof}
    Let $L$ be a localisation of $\CC$. We know $\ker L$ is closed under colimits taken in $\CC$, so we have cofibres and coproducts. Since $\ker L$ is a stable subcategory and closed under cofibres, we have suspensions and retracts. Therefore $\ker L$ is localising. 

    Conversely, take the localising subcategory $\AA$. By \cite{htt} Propositions 4.4.2.6 and 4.4.2.7, to show that $\AA$ is closed under small colimits computed in $\CC$, it is sufficient to establish closure under pushouts and small coproducts. We have coproducts since $\AA$ is localising. Because $\AA$ and $\CC$ are stable (and $\AA$ contains $0 \in \CC$), closure under pushouts is equivalent to closure under cofibres. Thus $\AA$ satisfies Condition \ref{cond: colimit closure acyclics} of Proposition \ref{prop: acyclics loc}. Since $A$ is a set of objects generating $\AA$, we also have Condition \ref{cond: small generation acyclics}, and thus $\AA$ is the kernel of a localisation.  
\end{proof}

A localising subcategory of small generation is the kernel of an accessible localisation.

\begin{remark}[The partial order on accessible localisations]\label{rmk: loc partial order}
Accessible localisations of $\CC$ have a natural partial order, with the comparison operator $\leq$ denoting inclusion of local objects, or reverse inclusion of acyclics. These two perspectives are equivalent because acyclics are left-orthogonal to local objects and local objects are right-orthogonal to acyclics. If $L_1 \leq L_2$ then the adjunction for $L_1$ factors through the adjunction for $L_2$ in the following way. The functor $L_1: \CC \to L_1\CC$ can be restricted to $L_2$-local objects inside $\CC$, giving a functor $L_1|_{L_2\CC}: L_2\CC \to L_1\CC$. Let $\iota_j: L_j \CC \to \CC$ denote the subcategory inclusions of local objects, and $\iota: L_1\CC \to L_2\CC$ the subcategory inclusion which we have by assumption. Then $\iota_2 \circ \iota = \iota_1$ and $L_1|_{L_2\CC} \circ L_2 = L_1$. Moreover we know $L_j \dashv \iota_j$, from which we can compute that $L_1|_{L_2\CC} \dashv \iota$ so $L_1|_{L_2\CC}$ is a localisation because its right adjoint is fully faithful.

The partial order on localisations has a maximal element $\id_\CC$ and a minimal element $0$. It has a \emph{join} or \emph{least upper bound} operation: given some set of localisations, take the intersection of all their subcategories of acyclics. This intersection is nonempty since it contains 0, and remains closed under colimits, so it is the category of acyclics for some localisation. It is evidently the least upper bound of the set of localisations we started with. Note that the collection of all localisations of $\CC$ is in general a partially ordered class, since there may be too many localisations for them to form a set. This means we do not have arbitrary joins, only joins over subsets of the localisations. There is also a \emph{meet} or \emph{greatest lower bound} operation: given a set of localisations, we take the subcategory of $\CC$ generated under colimits by the union of all acyclics of the various localisations. This defines the subcategory of acyclics for the meet localisation. This construction evidently describes the meet, but is often difficult to work with. 
\end{remark}

We next take a slight detour and talk about a classical formulation of localisations. 

\begin{defn}
    Let $\CC$ be a stable presentably symmetric monoidal $\infty$-category, and $E \in \CC$. The \emph{Bousfield localisation} of $\CC$ with respect to $E$, denoted by $L_E$, is defined as follows. 
    \begin{enumerate}[label=(\alph*)]
        \item The acyclics for the localisation are objects $X \in \CC$ such that $E \otimes X \simeq 0$.
        \item The $E$-local equivalences are morphisms $f: X \to Y$ which induce a homotopy equivalence $E \otimes f: E \otimes X \to E \otimes Y$.
    \end{enumerate}
\end{defn}

We observe that the class of acyclics satisfies the conditions of Proposition \ref{prop: acyclics loc} so indeed defines a localisation, and the local equivalences satisfy the conditions of Proposition \ref{prop: morphisms loc} so also define a localisation. Moreover, the cofibres of local equivalences are precisely the acyclics, so using our dictionary from before these two localisations are the same. We can obtain the local objects either in terms of the acyclics or the local equivalences.

A Bousfield localisation is therefore a special kind of accessible localisation, where the acyclics are defined by testing against a single object of the category $\CC$. In $\Sp$, existence of such a test object $E$ implies there is a single acyclic object $A$, testing against which determines all the local objects. This is because the category of spectra has a single generator, namely the sphere spectrum, so we can take $A = \fib(\S \to L_E\S)$. 

\begin{lem}[Bousfield \cite{bousfield}, Lemmas 1.13 and 1.14.]
    For every $E \in \Sp$ there exists some $E$-acyclic $A \in \Sp$ such that $X \in \Sp$ is $E$-local if and only if $\Map_{\Sp}(A, X)$ is contractible. Moreover, the $E$-acyclic objects are generated by $A$ under wedge sum and the two-out-of-three property for cofibre sequences.
\end{lem}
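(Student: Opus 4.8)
The statement to prove is the classical result of Bousfield: for every $E \in \Sp$ there is an $E$-acyclic spectrum $A$ such that $X$ is $E$-local iff $\Map_{\Sp}(A, X)$ is contractible, and the $E$-acyclic objects are generated by $A$ under wedge sums and the two-out-of-three property for cofibre sequences. The plan is to extract $A$ directly from the localisation functor $L_E$ itself, using that $\Sp$ is generated under colimits by the single object $\S$.

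\textbf{First step: produce the candidate.} Set $A := \fib(\S \to L_E\S)$, the fibre of the unit map of the localisation. Equivalently, up to a shift, $A \simeq \cof(\S \to L_E \S)[-1]$. Since $\S \to L_E\S$ is a local equivalence, its fibre (and cofibre) is $E$-acyclic; this is immediate from the fact that the cofibre of a local equivalence is acyclic, together with stability, which is already recorded in the discussion preceding Lemma~\ref{lem: acyclics gen by} in the excerpt. So $A$ is $E$-acyclic.

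\textbf{Second step: characterise locals by $A$.} I would show $X$ is $E$-local iff $\Map_{\Sp}(A, X) \simeq 0$. One direction is formal: if $X$ is local and $A$ is acyclic then $\Map(A, X) \simeq 0$ (the mapping space from an acyclic to a local is always contractible, as noted in the Appendix). For the converse, suppose $\Map(A, X) \simeq 0$. Apply $\Map(-, X)$ to the fibre sequence $A \to \S \to L_E\S$ to get a fibre sequence of spaces $\Map(L_E\S, X) \to \Map(\S, X) \to \Map(A, X) \simeq 0$, so $\Map(L_E\S, X) \xrightarrow{\simeq} \Map(\S, X) \simeq X$ (using $\Sp$ is generated by $\S$, i.e. $\Map(\S, -)$ is the identity up to equivalence on the relevant homotopy). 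This says precomposition with the unit $\S \to L_E\S$ is an equivalence $\Map(L_E\S, X) \to \Map(\S, X)$. To upgrade this from the generator $\S$ to all of $\Sp$: every $Y \in \Sp$ is a colimit of (shifts of) copies of $\S$, and both $\Map(-, X)$ and $\Map(L_E -, X)$ send colimits to limits, so precomposition with $Y \to L_E Y$ induces an equivalence $\Map(L_E Y, X) \to \Map(Y, X)$ for all $Y$; taking $Y = X$ and chasing the unit $X \to L_E X$ through the Yoneda argument shows $X \to L_E X$ is an equivalence, i.e. $X$ is local. (Alternatively, since the class of $Y$ for which precomposition is an equivalence is closed under colimits and contains $\S$, it is everything; this is the $S_0$-local / $\overline{S_0}$-local dictionary from the Appendix with $S_0 = \{\S \to L_E\S\}$.)

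\textbf{Third step: $A$ generates the acyclics.} Let $\AA_0$ be the smallest full subcategory of $\Sp$ containing $A$ and closed under wedge sums and the two-out-of-three property for cofibre sequences — equivalently (since $\Sp$ is stable and $0 \in \AA_0$, being the empty wedge) the smallest localising subcategory containing $A$, as the two-out-of-three property for cofibre sequences forces closure under cofibres, fibres, suspensions, and retracts built from $A$. By the last Lemma of the Appendix, $\AA_0$ is the kernel of an accessible localisation $L_{\{A\}}$ with $\ker L_{\{A\}} = \AA_0$. Clearly $\AA_0 \subseteq \ker L_E$ since $A$ is $E$-acyclic and $\ker L_E$ has all these closure properties. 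For the reverse inclusion, I would argue that $L_{\{A\}}$ and $L_E$ have the same local objects: an object $X$ is $L_{\{A\}}$-local iff $\Map(A, X) \simeq 0$ (this is exactly how the localisation generated by a single object detects locality — spelled out in the Appendix's treatment of $S_0$-local objects with $S_0 = \{0 \to A\}$), which by Step~2 holds iff $X$ is $E$-local. Since a localisation is determined by its subcategory of local objects (equivalently, acyclics are the left orthogonal of locals), we get $\ker L_E = \ker L_{\{A\}} = \AA_0$, which is the desired generation statement.

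\textbf{Main obstacle.} The genuinely load-bearing point is Step~2, specifically the bootstrap from "precomposition with $\S \to L_E\S$ is an equivalence on $\Map(-, X)$" — which only directly tests the generator — to "$X$ is local". The cleanest route is to observe that the class of $Y$ with $\Map(L_E Y, X) \xrightarrow{\simeq} \Map(Y, X)$ is closed under all colimits (both functors take colimits to limits) and contains $\S$, hence is all of $\Sp$ because $\S$ generates $\Sp$ under colimits; then locality of $X$ is the special case $Y$ arbitrary (or one notes this class of $Y$ is precisely $\overline{S_0}$ for $S_0 = \{\S \to L_E\S\}$, whose $S_0$-local objects coincide with the $E$-local objects since $\S \to L_E\S$ is itself an $E$-local equivalence). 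Everything else is formal manipulation of fibre sequences of mapping spaces and the dictionary between acyclics, locals, and local equivalences developed in the Appendix. Since the excerpt explicitly says this material is not novel, I would keep the writeup brief and lean heavily on the cited results of Bousfield and on Proposition~\ref{prop: acyclics loc}.
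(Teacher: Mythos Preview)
The paper does not prove this lemma; it is stated with a citation to Bousfield and no proof follows. The sentence immediately before the lemma suggests the candidate $A = \fib(\S \to L_E\S)$, and you have taken that hint and attempted to turn it into a full argument. So there is no ``paper's own proof'' to compare against beyond the bare citation. Bousfield's original argument proceeds differently: he constructs $A$ as a wedge of $E$-acyclic spectra with boundedly many cells via a cardinality argument, which is logically prior to the construction of $L_E$ itself and so cannot refer to $L_E\S$.

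Your Step~2 contains a real gap. The assertion that ``$\Map(L_E -, X)$ sends colimits to limits'' is false: the endofunctor $L_E\colon \Sp \to \Sp$ does not preserve colimits unless $L_E$ is smashing. Concretely, if each $Y_i$ is already $E$-local then each $Y_i$ lies trivially in your class, but $\colim Y_i$ need not be $E$-local, and there is no reason $\Map(L_E\colim Y_i, X) \to \Map(\colim Y_i, X)$ should be an equivalence just from $\Map(A, X) \simeq 0$. Your alternative phrasing via the strongly saturated closure $\overline{S_0}$ of $S_0 = \{\S \to L_E\S\}$ has the same defect: to deduce that $S_0$-local implies $E$-local you must show that $\overline{S_0}$ contains \emph{every} $E$-local equivalence, and the observation that $\S \to L_E\S$ is an $E$-local equivalence only gives $\overline{S_0} \subseteq \{\text{$E$-local equivalences}\}$, which is the wrong containment. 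Step~3 then invokes Step~2 to identify the two classes of local objects, so the gap propagates. Note that the final sentence of Lemma~\ref{lem: acyclics gen by} --- which is exactly the generation statement you want --- is proved in the paper only under the standing hypothesis that $L$ is smashing.
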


\begin{warning}
    Even in the category of spectra, the situation is not entirely symmetric. $A$, the test object which determines all $E$-local objects, is itself $E$-acyclic. However, the object $E$ which tests for acyclicity may not be $E$-local. Essentially, the condition that $E \otimes A = 0$ for all $E$-acyclic $A$ does not necessarily imply that $\Map_{\Sp}(A, E) = 0$ for all $E$-acyclic $A$. In an extreme case of this phenomenon, the Brown-Comenetz dual $I$ of the sphere has the property that $I\otimes I \simeq 0$, so it is acyclic with respect to itself. Hence $L_I I = 0$. However $\Map_{\Sp}(I, I) \neq 0$ because $I$ is a nonzero spectrum. A proof that $I \otimes I \simeq 0$ is given by Hovey-Strickland as Corollary B.12 in \cite{hoveystrickland}, or Mathew gives a particularly digestible exposition in \cite{mathew}. 
    
    The idea of the proof is that we first show $I \otimes H\F_p \simeq 0$ by exploiting a theorem of Ravenel in \cite{ravenel84}, namely that there are no nontrivial maps from $H\F_p$ to a finite spectrum. Now the class of $I$-acyclic spectra contains $H\F_p$ and is closed under colimits and suspensions, so it must contain $HG$ for every torsion group $G$. Since the homotopy groups of $I$ are concentrated in nonnegative degrees and are all torsion---indeed, they are all finite except $\pi_0$---each truncation $\tau_{\geq-n}I$ is an iterated extension of spectra $HG$ with $G$ torsion. Thus $\tau_{\geq-n}I$ is $I$-acyclic, and $I$ itself is $I$-acyclic as a colimit of such.
\end{warning}

One might ask whether every localisation can be realised as a Bousfield localisation. In $\Sp_{(p)}$ and in $\Sp$ this question is open, but in some categories obtained as further localisations of $\Sp_{(p)}$ the answer is known to be negative. The following construction is due to Wolcott and Hovey.

\begin{example}[Wolcott, \cite{wolcott} Section 6]
Take $\CC = L_{H\F_p} \Sp_{(p)}$. Note that the Bousfield localisation $L_{H\F_p}$ is not smashing, so colimits in $\CC$ do not in general agree with colimits in $\Sp$. The category $\CC$ has only two Bousfield localisations, namely the trivial ones whose kernels are $\{0\}$ and $\CC$. This fact is due to Wolcott, see \cite{wolcott} Proposition 6.2. 

However, $\CC$ has at least one nontrivial localisation, which is then necessarily not Bousfield. This example is due to Hovey, and can be found in \cite{wolcott} as Proposition 6.4. Let $M(p)$ be the Moore spectrum defined by the cofibre sequence $\S \xrightarrow{p} \S \to M(p)$ in $\Sp$, and note that $M(p)$ is $H\F_p$-local so lies in $\CC$. Let $\AA$ be the subcategory of objects $X \in \CC$ such that $[X, M(p)]_* = 0$. Coproducts in $\CC$ are computed by first taking the coproduct in $\Sp_{(p)}$ and then $H\F_p$-localising the result. Since $M(p)$ is $H\F_p$-local, it follows that $\AA$ is closed under coproducts. Since $\AA$ is evidently a thick subcategory, it is therefore a localising subcategory. $\AA$ is nonzero because $H\F_p \in \AA$, and $\AA \neq \CC$ because $M(p) \notin \AA$. In order to obtain a localisation corresponding to $\AA$, it remains only to establish small generation. In fact $\AA$ is the smallest localising subcategory of $\CC$ which contains $H\F_p$, and thus specifies the acyclics for a localisation which is not Bousfield. Even if this were not the case, we could pass to the smallest localising subcategory of $\AA$ which contains $H\F_p$, and this would necessarily satisfy small generation and define a localisation of $\CC$ which is not Bousfield. 

This example really describes a cohomological Bousfield class which is not a homological Bousfield class. For further background on these ideas, see \cite{hoveycohbous}.
\end{example}

We now restrict our attention to the case of a stable presentable symmetric monoidal $\infty$-category $\CC$. Key examples we will be interested in are $\Sp$ and various localisations thereof. We would like to understand how localisation interacts with the symmetric monoidal structure of $\CC$. Since acyclic objects are those $A$ for which $E \otimes A = 0$, we can compute the localisation of an acyclic object by tensoring with $E$. But this does not work in general -- it is not generally true that $E \otimes X \simeq L_E X$ for every object $X \in \CC$. We can construct a counterexample using the Brown-Comenetz dual of the sphere. Suppose $I$-localisation really were given by tensoring with $I$, so $L_I X = I \otimes X$. Then localising a second time, we have 
\[L_I X = L_I L_I X = I \otimes I \otimes X = 0\]
since $I \otimes I = 0$. Hence $I$-localisation would send every object to 0. But $L_I$ is not the zero localisation -- this follows, e.g., from Example 3.1 in \cite{latticestructure}. 

Localisations which can actually be computed as $L X \simeq A \otimes X$ for some object $A$ of $\CC$ have exceptionally nice properties. Such localisations are called \emph{smashing}, and we study them in Section \ref{sec: smashing}.

To conclude, we give some examples of smashing localisations on $\Sp$ and $\Sp_{(p)}$ of traditional interest.

\begin{example}[Smashing localisations of spectra]\leavevmode
    \begin{enumerate}
        \item A localisation is finite if its acyclic objects are generated under colimits by compact acyclics. Finite localisations of spectra are smashing. See \cite{millerfinite} and Section \ref{subsec: finiteness}.
        \item Bousfield localisation of $\Sp_{(p)}$ at Morava $E$-theory is smashing (Hopkins-Ravenel, see \cite{ravenel92} Theorem 7.5.6). This is the only known example of a smashing localisation of $\Sp_{(p)}$ which is not finite, due to Burklund-Hahn-Levy-Schlank in \cite{telescope}.
    \end{enumerate}
    Other examples come in the form of Bousfield localisations at Moore spectra $SA$ for an abelian group $A$.
    \begin{enumerate}[resume]
        \item $A = \Z_{(p)}$ produces $p$-localisation, which is smashing. This is analogous to $p$-localisation of the integers. \label{ex: ploc}
        \item $A = \Q$ gives rationalisation, which is smashing. This is equivalent to Bousfield localisation at the Eilenberg-MacLane spectrum $H\Q$ because $S\Q \simeq H\Q$ (Serre's Theorem). \label{ex: ratl}
        \item $A = \F_p = \Z/p$ gives $p$-completion, which is \emph{not} smashing. \label{ex: pcomp}
        \item $A = \Z_{(J)}$ for $J$ a set of primes in $\Z$ always produces a smashing localisation. Moreover, a spectrum is $SA$-local if and only if its homotopy groups are uniquely $p$-divisible for all primes $p \notin J$. This is a result of Bousfield, see \cite{bousfield} Proposition 2.4. Think of this process as inverting all the primes in the complement of $J$. \label{ex: non torsion smashing}
    \end{enumerate}
    Examples (\ref{ex: ploc}) and (\ref{ex: ratl}) are incarnations of (\ref{ex: non torsion smashing}). Rationalisation is $J = \{\}$ and thus produces rational homotopy theory. For $p$-localisation, take $J = \{p\}$.
    \begin{thm}[\cite{bousfield}, Section 2, Propsitions 2.3-2.6]
        Let $A_1$ and $A_2$ be abelian groups, and consider the Bousfield localisations at $SA_1$ and $SA_2$. These localisations agree if and only if
        \begin{enumerate}[label=(\roman*)]
            \item $A_1$ is a torsion group iff $A_2$ is a torsion group, and
            \item for each prime $p$, $A_1$ is uniquely $p$-divisible iff $A_2$ is uniquely $p$-divisible.
        \end{enumerate}
        Thus to classify all localisations of type $L_{SA}$, we need only consider groups of the form $A = \bigoplus_{p\in J} \Z/p$ (torsion) and $A = \Z_{(J)}$ (non-torsion) with $J$ a set of primes. The non-torsion case produces a smashing localisation (with $A = \Z_{(J)}$ being uniquely $p$-divisible by primes in the complement of $J$), and the torsion case always produces a non-smashing localisation.
    \end{thm}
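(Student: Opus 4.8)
The plan is to convert the statement into a purely algebraic assertion about abelian groups via the universal-coefficient description of Moore-spectrum homology, and then carry out (or cite) Bousfield's arithmetic.

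\emph{Reduction to algebra.} For an abelian group $A$ and a spectrum $X$ there is a natural short exact sequence
\[0 \to A\otimes_{\Z}\pi_n X \to \pi_n(SA\otimes X)\to\mathrm{Tor}_1^{\Z}(A,\pi_{n-1}X)\to 0,\]
obtained by realising a free resolution of $A$ by a cofibre sequence of wedges of spheres. Hence $X$ is $SA$-acyclic if and only if $A\otimes\pi_nX = 0 = \mathrm{Tor}(A,\pi_nX)$ for every $n$. Since a localisation is determined by its class of acyclics (Appendix), since arbitrary graded abelian groups are realised by homotopy groups of spectra, and since $HG$ realises a single group $G$ in degree $0$, we get $L_{SA_1}\simeq L_{SA_2}$ if and only if $\mathcal Z(A_1) = \mathcal Z(A_2)$, where $\mathcal Z(A) := \{G \text{ abelian} : A\otimes G = 0 = \mathrm{Tor}(A,G)\}$. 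Because $\mathrm{Tor}(A,\Q) = 0$ always, $\Q\in\mathcal Z(A)$ iff $A\otimes\Q = 0$ iff $A$ is torsion; and $\mathrm{Tor}(A,\Z/p) = A[p]$ while $A\otimes\Z/p = A/pA$, so $\Z/p\in\mathcal Z(A)$ iff multiplication by $p$ on $A$ is bijective, i.e. $A$ is uniquely $p$-divisible. Testing the equality $\mathcal Z(A_1) = \mathcal Z(A_2)$ against $\Q$ and each $\Z/p$ yields the ``only if'' direction (i)--(ii) at once.

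\emph{The invariants determine $\mathcal Z(A)$.} Let $J = J(A)$ be the set of primes at which $A$ is not uniquely $p$-divisible, and let $M$ be the model group $\Z_{(J)}$ if $A$ is non-torsion and $\bigoplus_{p\in J}\Z/p$ if $A$ is torsion; one checks $M$ is torsion exactly when $A$ is and $J(M) = J$, so by the previous paragraph it suffices to prove $\mathcal Z(A) = \mathcal Z(M)$. The ingredients are: (a) $\mathrm{Tor}(A,G)$ is always a torsion group, and $\mathrm{Tor}(T,G) = 0$ for any $p$-primary torsion $T$ once $G[p] = 0$; (b) if $G$ is uniquely $p$-divisible then $B\otimes G = 0 = \mathrm{Tor}(B,G)$ for every $p$-primary torsion $B$, by a direct exponent/divisibility argument, so $\mathcal Z(\Z/p)\subseteq\mathcal Z(B)$; (c) conversely, for $B\ne 0$ $p$-primary, an inclusion $\Z/p\hookrightarrow B$ together with the $\mathrm{Tor}$ long exact sequence (and $\mathrm{Tor}_2 = 0$ over $\Z$) forces $G[p] = 0$ for $G\in\mathcal Z(B)$, and then a short argument with $A/pA$ forces $G/pG = 0$, giving $\mathcal Z(B) = \mathcal Z(\Z/p)$. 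For torsion $A = \bigoplus_p A_{(p)}$ these combine to $\mathcal Z(A) = \bigcap_{p\in J}\mathcal Z(A_{(p)}) = \bigcap_{p\in J}\mathcal Z(\Z/p) = \mathcal Z(M)$. For non-torsion $A$ one shows directly $\mathcal Z(A) = \{G : G \text{ torsion and uniquely } p\text{-divisible for all } p\in J\} = \mathcal Z(\Z_{(J)})$: ``$\supseteq$'' is again the exponent argument applied to the primary summands of $G$; for ``$\subseteq$'', an element of infinite order gives $\Z\hookrightarrow A$ and exhibits $G$ as a quotient of a $\mathrm{Tor}$-group, hence torsion, while for $p\in J$ one splits into the cases $A[p]\ne 0$ (use $\Z/p\hookrightarrow A$) and $A[p] = 0$ with $A/pA\ne 0$ (use the exact sequence $0\to A\xrightarrow{p}A\to A/pA\to 0$) to conclude unique $p$-divisibility of $G$. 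This bookkeeping is exactly Bousfield's Propositions 2.3--2.6, which I would invoke.

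\emph{The smashing dichotomy, and the main obstacle.} By the classification, $L_{SA}$ is equivalent to $L_{S\Z_{(J)}}$ when $A$ is non-torsion and to $L_{S(\bigoplus_{p\in J}\Z/p)}$ when $A$ is torsion ($J = \emptyset$ forcing $A = 0$, the zero localisation). The non-torsion case is smashing by Example~\ref{ex: non torsion smashing} (Bousfield, Proposition 2.4). For torsion $A$ with $J\ne\emptyset$: if $L := L_{SA}$ were smashing with unit $R = L\S$, then $H\Q$ (whose homotopy $\Q$ is uniquely $p$-divisible for all $p$) is $SA$-acyclic, so $R\otimes H\Q\simeq LH\Q = 0$; thus $\pi_n(R)\otimes\Q = 0$ for all $n$, i.e. $\pi_*R$ is torsion in every degree --- contradicting Bousfield's computation (Proposition 2.5) that $\pi_*(L_{SA}\S)$ carries a nonzero torsion-free $\mathrm{Ext}$-summand (for instance $\mathrm{Ext}^1_{\Z}(\Z/p^\infty,\Z)\cong\widehat{\Z}_p$ for $p\in J$). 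Hence the torsion case is never smashing. The genuine obstacle is the middle step: showing that the two discrete invariants pin down $\mathcal Z(A)$ despite divisible torsion groups such as $\Q/\Z$ and $\Z/p^\infty$, which defeat naive sub- and quotient arguments and force the careful $\mathrm{Tor}$-analysis and case split above; a fully self-contained proof would simply reproduce Bousfield's Propositions 2.3--2.6.
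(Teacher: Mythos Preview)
The paper does not give its own proof of this theorem: it is stated in the Appendix purely as a cited result from \cite{bousfield}, Propositions~2.3--2.6, with no argument supplied. So there is no ``paper's proof'' to compare your proposal against; your sketch is essentially an outline of Bousfield's original argument, and it is correct as such. The reduction via the universal-coefficient sequence to the algebraic class $\mathcal Z(A)$, the identification of the two invariants by testing against $\Q$ and $\Z/p$, the case analysis showing the invariants determine $\mathcal Z(A)$, and the non-smashing argument for the torsion case (torsion $A$ forces $L\S$ to have torsion homotopy if $L$ were smashing, contradicting $\pi_0(\S_p^{\wedge})\cong\Z_p$) are all sound.
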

    Example (\ref{ex: non torsion smashing}) is finite because its acyclic objects are generated by the collection of Moore spectra $\{M(p)~|~p \notin J\}$, where $M(p) = \cof(\S \xrightarrow{p} \S) = S\Z/p$ is finite.
\end{example}

\end{document}